\newtheorem{theo}{Theorem}
\newtheorem{lem}{Lemma}
\newtheorem{prop}{Proposition}
\newtheorem{cor}{Corollary}
\author{Armand Riera}
\title{Isoperimetric inequalities in the Brownian plane\footnote{Supported by the ERC Advanced Grant 740943 {\sc GeoBrown}}}
\date{\small Universit\'e Paris-Sud}
\renewcommand{\leq}{\leqslant}
\renewcommand{\geq}{\geqslant}
\renewcommand{\epsilon}{\varepsilon}
\def\build#1_#2^#3{\mathrel{
\mathop{\kern 0pt#1}\limits_{#2}^{#3}}}
\begin{document}
\maketitle
\begin{abstract}
We consider the model of the Brownian plane, which is a pointed non-compact random metric space with the topology of the complex plane. The Brownian plane  can be obtained as the scaling limit in distribution of the uniform infinite planar triangulation or the uniform infinite planar quadrangulation and is conjectured to be the universal scaling limit of many others random planar lattices. We establish sharp bounds on the probability of having a short cycle separating the ball of radius $r$ centered at the distinguished point from infinity. Then we prove a strong version of the spatial Markov property of the Brownian plane. Combining our study of short cycles with this strong spatial Markov property we obtain sharp isoperimetric bounds for the Brownian plane.
\end{abstract}
\tableofcontents
\section{Introduction}\label{sec:int}
In recent years, much work and energy have been devoted to the study of discrete and continuous random geometry in dimension 2. In this paper we will  study the Brownian plane $\mathcal{M}_{\infty}$, which appears as the scaling limit in distribution of the uniform infinite planar  quadrangulation $Q_{\infty}$, in the local Gromov-Hausdorff sense and can also be interpreted as a Brownian map with infinite volume, see \cite{Plane}. The Brownian plane is a random pointed and weighted boundedly compact length space homeomorphic to $\mathbb{C}$ and  is conjectured to be the universal scaling limit of other discrete models. The case of the uniform infinite planar triangulation of type I has been treated in \cite{Bud}. We also mention that the Brownian plane is closely related to the Liouville quantum gravity surface called the quantum cone, see \cite[Corollary 1.5]{MS}.
\\
\\
The spaces $Q_{\infty}$ and $\mathcal{M}_{\infty}$ have a distinguished point, also called the root. Our first goal is to understand the probability of having a short injective cycle separating the ball of radius $r$ centered at the root in $\mathcal{M}_{\infty}$ from infinity. This will allow us to deduce isoperimetric inequalities for the Brownian plane. These results can then be extended to other models such as the Brownian sphere and the infinite Brownian disk.
The study of short separating cycles starts in random planar geometry with the paper \cite{Krikun}, where Krikun gave a construction of $Q_{\infty}$ as the local limit of large finite planar quadrangulations. He also proved the existence, for every $r\in \mathbb{N}^{*}$, of cycles separating the ball of radius $r$ of $Q_{\infty}$ from infinity having length of order $r$. Krikun conjectured that it is not possible to find  separating cycles with length of order smaller than $r$. In  \cite{Cycle},  Le Gall and Leh\'ericy confirmed Krikun's conjecture by proving that for every $\delta>0$, there exists a constant $c_{\delta}>0$ such that for every  $r\in \mathbb{N}^{*}$:
\[\mathbb{P}(L_{r}(Q_{\infty})< \epsilon r)<c_{\delta}\epsilon^{2-\delta}\]where $L_{r}(Q_{\infty})$ stands for the infimum of the lengths of injective cycles disconnecting the ball of radius $r$ of $Q_{\infty}$ from infinity. They also proved that the probability $\mathbb{P}(L_{r}(Q_{\infty})> u r)$ decreases exponentially fast when $u$ goes to infinity.
\\
\\  
This work can be seen as a continuous counterpart of this study. We are aiming at similar results for the Brownian plane. Thanks to the geometric properties of $\mathcal{M}_{\infty}$ we get  optimal results in the continuous setting. Since the Brownian plane is expected to be the universal scaling limit of random lattices such as the UIPQ and the UIPT, it is likely that these sharper results also have analogs for discrete models. Let us present our results more precisely. It should also be possible to adapt some of our techniques to the case of the UIPQ.
\\
\\
The Brownian plane $\mathcal{M}_{\infty}$ is equipped with a root, which we denote by $0$, a distance  $\Delta$ and a volume measure $|\cdot|$. The construction of $\mathcal{M}_{\infty}$  on a probability space $(\Omega, \mathcal{F},\Theta_{0})$ is recalled in Section \ref{subsection_Brownian_Plane}.
For every $r>0$, let $B_{r}(\mathcal{M}_{\infty})$ denote the closed ball of radius $r$ centered at $0$ in $\mathcal{M}_{\infty}$. For every path $\gamma:[t,t^{\prime}]\to \mathcal{M}_{\infty}$, we denote its length by $\Delta(\gamma)$ i.e.:
 \begin{equation}\label{def:distance}
 \Delta(\gamma):=\sup \limits_{t=t_{1}\leq t_{2},\ldots\leq t_{n}=t^{\prime}}\sum \limits_{i=1}^{n-1}\Delta\big(\gamma(t_{i}),\gamma(t_{i+1})\big)
 \end{equation}
 where the supremum is over all choices of the integer $n\geq 1$ and the finite sequence $t_{1}\leq t_{2}\leq\ldots\leq t_{n}$ satisfying $(t_{1},t_{n})=(t,t^{\prime})$. In this work a path has to be a continuous function. Moreover  we say that a path $\gamma:[t,t^{\prime}]\to \mathcal{M}_{\infty}$ is a separating cycle if:
\\
\\
$\bullet$  for every $t\leq s<s^{\prime}\leq t^{\prime}$ we have  $\gamma(s)= \gamma(s^{\prime})$ if and only if $(s,s^{\prime})=(t,t^{\prime})$;
\\
\\
$\bullet$ the distinguished point $0$ does not belong to the range of $\gamma$ and there exists $r>0$ such that for any path $\tilde{\gamma}:[s,s^{\prime}]\to \mathcal{M}_{\infty}$ with  $\tilde{\gamma}(s)=0$ and $\tilde{\gamma}(s^{\prime})\notin B_{r}(\mathcal{M}_{\infty})$ we have:
\[\gamma([t,t^{\prime}])\cap \tilde{\gamma}([s,s^{\prime}])\neq \emptyset.\]
We will say that a separating cycle $\gamma$ separates $B_{r}(\mathcal{M}_{\infty})$ from infinity if it takes values in the complement of $B_{r}(\mathcal{M}_{\infty})$.
Recall that $\mathcal{M}_{\infty}$ has a.s. the topology of $\mathbb{C}$ and consequently it has only one end.  So for every $r>0$, we can consider the hull of radius $r$, i.e. the complement of the unique  unbounded connected component of the complement of the closed ball of radius $r$ centered at the distinguished point. We denote the hull of radius $r$ by $B_{r}^{\bullet}(\mathcal{M}_{\infty})$. For every $r>0$ and any separating cycle $\gamma$ that separates $B_{r}(\mathcal{M}_{\infty})$ from infinity, an application of  Jordan's theorem shows that the path $\gamma$ has to take values in the complement of $B_{r}^{\bullet}(\mathcal{M}_{\infty})$. We say that such a path $\gamma$ separates  $B_{r}^{\bullet}(\mathcal{M}_{\infty})$ from infinity and we introduce the set $\mathcal{C}_{r}$ of all cycles  separating  $B_{r}^{\bullet}(\mathcal{M}_{\infty})$ from infinity, which is not empty since $B_{r}^\bullet(\mathcal{M}_{\infty})$ is bounded. Remark that any separating cycle $\gamma$ is in $\mathcal{C}_{r}$ for $r$ small enough and set:
\[L_{r}:=\inf \{\Delta(\gamma): \gamma \in \mathcal{C}_{r} \}. \]
 One of the benefits of working in the continuous setting is the fact that the Brownian plane is scale invariant in distribution, i.e. for every $r>0$, $(\mathcal{M}_{\infty},0,\Delta,|\cdot|)\overset{(d)}{=}(\mathcal{M}_{\infty},0,r\Delta, r^{4} |\cdot|)$ (see Section \ref{subsection_Brownian_Plane}). In particular, the scaling invariance implies that:
\[L_{r}\overset{(d)}{=} r L_{1} .\]
Therefore we will focus  on the variable $L_{1}$. 
We will prove the following result:
\begin{theo}\label{tail}
~\\
$\rm(i)$ We have
\[\limsup \limits_{u \to \infty} \frac{\log \big(\Theta_{0}(L_{1}>u)\big)}{u} \leq -\sup_{s>1}\frac{1}{2(s-1)}\log(\frac{s^{2}}{2s-1}) .\]
Consequently, $\Theta_{0}(L_{1}>u)$ decreases at least exponentially fast when $u$ goes to $\infty$. 
\\
\\
$\rm(ii)$ There exist two constants $0<c_{1}\leq c_{2}$ such that for every $\epsilon>0$:
\[ c_{1} (\epsilon^{2}\wedge 1)\leq \Theta_{0}(L_{1}<\epsilon) \leq c_{2} \epsilon^{2} . \]
\end{theo}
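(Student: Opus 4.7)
I would base both parts on the hull perimeter process $Z_r := |\partial B_r^\bullet(\mathcal{M}_\infty)|$, $r > 0$. Under $\Theta_0$, $(Z_r)_{r > 0}$ is a continuous Markov process whose law is explicit: up to time reversal it is a $3/2$-stable continuous-state branching process (CSBP), associated with the Brownian snake construction of $\mathcal{M}_\infty$. The basic link with $L_1$ is the inequality
\[ L_1 \leq \inf_{r \geq 1} Z_r, \]
valid because $\partial B_r^\bullet$ is (or is the limit of) an admissible injective separating cycle of length $Z_r$, and $L_1 \leq L_r$ for $r \geq 1$. This upper bound on $L_1$ drives part (i) and the lower bound of part (ii); reversing it is the heart of the upper bound of part (ii).

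\textbf{Part (i).} Since $\{L_1 > u\} \subseteq \{Z_r > u$ for all $r \in [1, s]\}$ for any $s > 1$, I would apply the Markov property of $Z$ at the endpoints together with the explicit one-dimensional Laplace transform of $Z_s$. A Chebyshev-type bound with free parameter $s > 1$ should yield
\[ \Theta_0(L_1 > u) \leq \exp\!\left( -\frac{u}{2(s-1)} \log\!\frac{s^2}{2s-1} \right), \]
after which optimizing in $s > 1$ gives the stated rate. The rational factor $s^2/(2s-1)$ is suggestive of the Laplace exponent of the $3/2$-stable CSBP re-expressed in terms of $s$, and the $1/(2(s-1))$ prefactor is the contribution of the length of the window $[1, s]$.

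\textbf{Part (ii), lower bound.} Conditionally on $Z_1$ being of order one (an event of positive $\Theta_0$-probability), the small-value asymptotics of the $3/2$-stable CSBP transition kernel on $[1, 2]$ give $\Theta_0(\inf_{r \in [1,2]} Z_r < \epsilon) \geq c_1 \epsilon^2$ for $\epsilon \in (0, 1)$; the exponent $2$ is dictated by the scaling of the CSBP near zero. Combined with $L_1 \leq \inf_{r \in [1,2]} Z_r$ this gives the claimed lower bound.

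\textbf{Part (ii), upper bound, and main obstacle.} The inequality $L_1 \leq \inf_{r \geq 1} Z_r$ is not tight a priori: a short separating cycle need not be close to any hull boundary. If $\gamma \in \mathcal{C}_1$ has length $< \epsilon$ and $r_\ast := \min_{x \in \gamma} \Delta(0, x) \geq 1$, then $\gamma$ is trapped in the thin annulus $B_{r_\ast + \epsilon}^\bullet \setminus B_{r_\ast}^\bullet$; I would need a \emph{pinching} argument showing that in this case the hull boundary length $Z_s$ is forced to be of order $\epsilon$ for some $s \in [r_\ast, r_\ast + \epsilon]$, using the planar topology of $\mathcal{M}_\infty$ and the coalescence of geodesics from infinity. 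A union bound over dyadic scales $r_\ast \in [2^k, 2^{k+1}]$, combined with scale invariance and the previous lower-bound-type estimate applied at scale $2^k$, should then produce $\Theta_0(L_1 < \epsilon) \leq c_2 \epsilon^2$. Controlling \emph{all} injective cycles in a thin annulus, uniformly across scales, is where the real difficulty lies; this is very likely where the strong spatial Markov property of $\mathcal{M}_\infty$ announced in the introduction enters in a crucial way.
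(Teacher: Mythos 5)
Your proposal rests on the inequality $L_1 \leq \inf_{r\geq 1} Z_r$, justified by the claim that $\partial B_r^{\bullet}$ ``is (or is the limit of) an admissible injective separating cycle of length $Z_r$''. This is the genuine gap: $Z_r$ is not the $\Delta$-length of $\partial B_r^{\bullet}$. It is the exit measure, i.e.\ the limit $\lim_{\epsilon\downarrow 0}\epsilon^{-2}|\check B_r^{\circ}\cap B_{r+\epsilon}|$, a quantity of dimension $(\mathrm{length})^2$ (under $\mathrm{hom}_\lambda$, distances scale by $\lambda$ while $Z_r$ scales by $\lambda^2$), and the curve $\partial B_r^{\bullet}$, though it is the range of an injective cycle, is fractal and has infinite length in the sense of \eqref{def:distance}. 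The discrete intuition $L_r\leq |\partial B_r^{\bullet}|$ from the UIPQ does not survive the scaling limit, precisely because boundary lengths and graph distances rescale by different powers. Since part (i) (via $\{L_1>u\}\subseteq\{Z_r>u\ \forall r\in[1,s]\}$) and the lower bound of part (ii) (via $L_1\leq\inf_{[1,2]}Z_r$) both rely on this false identification, neither argument goes through; moreover even granting it, your Chebyshev step for (i) is reverse-engineered from the answer (the factor $s^2/(2s-1)$ does not come out of the one-dimensional law of $Z_s$ alone), and the CSBP small-value exponent you invoke for (ii) would not produce the rate $\epsilon^2$.

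The paper's actual mechanism is different on both sides. For upper bounds on $L$ it constructs explicit short cycles from the labeled tree: with $N_r^s$ the number of subtrees above level $s$ reaching below level $r$ (conditionally Poisson with parameter $Z_s\,\mathbb{N}_s(0<W_*\leq r)$, $Z_s$ Gamma), one concatenates label-geodesics to get $L_{r,s}\leq 2(N_r^s+1)(s-r)$ (Proposition \ref{L_domine_par_N}); this yields both the exponential rate in (i) and, via the event that some thin annulus at height of order $1$ has no such subtree, the lower bound $c_1\epsilon^2$ in (ii). For the upper bound in (ii) it shows that any separating cycle confined to a thin annulus must cross each of the disks $A_u$ attached to the trapped subtrees, giving $L_{r,r+\delta}\geq\sum_k D_k$ with conditionally i.i.d.\ positive crossings, hence $\Theta_0(L_{r,r+\delta}<1\,|\,Z_{r+2\delta})\leq e^{1-\alpha_\delta Z_{r+2\delta}}$ (Proposition \ref{exp}); summing $\Theta_0(e^{-\alpha Z_{m+4}})\asymp m^{-3}$ over annuli then gives $c_2\epsilon^2$. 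Note in particular that a short cycle does \emph{not} force $Z_s$ to be of order $\epsilon$, contrary to your pinching heuristic; the correct statement is only the exponential penalty in $Z$ above, and the exponent $2$ emerges from summing $m^{-3}$ over the $\asymp\epsilon^{-1}$ relevant scales, not from small-value asymptotics of $Z$. (A minor further point: $Z$ is not continuous; it is c\`adl\`ag with negative jumps.)
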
 
\noindent It may be possible to get a discrete version of Theorem  \ref{tail} for the UIPQ by adapting our methods using the tree decomposition given in \cite{Discret}. This decomposition is the discrete analog of the construction of the Brownian plane that we will present in the preliminaries. Let us mention that our methods also allow us to obtain upper and lower bounds on the probability of the event $\{L_{1}<\epsilon\}$ under various conditionings. 
\\
\\
\\
In Section \ref{subsect_application} we prove a strong version of the spatial Markov property  of the Brownian plane, which has been
first derived in  \cite[Section 5.1]{Infinite_Spine}.  The statement of this property requires some notation and we give the
precise formulation of this property in Section \ref{subsect_application}.
 Combining Theorem \ref{tail} with this strong spatial Markov property we are able to study isoperimetric properties of the Brownian plane. 
Let us be more precise about this point.
\\
\\
We say that a closed subset $A$ of $\mathcal{M}_{\infty}$ is a (closed) Jordan domain if it is homeomorphic to the closed disk of the complex plane $\mathbb{C}$. 
Let $\mathcal{K}$ be the set of  all   Jordan domains of $\mathcal{M}_{\infty}$ whose interior contains the distinguished point of $\mathcal{M}_{\infty}$. For every $A\in\mathcal{K}$, we can define the length $\Delta(\partial A)$ of its boundary, as follows. We consider an injective cycle  $g:[0,1]\to \mathcal{M}_{\infty}$ such that  $g([0,1])=\partial A$ and we set: 
\[\Delta(\partial A) := \Delta(g) .\]
This definition does not depend on the parameterization $g$. We can now state our result concerning isoperimetric inequalities in the Brownian plane.
We will prove in Section \ref{seciso} that:
\begin{theo}\label{iso} For any nondecreasing function $f:\mathbb{R}_{+}\to (0,\infty)$:
\\
\\
$\rm(i)$ We have 
\[\inf \limits_{A\in \mathcal{K}} \frac{\Delta\big(\partial A\big)}{|A|^{\frac{1}{4}}} f(|\log(|A|)|)=0 \:,\:\Theta_{0}\text{-a.s.}\:,\:\text{if}\:\: \sum \limits_{m\in \mathbb{N}}\frac{1}{f(m)^{2}}=\infty.\]
\\
\\
$\rm(ii)$ We have
\[\inf \limits_{A\in \mathcal{K}} \frac{\Delta\big(\partial A\big)}{|A|^{\frac{1}{4}}} f(|\log(|A|)|)>0 \:,\:\Theta_{0}\text{-a.s.}\:,\:\text{if} \:\:\sum \limits_{m\in \mathbb{N}}\frac{1}{f(m)^{2}}<\infty.\]

\end{theo}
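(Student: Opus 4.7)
The plan is a Borel--Cantelli argument along the geometric sequence of scales $r_n := 2^{-n}$, combining the scale invariance $L_{r}\stackrel{(d)}{=}rL_{1}$, the tail estimates of Theorem~\ref{tail}, and the strong spatial Markov property from Section~\ref{subsect_application}. Setting $f_n := f(4n\log 2)$, monotonicity of $f$ together with a condensation-type comparison shows that $\sum_{m}1/f(m)^2$ and $\sum_{n}1/f_n^2$ converge or diverge simultaneously, and for any $A\in\mathcal{K}$ with $|A|$ of order $r_n^4$ one has $f(|\log(|A|)|)$ equivalent to $f_n$ up to a bounded multiplicative constant. Introducing the dyadic events $E_n := \{L_{r_n} < r_n/f_n\}$, scale invariance and Theorem~\ref{tail}(ii) give $c_1/f_n^2 \leq \Theta_{0}(E_n) \leq c_2/f_n^2$ for all $n$ sufficiently large.

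For part~(ii), assume $\sum 1/f(m)^2<\infty$. The first Borel--Cantelli lemma applied to $E_n$ produces an a.s.\ finite $N(\omega)$ such that $L_{r_n}\geq r_n/f_n$ for all $n\geq N$. Given $A\in\mathcal{K}$ of sufficiently small volume, introduce the inner radius $\rho(A):=\sup\{r>0 : B_{r}^{\bullet}(\mathcal{M}_{\infty})\subseteq A\}>0$, and note that $\partial A$ is an injective cycle in $\mathcal{C}_{\rho(A)}$, so $\Delta(\partial A)\geq L_{\rho(A)}$. For ``fat'' $A$ with $\rho(A)\geq c_0 |A|^{1/4}$, picking the dyadic index $n$ comparable to $|\log(|A|)|/(4\log 2)$ with $r_n\leq\rho(A)$ directly yields $\Delta(\partial A)\geq L_{r_n}\geq r_n/f_n$, of order $|A|^{1/4}/f(|\log(|A|)|)$. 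For ``thin'' $A$ with $\rho(A)<c_0|A|^{1/4}$, the set $A\setminus B_{\rho(A)}^{\bullet}(\mathcal{M}_{\infty})$ carries the bulk of the volume (using the a priori upper bound $|B_{r}^{\bullet}(\mathcal{M}_{\infty})|\leq c' r^4$ on hull volumes, derived from scale invariance and basic tail estimates), and one argues that the boundary $\partial A$ must be anomalously long by exhibiting intermediate hulls sitting inside $A$ at scales between $\rho(A)$ and $|A|^{1/4}$, each contributing short separating cycles whose lengths sum to a comparable lower bound on $\Delta(\partial A)$.

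For part~(i), assume $\sum 1/f(m)^2=\infty$, so $\sum_n \Theta_{0}(E_n)=\infty$. One needs decorrelation of the events $E_n$ across scales to conclude that $E_n$ occurs infinitely often, and this is the content of the strong spatial Markov property of Section~\ref{subsect_application}: conditionally on the hull $B_{r_{n-1}}^{\bullet}(\mathcal{M}_{\infty})$ and its boundary length, the exterior is distributed according to a free-boundary law, so $L_{r_n}$---measurable with respect to this exterior---satisfies a conditional lower bound $\Theta_{0}(E_n\mid B_{r_{n-1}}^{\bullet}(\mathcal{M}_{\infty}))\geq c_1/f_n^2$ uniformly. The conditional (L\'evy) form of the second Borel--Cantelli lemma then yields $\Theta_{0}(E_n \text{ i.o.})=1$. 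On each such $n$, a near-minimizing cycle $\gamma_n$ of $L_{r_n}$ bounds a Jordan domain $A_n\supseteq B_{r_n}^{\bullet}(\mathcal{M}_{\infty})$ with $|A_n|\geq c r_n^4$ and $\Delta(\partial A_n)\leq 2r_n/f_n$, so that $\Delta(\partial A_n)\,f(|\log(|A_n|)|)/|A_n|^{1/4}\to 0$ along the subsequence. The principal technical obstacle is establishing the uniform conditional lower bound on $\Theta_{0}(E_n\mid B_{r_{n-1}}^{\bullet}(\mathcal{M}_{\infty}))$, which rests on the quantitative control of the free-boundary law provided by the strong spatial Markov property; a secondary subtlety in (ii) is the thin-domain case, handled by the auxiliary isoperimetric argument sketched above.
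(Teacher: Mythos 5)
Your overall architecture (Borel--Cantelli along a geometric sequence of scales, using Theorem \ref{tail} and the strong spatial Markov property) points in the right direction, but both halves have genuine gaps at exactly the places where the real work lies.

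For part (i), the decorrelation step does not hold as you state it. With $r_n=2^{-n}$ decreasing, $L_{r_n}$ is \emph{not} measurable with respect to the exterior of $B^{\bullet}_{r_{n-1}}$ (the short cycles at scale $r_n$ live inside the larger hull you are conditioning on). Even if the radii are read as increasing, the claimed uniform bound $\Theta_{0}(E_n\mid B^{\bullet}_{r_{n-1}})\geq c_1/f_n^2$ is false: by the spatial Markov property the conditional law of the exterior depends on the perimeter $Z_{r_{n-1}}$, and by scaling (cf.\ Proposition \ref{cortail}) the conditional probability of a short cycle degrades like $r_n^2/(f_n^2 Z_{r_{n-1}})$ when $Z_{r_{n-1}}$ is atypically large, so no uniform-in-$z$ lower bound is available; moreover the unlocalized events $\{L_{r_n}<r_n/f_n\}$ are not adapted to any filtration in which a conditional second Borel--Cantelli could be run, since the minimizing cycle may lie arbitrarily far out. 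The paper circumvents precisely these two problems by (a) replacing deterministic radii by the hitting times $T_{r^n}$ of the perimeter process, at which $Z_{T_{r^n}}=r^n$ is pinned (no positive jumps), so that after the strong spatial Markov property at stopping times (Theorem \ref{spatial_random_level}) the annular quantities $\big(r^{-4n}|B^\bullet_{T_{r^{2n}},T_{r^{2n+1}}}|,\,r^{-n-\frac12}L_{T_{r^{2n+1}},T_{r^{2n+2}}}\big)$ are exactly i.i.d.\ and the ordinary second Borel--Cantelli applies, and (b) localizing the cycles in annuli and proving the lower bound $\Theta_{1}(L_{0,T_{2r}}\leq\epsilon)\geq \tilde c_3(1\wedge\epsilon^2)$ (Proposition \ref{cycleIBD}, which itself requires the estimate of Lemma \ref{T2}). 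None of these ingredients is present in your sketch, and without them the "uniform conditional lower bound" you identify as the key obstacle cannot be established.

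For part (ii), your "fat" case is fine and is close in spirit to the paper's dichotomy, but the "thin" case, which is where the difficulty is concentrated, is not a proof. First, there is no deterministic bound $|B_r^\bullet|\leq c'r^4$: hull volumes have heavy-tailed fluctuations (Lemma \ref{moment} shows $|B_1^\bullet|$ has no moment of order $3/2$), and at best one has a.s.\ bounds with logarithmic corrections (Lemma \ref{CantelliH}). Second, "exhibiting intermediate hulls sitting inside $A$ at scales between $\rho(A)$ and $|A|^{1/4}$" is contradictory, since $\rho(A)$ is by definition the largest radius with $B^\bullet_r\subseteq A$. The real issue in the thin regime is to control the \emph{joint} event that $\partial A$ is short while the enclosed volume is large; marginal estimates ($\Theta_0(L_r<\epsilon)\lesssim \epsilon^2$ plus volume tails) do not suffice because of the heavy tails of $|B^\bullet_r|$. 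The paper resolves this with the decoupling estimate of Lemma \ref{lemtech}, $\Theta_0\big(\mathbbm{1}_{L_1<\epsilon}|B_r^\bullet|^\beta\big)\leq C_\beta r^{4\beta}\epsilon^2$, proved via Proposition \ref{exp}, conditional H\"older, and the spatial Markov property, and then runs Borel--Cantelli on the events $\{L_{2^{m-1}}/|B^\bullet_{2^{m+1}}|^{1/4}<1/f(|m|)\}$ for $m\in\mathbb{Z}$ (note also that the infimum over $\mathcal{K}$ includes arbitrarily large domains, so the scales must range over all of $\mathbb{Z}$, not only small radii). Your proposal contains no substitute for this joint control, so the thin case remains open as written.
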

\noindent  Theorem \ref{iso} can be extended to the infinite volume Brownian disk (see Corollary \ref{isocor}) and the Brownian map (since the Brownian map and the Brownian Plane  are locally isometric \cite[Theorem 1]{Plane}).  In \cite{Cycle}, Le Gall and Leh\'ericy use  their study of short cycles to get an analog for the UIPQ of  Theorem \ref{iso} for the special case $f(x):=x^{\frac{3}{4}+\delta}$ for any $\delta>0$. We conclude this introduction by pointing out that the study of separating cycles appears naturally in other problems of random geometry; in the recent work \cite{Pants} the authors use a class of  separating cycles to obtain bijective enumerations of planar maps with three boundaries. They also discuss the statistics of the lengths of minimal separating loops in different discret models.

\section{Preliminaries}\label{preli}
The preliminaries are divided as follows. Section \ref{secsnake} gives a quick presentation of snake trajectories and the associated compact trees, we refer to \cite{ALG,DuquesneLeGall} for a more detailed description of these objects. Section \ref{subsection_Brownian_snake} presents the Brownian snake excursion, which is the  building block of the theory of Brownian geometry, and the special Markov property. Finally in Section \ref{subsect_coding_triple} and \ref{subsection_Brownian_Plane} we introduce the notion of a coding triple and give the construction of the Brownian plane and the infinite volume Brownian disk; these last sections follow \cite{Infinite_Spine}. Before starting the preliminaries, let us introduce some standard notation. 
\\
\\
Let  $(E,d)$ be a metric space.
\\
\\
$\bullet$ For $A\subset E$, we denote the closure (resp. the interior) of $A$ in $E$ by $\text{Cl}(A)$ (resp. $\text{Int}(A)$). Set $\partial A:=\text{Cl}(A)\setminus \text{Int}(A)$. 
\\
\\
$\bullet$ A path $\gamma$ on $E$ is a continuous function defined on an interval $I$ of $\mathbb{R}$ taking values in $E$. We say that the path $\gamma$ separates two subsets $A$ and $B$ of $E$ if the range of $\gamma$ does not intersect $A\cup B$ and if any path starting at $A$ and ending at $B$ intersects the range of $\gamma$. The path $\gamma$ is a geodesic on $E$ if for every $s,t\in I$, $d(\gamma(s),\gamma(t))=|s-t|$. 
\\
\\
$\bullet$ We denote the length of a path $\gamma$ by $d(\gamma)$. The definition of $d(\gamma)$ is the same as defined in \eqref{def:distance} replacing $\Delta$ by $d$. We say that $(E,d)$ is a length space if, for every $x,y\in E$, the distance $d(x,y)$ is the infimum of the quantities $d(\gamma)$ over all the paths $\gamma$ on $E$ starting at $x$ and ending at $y$.
\\
\\
$\bullet$ If $(E,d)$ is a length space and $U$ is a path-connected subset of $E$, the intrinsic distance induced by $d$ on $U$ is the distance $d_{U}$ on $U$ defined as follows:
\[\forall x,y\in U,\:d_{U}(x,y):=\inf\big\{d(\gamma):\:\gamma:[0,1]\to U\:\:\text{path with}\: (\gamma(0),\gamma(1))=(x,y)\big\}.\]
Remark that $d_{U}$ may take infinite values if $U$ is not an open subset of $E$.
\\
\\
$\bullet$ We say that a compact (resp. boundedly compact) metric space is weighted if it is given with a finite (resp. finite on compact sets) measure,
which is often called the volume measure.
We denote by $\mathbb{K}$ (resp. $\mathbb{K}_{\infty}$) the set of all isometry classes of pointed and weighted compact (resp. boundedly compact) metric spaces equipped with the  Gromov-Hausdorff-Prokhorov distance (resp. the local Gromov-Hausdorff-Prokhorov distance). Both $\mathbb{K}$ and
$\mathbb{K}_{\infty}$ are
 Polish spaces. 
\\
\\
Finally, we write $s\vee t:=\max(s,t)$, $s\wedge t:=\min(s,t)$ and by convention $\inf \emptyset:=\infty$. 
\subsection{Snake trajectories and labeled  trees}\label{secsnake}
Let $\mathcal{W}$ be the set of all continuous mappings $\text{w}:[0,\zeta_{\text{w}}]\to \mathbb{R}$, where $\zeta_{\text{w}}\geq 0$ is called  the lifetime of $\text{w}$. We will write $\widehat{\text{w}}=\text{w}(\zeta_{\text{w}})$ for the endpoint of $\text{w}$. For every  $x\in \mathbb{R}$, we identify $x$ with the map starting from $x$ with 0 lifetime. Set $\mathcal{W}_{x}:=\{\text{w} \in \mathcal{W}:~ \text{w}(0)=x\}$ and equip $\mathcal{W}$ with the distance:
\[d_{\mathcal{W}}(\text{w},\text{w}^{\prime})=|\zeta_{\text{w}}-\zeta_{\text{w}^{\prime}}|+\sup \limits_{t\geq 0}|\text{w}(t\wedge \zeta_{\text{w}})-\text{w}^{\prime}(t\wedge \zeta_{\text{w}^{\prime}})| .\]
Let $x\in \mathbb{R}$. A snake trajectory with initial point $x$ is a continuous mapping $\omega:s\mapsto \omega_{s}$ from $\mathbb{R}_{+}$ into $\mathcal{W}_x$ satisfying the following properties:
\\
\\
$\bullet$ $\omega_{0}=x$ and the quantity $\sigma(\omega):=\sup \{s\geq 0:~ \omega_{s}\neq x\}$ is finite. The quantity $\sigma(\omega)$ is called the lifetime of $\omega$. By convention $\sigma(\omega):=0$ if $\omega_{s}=x$ for every $s\geq 0$;
\\
$\bullet$ For every $ s, s^{\prime}\in \mathbb{R}_{+}$ with $s\leq s^{\prime}$, we have $\omega_{s}(t)= \omega_{s^{\prime}}(t)$ for every $t\leq \min \limits_{r\in[s,s^{\prime}]} \zeta_{\omega_{r}}$. This property is called the snake property.
\\
\\
We denote the set of all snake trajectories starting at $x$ by $\mathcal{S}_{x}$, and write $\mathcal{S}=\cup_{x\in\mathbb{R}}\mathcal{S}_{x}$ for the set of all snake trajectories. For every $\omega\in \mathcal{S}$ and $s\geq 0$, introduce the notation $W_{s}(\omega):=\omega_{s}$. The set $\mathcal{S}$ is equipped with the distance:
\[d_{\mathcal{S}}(\omega,\omega^{\prime}):=|\sigma(\omega)-\sigma(\omega^{\prime})|+\sup \limits_{s\geq 0}d_{\mathcal{W}}\big(W_{s}(\omega),W_{s}(\omega^{\prime})\big).\]
It is straightforward to verify that the space $(\mathcal{S},d_{\mathcal{S}})$ is a Polish space. To simplify notation, for every $\omega\in \mathcal{S}$, we set
\[\omega_{*}:=\inf \{\widehat{\omega}_{s}:~ s\geq 0 \}.\]
It will be important for our study to associate a compact $\mathbb{R}$-tree $\mathcal{T}_{\omega}$ with every snake trajectory $\omega$.
\\
\\
Let $\omega\in \mathcal{S}$ and define:
\[d_{\omega}(s,t):=\zeta_{\omega_{s}}+\zeta_{\omega_{t}}-2\inf \limits_{r\in[s\wedge t,s\vee t]} \zeta_{\omega_{r}}\]
for every $s,t\in [0,\sigma(\omega)]$. Since $s\mapsto \zeta_{\omega_{s}}$ is continuous, $d_{\omega}$ 
is a continuous pseudo-distance on $[0,\sigma(\omega)]$. We define an equivalence relation $\approx_{d_{\omega}}$ by setting $s\approx_{d_{\omega}}t$ if $d_{\omega}(s,t)=0$. The space $\mathcal{T}_{\omega}:=[0,\sigma(\omega)]/\approx_{d_{\omega}}$ equipped with the distance
induced by $d_{\omega}$ is a compact $\mathbb{R}$-tree.
Let $p_{\omega}:[0,\sigma(\omega)]\to \mathcal{T}_{\omega}$ be the canonical projection and let $V_{\omega}$ be  
the pushforward of Lebesgue measure on $[0,\sigma(\omega)]$ under $p_{\omega}$.
We view the tree $\mathcal{T}_{\omega}$ as a pointed  and weighted compact metric space, for which the volume 
measure is $V_\omega$ and the distinguished point  is $\rho_{\omega}:=p_{\omega}(0)$, which  is called the root of $\mathcal{T}_{\omega}$. For every $u\in \mathcal{T}_{\omega}$, set $\Lambda^{\omega}_{u}:=\widehat{\omega}_{t}$ where $t$ is any element of $p_{\omega}^{-1}(u)$. The quantity $\Lambda^{\omega}_{u}$ is well defined by the snake property and we interpret $\Lambda^{\omega}_{u}$ as a label assigned to $u$. The pair $\big(\mathcal{T}_{\omega},(\Lambda^{\omega}_{u})_{u\in\mathcal{T}_{\omega}}\big)$ is the labeled tree associated with the snake trajectory $\omega$. 
\\
\\
We will use the following standard nomenclature. 
Let $\mathcal{T}$ be a compact tree. The multiplicity of a point $a\in \mathcal{T}$ is the number of connected components of $\mathcal{T}\setminus\{a\}$. If the multiplicity of $a$ is $1$ (resp. $>2$), $a$ is called a leaf (resp. a branching point). 
\subsection{The Brownian snake excursion}\label{subsection_Brownian_snake}
To simplify notation, set $\widehat{W}_{s}(\omega)=\widehat{\omega}_{s}$ and $W_{*}(\omega)=\omega_{*}$ for every $\omega \in \mathcal{S}$.
Fix $x\in \mathbb{R}$. The Brownian snake excursion measure $\mathbb{N}_{x}$ is the unique $\sigma$-finite measure on $\mathcal{S}_{x}$ that satisfies the following properties:
\\
\\
$\bullet$ The distribution of $s\mapsto \zeta_{\omega_{s}}$ is the Itô measure of positive excursions of linear Brownian motion, with the normalization:
\[\forall \epsilon>0,\:\mathbb{N}_{x}(\sup\limits_{s\in[0,\sigma(\omega)]} \zeta_{\omega_{s}}>\epsilon)=\frac{1}{2\epsilon}~;\]
\\
$\bullet$ Conditionally on $(\zeta_{\omega_{s}})_{s\geq0}$, $(\widehat{W}_{s}(\omega))_{s\geq 0}$ is a Gaussian process with mean $x$ and covariance function:
\[\forall s,s^{\prime}\in [0,\sigma(\omega)],\:K(s,s^{\prime}):=\min\limits_{r\in [s\wedge s^{\prime},s\vee s^{\prime}]} \zeta_{\omega_{r}}.\]
Roughly speaking, conditionally on $(\zeta_{s})_{s\geq 0}$, the process $(W_{s})_{s\geq 0}$  evolves as follows. If $\zeta_{s}$ decreases, the path $W_{s}$ is shortened from its tip, while if $\zeta_{s}$ increases, the path $W_{s}$ is extended by adding "little pieces of linear Brownian motion" at its tip. We refer to \cite{Zurich} for a rigorous presentation.  \noindent For every $x,y\in \mathbb{R}$ with $x<y$ we have:
\begin{equation}\label{etoile}
\mathbb{N}_{y}\big(W_{*}<x\big)=\frac{3}{2(y-x)^{2}}
\end{equation}
see \cite[Chapter 6]{Zurich} for more details. 
To simplify notation, under $\mathbb{N}_{x}(d\omega)$ we will write $\sigma$ for $\sigma(\omega)$ and $W_{s}(t)$ for $\omega_{s}(t)$.
\\
\\
\textbf{Operations.}
We introduce a collection of elementary operations on $\mathcal{S}$.
\\
\\
$\bullet$ Translation: 
\\
For every snake trajectory $\omega$ and every $\lambda\in \mathbb{R}$, we will write $\omega+\lambda$ for the snake trajectory
\[(\omega+\lambda)_{s}(t):= \omega_{s}(t)+\lambda\,,\qquad 0\leq t\leq \zeta_{(\omega+\lambda)_{s}}:=\zeta_{\omega_{s}}.\]
By construction for every $x\in \mathbb{R}$ the pushforward measure of $\mathbb{N}_{x}$ under $\omega \mapsto \omega+\lambda$ is $\mathbb{N}_{x+\lambda}$. 
\\
\\
$\bullet$ Scaling:
\\ 
For every snake trajectory $\omega$ and every $\lambda\in \mathbb{R}_{+}^{*}$, we will write $\text{hom}_{\lambda}(\omega)$ for the snake trajectory defined by
\[ \text{hom}_{\lambda}(\omega)_{s}(t):= \lambda \omega_{s\lambda^{-4}}(t\lambda^{-2})\,,\qquad 0\leq t\leq \zeta_{\text{hom}_{\lambda}(\omega)_{s}}:=\lambda^{2}\zeta_{\omega_{s\lambda^{-4}}}.\]
It is also easy to deduce from the scaling property of Brownian motion that for every $x\in \mathbb{R}$ the pushforward measure of $\mathbb{N}_{x}$ under $\omega\mapsto \text{hom}_{\lambda}(\omega)$ is $\lambda^{2} \mathbb{N}_{\lambda x}$. We will call this property the scaling property of the Brownian snake excursion.
\\
\\
$\bullet$ Truncation:
\\
Let $(x,r)\in \mathbb{R}^{2}$ with $x> r$. For every $\text{w}\in \mathcal{W}_{x}$, let:
\[\text{hit}_{r}(\text{w}):=\inf\{t\in[0,\zeta_{\text{w}}]: \text{w}(t)= r\}\]
with the usual convention $\inf \emptyset=\infty$. 
 Consider $\omega\in\mathcal{S}_{x}$, and for every $s\geq 0$ set:
\[\eta_{s}^{(r)}(\omega):=\inf \Big\{t\geq 0: \int_{0}^{t}\mathbbm{1}_{\zeta_{\omega_{u}}\leq \text{hit}_{r}(\omega_{u})} \:du> s\Big\}.\]
The snake trajectory $\text{tr}_{r}(\omega)$ defined by
\[\forall s\geq 0,\:\big( \text{tr}_{r}(\omega)\big)_{s}:= \omega_{\eta_{s}^{(r)}(\omega)}\]
is called the truncation of $\omega$ at level $r$. See \cite[Proposition 10]{ALG}. Roughly speaking, 
$\text{tr}_{r}(\omega)$ is obtained by removing those paths $\omega$ that 
hit $r$ and then survive for a positive amount of time.
Let $\mathcal{Y}_{r}(\omega):=\sigma(\text{tr}_{r}(\omega))$ which can be interpreted as the time spent by $\omega$ before hitting $r$ and write  $\mathcal{H}^{x}_{r}$  for  the $\sigma$-field on $\mathcal{S}_{x}$ generated by $\text{tr}_{r}(W)$ and the class of all $\mathbb{N}_{x}$-negligible sets.

We now discuss the special Markov property of the Brownian snake excursion, which will be crucial in our study. 
The set:
\[\{s\geq 0:~ \text{hit}_{r}(W_{s})<\zeta_{s}\}\]
is open so it can be written as a union of disjoint open intervals $(a_{i},b_{i})_{i\in I}$ with $I$ an indexing set that may be empty. For every $i\in I$,  let $W^{(i)}$  be the  snake trajectory defined by:
\[W_{s}^{(i)}(t):=W_{(a_{i}+s)\wedge b_{i}}(\zeta_{a_{i}}+t)\:\: \text{for}\: 0\leq t\leq \zeta_{(a_{i}+s)\vee b_{i}}-\zeta_{a_{i}}\]
for every $s\geq 0$. 
By definition the snake trajectories $(W^{(i)})_{i\in I}$ are the excursions of $W$ below $r$. Note that  the information about the paths $W_{s}$ before hitting $r$ is contained in the sigma-field $\mathcal{H}^{x}_{r}$. The exit measure at level $r$ is the quantity:
\[\mathcal{Z}_{r}(\omega):=\liminf \limits_{\epsilon\downarrow 0}\frac{1}{\epsilon^{2}}\int_{0}^{\sigma}ds\mathbbm{1}_{\text{hit}_{r}(\omega_{s})=\infty,~\widehat{\omega}_{s}<r+\epsilon}.\]
The previous $\liminf$ is a well defined finite limit $\mathbb{N}_{x}$-a.e. (we refer to  \cite[Proposition 28]{Disks} for a proof) and it is $\mathcal{H}_{r}^{x}$-measurable by \cite[Proposition 2.3]{Laplacian_u_2}. 
We now can give a formal statement of the special Markov property.
\\
\\
\textbf{Special Markov property.} Let $x,r\in\mathbb{R}$, such that $x> r$. Under $\mathbb{N}_{x}$, conditionally on $\mathcal{H}^{x}_r$, the point measure:
\[\sum \limits_{i\in I} \delta_{W^{(i)}}(d\omega)\]
is Poisson with intensity $\mathcal{Z}_{r}\mathbb{N}_{r}(d\omega)$.
\\
\\
We refer to \cite[Corollary 21]{subor} for a proof.
It will be useful to note that for $r^{\prime}<r<x$, if we replace $\mathbb{N}_{x}(d\omega)$ by $\mathbb{N}_{x}(d\omega\:|\: W_{*}>r^{\prime})$, the last statement remains valid up to the replacement of $\mathcal{Z}_{r}\mathbb{N}_{r}(d\omega)$ by $\mathcal{Z}_{r}\mathbb{N}_{r}(d\omega \cap\: \{W_{*}>r^{\prime}\})$. The Laplace transform of $\mathcal{Z}_{r}$ is given by:
\begin{equation}\label{LaplaceZ}
\mathbb{N}_{x}\big(1-\exp(-\lambda\mathcal{Z}_{r})\big)=\Big(\lambda^{-\frac{1}{2}}+\sqrt{\frac{2}{3}}(x-r)\Big)^{-2}
\end{equation}
for every $\lambda\geq 0$. See e.g. formula (6) in \cite{Hull}. Remark that the limit when $\lambda$ goes to $\infty$ gives formula \eqref{etoile}.
\subsection{Coding triples and metric spaces}\label{subsect_coding_triple}~
\\
\\
\textbf{Infinite spine coding triples.}
An infinite spine coding triple is a triple $(\text{w},\mathfrak{N}^{+},\mathfrak{N}^{-})$ such that:
\begin{description}
\item[\rm(i)] $\text{w}:\mathbb{R}_{+}\to \mathbb{R}$ is a continuous function;
\item[\rm(ii)] $\mathfrak{N}^{+}=\sum \limits_{i\in I}\delta_{(t_{i},\omega^{i})}$ and $\mathfrak{N}^{-}=\sum \limits_{i\in J}\delta_{(t_{i},\omega^{i})}$ are point measures on $(0,\infty)\times \mathcal{S}$ ($I$ and $J$ are two disjoint indexing sets) and for every $i\in I\cup J$, $\omega^{i}\in\mathcal{S}_{\text{w}(t_{i})}$;
\item[\rm(iii)] the numbers $(t_{i})_{i\in I\cup J}$ are distinct;
\item[\rm(iv)] the functions 
\[u\mapsto \beta_{u}^{+}:=\sum \limits_{i\in I}\mathbbm{1}_{t_{i}\leq u}\sigma(\omega^{i}),\:\:\:u\mapsto \beta_{u}^{-}:=\sum \limits_{i\in J}\mathbbm{1}_{t_{i}\leq u}\sigma(\omega^{i})\]
take finite values, are monotone increasing on $\mathbb{R}_{+}$, and tend to $\infty$ at $\infty$;
\item[\rm(v)] for every $t>0$ and $\epsilon>0$:
\[\#\big\{i\in I\cup J:~t_{i}\leq t\:\:\text{and}\:\:\sup\limits_{s\in[0,\sigma(\omega^{i})]}|\hat{\omega}^{i}_{s}-\text{w}_{t_{i}}|>\epsilon\big\}<\infty.\]
\end{description}
We  define a scaling operation for coding triples as follows; for every $\lambda>0$
\[ \text{hom}_{\lambda}\Big(\text{w},\sum \limits_{i\in I}\delta_{(t_{i},\omega^{i})},\sum \limits_{i\in J}\delta_{(t_{i},\omega^{i})}\Big):=\Big(\lambda\text{w}(\cdot/\lambda^{2}),\sum \limits_{i\in I}\delta_{(\lambda^2 t_{i},\text{hom}_{\lambda}(\omega^{i}))},\sum \limits_{i\in J}\delta_{(\lambda^2 t_{i},\text{hom}_{\lambda}(\omega^{i}))}\Big).\]
An infinite spine coding triple belongs to the space $\mathcal{C}(\mathbb{R}_{+},\mathbb{R})\times M(\mathcal{S})\times M(\mathcal{S})$, where $M(\mathcal{S})$ stands for the space of all $\sigma$-finite measures $\mu$ on $(0,\infty)\times\mathcal{S}$ putting no mass on the 
set $\{(t,\omega):\sigma(\omega)=0\}$ and satisfying 
$\mu\big([0,t]\times \{\omega \in \mathcal{S}:~\sigma(\omega)>\delta\}\big)<\infty$, 
for every $t\geq 0$ and $\delta>0$. We equip the space $M(\mathcal{S})$ with the distance:
$$d_{M(\mathcal{S})}(\mu,\mu^{\prime}):=\sum \limits_{n\geq 0} d_{\text{Pro}}\big(\mu(\cdot \cap \mathcal{S}_{(n)}),\mu'(\cdot\cap \mathcal{S}_{(n)}))\wedge 2^{-n},$$
where 
$\mathcal{S}_{(n)}=[0,2^n]\times\{\omega\in\mathcal{S}:\sigma(\omega)> 2^{-n}\}$,
and $d_{\text{Pro}}$ stands for the Prokhorov metric inducing the weak topology on finite measures on $\mathbb{R}_{+}\times \mathcal{S}$. \\
We also equip $\mathcal{C}(\mathbb{R}_{+},\mathbb{R})\times M(\mathcal{S})\times M(\mathcal{S})$ with the product metric and the associated Borel sigma-field. 
\\
\\
Let $(\text{w},\mathfrak{N}^{+},\mathfrak{N}^{-})$ be an infinite spine coding triple. We now introduce the infinite  tree $\mathcal{T}_{\infty}$ associated with $(\text{w},\mathfrak{N}^{+},\mathfrak{N}^{-})$. For every $i \in I \cup J$, let $(\zeta^{i}_{s})$ be the lifetime process associated with $\omega^{i}$ and $\sigma^{i}:=\sigma(\omega^{i})$. We write $\mathcal{T}^{i}$ for the tree coded by $\zeta^{i}$, i.e.~$\mathcal{T}^i=\mathcal{T}_{\omega^{i}}$ , and $p_{\zeta^{i}}$ for the canonical projection from $[0,\sigma^{i}]$ onto $\mathcal{T}^{i}$. The tree $\mathcal{T}_{\infty}$ can be defined from the disjoint union:
\[ [0,\infty) \cup \Big(\bigcup_{i \in I\cup J}  \mathcal{T}^{i} \Big) \]
by identifying the point $t_{i}$ of $[0,\infty)$ with $p_{\omega^{i}}(0)$ (that is, the root of $\mathcal{T}^{i}$) for every $i\in I\cup J$. The set $[0,\infty)$ is called the spine of $\mathcal{T}_{\infty}$. 
\begin{figure}[!h]
 \begin{center}
    \includegraphics[height=7.5cm,width=8.5cm]{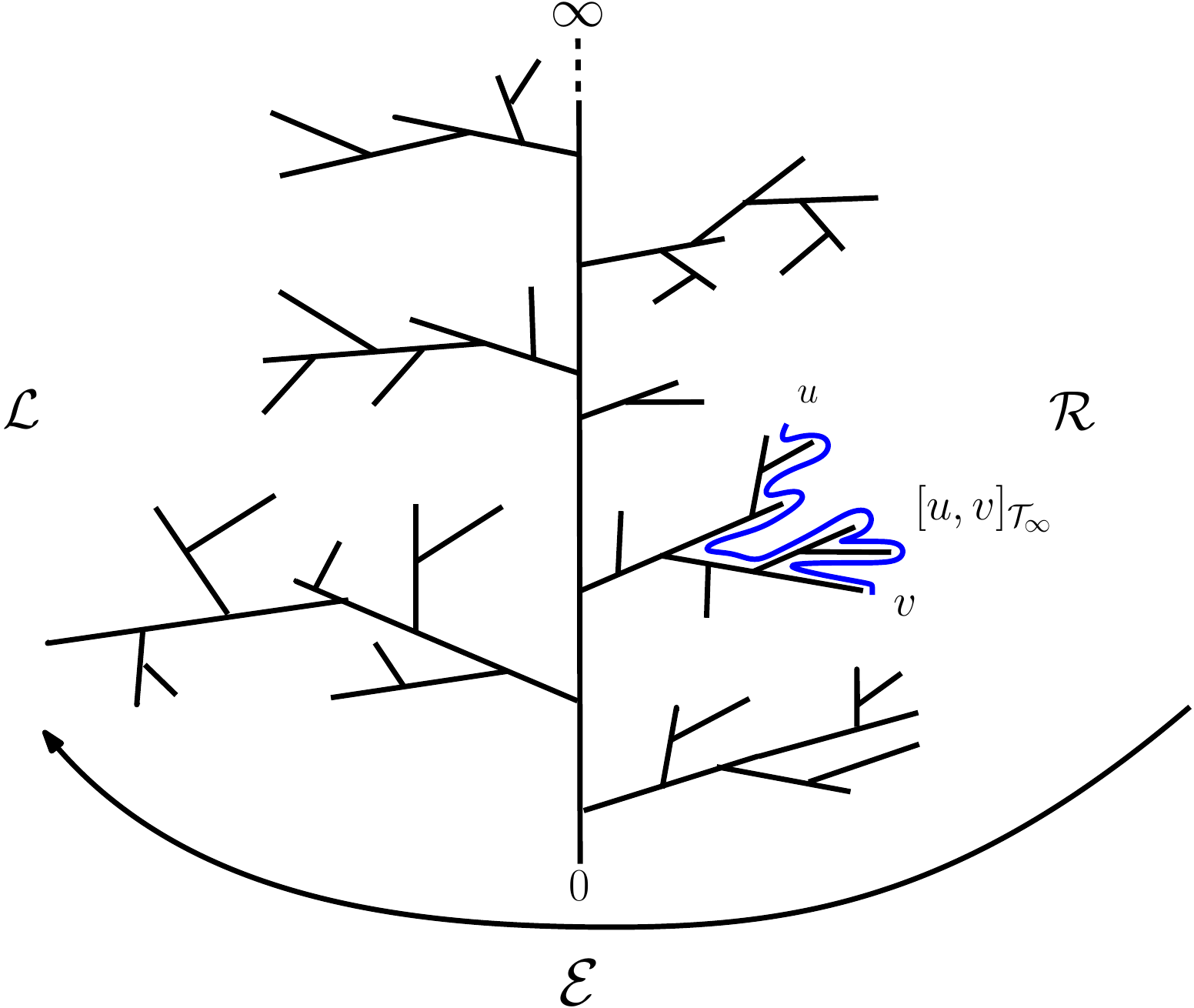}
 \caption{\label{fig:tree} A representation of the tree $\mathcal{T}_{\infty}$.}
 \end{center}
 \end{figure}
We equip $\mathcal{T}_{\infty}$ with a natural distance  $d_{\mathcal{T}_{\infty}}$ as follows. The restriction of $d_{\mathcal{T}_{\infty}}$ to the spine  is the Euclidean distance in $[0,\infty)$ and the restriction on each tree $\mathcal{T}^{i}$ is the tree distance $d_{\omega^{i}}$. If $u \in \mathcal{T}^{i}$ and $t \in [0,\infty)$, we take $d_{\mathcal{T}_{\infty}}(u,t)=d_{\omega^{i}}(u, p_{\omega^{i}}(0))+ |t_{i}-t|$. If $u \in \mathcal{T}^{i}$ and $v \in \mathcal{T}^{j}$ with $i \neq j$, we take $d_{\mathcal{T}_{\infty}}(u,v)=d_{\omega^{i}}(u, p_{\omega^{i}}(0))+ |t_{i}-t_{j}|+ d_{\omega^{j}}(v, p_{\omega^{j}}(0)) $. Then $(\mathcal{T}_{\infty},d_{\infty})$ is a (non-compact) $\mathbb{R}$-tree. We can also assign a label $\Lambda_{u}$, to each $u$ in  $\mathcal{T}_{\infty}$ as follows. If $t \in [0,\infty)$, we take $\Lambda_{t}:=\text{w}(t)$. If $u \in \mathcal{T}^{i}$, we take $\Lambda_{u}:=\Lambda_{u}^{\omega^{i}}$. In particular, we have $\big(\mathcal{T}^{i},(\Lambda_{u})_{u\in\mathcal{T}^{i}}\big)=\big(\mathcal{T}_{\omega^{i}},(\Lambda^{\omega^{i}}_{u})_{u\in\mathcal{T}_{\omega^{i}}}\big)$ for every $i\in I\cup J$. Moreover using property \rm(v), one checks that the mapping $u\mapsto \Lambda_{u}$ is continuous on $\mathcal{T}_{\infty}$. Finally, we can define a natural volume measure $V_{\mathcal{T}_{\infty}}$ on $\mathcal{T}_{\infty}$ as follows, $V_{\mathcal{T}_{\infty}}$ gives no mass to the spine and its restriction to $\mathcal{T}^{i}$ is $V_{\omega^{i}}$.
\\
\\
Roughly speaking, $\mathcal{T}_{\infty}$ is obtained by gluing the trees $\mathcal{T}^{i}$ along the spine and keeping their labels. It will be important for our purposes to equip $\mathcal{T}_{\infty}$ also with an order structure inherited from the coding triple. We define the left side of $\mathcal{T}_{\infty}$ as the subset:
\[ \mathcal{L}:=[0,\infty) \cup \Big(\bigcup_{i \in I}  \mathcal{T}^{i} \Big)\]
where again the point $t_{i}$ is identified  with $p_{\omega^{i}}(0)$ for $i\in I$, and we define the right side
$\mathcal{R}$ in the same way by replacing $I$ by $J$. 
Remark that $\mathcal{L}\cap \mathcal{R}=[0,\infty)$.  We write $\beta_{u-}^{+}$ and $\beta_{u-}^{-}$ for the left limits of $\beta^{+}$ and  $\beta^{-}$ at $u$ (and we take $\beta_{0-}^{+}= \beta_{0-}^{-}=0$ as convention). Note that if $u$ is a discontinuity point of $\beta^{+}$ then there is a unique $i \in I$ such that $t_{i}=u$ and $\beta_{u}^{+}-\beta_{u-}^{+}=\sigma^{i}$ (and the same property is true for $\beta^{-}$ replacing $I$ by $J$).
\\
\\
We define the exploration process $\mathcal{E}^{+}$ of the left side of $\mathcal{T}_{\infty}$ as follows.
\\
For every $s\geq 0$, there is a unique $u$ such that $\beta_{u-}^{+}\leq s \leq \beta_{u}^{+}$. Then if there is an index $i\in I$ such that $t_{i}=u$, set
\[\mathcal{E}_{s}^{+}:=p_{\omega^{i}}(s-\beta_{t_{i}-}^{+})\]
and if there is no such $i$, simply set $\mathcal{E}_{s}^{+}:=u$. We define similarly the exploration of the right side $\mathcal{E}^{-}$ by replacing $\beta^{+}$ by $\beta^{-}$ and $I$ by $J$.
Finally, let $\mathcal{E}$ be the function from $\mathbb{R}$ onto $\mathcal{T}_{\infty}$ defined by:
\[\mathcal{E}_{s}:=\begin{cases}
 \mathcal{E}_{s}^{+}\:& \text{if}\: s\geq 0 \\
  \mathcal{E}_{-s}^{-}\:& \text{if}\: s\leq 0
  \end{cases}\]
Remark that $\mathcal{E}$ is continuous and the volume measure on $\mathcal{T}_{\infty}$ is the pushforward of Lebesgue measure on $\mathbb{R}$ under the mapping $s\mapsto \mathcal{E}_{s}$. Moreover the left side of  $\mathcal{T}_{\infty}$ is $\{\mathcal{E}_{s}:~ s\geq 0\}$ and the right side is $\{\mathcal{E}_{s}:~ s\leq 0\}$. This exploration process allows us to define a notion of interval on $\mathcal{T}_{\infty}$. By convention,  for every  $s,t\in \mathbb{R}$ with $s<t$ we set $[t,s]:=(-\infty,s]\cup [t, \infty)$.
For every $u,v \in \mathcal{T}_{\infty}$ with $u\neq v$, let $[s,t]$ be the smallest interval such that $\mathcal{E}_{s}=u$ and $\mathcal{E}_{t}=v$. It is easy to check from the definition that there is always a smallest such interval. We put:
\[ [u,v]_{\mathcal{T}_{\infty}}:=\{\mathcal{E}_{r}:~ r\in [s,t]\}. \]
If $u=v$, take $[u,v]_{\mathcal{T}_{\infty}}=\{u\}$. 
Note that $[u,v]_{\mathcal{T}_{\infty}}\neq [v,u]_{\mathcal{T}_{\infty}}$ as long as $u \neq v$.  See Figure \ref{fig:tree} for an illustration.
\\
\\
By analogy with the case of compact trees, for every $u\in \mathcal{T}_{\infty}$,  the multiplicity of $u$ is the number of connected components of $\mathcal{T}_{\infty}\setminus \{u\}$. We will say that $u$ is a leaf (resp. a branching point) if its multiplicity is $1$ (resp. greater than $2$). Remark that:
\\
\\
$\bullet$ $0$ is the only leaf belonging to the spine.
\\
$\bullet$ The branching points belonging to the spine are the points $(t_{i})_{i\in I\cup J}$.
\\
$\bullet$ For every $i\in I\cup J$, the multiplicity of $a\in \mathcal{T}^{i}\setminus\{t_{i}\}$ in $\mathcal{T}_{\infty}$ is its multiplicity in $\mathcal{T}^{i}$. 
\\
\\
Finally, for every $u,v\in\mathcal{T}_{\infty}$, we denote  the unique geodesic segment of $\mathcal{T}_{\infty}$ connecting $u$ and $v$ by $\llbracket u,v \rrbracket_{\mathcal{T}_{\infty}}$. We write $u\preceq v$ for $u,v\in \mathcal{T}_{\infty}$ if and only if $u\in \llbracket 0,v \rrbracket_{\mathcal{T}_{\infty}}$. In this case we say that $u$ is an ancestor of $v$.
We also write $\llbracket u,\infty\llbracket_{\mathcal{T}_\infty}$ for the range of the unique geodesic from 
$u$ to $\infty$ in $\mathcal{T}_\infty$. 
\\
\\
\textbf{From coding triples to metric spaces.}
Let $(\text{w},\mathfrak{N}^{+},\mathfrak{N}^{-})$ be a coding triple and let $\big(\mathcal{T}_{\infty},(\Lambda_{v})_{v\in\mathcal{T}_{\infty}}\big)$ be the associated labeled tree. We make the following assumption:

\[
(H_{1}):\begin{cases}
\text{for every}\:\: v\in\mathcal{T}_{\infty},\:\: \Lambda_{v}\geq 0;\\
\text{if}\:\: \Lambda_{v}= 0\:\: \text{then}\:\: v\:\: \text{is a leaf;}\\
 \Lambda_{0}=0;\\
 \Lambda_{\mathcal{E}_{t}}\rightarrow \infty\:\:\text{as}\:\: |t|\to \infty.
\end{cases}\]
\noindent Set $\mathcal{T}_{\infty}^{\circ}:=\{v\in\mathcal{T}_{\infty}:\: \Lambda_{v}>0\}$ and $\partial \mathcal{T}_{\infty}:=\mathcal{T}_{\infty}\setminus\mathcal{T}_{\infty}^{\circ}$. Remark that $\mathcal{T}_{\infty}^{\circ}$ is path connected and dense in $\mathcal{T}_{\infty}$ by $(H_{1})$. The last assumption in $(H_{1})$  implies that $\inf \limits_{[u,v]_{\mathcal{T}_{\infty}}}\Lambda$ is attained for every interval $[u,v]_{\mathcal{T}_{\infty}}$ of $\mathcal{T}_{\infty}$.
\\
\\
For every $u,v\in \mathcal{T}_{\infty}$ set:
\[\Delta^{\circ}(u,v):=\begin{cases}
 \Lambda_{u}+\Lambda_{v}-2\max\big(\inf \limits_{[u,v]_{\mathcal{T}_{\infty}}}\Lambda,\inf \limits_{[v,u]_{\mathcal{T}_{\infty}}}\Lambda\big)\:& \text{if}\: \max\big(\inf \limits_{[u,v]_{\mathcal{T}_{\infty}}}\Lambda,\inf \limits_{[v,u]_{\mathcal{T}_{\infty}}}\Lambda\big)>0 \\
  \infty\:& \text{otherwise.}
  \end{cases}\]
We then let 
\begin{equation}
\label{Def-Delta}
\forall u,v\in \mathcal{T}_{\infty}^{\circ},\:\Delta(u,v):= \inf \limits_{u_{1}=u,u_{2},\ldots
,u_{n}=v} \sum \limits_{i=1}^{n-1} \Delta^{\circ}(u_{i},u_{i+1})
\end{equation}
 where the infimum is over all choices of the integer $n\geq 1$ and of the finite sequence $u_{1},\ldots,u_{n}$ of elements of $\mathcal{T}_{\infty}$ verifying $u_{1}=u$ and $u_{n}=v$. Using the continuity of $u\mapsto \Lambda_{u}$ one verifies that the mapping $(u,v)\mapsto \Delta(u,v)$ takes finite values and is continuous on $\mathcal{T}_{\infty}^{\circ}\times \mathcal{T}_{\infty}^{\circ}$. Since $\Delta^{\circ}(u,v)\geq  |\Lambda_{u}-\Lambda_{v}|$, we have for every $u,v \in \mathcal{T}_{\infty}^{\circ}$
\begin{equation}\label{major}
 \Delta(u,v)\geq |\Lambda_{u}-\Lambda_{v}|.
\end{equation}
It is important to remark that $\Delta$ defines a pseudo-distance on $\mathcal{T}_{\infty}^{\circ}$. From now on we make the extra assumption that:
\\
\\
$(H_{2})$: The map $(u,v)\mapsto \Delta(u,v)$ has a continuous extension to $\mathcal{T}_{\infty}\times \mathcal{T}_{\infty}$ 
\\
\\
and we consider this continuous extension in what follows. For simplicity we keep the notation $\Delta$ for this continuous extension, which defines a pseudo-distance on $\mathcal{T}_{\infty}$. The associated equivalence relation is defined by $u\approx v$ iff $\Delta(u,v)=0$.  By abuse of notation, we write $\mathcal{T}_{\infty}/\Delta$ for $\mathcal{T}_{\infty}/\approx$. Note that the definition of $u\approx v$
makes sense for $u,v\in \mathcal{T}_\infty^\circ$ even if $(H_{2})$ does not hold and so we can still consider the space $\mathcal{T}_{\infty}^{\circ}/\approx$
in that case. We denote the canonical projection by $\Pi:\mathcal{T}_{\infty}\to\mathcal{T}_{\infty}/\Delta$  and, for every $x\in\mathcal{T}_{\infty}/\Delta$, we set $\Lambda_{x}:=\Lambda_{u}$ where $u$ is any preimage of $x$ under $\Pi$ (remark that the definition is unambiguous by \eqref{major}). We write $|\cdot|$ for the pushforward of $V$ under $\Pi$, which defines a volume measure on $\mathcal{T}_{\infty}/\Delta$, and for simplicity we write $0$ for the equivalence class of $0$ in $\mathcal{T}_{\infty}/\Delta$.
The metric space $(\mathcal{T}_{\infty}/\Delta,0,\Delta,|\cdot|)$ is a weighted locally compact length space which is pointed at $0$, and we have:
\begin{equation}\label{distance_boundary}
 \Delta(x,\Pi(\partial \mathcal{T}_{\infty}))=\Lambda_{x} 
\end{equation}
for every $x\in \mathcal{T}_{\infty}/\Delta$.
We refer \cite[Subsection 4.1]{Infinite_Spine} for a proof of these two facts. 
For every $r\geq 0$, we write $B_{r}(\mathcal{T}_{\infty}/\Delta)$ for the set of all points $x\in \mathcal{T}_{\infty}/\Delta$ with $\Delta(x,\Pi(\partial \mathcal{T}_{\infty}))\leq r$. By \eqref{distance_boundary}:
\[B_{r}(\mathcal{T}_{\infty}/\Delta)=\{x\in \mathcal{T}_{\infty}/\Delta:\:\Lambda_{x}\leq r\}.\]
It will also be useful to introduce for every $r>0$, the set $\mathcal{T}_{\infty}^{r}$  of all points $u\in \mathcal{T}_{\infty}$ such that $\Lambda_{u}\geq r$ and $\Lambda_{v}>r$ for every $v\in [\![u,\infty[\![_{\mathcal{T}_{\infty}}\setminus\{u\}$. Remark that $\mathcal{T}_{\infty}^{r}$ is an $\mathbb{R}$-tree. We define:
\[\mathcal{T}_{\infty}^{r,\circ}:=\big\{u\in  \mathcal{T}_{\infty}:\:\inf\limits_{[\![u,\infty[\![_{\mathcal{T}_{\infty}}}\Lambda>r\big\}\]
and we let the  "boundary" $\partial \mathcal{T}_{\infty}^{r}$ be the set of all points $u\in \mathcal{T}_{\infty}$ such that $\Lambda_{u}= r$ and $\Lambda_{v}>r$ for every $v\in [\![u,\infty[\![_{\mathcal{T}_{\infty}}\setminus\{u\}$. Set $\check{B}_{r}^{\bullet}(\mathcal{T}_{\infty}/\Delta):=\Pi(\mathcal{T}_{\infty}^{r})$, $\check{B}_{r}^{\circ}(\mathcal{T}_{\infty}/\Delta):=\Pi(\mathcal{T}_{\infty}^{r,\circ})$ and:
\begin{equation}\label{equation_hull_open}
    B_{r}^{\bullet}(\mathcal{T}_{\infty}/\Delta):=\Pi\Big(\big\{u\in \mathcal{T}_{\infty}:~\inf\limits_{[\![u,\infty[\![_{\mathcal{T}_{\infty}}} \Lambda\leq r\big\}\Big)=\Pi\Big(\mathcal{T}_{\infty}\setminus\mathcal{T}_{\infty}^{r,\circ}\Big).
\end{equation}
When there is no ambiguity, we will remove $\mathcal{T}_{\infty}/\Delta$ from the notation and write $B_{r}^{\bullet},\check{B}_{r}^{\bullet}$ and $\check{B}_{r}^{\circ}$ instead. In the next section we explain the geometric meaning of these sets and we will see that the notation $B_{r}^{\bullet}$ is consistent with the one used in the introduction to designate the hull of the Brownian plane.

\subsection{The Brownian plane and the infinite volume Brownian disk}\label{subsection_Brownian_Plane}
In this section we give the construction of  the Brownian plane and the infinite volume Brownian disk    from random infinite spine coding triples. We also list some useful geometric properties of the Brownian plane.
\subsubsection{The Brownian plane}
\label{subsub-Plane}

We now consider a triple $(X,\mathfrak{L},\mathfrak{R})$ such that:
\\
$\bullet$ $X=(X_{t})_{t\geq 0}$ is a nine-dimensional Bessel process started from $0$;
\\
$\bullet$ Conditionally on $X$, $\mathfrak{L}$ and $\mathfrak{R}$ are independent Poisson point measures on $\mathbb{R}_{+}\times \mathcal{S}$ with intensity:
\[2dt\mathbb{N}_{X_{t}}(d\omega\cap\{\omega_{*}>0\}).\]
It is easy to verify that $(X,\mathfrak{L},\mathfrak{R})$ is a.s.~a coding triple in the sense of Section \ref{subsect_coding_triple}, and the root
of $\mathcal{T}_\infty$ is the only point with zero label. We may assume that $(X,\mathfrak{L},\mathfrak{R})$ is defined on the canonical space $\mathcal{C}(\mathbb{R}_{+},\mathbb{R})\times M(\mathcal{S})\times M(\mathcal{S})$ under the probability measure $\Theta_{0}$. As previously, we write $(\mathcal{T}_{\infty},(\Lambda_{v})_{v\in\mathcal{T}_{\infty}})$ for the associated infinite labeled tree. We note that $(X,\mathfrak{L},\mathfrak{R})$ satisfies assumptions $(H_{1},H_{2})$,
see \cite[Section 4.2]{Infinite_Spine}. In fact, since the root of $\mathcal{T}_\infty$
is the only point with zero label, it is possible to define directly the continuous extension of $\Delta$
to $\mathcal{T}_\infty\times\mathcal{T}_\infty$, just replacing $\Delta^\circ$ in formula  \eqref{Def-Delta} by 
$$\Delta^{\circ,\prime}(u,v):= \Lambda_{u}+\Lambda_{v}-2\max\big(\inf \limits_{[u,v]_{\mathcal{T}_{\infty}}}\Lambda,\inf \limits_{[v,u]_{\mathcal{T}_{\infty}}}\Lambda\big).$$
See
\cite[Section 4.2]{Infinite_Spine} for more details. The Brownian plane is the space $ (\mathcal{T}_{\infty}/\Delta,0,\Delta,|\cdot|)$ under $\Theta_{0}$. To simplify notation, we denote this space, which is an element of $\mathbb{K}_{\infty}$,  by $\mathcal{M}_{\infty}$. Remark that since $\partial \mathcal{T}_{\infty}=\{0\}$ we have $\Lambda_{x}=\Delta(0,x)$ for every $x\in \mathcal{M}_{\infty}$. Moreover, for every $\lambda>0$, the pushforward of $\Theta_{0}$ under $\text{hom}_{\lambda}$ is $\Theta_{0}$. Consequently, the space $ (\mathcal{T}_{\infty}/\Delta,0,\lambda \Delta,\lambda^{4}|\cdot|)$ is also distributed as a Brownian plane. Another important property is that , $\Theta_{0}$-a.s., we have 
\\
\\
$(F)$: For every $u,v\in \mathcal{T}_{\infty}$, with $u\neq v$, we have $\Delta(u,v)=0$ if and only if $\Delta^{\circ}(u,v)=\Delta^{\circ,\prime}(u,v)=0$. Moreover if  $u\neq v$ and $\Delta(u,v)=0$ then $u$ and $v$ must be leaves.
\\
\\
This fact is a classical result in Brownian geometry.  The first part of $(F)$ is derived in \cite[Section 3.2]{Hull}. The second part follows from the first part and the known results  for the Brownian map (see \cite[Lemma 3.2]{GallPaulin}).   By formulas (16) and (17) in \cite{Hull}, the set $ B_{r}^{\bullet}=B_{r}^{\bullet}(\mathcal{T}_{\infty}/\Delta)$
defined in \eqref{equation_hull_open} coincides with the hull of radius $r$ of $\mathcal{M}_{\infty}$ as defined in the introduction (Section \ref{sec:int}), $\check{B}_{r}^{\circ}$ is the complement of the hull, and  $\check{B}_{r}^{\bullet}$ is the closure of $\check{B}_{r}^{\circ}$. We also have 
\begin{equation}\label{injective_boundary}
    \partial B_{r}^{\bullet}=\partial \check{B}_{r}^{\bullet}=\Pi\big(\partial  \mathcal{T}_{\infty}^{r}\big)
\end{equation}
which is the range of an injective cycle (see the proof of \cite[Theorem 31]{Infinite_Spine} for more details). We will equip the hull $B_{r}^{\bullet}=\Pi\big(\mathcal{T}_{\infty}\setminus\mathcal{T}_{\infty}^{r,\circ}\big)$ with the distance $\Delta^{(r)}$ defined as follows. First set
\begin{equation}
\label{Def-Delta-Hull}
\forall u,v\in \mathcal{T}_{\infty}\setminus\mathcal{T}_{\infty}^{r,\circ},\:\Delta^{(r)}(u,v):=\build{\inf_{u=u_{1},u_{2},\ldots
u_{n}=v}}_{u_2,\ldots,u_{n-1}\in \mathcal{T}_{\infty}\setminus\mathcal{T}_{\infty}^{r,\circ}}^{} \sum_{i=1}^{n-1}\Delta^{\circ,\prime}(u_{i},u_{i+1}).
\end{equation}
By $(F)$, we see that for every $u,v\in \mathcal{T}_{\infty}\setminus\mathcal{T}_{\infty}^{r,\circ}$ we have $\Delta(u,v)=0$ iff  $\Delta^{(r)}(u,v)=0$. In particular, we can define $\Delta^{(r)}$ on the hull $B^{\bullet}_{r}$ taking for every $x,y\in B_{r}^{\bullet}$,  $\Delta^{(r)}(x,y):=\Delta^{(r)}(u,v)$ where $u,v\in \mathcal{T}_{\infty}\setminus\mathcal{T}_{\infty}^{r,\circ}$ are any elements such that $(\Pi(u),\Pi(v))=(x,y)$.  By definition $\Delta^{(r)}$ is a distance on $B_{r}^{\bullet}$ and it is not hard to verify that the restriction of  $\Delta^{(r)}$ on $\text{Int}(B_{r}^{\bullet})$ coincides with the intrinsic metric on $\text{Int}(B_{r}^{\bullet})$ viewed
as a subset of the metric space $(\mathcal{T}_\infty/\Delta,\Delta)$ (one can directly adapt the proof of \cite[Lemma 30]{Infinite_Spine}). In other words, $\Delta^{(r)}$ is the continuous extension to $B_{r}^{\bullet}$ of the intrinsic metric on $\text{Int}(B_{r}^{\bullet})$. In what follows, we will always view $B^\bullet_r$
as a (random) pointed and weighted compact metric space for the metric $\Delta^{(r)}$ (the volume
measure is obviously the restriction of the volume measure on $\mathcal{M}_\infty$ and the distinguished 
point is the same as in $\mathcal{M}_\infty$). 
\\
\\
\textbf{Exit measures.}
We now introduce the exit measures of the infinite tree $\mathcal{T}_{\infty}$. For every $a\geq 0$, set
\begin{equation}\label{tau}
    \tau_{a}:= \sup \{t\geq 0: X_{t}\leq a\}
\end{equation}
which is $\Theta_{0}$-a.s. finite since $X_{t}\rightarrow \infty$,  $\Theta_{0}$-a.s., when $t\to \infty$. We take $\tau_{\infty}:=\infty$ by convention.  For every $0\leq s\leq t\leq \infty$, introduce the point measures $\mathfrak{L}^{s,t}$ and $\mathfrak{R}^{s,t}$ on $\mathbb{R}_{+}\times \mathcal{S}$ defined as follows:
\[\int \Phi(\ell,\omega)\mathfrak{L}^{s,t}(d\ell\:d\omega):=\int_{\tau_{s}}^{\tau_{t}} \Phi(\ell-\tau_{s},\omega)\mathfrak{L}(d\ell\:d\omega)\]
and
\[\int \Phi(\ell,\omega)\mathfrak{R}^{s,t}(d\ell\: d\omega):=\int_{\tau_{s}}^{\tau_{t}} \Phi(\ell-\tau_{s},\omega)\mathfrak{R}(d\ell\:d\omega).\]
By the time reversal property of Bessel processes the process $(X_{(\tau_{t}-\ell)\vee 0})_{\ell\geq 0}$ is a Bessel process of dimension $-5$ started from $t$ stopped when it hits $0$ (see  \cite[Theorem 2.5]{Wil}). 
Applying this property with $t$ replaced by $t^{\prime}>t$, we get that $(X_{(\tau_{t}-\ell)\vee 0})_{\ell\geq 0}$ and $(X_{(\tau_{t}+\ell)})_{\ell\geq 0}$ are independent. Consequently, for every $0< t< \infty$:
\[\big((X_{(\tau_{t}+\ell)})_{\ell\geq 0},\mathfrak{L}^{t,\infty},\mathfrak{R}^{t,\infty}\big)\:\:\text{and}\:\:\big((X_{(\tau_{t}-\ell)\vee 0})_{\ell\geq 0}),\mathfrak{L}^{0,t},\mathfrak{R}^{0,t}\big)\:\:\text{are independent}.\]
We call this property the spine independence property of $\mathcal{T}_{\infty}$.  For every
 $0<r\leq s \leq t$ set:
\[ Z_{r}^{s,t}:=\int \mathcal{Z}_{r}(\omega)\mathfrak{R}^{s,t}(d\ell d\omega)+\int \mathcal{Z}_{r}(\omega)\mathfrak{L}^{s,t}(d\ell d\omega)\]
which is the total  exit measure at level $r$ accumulated by the snakes glued on $[\tau_{s},\tau_{t}]$.
To simplify notation, write $Z_{r}:=Z_{r}^{r,\infty}$. The proof of \cite[Lemma 4.2]{Hull} gives the following formula, for every $\lambda\geq 0$:
\begin{equation}\label{Zabc}
    \Theta_{0}\big(\exp(-\lambda Z_{r}^{s,t})\big)=\Big(\frac{t}{s}\Big)^{3}\cdot \Big(\frac{s-r+(r^{-2}+\frac{2}{3}\lambda)^{-\frac{1}{2}}}{t-r+(r^{-2}+\frac{2}{3}\lambda)^{-\frac{1}{2}}}\Big)^{3}.
\end{equation}
\noindent Consequently, computing the limit when $\lambda$ goes to infinity, we obtain:
\begin{prop}\label{corZabc0}
For every $0\leq r\leq s\leq t<\infty$:
\begin{equation}
    \Theta_{0}(Z_{r}^{s,t}=0)=\big(\frac{t}{s}\big)^{3} \cdot \big(\frac{s-r}{t-r}\big)^{3}.
\end{equation}
\end{prop}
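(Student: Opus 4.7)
The plan is to derive this proposition as a direct corollary of the Laplace transform identity \eqref{Zabc} by sending $\lambda \to \infty$. Since $Z_{r}^{s,t}$ is nonnegative, the random variable $\exp(-\lambda Z_{r}^{s,t})$ is monotone in $\lambda$ and converges $\Theta_{0}$-almost surely to $\mathbbm{1}_{\{Z_{r}^{s,t}=0\}}$ as $\lambda\to\infty$. I would use monotone convergence to write
\[\Theta_{0}(Z_{r}^{s,t}=0)=\lim_{\lambda\to\infty}\Theta_{0}\bigl(\exp(-\lambda Z_{r}^{s,t})\bigr),\]
and then just pass to the limit on the right-hand side of \eqref{Zabc}.

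For $r>0$, the only $\lambda$-dependence in the right-hand side of \eqref{Zabc} sits in the term $(r^{-2}+\tfrac{2}{3}\lambda)^{-1/2}$, which tends to $0$ as $\lambda\to\infty$. Substituting this limit, the factor between the large parentheses converges to $(s-r)/(t-r)$, and cubing and multiplying by $(t/s)^{3}$ yields exactly the stated expression. The boundary case $r=0$ is not covered by formula \eqref{Zabc} but is essentially trivial: the Poisson intensities defining $\mathfrak{L}$ and $\mathfrak{R}$ are supported on snake trajectories with $\omega_{*}>0$, and directly from the definition of $\mathcal{Z}_{0}(\omega)$ any such trajectory satisfies $\mathcal{Z}_{0}(\omega)=0$; hence $Z_{0}^{s,t}=0$ almost surely, consistent with the value $(t/s)^{3}(s/t)^{3}=1$ of the right-hand side at $r=0$.

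There is no genuine obstacle in the argument: both the monotone-convergence step and the passage to the limit in the explicit Laplace formula are routine, and all the substantive work has already been done in establishing \eqref{Zabc}. The only small care needed is the verification of the $r=0$ boundary case, which amounts to observing that the Poisson intensities exclude snakes that touch $0$.
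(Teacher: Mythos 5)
Your argument is correct and is essentially the paper's own proof: Proposition \ref{corZabc0} is obtained there precisely by letting $\lambda\to\infty$ in \eqref{Zabc}, which is the same monotone-convergence step you describe. Your remark on the boundary case $r=0$ (where the intensities $\mathbb{N}_{X_t}(d\omega\cap\{\omega_*>0\})$ force $\mathcal{Z}_0(\omega)=0$) is a minor point the paper passes over silently, but it does not change the route.
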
 
The special Markov property of the Brownian snake excursion implies that conditionally on $Z_{r}^{s,t}$ the excursions outside $r$ of the snake trajectories $\omega^{i}$ with $t_{i}\in[\tau_{s},\tau_{t}]$ are distributed as the atoms of  a Poisson point measure with intensity:
\[Z_{r}^{s,t} \mathbb{N}_{r}(\:d\omega \:\cap\{\omega_{*}>0\}).\]
We will use this property throughout the article. It will be also useful to note that the Laplace transform of $Z_{r}$ can be deduced from formula \eqref{Zabc} taking the limit when $t$ goes to infinity with $s=r$. More precisely,  for every $r>0$ and $\lambda\geq 0$ we get:
\begin{equation}\label{eq_Laplace_Z}
\Theta_{0}\big(\exp(-\lambda Z_{r})\big)=\big(1+\frac{2\lambda r^{2}}{3}\big)^{-\frac{3}{2}}.
\end{equation}
Equivalently $Z_{r}$ follows a Gamma distribution with parameter $\frac{3}{2}$ and mean $r^{2}$.
The previous formula appears already in \cite[Proposition 1.2]{Hull}, which also shows that 
$Z:=(Z_{r})_{r\geq 0}$ has a c\`adl\`ag modification, with only negative jumps, and from now on
we consider this modification. 
Furthermore, \cite[Proposition 4.3]{Hull}  states  that, for every $0\leq r\leq s$ and $\lambda\geq 0$, we have 
\begin{align}\label{eq_Z_sachant_Z}
\Theta_{0}\big(\exp(-\lambda Z_{r})\:|Z_{s}\big)=&\Big(\frac{s}{r+(s-r)(1+\frac{2\lambda r^{2}}{3})^{\frac{1}{2}}}\Big)^{3}\\
&\cdot \exp\Big(-\frac{3}{2}Z_{s}(\frac{1}{(s-r+(\frac{2\lambda}{3}+r^{-2})^{-\frac{1}{2}})^{2}}-\frac{1}{s^{2}})\Big)\nonumber.
\end{align}
We conclude this Subsection by giving a geometric interpretation of $Z_{r}$. One can derive from  \cite[Proposition 8]{Growth} that
\begin{lem}\label{lem:appen}
$\Theta_0$-a.s.~, for every $r>0$ we have:
\begin{equation}\label{geoZ}
Z_{r}=\lim \limits_{\epsilon \downarrow 0} \frac{1}{\epsilon^{2}} |\check{B}^{\circ}_{r}\cap B_{r+\epsilon}|.
\end{equation}
\end{lem}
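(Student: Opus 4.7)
The plan is to reduce the geometric identity to the analytic definition of the exit measure $\mathcal{Z}_r(\omega)$ by working on the infinite labeled tree $\mathcal{T}_\infty$. Since $|\cdot|$ is by construction the pushforward of $V_{\mathcal{T}_\infty}$ under $\Pi$, we have $|A|=V_{\mathcal{T}_\infty}(\Pi^{-1}(A))$ for every Borel set $A\subset\mathcal{M}_\infty$. The property $(F)$ recalled in Subsection \ref{subsub-Plane} ensures that two points of $\mathcal{T}_\infty$ identified by $\Pi$ must be leaves, and the set of such identified pairs has zero $V_{\mathcal{T}_\infty}$-measure. Together with \eqref{equation_hull_open}, this lets us replace $\Pi^{-1}(\check{B}^\circ_r\cap B_{r+\epsilon})$ by its natural representative $\mathcal{T}^{r,\circ}_\infty\cap\{\Lambda\leq r+\epsilon\}$ up to a $V_{\mathcal{T}_\infty}$-null set.

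For a fixed $r>0$, I would then carry out the spine/subtree decomposition. Since $V_{\mathcal{T}_\infty}$ gives no mass to the spine $[0,\infty)$, only the contributions of the subtrees $\mathcal{T}^i$ survive. A point $u\in\mathcal{T}^i$ with $u=p_{\omega^i}(s)$ lies in $\mathcal{T}^{r,\circ}_\infty$ if and only if the spine branch $[t_i,\infty)$ stays strictly above $r$, i.e.\ $t_i>\tau_r$, and the path $\omega^i_s$ itself does not reach $r$, i.e.\ $\text{hit}_r(\omega^i_s)=\infty$. This yields the key identity
\[
|\check{B}^\circ_r\cap B_{r+\epsilon}|=\sum_{i\in I\cup J:\,t_i>\tau_r}\int_0^{\sigma^i}ds\,\mathbbm{1}_{\text{hit}_r(\omega^i_s)=\infty,\,\hat\omega^i_s\leq r+\epsilon}.
\]
Comparing with the definition of $\mathcal{Z}_r$, each summand divided by $\epsilon^{2}$ tends to $\mathcal{Z}_r(\omega^i)$, and summing gives $Z_r^{r,\infty}=Z_r$. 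Justifying the interchange of the Poisson sum with the $\liminf$ is precisely the content of \cite[Proposition 8]{Growth}: conditioning on $X$ and on $(\mathfrak{L}^{0,\tau_r},\mathfrak{R}^{0,\tau_r})$, the special Markov property decomposes the relevant snakes into a Poisson point process whose intensity is controlled by $Z_r$, and one can then propagate the pointwise convergence to an $L^{2}$ (and hence a.s.)\ convergence of the sum along a suitable subsequence.

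Finally, to pass from the a.s.\ identity at each fixed $r$ to an identity valid simultaneously for every $r>0$ on a single event of full probability, I would use the regularity of both sides in $r$. Recall that $r\mapsto Z_r$ admits a càdlàg modification with only negative jumps (stated just before the lemma). The map $r\mapsto\lim_{\epsilon\downarrow 0}\epsilon^{-2}|\check{B}^\circ_r\cap B_{r+\epsilon}|$ inherits the same right-continuity: a small change of $r$ modifies $\mathcal{T}^{r,\circ}_\infty$ in a controlled way, and one can estimate the difference using the same Poisson/exit-measure bounds. Verifying the identity on a countable dense subset of $(0,\infty)$ and invoking right-continuity gives the full statement.

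The main obstacle is the interchange of the $\liminf$ with the infinite Poisson sum over $\{i:t_i>\tau_r\}$, as there are infinitely many snake trajectories, many with small $\sigma^i$, that contribute to the sum but whose individual rescaled integrals are not uniformly controlled in $\epsilon$. This is precisely the step that leans on \cite[Proposition 8]{Growth}, and without the Poisson structure and the scaling/moment bounds for $\mathcal{Z}_r$ under $\mathbb{N}_x$ it would be delicate; everything else is bookkeeping about the coding triple $(X,\mathfrak{L},\mathfrak{R})$ and the quotient $\Pi$.
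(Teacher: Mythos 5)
Your reduction of the volume to the grafted snakes is sound and matches the paper's starting point: up to $V_{\mathcal{T}_\infty}$-null sets of identified leaves, $\epsilon^{-2}|\check{B}^{\circ}_{r}\cap B_{r+\epsilon}|$ is the sum over atoms with $t_i>\tau_r$ of the approximate exit-measure functionals $\mathcal{Z}^{\epsilon}_{r}(\omega^i)$. But the two steps where you defer the real work are where the gaps lie. First, you invoke \cite[Proposition 8]{Growth} as if it justified interchanging the infinite Poisson sum with the $\epsilon\downarrow 0$ limit; it does not. That proposition is a statement about a \emph{single} snake excursion under $\mathbb{N}_s$, namely that $\sup_{r\leq s-\beta}|\mathcal{Z}^{\epsilon}_{r}-\mathcal{Z}_{r}|\to 0$, $\mathbb{N}_s$-a.e. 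To make it bear on the ensemble of snakes grafted along the spine you need the two ingredients the paper uses and you omit: (a) the truncation event $A(s)=\{Z^{s,\infty}_{r_2}=0\}$, whose probability tends to $1$, which guarantees that the geometric volume is \emph{exactly} $\epsilon^{2}Z^{r,s}_{r}(\epsilon)$, i.e.\ a functional of the snakes attached to the bounded spine interval $[\tau_r,\tau_s]$ only; and (b) the time-reversal of the spine (Bessel of dimension $-5$ started from $s$), whose law on the relevant window is absolutely continuous with respect to Brownian motion, so that by \cite[Proposition 2]{Infinite_Spine} the pair $(Z^{r,s}_{r},Z^{r,s}_{r}(\epsilon))_{r\in[r_1,r_2]}$ is absolutely continuous with respect to $(\mathcal{Z}_{r-r_1},\mathcal{Z}^{\epsilon}_{r-r_1})_{r\in[r_1,r_2]}$ under a single excursion measure $\mathbb{N}_{s-r_1}$. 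Your alternative ``$L^2$ along a subsequence'' plan is both unproved (you give no moment bounds for the infinitely many small atoms accumulating near $\tau_r$) and insufficient as stated, since the lemma asserts a genuine limit as $\epsilon\downarrow 0$, not convergence along a subsequence (this last point is repairable via monotonicity of $\epsilon\mapsto|\check{B}^{\circ}_{r}\cap B_{r+\epsilon}|$, but you do not address it).

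Second, and more seriously, your passage from fixed $r$ to ``simultaneously for every $r>0$'' is circular. You propose to check the identity on a countable dense set and then invoke right-continuity in $r$ of $r\mapsto\lim_{\epsilon\downarrow 0}\epsilon^{-2}|\check{B}^{\circ}_{r}\cap B_{r+\epsilon}|$; but the existence of this limit for every $r$ (in particular at the random jump times of the c\`adl\`ag process $Z$) is precisely what has to be proved, so its right-continuity cannot be assumed, and ``a small change of $r$ modifies $\mathcal{T}^{r,\circ}_\infty$ in a controlled way'' is not an estimate. The paper obtains simultaneity exactly from the \emph{uniformity in $r$} built into \cite[Proposition 8]{Growth}, transferred to the Brownian plane by the absolute-continuity argument above: one gets $\sup_{r\in[r_1,r_2]}|Z^{r,s}_{r}(\epsilon)-Z^{r,s}_{r}|\to 0$ a.s., and the all-$r$ statement follows on $A(s)$, then on a full-probability event by letting $s\to\infty$ and $r_1\downarrow 0$, $r_2\uparrow\infty$. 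Without that uniform control (or some substitute, e.g.\ a genuine monotone sandwich in $r$ handling the negative jumps of $Z$), your argument does not yield the statement of Lemma \ref{lem:appen}.
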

For the sake of completeness we give a proof of Lemma \ref{lem:appen}, but we postpone it to the Appendix below to avoid to weigh down the preliminaries.
It will be important for us to know that the convergence holds simultaneously for every $r>0$.
Roughly speaking, $Z_{r}$ represents the length or perimeter (in a generalized sense) of $\partial B^{\bullet}_{r}$. 

\subsubsection{The infinite volume Brownian disk}\label{IBD}
We keep the assumptions and notation of the preceding Subsection.
Let $r>0$ and set $\widetilde{X}^{(r)}_{t}:=X_{\tau_{r}+t}-r$. Let us also introduce the point measures $\widetilde{\mathfrak{R}}_{r}$ and $\widetilde{\mathfrak{L}}_{r}$ on $\mathbb{R}_{+}\times \mathcal{S}$ defined by:
\[\int \Phi(t,\omega)\widetilde{\mathfrak{L}}_{r}(dtd\omega):=\int\Phi(t,\text{tr}_{0}(\omega-r))\mathfrak{L}^{r,\infty}(dt\:d\omega)\]
and
\[\int \Phi(t,\omega)\widetilde{\mathfrak{R}}_{r}(dtd\omega):=\int \Phi(t,\text{tr}_{0}(\omega-r))\mathfrak{R}^{r,\infty}(dt\:d\omega).\]
One easily checks that the triple $(\widetilde{X}^{(r)},\widetilde{\mathfrak{L}}_{r},\widetilde{\mathfrak{R}}_{r})$ is a random infinite spine coding triple satisfying $(H_{1})$. Moreover \cite[Proposition 6]{Infinite_Spine} shows that there exists a unique collection of probability measures $(\Theta_{z})_{z>0}$ on the space of coding triples such that for every $r>0$:
\begin{equation}\label{def_theta_z}
    \Theta_{0}\big(g(Z_{r})F(\widetilde{X}^{(r)},\widetilde{\mathfrak{L}}_{r},\widetilde{\mathfrak{R}}_{r})\big)=\frac{3^{\frac{3}{2}}}{\sqrt{2\pi}r}\int_{0}^{\infty}dz z^{\frac{1}{2}}\exp(-\frac{3}{2r^{2}} z) g(z) \Theta_{z}(F).
\end{equation}
and the pushforward of $\Theta_{z}$ by $\text{hom}_{\lambda}$ is $\Theta_{\lambda^{2} z}$ (for every $z,\lambda>0$).
In other words, conditionally on $Z_{r}=z$, the distribution of $(\widetilde{X}^{(r)},\widetilde{\mathfrak{L}}_{r},\widetilde{\mathfrak{R}}_{r})$ is $\Theta_{z}$. It is crucial that the preceding conditional distribution does not depend on $r$. Furthermore by \cite[Lemma 16]{Infinite_Spine} an infinite coding triple distributed according to $\Theta_{z}$ satisfies a.s.~$(H_{1},H_{2})$. Consequently we can consider the associated metric space and according to \cite[Proposition 38]{Infinite_Spine} this space is the infinite volume Brownian disk with perimeter $z$. The infinite volume Brownian disk is a random element of $\mathbb{K}_{\infty}$ and is a.s. homeomorphic to the complement of the open unit disk in the complex plane. It can also be obtained as scaling limit of random planar lattices with a boundary (see \cite{BMR}). The boundary of the infinite volume Brownian disk is the set of points that have no neighborhood homeomorphic to the (open) disk. The infinite volume Brownian disk also satisfies a scale invariance property. More precisely  since the pushforward of $\Theta_{z}$ by $\text{hom}_{\lambda}$ is $\Theta_{\lambda^{2} z}$,  if $(E,\rho_{E},\Delta_{E},|\cdot|_{E})$ is an infinite volume Brownian disk with perimeter $z$, then $(E,\rho_{E},\lambda \Delta_{E},\lambda^{4}|\cdot|_{E})$ is an infinite volume Brownian disk with perimeter $\lambda^{2} z$. \\
\\
We now explain the geometric interpretation of  $(\widetilde{X}^{(r)},\widetilde{\mathfrak{L}}_{r},\widetilde{\mathfrak{R}}_{r})$ and the implications of \eqref{def_theta_z} for the Brownian plane. First observe that the labeled tree associated with $(\widetilde{X}^{(r)},\widetilde{\mathfrak{L}}_{r},\widetilde{\mathfrak{R}}_{r})$ can be identified with $\big(\mathcal{T}_{\infty}^{r},(\Lambda_{v}-r)_{v\in \mathcal{T}_{\infty}^{r}}\big)$, see
the beginning of the proof of Theorem 29 in \cite{Infinite_Spine}. For every $r>0$,  let $\check{\Delta}^{(r)}$ be the intrinsic
distance induced by $\Delta$ on $\check{B}_{r}^{\circ}$ and also write  $|\cdot|_{\check{\Delta}^{(r)}}$ for the restriction of the volume measure $|\cdot|$ to $\check{B}_{r}^{\bullet}$.  The following lemma is then a consequence of  \cite[Lemma 30]{Infinite_Spine} and the identification of
$\big(\mathcal{T}_{\infty}^{r},(\Lambda_{v}-r)_{v\in \mathcal{T}_{\infty}^{r}}\big)$ with the labeled tree associated with $(\widetilde{X}^{(r)},\widetilde{\mathfrak{L}}_{r},\widetilde{\mathfrak{R}}_{r})$.

\begin{lem}\label{lem:H_2}
$\Theta_{0}$-a.s., for every $r>0$ such that $(\widetilde{X}^{(r)},\widetilde{\mathfrak{L}}_{r},\widetilde{\mathfrak{R}}_{r})$ satisfies $(H_{2})$ the following properties hold:
\\
\\
$\rm(i)$ The intrinsic distance $\check{\Delta}^{(r)}$ has a unique continuous extension to $\check{B}_{r}^{\bullet}$;
\\
\\
$\rm(ii)$ The space $\check{B}_{r}^{\bullet}$ equipped with this continuous extension of $\check{\Delta}^{(r)}$, the measure $|\cdot|_{\check{\Delta}^{(r)}}$ and the distinguished point $\Pi(\tau_{r})$ coincides as an element of $\mathbb{K}_{\infty}$ with the metric space associated with $(\widetilde{X}^{(r)},\widetilde{\mathfrak{L}}_{r},\widetilde{\mathfrak{R}}_{r})$.
\end{lem}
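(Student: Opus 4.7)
The plan is to reduce the statement to two ingredients: the explicit identification of the labeled tree associated with the shifted coding triple $(\widetilde{X}^{(r)},\widetilde{\mathfrak{L}}_{r},\widetilde{\mathfrak{R}}_{r})$ with $(\mathcal{T}_\infty^r,(\Lambda_v-r)_{v\in\mathcal{T}_\infty^r})$, and the content of \cite[Lemma 30]{Infinite_Spine}, which identifies the pseudo-distance produced by the construction of Section \ref{subsect_coding_triple} on a subtree with the intrinsic metric inherited from the ambient quotient space. Once these two ingredients are combined, (i) and (ii) follow by bookkeeping.

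First I would verify the identification of labeled trees, following the beginning of the proof of \cite[Theorem 29]{Infinite_Spine}. Since $\tau_r$ is the last exit time of $X$ from $[0,r]$, the shifted spine $\widetilde{X}^{(r)}_t=X_{\tau_r+t}-r$ starts at $0$ and stays nonnegative, and it traces exactly the portion of the spine of $\mathcal{T}_\infty$ lying in $\mathcal{T}_\infty^r$. For each atom $(t_i,\omega^i)$ of $\mathfrak{L}$ or $\mathfrak{R}$ with $t_i>\tau_r$, the operation $\text{tr}_0(\omega^i-r)$ prunes those subexcursions of $\omega^i$ that descend to level $r$ or below, so it extracts precisely the intersection of $\mathcal{T}^i$ with $\mathcal{T}_\infty^r$. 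Putting these pieces together yields a canonical bijection between the tree associated with $(\widetilde{X}^{(r)},\widetilde{\mathfrak{L}}_{r},\widetilde{\mathfrak{R}}_{r})$ and $\mathcal{T}_\infty^r$ that preserves the tree metric, the shifted labels $\Lambda-r$, the exploration-induced interval structure, and the volume measure.

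With this identification in hand, the pseudo-distance $\widetilde{\Delta}$ built from $(\widetilde{X}^{(r)},\widetilde{\mathfrak{L}}_{r},\widetilde{\mathfrak{R}}_{r})$ via formula \eqref{Def-Delta} lives on $\mathcal{T}_\infty^r$, and under $(H_2)$ for the shifted triple it has a continuous extension that defines the associated quotient metric space. Property $(F)$ of the Brownian plane implies $\widetilde{\Delta}(u,v)=0$ if and only if $\Delta(u,v)=0$ on $\mathcal{T}_\infty^r$, so the quotient $\mathcal{T}_\infty^r/\widetilde{\Delta}$ coincides set-theoretically with $\check{B}_r^\bullet=\Pi(\mathcal{T}_\infty^r)$. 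Applying \cite[Lemma 30]{Infinite_Spine} then yields that $\widetilde{\Delta}$ coincides on $\check{B}_r^\circ$ with the intrinsic metric $\check{\Delta}^{(r)}$ inherited from $\Delta$. This gives (i), since the continuous extension of $\check{\Delta}^{(r)}$ to $\check{B}_r^\bullet$ is then exactly $\widetilde{\Delta}$, and (ii) follows by matching the remaining canonical data: the volume measure of the shifted triple gives no mass to its spine, so it pushes forward to the restriction of $|\cdot|$ to $\check{B}_r^\bullet$, and the distinguished point of the shifted tree is its root, corresponding to $\Pi(\tau_r)$.

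The main obstacle will be the careful application of \cite[Lemma 30]{Infinite_Spine}, i.e. showing that the approximating sequence $u_1,\ldots,u_n$ for $\Delta$ in \eqref{Def-Delta} cannot gain by leaving $\mathcal{T}_\infty^r$ and re-entering, so that the internally constructed $\widetilde{\Delta}$ really equals the intrinsic distance $\check{\Delta}^{(r)}$ on $\check{B}_r^\circ$. The argument exploits that every point of $\partial\mathcal{T}_\infty^r$ carries label exactly $r$ together with property $(F)$, which ensures that any identification $\Delta(u,v)=0$ across $\partial\mathcal{T}_\infty^r$ occurs only at leaves with matching labels; any "shortcut" through $\mathcal{T}_\infty\setminus\mathcal{T}_\infty^r$ may thus be rerouted along $\mathcal{T}_\infty^r$ at no additional cost, which is the key step underlying Lemma 30 in the present context.
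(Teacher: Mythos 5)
Your proposal is correct and follows essentially the same route as the paper: the paper derives Lemma \ref{lem:H_2} precisely from the identification of the labeled tree of $(\widetilde{X}^{(r)},\widetilde{\mathfrak{L}}_{r},\widetilde{\mathfrak{R}}_{r})$ with $\big(\mathcal{T}_{\infty}^{r},(\Lambda_{v}-r)_{v\in \mathcal{T}_{\infty}^{r}}\big)$ (beginning of the proof of Theorem 29 in \cite{Infinite_Spine}) combined with \cite[Lemma 30]{Infinite_Spine}, which is exactly your two ingredients. Your additional remarks (truncation pruning the excursions below $r$, property $(F)$ controlling identifications across $\partial\mathcal{T}_{\infty}^{r}$, rerouting of shortcuts) are consistent with how these cited results are used.
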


By \eqref{def_theta_z}, the coding triple  $(\widetilde{X}^{(r)},\widetilde{\mathfrak{L}}_{r},\widetilde{\mathfrak{R}}_{r})$ satisfies $(H_{2})$, $\Theta_{0}$-a.s., for every fixed $r>0$, and thus properties $\rm(i)$ and $\rm(ii)$ hold $\Theta_0$-a.s. when $r>0$ is fixed. However, we point out that we do not claim that $(\widetilde{X}^{(r)},\widetilde{\mathfrak{L}}_{r},\widetilde{\mathfrak{R}}_{r})$ satisfies $(H_{2})$ simultaneously for every $r>0$.  Consequently it is not
clear whether $\check{\Delta}^{(r)}$ has a unique continuous extension to $\check{B}_{r}^{\bullet}$ simultaneously for every $r>0$, a.s.
\\
\\
On the other hand, we saw in Section \ref{subsub-Plane} that the hull $B^\bullet_r$
(equipped with the distance $\Delta^{(r)}$ defined in \eqref{Def-Delta-Hull}) can be viewed as a random element of 
the space $\mathbb{K}$. In the next statement, we also view $\check B^\bullet_r$
as an element of $\mathbb{K}_\infty$ as explained in property $\rm(ii)$ of Lemma \ref{lem:H_2}. 
The following theorem is essentially a reformulation of Theorems 29 and 31 in \cite{Infinite_Spine}. 

\begin{theo}\label{spatial_fix_level}
Let $r>0$. Then, conditionally on $Z_{r}=z$, the coding triple $(\widetilde{X}^{(r)},\widetilde{\mathfrak{L}}_{r},\widetilde{\mathfrak{R}}_{r})$ is distributed according to $\Theta_{z}$ and  is independent of $B_{r}^{\bullet}$. 
Consequently, conditionally on  $Z_{r}=z$, the space  $\check{B}_{r}^{\bullet}$ is an infinite Brownian disk with perimeter $z$
and is independent of $B_{r}^{\bullet}$. 
\end{theo}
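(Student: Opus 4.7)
The plan is to decompose the coding triple data into three (conditionally independent) pieces, apply the spine independence property of $\mathcal{T}_\infty$ and the special Markov property of the Brownian snake, and finally translate from coding triples to metric spaces via Lemma~\ref{lem:H_2}. Concretely, introduce the $\sigma$-fields $\mathcal{G}_{\mathrm{past}}$ generated by $\big((X_{(\tau_r-\ell)\vee 0})_{\ell\geq 0}, \mathfrak{L}^{0,r}, \mathfrak{R}^{0,r}\big)$, $\mathcal{G}_{\mathrm{out}}$ generated by $(\widetilde{X}^{(r)},\widetilde{\mathfrak{L}}_r,\widetilde{\mathfrak{R}}_r)$ (equivalently, by $(X_{\tau_r+\cdot})$ and by the truncations $\mathrm{tr}_r(\omega^i)$ of the snakes $\omega^i$ with $t_i>\tau_r$), and $\mathcal{G}_{\mathrm{in}}$ generated by the excursions below $r$ of these same future snakes. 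Since $X_{\tau_r}=r$, any ancestor-to-infinity path from a point of a subtree $\mathcal{T}^i$ with $t_i\leq\tau_r$ visits the spine label $r$, so no such point belongs to $\mathcal{T}_\infty^{r,\circ}$; combined with the observation that the excursions below $r$ of the future snakes describe exactly the contributions of the future subtrees to $\mathcal{T}_\infty\setminus\mathcal{T}_\infty^{r,\circ}$, this shows that $B_r^\bullet$ (equipped with the distance $\Delta^{(r)}$, its volume measure and its distinguished point) is measurable with respect to $\mathcal{G}_{\mathrm{past}}\vee\mathcal{G}_{\mathrm{in}}$.

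Next, I would establish the conditional independence. The spine independence property applied at $t=r$ yields that $\mathcal{G}_{\mathrm{past}}$ is independent of $\sigma(\mathcal{G}_{\mathrm{out}},\mathcal{G}_{\mathrm{in}})$. The special Markov property of the Brownian snake, applied jointly to the snakes in $\mathfrak{L}^{r,\infty}\cup\mathfrak{R}^{r,\infty}$ at level $r$, states that conditionally on $\mathcal{G}_{\mathrm{out}}$ the excursions below $r$ form a Poisson point process on $\mathcal{S}$ with intensity $Z_r\,\mathbb{N}_r(\cdot\cap\{\omega_*>0\})$; since $Z_r$ is $\mathcal{G}_{\mathrm{out}}$-measurable, conditionally on $\{Z_r=z\}$ these excursions are independent of $\mathcal{G}_{\mathrm{out}}$ and their law depends only on $z$. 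Combining these two statements, conditionally on $\{Z_r=z\}$ the $\sigma$-field $\mathcal{G}_{\mathrm{out}}$ is independent of $\mathcal{G}_{\mathrm{past}}\vee\mathcal{G}_{\mathrm{in}}$, and therefore independent of $B_r^\bullet$. Formula \eqref{def_theta_z} identifies the conditional law of $(\widetilde{X}^{(r)},\widetilde{\mathfrak{L}}_r,\widetilde{\mathfrak{R}}_r)$ given $Z_r=z$ as $\Theta_z$, and Lemma~\ref{lem:H_2} then translates this into the identification of $\check{B}_r^\bullet$ with an infinite volume Brownian disk of perimeter $z$, yielding both conclusions of the theorem.

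The main obstacle is the measurability statement in the first step. While the set underlying $B_r^\bullet$ is easily described as $\Pi(\mathcal{T}_\infty\setminus\mathcal{T}_\infty^{r,\circ})$, the distance $\Delta^{(r)}$ defined by \eqref{Def-Delta-Hull} uses infima of sums of $\Delta^{\circ,\prime}(u_i,u_{i+1})$, and each such quantity involves infima of $\Lambda$ along intervals $[u,v]_{\mathcal{T}_\infty}$ that can cross $\mathcal{T}_\infty^{r,\circ}$, where the labels are $\mathcal{G}_{\mathrm{out}}$-measurable. One must verify that only points in $\mathcal{T}_\infty\setminus\mathcal{T}_\infty^{r,\circ}$ effectively contribute to these infima, the large labels in $\mathcal{T}_\infty^{r,\circ}$ acting only as upper barriers, so that $\Delta^{(r)}$ depends only on $\mathcal{G}_{\mathrm{past}}\vee\mathcal{G}_{\mathrm{in}}$. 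Property $(F)$ together with an analogue for $B_r^\bullet$ of \cite[Lemma~30]{Infinite_Spine} (used to identify $\check{\Delta}^{(r)}$ with an intrinsic metric) should provide the required reduction, after which the spine independence plus special Markov structure above concludes the proof.
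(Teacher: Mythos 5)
Your plan correctly handles the easy half of the statement (the conditional law of $(\widetilde{X}^{(r)},\widetilde{\mathfrak{L}}_{r},\widetilde{\mathfrak{R}}_{r})$ given $Z_r=z$ is $\Theta_z$ by \eqref{def_theta_z}, and Lemma \ref{lem:H_2} transfers this to $\check B^\bullet_r$), and the logic by which spine independence plus a conditional statement given $Z_r$ would yield the full conditional independence is sound. Note, however, that the paper does not reprove the conditional independence at all: it cites \cite[Theorems 29 and 31]{Infinite_Spine}, so what you are really attempting is a reconstruction of the proof of that cited theorem. In that reconstruction there is a genuine gap, located exactly at the junction of your measurability step and your use of the special Markov property, i.e. in the choice of $\mathcal{G}_{\mathrm{in}}$. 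If $\mathcal{G}_{\mathrm{in}}$ records only the collection of excursions below $r$ of the future snakes --- which is all that the special Markov property as quoted in Section \ref{subsection_Brownian_snake} controls --- then $(B_r^\bullet,\Delta^{(r)})$ is \emph{not} measurable with respect to $\mathcal{G}_{\mathrm{past}}\vee\mathcal{G}_{\mathrm{in}}$: for $u,v$ lying in two different excursion subtrees, $\Delta^{\circ,\prime}(u,v)$ involves the label infima over the two intervals $[u,v]_{\mathcal{T}_\infty}$ and $[v,u]_{\mathcal{T}_\infty}$, hence depends on which excursion subtrees (and whether the past part) lie between them in the cyclic order around $\partial B_r^\bullet$; the isometry class of the hull genuinely depends on this arrangement, which is not a function of the unpositioned excursion point measure. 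If, to repair this, you enlarge $\mathcal{G}_{\mathrm{in}}$ to contain the attachment data (in which snake $\omega^i$ and at which time each excursion occurs), then your step ``conditionally on $Z_r=z$ these excursions are independent of $\mathcal{G}_{\mathrm{out}}$'' becomes false, or at least is no longer a consequence of the quoted property: conditionally on $\mathcal{G}_{\mathrm{out}}$, the number of excursions carried by the snake $\omega^i$ is Poisson with mean $\mathcal{Z}_r(\omega^i)\,\mathbb{N}_r(W_*>0)$, and the individual exit measures $\mathcal{Z}_r(\omega^i)$ are $\mathcal{G}_{\mathrm{out}}$-measurable without being functions of the total $Z_r$.

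The way to thread this needle --- and the content of the proof you would have to supply --- is to mark each excursion below $r$ with its exit local time position along the boundary. On one hand, the hull \emph{is} a measurable function of the past data and of this marked point measure: this uses the reduction you only gesture at, namely that any maximal portion of an interval $[u,v]_{\mathcal{T}_\infty}$ lying in $\mathcal{T}_\infty^{r,\circ}$ is bracketed by points of label exactly $r$ belonging to $\mathcal{T}_\infty\setminus\mathcal{T}_\infty^{r,\circ}$, so the infima defining $\Delta^{\circ,\prime}$, and hence $\Delta^{(r)}$, do not feel the labels in $\mathcal{T}_\infty^{r,\circ}$, while the cyclic arrangement is recovered from the local time marks. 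On the other hand, one then needs the stronger, local-time-marked form of the special Markov property, asserting that conditionally on the truncated snakes the marked measure is Poisson with intensity $\mathbbm{1}_{[0,Z_r]}(\ell)\,d\ell\,\mathbb{N}_r(d\omega\cap\{\omega_*>0\})$, together with a concatenation argument over the infinitely many atoms of $\mathfrak{L}^{r,\infty}$ and $\mathfrak{R}^{r,\infty}$, so that the conditional law of the marked measure given $\mathcal{G}_{\mathrm{out}}$ depends only on $Z_r$. This strengthening is true, but it is not the statement quoted in this paper, and establishing it together with the measurability reduction is precisely what the proof of \cite[Theorem 31]{Infinite_Spine} does and what the present paper deliberately cites rather than repeats; as written, both load-bearing points of your argument (the identification of a $\sigma$-field that is simultaneously large enough to determine $B_r^\bullet$ and small enough to be conditionally independent of $\mathcal{G}_{\mathrm{out}}$ given $Z_r$) are left open.
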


The fact that conditionally on $Z_{r}=z$ the coding triple $(\widetilde{X}^{(r)},\widetilde{\mathfrak{L}}_{r},\widetilde{\mathfrak{R}}_{r})$ is distributed according to $\Theta_{z}$ is just a reformulation of \eqref{def_theta_z}. Using property \rm(ii) of Lemma \ref{lem:H_2}, it follows that the conditional
distribution of $\check{B}_{r}^{\bullet}$ knowing that $Z_r=z$ is the law of the infinite volume Brownian disk with perimeter $z$.
The conditional
 independence of $\check{B}_{r}^{\bullet}$ and $B^\bullet_r$ given $Z_r$ is stated in \cite[Theorem 31]{Infinite_Spine},
 and the slightly stronger conditional independence of $(\widetilde{X}^{(r)},\widetilde{\mathfrak{L}}_{r},\widetilde{\mathfrak{R}}_{r})$
 and $B^\bullet_r$ is also established at the end of the proof of this result.

We will refer to the last assertion of Theorem \ref{spatial_fix_level} as the spatial Markov property of $\mathcal{M}_\infty$. 
 In Section \ref{subsect_application} below, we will extend this property to the case of a random level $r$.   

\section{Separating cycles}
In most of this section, we argue under $\Theta_0$ and we use the following notation:
\[B_{r,s}^{\circ}:=\text{Int}(B_{s}^{\bullet}\setminus B_{r}^{\bullet})\:\:\text{and}\:\:B_{r,s}^{\bullet}:=\text{Cl}(B_{r,s}^{\circ}).\]
for every $r,s\in (0,\infty)$ with $r<s$. 
Our first goal is to study the quantity:
\begin{equation}\label{def:L_r,s}
L_{r,s}=\inf \{\Delta(g):~g:[0,1]\to B_{r,s}^{\circ}\:\:\text{cycle separating }B^\bullet_r\text{ from }\infty\}. 
\end{equation}
Since $\mathcal{M}_{\infty}$ has the topology of the complex plane $\mathbb{C}$, the quantity $L_{r,s}$ is well defined. Actually by construction it only depends on $B_{r,s}^{\circ}$ and the intrinsic distance induced by $\Delta$ on $B_{r,s}^{\circ}$.  Let us briefly justify the measurability of
the random variable $L_{r,s}$. We consider a dense sequence $(a_n:n\in\mathbb{N})$ in $\mathcal{M}_\infty$. Given $\alpha>0$, we observe that $L_{r,s}< \alpha$
if and only if, for some $\delta>0$, the following holds for every $\epsilon>0$: There exists a finite sequence $a_{n_1},a_{n_2},\ldots,a_{n_p},a_{n_{p+1}}=a_{n_1}$ such that $\Delta(a_{n_i},(B^\circ_{r,s})^c)\geq \delta$, $\Delta(a_{n_i},a_{n_{i+1}})<\epsilon$ for every $1\leq i\leq p$, and
$$\sum_{i=1}^p \Delta(a_{n_i},a_{n_{i+1}})<\alpha-\delta,$$
and such that for any other sequence $a_{m_1},\ldots,a_{m_q}$ with $a_{m_1}\in B^\bullet_r$, $a_{m_q}\in\check B^\bullet_s$, and
$\Delta(a_{m_j},a_{m_{j+1}})<\epsilon$ for every $1\leq j\leq q-1$, there exist $i\in\{1,\ldots,p\}$ and $j\in\{1,\ldots,q\}$ such that
$\Delta(a_{n_i},a_{m_j})<\epsilon$. 

\vspace{0.2cm}
\subsection{Geometric properties}
We argue under $\Theta_{0}$ and use the notation introduced in
Section \ref{subsect_coding_triple}. In particular, $(\mathcal{E}_s)_{s\in\mathbb{R}}$ is the exploration function
of the tree $\mathcal{T}_\infty$. Our goal here is to identify a subclass of separating cycles and then to show that we can restrict our study to this collection of paths which is easier to study. 
\\
\\
For every $t\geq 0$, set $p_{\infty}^{(\ell)}(t):=\mathcal{E}_{\sup\{s\in \mathbb{R}:\: \Lambda_{s}=t\}}$ and $p_{\infty}^{(r)}(t):=\mathcal{E}_{\inf\{s\in \mathbb{R}:\: \Lambda_{s}=t\}}$. Remark that $p_{\infty}^{(\ell)}$ (resp. $p_{\infty}^{(r)}$) takes values in $\mathcal{L}$ (resp. $\mathcal{R}$) since $\mathcal{E}_{0}=0$. By definition for every $s\leq t$, we have 
\[\inf \limits_{[p^{(\ell)}_{\infty}(s),p^{(\ell)}_{\infty}(t)]_{\mathcal{T}_{\infty}}}\Lambda=\inf \limits_{[p^{(r)}_{\infty}(t),p^{(r)}_{\infty}(s)]_{\mathcal{T}_{\infty}}}\Lambda=s.\]
Consequently we have
\begin{equation}\label{geo_p}
    \Delta^{\circ}\big(p_{\infty}^{(\ell)}(t),p_{\infty}^{(\ell)}(s)\big)=\Delta^{\circ}\big(p_{\infty}^{(r)}(t),p_{\infty}^{(r)}(s)\big)=|t-s|
\end{equation}
for every $s,t>0$. Moreover knowing that for every $t\geq 0$ and $u\in[p_{\infty}^{(\ell)}(t),p_{\infty}^{(r)}(t)]_{\mathcal{T}_{\infty}}$ we have $\Lambda_{u}\geq t$, we get that  $\Delta^{\circ}(p_{\infty}^{(\ell)}(t),p_{\infty}^{(r)}(t))=0$ for every $t>0$. We write $$\gamma_{\infty}(t):=\Pi\big(p_{\infty}^{(\ell)}(t)\big)=\Pi\big(p_{\infty}^{(r)}(t)\big)\,,\qquad t\in [0,\infty).$$
By \eqref{major} and \eqref{geo_p}, $\gamma_{\infty}$ is a geodesic path connecting $0$ and $\infty$. It can be shown that this geodesic path is the unique geodesic path connecting $0$ and $\infty$ (see \cite[Proposition 15]{Plane}) but we will not use this result in this work. To simplify notation set $P^{(\ell)}:=p_{\infty}^{(\ell)}(\mathbb{R}_{+})$, $P^{(r)}:=p_{\infty}^{(r)}(\mathbb{R}_{+})$ and $P:=P^{(\ell)}\cup P^{(r)}$. Remark that $\Pi(P)$ is the range of $\gamma_{\infty}$.
\\
\\
We define the left (resp. right) side of $\mathcal{M}_{\infty}$ as the subset $\Pi(\mathcal{L})$ (resp. $\Pi(\mathcal{R})$). 
\begin{lem}\label{Topological_lemma} 
The following properties hold $\Theta_{0}$-a.s.
\\
\\
$\rm(i)$ The maps $\mathbb{R}_{+}\ni t\mapsto \Pi(t)$ and $ \mathbb{R}_{+}\ni t\mapsto \gamma_{\infty}(t)$ are injective. Moreover $\Pi([0,\infty))\cap \Pi(P)=\{0\}$.
\\
\\
$\rm(ii)$ The sets $\text{Int}(\Pi(\mathcal{L}))$ and $\text{Int}(\Pi(\mathcal{R}))$ are the connected components of the complement of $\Pi([0,\infty))\cup \Pi(P)$. 
\end{lem}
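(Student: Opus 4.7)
The plan for (i) is to combine property $(F)$ with the geodesic nature of $\gamma_\infty$. Injectivity of $\gamma_\infty$ is immediate: the definition of $\Delta$ as an infimum of sums of $\Delta^{\circ}$ gives $\Delta\leq\Delta^{\circ}$, so \eqref{geo_p} yields $\Delta(\gamma_\infty(s),\gamma_\infty(t))\leq |s-t|$, while \eqref{major} yields the reverse inequality, hence $\Delta(\gamma_\infty(s),\gamma_\infty(t))=|s-t|$. For the injectivity of $t\mapsto\Pi(t)$ on $[0,\infty)$, I observe that $\Pi(0)=0$ is the unique point with label $0$, and for $t>0$ the spine point has multiplicity at least $2$ in $\mathcal{T}_{\infty}$ (removing it disconnects the spine into $[0,t)$ and $(t,\infty)$ pieces), so it is not a leaf; by $(F)$ this prevents any identification $\Pi(t)=\Pi(t')$ with $t\neq t'$. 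For $\Pi([0,\infty))\cap\Pi(P)=\{0\}$, we have $0\in\Pi(P)$ via $p_{\infty}^{(\ell)}(0)=p_{\infty}^{(r)}(0)=0$; conversely, if $\Pi(t)=\gamma_{\infty}(s)$ with $t>0$, matching labels gives $s=X_{t}>0$, and then $(F)$ combined with $t$ not being a leaf forces $t=p_{\infty}^{(\ell)}(s)$ or $t=p_{\infty}^{(r)}(s)$ as a point of $\mathcal{T}_{\infty}$; one must therefore rule out that these points lie on the spine when $s>0$. This is the technical heart of (i): using the special Markov property and the divergence of $\int_{u^{*}}^{u^{*}+\varepsilon}(X_{v}-s)^{-2}\,dv$ just above the last crossing $u^{*}=\sup\{u:X_{u}=s\}$, one shows that a.s.\ both left and right subtrees attached above $u^{*}$ reach level $s$, so the last (resp.\ first) exploration time having label $s$ lies strictly inside such a subtree's exploration interval, not at a spine point.

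For (ii), set $K:=\Pi([0,\infty))\cup\Pi(P)$. The key topological input is that $K$ is the image of a Jordan curve in the one-point compactification $\widehat{\mathcal{M}}_{\infty}\simeq S^{2}$: by (i) the maps $t\mapsto \Pi(t)$ and $t\mapsto \gamma_{\infty}(t)$ are continuous injections from $[0,\infty)$ meeting only at $0$, and the last condition in $(H_{1})$ forces both paths to tend to the point at infinity in the compactification, so their concatenation forms a simple closed curve through $0$ and $\infty$. The Jordan curve theorem applied on $S^{2}$ then produces exactly two connected components of $\widehat{\mathcal{M}}_{\infty}\setminus K$, neither containing $\infty$ (since $\infty\in K$), giving exactly two connected components of $\mathcal{M}_{\infty}\setminus K$.

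To identify these components with $\text{Int}(\Pi(\mathcal{L}))$ and $\text{Int}(\Pi(\mathcal{R}))$, I would first check $\Pi(\mathcal{L})\cup\Pi(\mathcal{R})=\mathcal{M}_{\infty}$ (from $\mathcal{L}\cup\mathcal{R}=\mathcal{T}_{\infty}$) and $\Pi(\mathcal{L})\cap\Pi(\mathcal{R})=K$; the inclusion $\supseteq$ is immediate from $\mathcal{L}\cap\mathcal{R}=[0,\infty)$ and $\Pi(p_{\infty}^{(\ell)}(t))=\Pi(p_{\infty}^{(r)}(t))$, while $\subseteq$ uses $(F)$ to show any cross-identification $\Pi(u)=\Pi(v)$ with $u\in\mathcal{L}\setminus[0,\infty)$ and $v\in\mathcal{R}\setminus[0,\infty)$ forces both to be leaves, and the snake-theoretic analysis underlying (i) shows the only such identified pairs of leaves are those arising from $\gamma_{\infty}$. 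Every point of $K$ is approached by points from both $\Pi(\mathcal{L})$ and $\Pi(\mathcal{R})$ (every neighborhood of a spine point or of a geodesic point contains tree points strictly on both sides), so $K$ is disjoint from either interior, yielding $\text{Int}(\Pi(\mathcal{L}))=\Pi(\mathcal{L})\setminus K$ and similarly for $\mathcal{R}$. These two nonempty open sets are disjoint and cover $\mathcal{M}_{\infty}\setminus K$, so they must coincide with the two components produced by the Jordan curve argument; in particular each is connected. The main difficulties will be the snake-theoretic step in (i) ruling out that $p_{\infty}^{(\ell/r)}(s)$ lies on the spine, and the rigorous verification that the concatenation of spine image and geodesic is a genuine Jordan curve in the compactification.
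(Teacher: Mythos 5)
Your overall route is the same as the paper's. For $\rm(i)$ you use the geodesic property of $\gamma_{\infty}$ for its injectivity, and property $(F)$ together with the fact that $0$ is the only leaf on the spine for the injectivity of $t\mapsto\Pi(t)$ and for $\Pi([0,\infty))\cap\Pi(P)\subset\{0\}$; for $\rm(ii)$ you identify $\text{Int}(\Pi(\mathcal{L}))=\Pi(\mathcal{L})\setminus\big(\Pi([0,\infty))\cup\Pi(P)\big)$ (and similarly for $\mathcal{R}$) and then invoke planar topology to see that the complement of $\Pi([0,\infty))\cup\Pi(P)$ has exactly two components. The paper compresses this last step into one sentence; your explicit Jordan-curve argument in the one-point compactification (properness of both curves coming from $\Lambda_{\Pi(t)}=X_{t}\to\infty$ and $\Lambda_{\gamma_{\infty}(t)}=t\to\infty$) is precisely the intended justification. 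Also note that your identification of the cross-identified pairs between $\mathcal{L}\setminus[0,\infty)$ and $\mathcal{R}\setminus[0,\infty)$ does not require any additional snake estimate: by $(F)$ it reduces to $\Delta^{\circ,\prime}(u,v)=0$, and since the interval through the root has label infimum $0<\Lambda_u$, the maximum must be realized by the interval through infinity, which forces $u,v$ to be identified with $p^{(\ell)}_{\infty}(\Lambda_u),p^{(r)}_{\infty}(\Lambda_u)$.

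The one place where you go beyond the paper is also where your sketch is incomplete. You correctly isolate the case that a bare appeal to $(F)$ does not cover, namely that $p^{(\ell)}_{\infty}(s)$ or $p^{(r)}_{\infty}(s)$ could itself be a spine point (essentially $\tau_{s}$), in which case the two tree points coincide and $(F)$ says nothing. Your proposed remedy — the a.s. divergence of $\int_{\tau_s}^{\tau_s+\epsilon}(X_v-s)^{-2}\,dv$, which through the Poisson intensity $2\,\mathbb{N}_{X_v}(0<\omega_*\leq s)=3\big((X_v-s)^{-2}-X_v^{-2}\big)$ forces infinitely many left and right subtrees grafted just after $\tau_s$ to reach level $s$ — is the right mechanism, but as formulated it is a fixed-level statement, valid $\Theta_0$-a.s. for each given $s$. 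In the lemma the level of a hypothetical coincidence is random (it equals $X_t$ for the exceptional spine point $t$), so you need the exclusion simultaneously for all $s>0$; this does not follow from the fixed-$s$ statement by a countable union, and there is no obvious monotonicity in $s$ to interpolate from rational levels. Some additional input is required, for instance an a.s. simultaneous statement such as strict positivity for all $s>0$ of the left and right exit-measure contributions of the subtrees grafted after $\tau_s$, or a covering argument over the countable family of grafted subtrees. For fairness, the paper's own proof is silent on this case: it applies $(F)$ only to distinct points of $\mathcal{T}_{\infty}$ and implicitly takes for granted that $P$ contains no spine point other than $0$; so your extra care is warranted, but the step is not yet complete as written.
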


\begin{proof}~
\\
\\
$\rm(i)$ Since  $\mathbb{R}_{+}\ni t\mapsto \gamma_{\infty}(t)$ is a geodesic path it has to be injective. Moreover, as the only leaf on the spine $[0,\infty)$ is $0$, we can apply $(F)$ to deduce that $t\in \mathbb{R}_{+}\mapsto \Pi(t)$ is also injective and  that $\Pi([0,\infty))\cap \Pi(P)\subset\{0\}$. 
\\
\\
$\rm(ii)$ As a simple consequence of $(F)$, a point $x$ belongs to the boundary of $\Pi(\mathcal{L})$ iff it belongs to $\Pi([0,\infty))$ or to $\Pi(P^{(\ell)})=\Pi(P)$, and similarly if $\mathcal{L}$ is replaced by $\mathcal{R}$. Consequently:
\[\text{Int}\big(\Pi(\mathcal{L})\big)=\Pi(\mathcal{L})\setminus\big(\Pi([0,\infty))\cup \Pi(P)\big)\:\:,\:\:\text{Int}\big(\Pi(\mathcal{R})\big)=\Pi(\mathcal{R})\setminus\big(\Pi([0,\infty))\cup \Pi(P)\big).\]
Thanks again to $(F)$ we have $\text{Int}\big(\Pi(\mathcal{L})\big)\cap \text{Int}\big(\Pi(\mathcal{R})\big)=\emptyset$. Since $\mathcal{M}_{\infty}$ has the topology of the complex plane and  $\mathcal{M}_{\infty}\setminus \big(\Pi([0,\infty))\cup \Pi(P)\big)$ is the union of $\text{Int}\big(\Pi(\mathcal{L})\big)$ and $\text{Int}\big(\Pi(\mathcal{R})\big)$ the desired result follows.
\end{proof}
\noindent Let us introduce the subclass of separating cycles that will play an important role. We define the set $\mathcal{A}$  of all paths $\gamma:[t,t^{\prime}]\mapsto \mathcal{M}_{\infty}$ such that:
\begin{itemize}
\item[$\bullet$]
$\gamma(t)=\gamma(t^{\prime})$ is in $\Pi(P)$ and $\gamma$ does not hit $0$;
\item[$\bullet$]
 For every $t\leq s<s^{\prime}\leq t^{\prime}$, we have $\gamma(s)=\gamma(s^{\prime})$ if and only if $(s,s^{\prime})=(t,t^{\prime})$;
 \item[$\bullet$] There exist two times $t_{1}\leq t_{2}$ in $[t,t']$, such that $\gamma(t_{1}),\gamma(t_{2})\in \Pi([0,\infty))$, $(\gamma(t))_{t\in[t_1,t_2]}$
does not intersect $\Pi(P)$, and 
$\gamma(s)\in \Pi(\mathcal{R})$ (resp. $\gamma(s)\in \Pi(\mathcal{L})$) for every $s\in[t,t_{1}]$  (resp. for every $s\in[t_{2},t']$).
\end{itemize}

\begin{figure}[!h]
 \begin{center}
    \includegraphics[height=6.5cm,width=10cm]{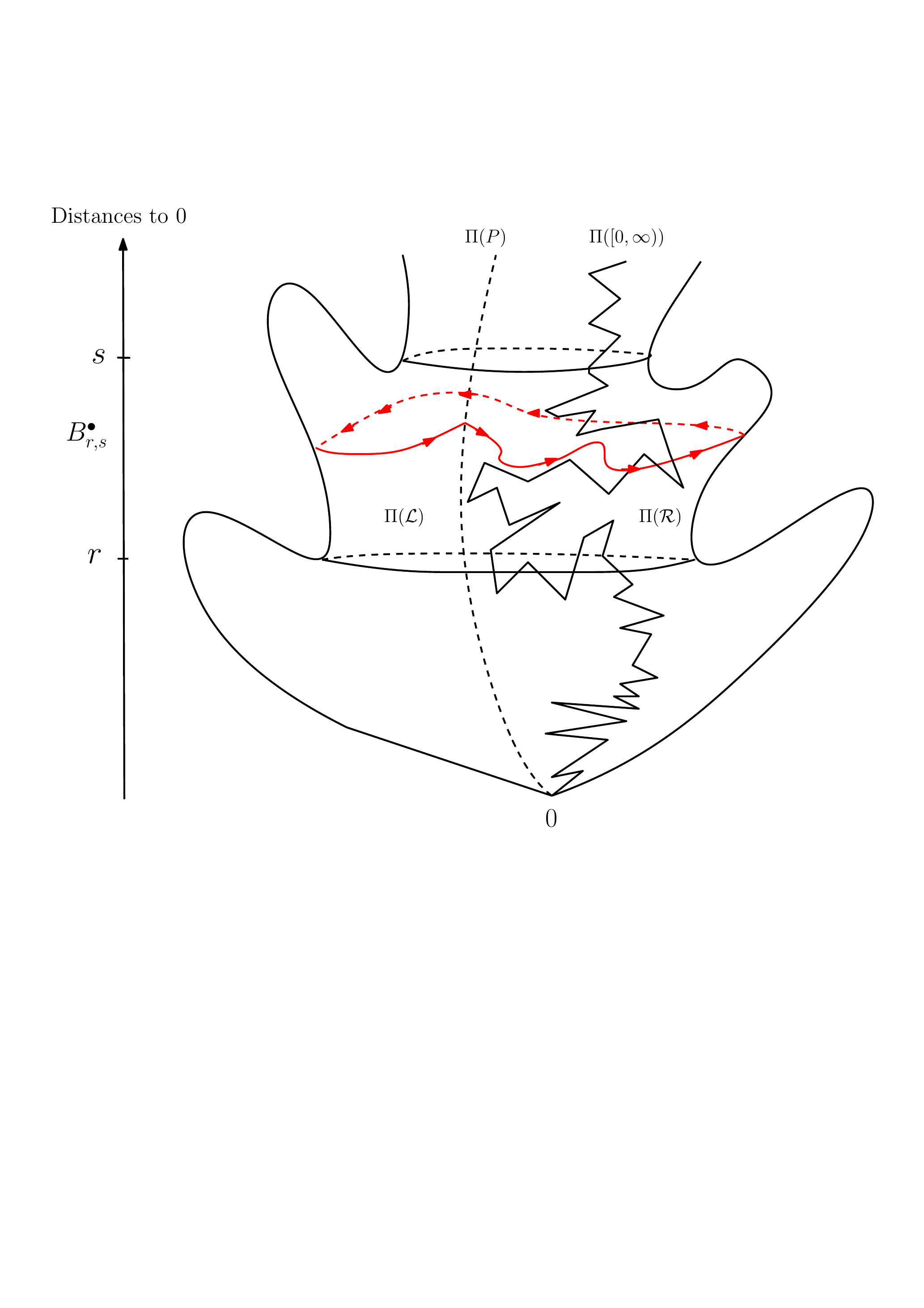}
 \caption{\label{fig:A}Cactus representation of the Brownian plane. The vertical distances represent the distances to the root $0$. In red a path of $\mathcal{A}$ taking values in $B_{r,s}^{\circ}$.}
 \end{center}
 \end{figure}
Using Lemma \ref{Topological_lemma} and the fact that $\mathcal{M}_{\infty}$ is homeomorphic to $\mathbb{C}$, one easily verifies that any path $\gamma$ in $\mathcal{A}$ is a separating cycle. So to give an upper bound for $L_{r,s}$ for $r<s$, it is sufficient to construct a path $\gamma\in \mathcal{A}$ taking values in $B_{r,s}^{\circ}$.  See Figure \ref{fig:A} for an illustration. In the next lemma we explain why we can restrict our attention to the subclass $\mathcal{A}$.

\begin{lem}\label{sepa_A}
$\Theta_{0}$-a.s.~for every separating cycle $\gamma$ there exists a path $\gamma^{\prime}$ in $\mathcal{A}$ such that $\Delta(\gamma^{\prime})\leq \Delta(\gamma)$. Moreover if $\gamma$ takes values in $B_{r,s}^{\circ}$ then the path  $\gamma^{\prime}$ also takes values in $B_{r,s}^{\circ}$.
\end{lem}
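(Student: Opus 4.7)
My plan is to start with an arbitrary separating cycle $\gamma$ and modify it step by step, without increasing its length, into an element of $\mathcal{A}$.

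First, I would argue that $\gamma$ must intersect $\Pi(P)$: indeed, $\Pi(P)$ is the range of the geodesic $\gamma_{\infty}$ from $0$ to infinity in $\mathcal{M}_{\infty}$, and any cycle separating $0$ from infinity must meet such a path. By cyclic reparametrization I may assume $\gamma(t)=\gamma(t')=\gamma_{\infty}(c)$ for some $c>0$, so that $\gamma$ becomes a loop based at a point of $\Pi(P)\setminus\{0\}$.

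Next, I would eliminate internal visits of $\gamma$ to $\Pi(P)$ by a cut-and-paste procedure using $\gamma_{\infty}$. If $\gamma(s)=\gamma_{\infty}(a)$ for some $s\in(t,t')$, splitting $\gamma$ at $s$ and closing each half with the geodesic segment $\gamma_{\infty}|_{[a\wedge c,a\vee c]}$ (of length $|c-a|$) produces two sub-cycles based at $\gamma_{\infty}(c)$. By the geodesic property \eqref{geo_p}, each arc of $\gamma$ from $\gamma_{\infty}(c)$ to $\gamma_{\infty}(a)$ has length at least $|c-a|$, so each of the two resulting sub-cycles has length at most $\Delta(\gamma)$. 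A topological argument, using that $\mathcal{M}_{\infty}$ has the topology of $\mathbb{C}$, shows that at least one of them still separates $0$ from infinity. Iterating, I reduce to the case where $\gamma$ visits $\Pi(P)$ only at its endpoints, while removing self-intersections along the way (which only shortens $\gamma$ and preserves separation).

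Then I would set $t_1:=\inf\{s>t:\gamma(s)\in\Pi([0,\infty))\}$ and $t_2:=\sup\{s<t':\gamma(s)\in\Pi([0,\infty))\}$; both exist because the spine image $\Pi([0,\infty))$ is another path from $0$ to infinity that $\gamma$ must cross. The arcs $\gamma|_{[t,t_1]}$ and $\gamma|_{[t_2,t']}$ avoid both $\Pi(P)$ (by the previous reduction) and $\Pi([0,\infty))$ except at their endpoints, so by Lemma \ref{Topological_lemma}(ii) each of them is contained in one of the closed sides $\Pi(\mathcal{L})$ or $\Pi(\mathcal{R})$. A parity argument on the number of crossings of $\Pi([0,\infty))$ by $\gamma$, combined with the separating property, shows that (possibly after reversing the parametrization) $\gamma|_{[t,t_1]}\subset\Pi(\mathcal{R})$ and $\gamma|_{[t_2,t']}\subset\Pi(\mathcal{L})$, yielding the required element of $\mathcal{A}$.

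The main difficulty is the selection step in the cut-and-paste: one must verify that at least one of the two sub-cycles obtained by splitting at an internal visit to $\Pi(P)$ is still separating. This requires a careful analysis of the local topology of $\gamma$ around $\gamma_{\infty}(a)$ and of its winding around the geodesic, using the planarity of $\mathcal{M}_{\infty}$ together with property $(F)$. Finally, the condition $\gamma([t,t'])\subset B_{r,s}^{\circ}$ is preserved throughout: every geodesic arc of $\gamma_{\infty}$ used in the modifications has endpoints $\gamma_{\infty}(a)$ with $a\in(r,s)$, and since $\gamma_{\infty}(a)$ lies at distance exactly $a$ from $0$, these arcs stay in $B_{r,s}^{\circ}$.
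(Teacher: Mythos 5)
Your outline has a genuine gap, and it sits exactly where you admit it does: the selection step of the cut-and-paste, on which everything else depends. Three separate problems arise. First, the assertion that at least one of the two sub-loops obtained by splitting at an internal visit to $\Pi(P)$ still separates $0$ from infinity is not proved; it can be made to work via additivity of winding numbers, but as written it is the heart of the lemma and is left as a claim. Second, even granting it, the iteration need not terminate: $\{t:\gamma(t)\in \Pi(P)\}$ is merely a closed subset of the circle and may have infinitely many complementary intervals, so splitting one internal visit at a time does not, after finitely many steps, produce a loop meeting $\Pi(P)$ only at its base point; you would need a limiting argument preserving both the length bound and the separating property. Third, injectivity is lost and never recovered: once you paste an arc of $\gamma_\infty$, the retained arc of $\gamma$ may cross it, so the resulting sub-loop is no longer a separating cycle in the sense of this paper (which requires injectivity), nor is it eligible for membership in $\mathcal{A}$; the parenthetical remark that removing self-intersections ``preserves separation'' is itself a nontrivial unproved claim.

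The paper's proof avoids all three issues by not iterating at all. It decomposes the original $\gamma$ into its excursions away from $\Pi(P)$, i.e.\ the connected components of $\{t:\gamma(t)\notin\Pi(P)\}$, and proves a single claim $(C)$: some excursion starts in $\Pi(\mathcal{R})$ and ends in $\Pi(\mathcal{L})$, or conversely. This is precisely your ``parity argument'', made rigorous by a homeomorphism $h:\mathcal{M}_\infty\to\mathbb{C}$ sending $\Pi([0,\infty))$ and $\gamma_\infty(\mathbb{R}_+)$ to the two half-axes and the two sides to the half-planes, together with a continuous angle function $\theta$ for $h\circ\gamma$: separation forces $\theta(1)=\pm 2\pi$, and the piece of $\gamma$ between the last time $\theta=0$ and the first subsequent time $\theta=\pm2\pi$ is a single excursion that switches sides. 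Closing that one excursion by two arcs of the geodesic $\gamma_\infty$ already yields an element of $\mathcal{A}$; the length bound is immediate because the discarded arcs of $\gamma$ join the same endpoints as the geodesic arcs and hence are at least as long, and the $B_{r,s}^{\circ}$ statement follows since those endpoints lie on $\gamma_\infty((r,s))$. So your endgame is the right idea, but it should be applied once, directly to the excursion decomposition of the original cycle, rather than after an unjustified iterative reduction.
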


\begin{proof}
Let $(\gamma(t))_{t\in[0,1]}$ be a separating cycle. Since $\gamma$ does not hit $0$, there exist $r<s$ such that $\gamma$ takes values in $B_{r,s}^{\circ}$. In what follows we fix $r<s$ such that  $\gamma$ stays in $B_{r,s}^{\circ}$. Our goal is to show that there exists $\gamma^{\prime}\in \mathcal{A}$ taking values in $B_{r,s}^{\circ}$ such that $\Delta(\gamma^{\prime})\leq \Delta(\gamma)$. First notice that since the path $(\gamma_{\infty}(t))_{t\geq 0}$ connects $0$ and $\infty$, the range of $\gamma$ has to intersect  $\Pi(P)=\gamma_{\infty}\big(\mathbb{R}_{+}\big)$. Without loss of generality we may and will assume that $\gamma(0)=\gamma(1) \in \Pi(P)$.  Let $(t_{i},t_{i}^{\prime})_{i\in \mathcal{I}}$ be  the connected components of  $\{t\in[0,1]:\:\gamma(t)\notin \Pi(P)\}$ and to simplify notation set $\gamma^{i}:=\gamma_{[t_{i},t_{i}^{\prime}]}$. Remark that  $\gamma^{i}$ hits $\Pi(P)$ only at times $t_{i}$ and $t_{i}^{\prime}$. In particular, since  $\gamma$ does not hit $0$ we can use Lemma \ref{Topological_lemma} to obtain that for every $i\in \mathcal{I}$ there exists $\epsilon>0$ such that:
\[\forall t\in [0,\epsilon],\:\gamma^{i}(t_{i}+t)\in\Pi(\mathcal{R})\:\:\text{or}\:\:\forall t\in [0,\epsilon],\:\gamma^{i}(t_{i}+t)\in\Pi(\mathcal{L}).\]
In the first case we say that $\gamma^{i}$ starts in $\Pi(\mathcal{R})$ and in the second case that $\gamma^{i}$ starts in $\Pi(\mathcal{L})$. Similarly  by Lemma \ref{Topological_lemma} there exists $\epsilon>0$ such that:
\[\forall t\in [0,\epsilon],\:\gamma^{i}(t_{i}^{\prime}-t)\in\Pi(\mathcal{R})\:\:\text{or}\:\:\forall t\in [0,\epsilon],\:\gamma^{i}(t_{i}^{\prime}-t)\in\Pi(\mathcal{L}).\]
In the first case we say that $\gamma^{i}$ ends in $\Pi(\mathcal{R})$ and in the second case that $\gamma^{i}$ ends in $\Pi(\mathcal{L})$. Then since $\gamma$ is a separating cycle we claim that
\\
\\ 
$(C)$: there exists $i\in \mathcal{I}$ such that $\gamma^{i}$ starts in $\Pi(\mathcal{R})$ and ends in  $\Pi(\mathcal{L})$, or  starts in $\Pi(\mathcal{L})$ and ends in  $\Pi(\mathcal{R})$.
\\
\\
Let us explain why $(C)$ holds. 
Thanks to Lemma \ref{Topological_lemma}, we can find a homeomorphism $h:\mathcal{M}_{\infty}\to \mathbb{C}$ such that $h(\Pi(\mathbb{R}_{+}))=\mathbb{R}_{-}$, $h(\gamma_{\infty}(\mathbb{R}_{+}))=\mathbb{R}_{+}$, and
$h(\Pi(\mathcal{L}))$ and $h(\Pi(\mathcal{R}))$ are the upper and lower half-planes. In particular, since $h(0)=0$, the injective cycle $h\circ\gamma$ separates $0$ from infinity in the complex plane $\mathbb{C}$. Let $(\rho(t),\theta(t))_{t\in[0,1]}$ be two continuous functions from $[0,1]$ into $\mathbb{R}_{+}$ such that $h(\gamma(t))=\rho(t)\exp(i\theta(t))$. Since $\gamma(0)\in \gamma_{\infty}(\mathbb{R}_{+})$, we can take $\theta(0)=0$ which determines in a unique way the pair $(\rho(t),\theta(t))_{t\in[0,1]}$. Since $h\circ\gamma$ separates $0$ from infinity in $\mathbb{C}$, an application of Jordan's theorem shows that we must have $\theta(1)\in\{2\pi,-2\pi\}$. Suppose that $\theta(1)=2\pi$ for
definiteness and set $s_0:=\sup\{t\in[0,1]:\theta(0)=0\}$
and $s_1=\inf\{t\in[s_0,1]:\theta(t)=2\pi\}$. Necessarily there exists $i$ such that $\gamma^{i}=(\gamma(r))_{r\in[s_0,s_1]}$, and the ``excursion''   $\gamma^{i}$ starts in $\Pi(\mathcal{R})$ and ends in  $\Pi(\mathcal{L})$ or  conversely.
\\
\\
Let us derive the lemma from the claim $(C)$. Up to replacing $\gamma$ by $(\gamma(1-t))_{t\in[0,1]}$, we can assume
that there exists $i\in \mathcal{I}$ such that $\gamma^{i}$ starts in $\Pi(\mathcal{R})$ and ends in $\Pi(\mathcal{L})$. Since $\gamma(0),\gamma^{i}(t_{i}),\gamma^{i}(t_{i}^{\prime})\in \Pi(P)$, we can consider the geodesic path  $g$ (resp. $g^{\prime}$) taking values in $\Pi(P)$ starting at $\gamma(0)$ and ending at  $\gamma(t_{i})$ (resp. starting at $\gamma(t_{i}^{\prime})$ and ending at $\gamma(1)=\gamma(0)$). The concatenation of $g^{\prime}$, $\gamma^{i}$ and $g$ gives a path $\gamma^{\prime}$ in $\mathcal{A}$, which is shorter than $\gamma$. Moreover as $\gamma$ takes values in $B_{r,s}^{\circ}$ we have $\gamma(0),\gamma^{i}(t_{i}),\gamma^{i}(t_{i}^{\prime})\in \gamma_{\infty}\big((r,s)\big)$ and consequently $g$ and $g^{\prime}$ takes values in $\gamma_{\infty}\big((r,s)\big)$. We conclude that the concatenation of $g^{\prime}$, $\gamma^{i}$ and $g$ takes values in  $B_{r,s}^{\circ}$.
\end{proof}
\subsection{Lower bound for the tail of $L_{1}$ near $0$}\label{bound_L_r_s}
In this section, for every $0<r<s$, we construct an explicit path in $\mathcal{A}$ taking values in $B_{r,s}^{\circ}$. This gives an upper bound for $L_{r,s}$. We will use this bound to obtain Theorem \ref{tail} $\rm(i)$ and the lower bound for point $\rm(ii)$ of Theorem \ref{tail}.
\\
\\
We start with some notation. Let $u,v\in \mathcal{T}_{\infty}$ and let $t,t'\in \mathbb{R}$ be chosen in a unique way
so that $\mathcal{E}_t=u$, $\mathcal{E}_{t'}=v$ and $[t,t']$ is as small as possible (recall
our special convention for $[t,t']$ when $t>t'$). Suppose that $t\leq t'$. Recall
that $[u,v]_{\mathcal{T}_\infty}=\{\mathcal{E}_r:r\in[t,t']\}$. Let $M_{u,v}:=\inf \limits_{[u,v]_{\mathcal{T}_{\infty}}}\Lambda$ and, for every $0\leq r\leq \Lambda_{u}-M_{u,v}$, set
\[\gamma_{u,v}(r):=\Pi\Big(\mathcal{E}_{\inf\big\{r^{\prime}\in[t,t']:\:\Lambda_{r^{\prime}}=\Lambda_{u}-r\big\}}\Big)\]
and for every $\Lambda_{u}-M_{u,v}< r\leq \Lambda_{u}+\Lambda_{v}-2M_{u,v}$
\[\gamma_{u,v}(r):=\Pi\Big(\mathcal{E}_{\sup\big\{r^{\prime}\in[t,t']:\:\Lambda_{r^{\prime}}=r+2M_{u,v}-\Lambda_{u}\big\}}\Big).\]
By construction $\Delta^{\circ}(\gamma_{u,v}(r_{1}),\gamma_{u,v}(r_{2}))=|r_{1}-r_{2}|$ as soon as $r_{1},r_{2}\in (0,\Lambda_{u}-M_{u,v})$ or $r_{1},r_{2}\in (\Lambda_{u}-M_{u,v}~,~\Lambda_{u}+\Lambda_{v}-2 M_{u,v})$. In particular, applying \eqref{major}, we deduce that the restriction of $\gamma_{u,v}$ to $[0,\Lambda_{u}-M_{u,v}]$ or $[\Lambda_{u}-M_{u,v},\Lambda_{u}+\Lambda_{v}-2M_{u,v}]$ is a geodesic. Hence $\gamma_{u,v}$ is a  path with length $\Lambda_{u}+\Lambda_{v}-2\inf \limits_{[u,v]_{\mathcal{T}_{\infty}}}\Lambda$. Remark that the range of $\gamma_{u,v}$ is contained in $\Pi([u,v]_{\mathcal{T}_{\infty}})$. In particular, if $u,v\in \mathcal{R}$ (resp. $u,v\in \mathcal{L}$)  the range of $\gamma_{u,v}$ is contained in $\Pi(\mathcal{R})$ (resp. $\Pi(\mathcal{L})$) since $[u,v]_{\mathcal{T}_\infty}=\{\mathcal{E}_r:r\in[t,t']\}$. 
\\
\\
Finally, for $r<s$ set:
\[K_{r}^{s}:=\{u\in \partial \mathcal{T}_{\infty}^{s}:\:\text{there exists}\: v\in \mathcal{T}_{\infty}\:\text{such that} \:\:u\in [\![v,\infty[\![_{\mathcal{T}_{\infty}} \:\text{and}\:\: \Lambda_{v}\leq r\}.\]
The continuity of the label function $u\mapsto \Lambda_{u}$ from $\mathcal{T}_{\infty}$ into $\mathbb{R}$ and the last assumption in $(H_{1})$ imply that the set $K_{r}^{s}$ is finite $\Theta_{0}$-a.s. Recall the definition \eqref{tau} of $(\tau_{a})_{a\geq 0}$ and remark that $\tau_{s}$ is the unique element of $K_{r}^{s}$ belonging to the spine $[0,\infty)$.
Write $N_{r}^{s}:=\# K_{r}^{s}-1$ for the number of elements of $K_{r}^{s}$ not belonging to the spine.
\begin{prop}\label{L_domine_par_N}
For every $r<s$, $\Theta_{0}\text{-a.s.}$, we have:
\begin{equation}\label{L_tilde_L_1}
  L_{r,s}\leq 2 (N_{r}^{s}+1) (s-r).  
\end{equation}
\end{prop}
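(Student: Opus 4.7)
The plan is to construct, for every $\eta>0$, a path in $\mathcal{A}$ contained in $B_{r,s}^\circ$ of length at most $2(N_r^s+1)(s-r)+\eta$, which together with Lemma \ref{sepa_A} will give \eqref{L_tilde_L_1}. First I would enumerate $K_r^s=\{w_0,w_1,\ldots,w_{N_r^s}\}$ with $w_0=\tau_s$, in the cyclic order in which they appear along $\partial\mathcal{T}_\infty^s$---equivalently, along the injective cycle parametrizing $\partial B_s^\bullet$ via \eqref{injective_boundary}.

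For each consecutive pair $(w_i,w_{i+1})$ (indices mod $N_r^s+1$), the building block is the path $\gamma_{w_i,w_{i+1}}$ introduced just before the statement, whose length equals $2(s-M_{w_i,w_{i+1}})$ with $M_{w_i,w_{i+1}}=\inf_{[w_i,w_{i+1}]_{\mathcal{T}_\infty}}\Lambda$, and whose image is contained in $\Pi\big([w_i,w_{i+1}]_{\mathcal{T}_\infty}\big)$. The core of the proof is then the claim that $M_{w_i,w_{i+1}}\geq r$ for every $i$. I would argue by contradiction: assume $M_{w_i,w_{i+1}}<r$ and pick $v\in[w_i,w_{i+1}]_{\mathcal{T}_\infty}$ with $\Lambda_v<r$; following the geodesic ray $\llbracket v,\infty\llbracket_{\mathcal{T}_\infty}$, the continuity of the label function and the fact that labels tend to infinity at infinity let me define the first point $u^\star$ on this ray after which labels stay strictly above $s$. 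Then $\Lambda_{u^\star}=s$, so $u^\star\in\partial\mathcal{T}_\infty^s$, and $v$ certifies $u^\star\in K_r^s$. The identification \eqref{injective_boundary} of the injective cycle tracing $\partial B_s^\bullet$ with the exploration of $\partial\mathcal{T}_\infty^s$ then forces $u^\star$ to appear strictly between $w_i$ and $w_{i+1}$ in that cyclic order, contradicting the consecutivity of $w_i$ and $w_{i+1}$.

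Granted the key claim, the concatenation $\tilde g:=\gamma_{w_0,w_1}\cdot\gamma_{w_1,w_2}\cdots\gamma_{w_{N_r^s},w_0}$ is a closed path of total length at most $2(N_r^s+1)(s-r)$ with image in $\Pi\big(\bigcup_{i}[w_i,w_{i+1}]_{\mathcal{T}_\infty}\big)\subset B_s^\bullet$; it passes through $\Pi(\tau_s)\in\Pi([0,\infty))$ and, as it goes once around $\partial B_s^\bullet$, it visits both $\Pi(\mathcal{L})$ and $\Pi(\mathcal{R})$. This $\tilde g$ however touches $\partial B_s^\bullet$ at each $\Pi(w_i)$ and may touch $\partial B_r^\bullet$ whenever some $M_{w_i,w_{i+1}}=r$, so it need not lie in the open set $B_{r,s}^\circ$ nor satisfy the injectivity and ordering conditions required for $\mathcal{A}$. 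I would fix this with a standard perturbation: for small $\delta>0$, truncate each $\gamma_{w_i,w_{i+1}}$ at labels in $[r+\delta,s-\delta]$ and reconnect the truncated pieces by short arcs routed through $\gamma_\infty(s-\delta)$ on the spine, producing an element $g_\delta$ of $\mathcal{A}$ contained in $B_{r,s}^\circ$ whose length is at most $2(N_r^s+1)(s-r)+O(\delta)$; letting $\delta\downarrow 0$ yields the desired bound.

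The main obstacle will be the key claim, specifically two sub-points: (i) that the candidate point $u^\star$ built from $v$ really belongs to $\partial\mathcal{T}_\infty^s$, which uses property $(F)$ and a careful analysis of the label function along $\llbracket v,\infty\llbracket_{\mathcal{T}_\infty}$, and (ii) that $u^\star$ actually falls strictly between $w_i$ and $w_{i+1}$ in the correct cyclic order on $\partial B_s^\bullet$; both points hinge on the description \eqref{injective_boundary} of $\partial B_s^\bullet$ through the exploration of $\partial\mathcal{T}_\infty^s$.
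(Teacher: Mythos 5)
Your construction is essentially the paper's: you join consecutive points of $K_r^s$ by the label--descent/ascent paths $\gamma_{u,v}$, prove that the label infimum between consecutive points is at least $r$ by producing an intermediate element of $K_r^s$ from any low-labelled point (this is exactly the step the paper's proof treats with ``by construction''), and count $N_r^s+1$ blocks of length at most $2(s-r)$. There are, however, two concrete soft spots in the way you close the argument. First, exactly one of your cyclic pairs --- the one whose connecting interval wraps around through $\pm\infty$, i.e.\ through the cut point $\gamma_\infty(s)$ of $\partial B_s^\bullet$ --- is not covered by the definition of $\gamma_{u,v}$, which is only given when the minimal interval $[t,t']$ is a genuine interval with $t\leq t'$. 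You would need to extend that definition to wrap-around intervals and recheck continuity at the turning point and the length formula. The paper avoids this by cutting that arc at a point $\gamma_\infty(R)$ of the distinguished geodesic, with $R>r$ the label infimum over the wrap-around region, and this choice is not cosmetic: it also places a point of the cycle on $\Pi(P)$, which is what membership in $\mathcal{A}$ (hence the separating property) requires. Your cycle, as built, need not meet $\Pi(P)$ at all, and ``it visits both $\Pi(\mathcal{L})$ and $\Pi(\mathcal{R})$'' is strictly weaker than the three conditions defining $\mathcal{A}$; also, Lemma \ref{sepa_A} is not what an upper bound needs --- what is used is the remark that every element of $\mathcal{A}$ is a separating cycle. (In your key claim, do not forget the case where the point $u^\star$ obtained from $v$ lies on the spine, i.e.\ $u^\star=\tau_s$: there one argues instead that $v$ is then explored between the two visits of $\tau_s$, a region meeting your connecting intervals only at times where the label equals $s$.)

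Second, the perturbation step is off as described. The lower truncation at label $r+\delta$ is unnecessary --- your contradiction argument in fact gives $M_{w_i,w_{i+1}}>r$ strictly, since the infimum is attained (labels tend to infinity along the exploration) --- and it is harmful: when $M_{w_i,w_{i+1}}<r+\delta$ it cuts a block into two arcs that your recipe never reconnects. Moreover ``reconnect the truncated pieces by short arcs routed through $\gamma_\infty(s-\delta)$'' cannot work: the loose ends created by the upper truncation sit near the various $\Pi(w_i)$, which are scattered along $\partial B_s^\bullet$ and in general nowhere near $\gamma_\infty(s-\delta)$, so such detours are neither short nor compatible with injectivity. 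What is needed (and what the paper also invokes, admittedly briefly) is a local modification near each of the finitely many touches of $\partial B_s^\bullet$, by arcs of length $O(\delta)$ which are checked to stay inside $\mathrm{Int}(B_s^\bullet)\setminus B_r^\bullet\subset B_{r,s}^\circ$; one should also record, as the paper does, that the cycle avoids the whole hull $B_r^\bullet$ (not merely the set of points with label at most $r$), because it never meets $\partial B_r^\bullet$ and contains a point, such as $\Pi(\tau_s)$ or $\gamma_\infty(R)$, lying outside $B_r^\bullet$. With these repairs your cyclic packaging does give the bound, but as written these steps are gaps rather than routine details.
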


\begin{proof}
Denote the elements of $K_{r}^{s}$ by $u_{1},\ldots,u_{N_{r}^{s}+1}$ in such a way that $$\inf\{t\in \mathbb{R}:~\mathcal{E}_{s}=u_{i}\}<\inf\{t\in \mathbb{R}:~\mathcal{E}_{s}=u_{j}\}$$ if $i<j$. Recall that $\tau_{s}$ is the only point of the spine in $K_{r}^{s}$. Let $1\leq k\leq N_{r}^{s}+1$ be the unique index such that $u_{k}=\tau_{s}$. Fix $i\in\{1,\ldots,N_{r}^{s}+1\}$ with $i\ne k$.  Next set, for every $i\in \{1,\ldots,N_{r}^{s}+1\}$,
\[f_{i}:=\inf \{t\in \mathbb{R}:\:\mathcal{E}_{t}=u_{i}\}\:\:\text{and}\:\:\ell_{i}:=\sup \{t\in \mathbb{R}:\:\mathcal{E}_{t}=u_{i}\}.\]
Since $u_{i}$ cannot be a leaf we have $f_{i}<\ell_{i}$.  Moreover,  the sequence $(f_{i})_{1\leq i\leq n}$
is increasing. Remark that, for every $1\leq i\leq N_{r}^{s}$, we have $\ell_{i}<f_{i+1}$ and $[\ell_{i},f_{i+1}]$ is the smallest interval 
such that $\mathcal{E}_{\ell_i}=u_i$ and $\mathcal{E}_{f_{i+1}}=u_{i+1}$. Therefore we can consider 
the path $\gamma_{u_i,u_{i+1}}$ as defined before the proposition. By construction, labels of points of the
form $\mathcal{E}_t$ with $t\in [\ell_{i},f_{i+1}]$ are greater than $r$, and it follows that the length of the
path $\gamma_{u_i,u_{i+1}}$ is smaller than $2(s-r)$. Finally set
\[R=\min\big(\inf_{(-\infty,f_{1}]}\Lambda;\inf_{[\ell_{N_{r}^{s}+1},\infty)}\Lambda\big)\]
which is greater than $r$ by construction. Set $u_{0}:=p_{\infty}^{(r)}(R)$ and $u_{N_{s}^{r}+2}:=p_{\infty}^{(\ell)}(R)$. Note in 
particular that $\Pi(u_0)=\Pi(u_{N_{s}^{r}+2})$. 
 Again we can consider the paths $\gamma_{u_{0},u_{1}}$ and $\gamma_{u_{N_{r}^{s}+1},u_{N_{r}^{s}+2}}$. Let $\gamma$  be the 
 cycle obtained by concatenating the paths $(\gamma_{u_{i},u_{i+1}})_{0\leq i\leq N_{r}^{s}+1}$. It is straightforward to
 verify using property $(F)$ that $\gamma$ is an injective cycle. The paths $\gamma_{u_{0},u_{1}}$ and $\gamma_{u_{N_{r}^{s}+1},u_{N_{r}^{s}+2}}$ have length $s-R\leq s-r$ and the paths $(\gamma_{u_{i},u_{i+1}})_{1\leq i\leq N_{r}^{s}}$ have length smaller than $2(s-r)$. Hence the length
 of $\gamma$ is smaller than $2 (N_{r}^{s}+1) (s-r)$. Moreover, by
 a preceding remark, the range of $\gamma_{u_{i},u_{i+1}}$ is contained in $\Pi(\mathcal{R})$ when $i<k$ and contained in  $\Pi(\mathcal{L})$ when $i\geq k$. Consequently, if $t_0$ is the time at which $\gamma$ visits the point $\Pi(u_k)$, we have $\gamma(t)\in \Pi(\mathcal{R})$ when $t\leq t_0$
 and $\gamma(t)\in\Pi(\mathcal{L})$ when $t\geq t_0$. We conclude that the path $\gamma$ is in $\mathcal{A}$ and in particular $\gamma$
 is a separating path. Finally, by construction the path $\gamma$ visits only points $v$ such that $\Lambda_v\in(r,s]$. Since 
$\gamma(0)= \gamma_\infty(R)$ does not belong to $B^\bullet_r$, it follows that $\gamma$ takes values in $B_{s}\setminus B_{r}^{\bullet}$. But now remark that $\gamma$ hits the boundary $\partial B_{s}$ only at the times at which it visits $\Pi(u_{1}),\ldots,\Pi(u_{N_{r}^{s}+1})$. Since $N_{r}^{s}$ is, $\Theta_{0}$-a.s., finite we deduce that $\gamma$ hits the boundary $\partial B_{s}$ a finite number of times. Consequently, by an approximation procedure, for every $\epsilon>0$ we can find  $\gamma^{\prime}\in \mathcal{A}$ taking values in $\text{Int}(B_{s}^{\bullet})\setminus B_{r}^{\bullet}\subset B_{r,s}^{\circ}$ such that $\Delta(\gamma^{\prime})<\Delta(\gamma)+\epsilon$. Consequently by the definition of $L_{r,s}$ as an infimum \eqref{def:L_r,s} we deduce that 
$$  L_{r,s}\leq \Delta(\gamma)\leq 2 (N_{r}^{s}+1) (s-r).  $$
\end{proof}

\noindent The proposition shows that it is enough to control $N_{r}^{s}$ in order to get an upper bound for $L_{r,s}$. Moreover since the Brownian plane is scale invariant we have:
\[N_{r}^{s} \overset{(d)}{=} N_{1}^{\frac{s}{r}}.\]
So we consider only $N_{1}^{s}$ with $s>1$. Thanks to \eqref{eq_Laplace_Z}, the law of $N_{1}^{s}$ can be easily determined:
\begin{prop}\label{LaplaceN}
For $s>1$ and $\lambda\geq 0$ we have
\[\Theta_{0}\big(\exp(-\lambda N_{1}^{s})\big)=\Big( 1+(1-\exp(-\lambda))\frac{2s-1}{(s-1)^{2}}\Big)^{-\frac{3}{2}}\]
\end{prop}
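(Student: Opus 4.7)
The plan is to identify $N_1^s$ combinatorially as a count of excursions below level $s$ of the Brownian snakes that compose $\mathcal{M}_\infty$, and then to combine the special Markov property with the Laplace transform \eqref{eq_Laplace_Z} of the exit measure $Z_s$.

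I would first establish the geometric identification of $K_1^s$. A point $u\in\partial\mathcal{T}_\infty^s$ is either the spine point $\tau_s$ (which always lies in $K_1^s$, since $\tau_1\in\llbracket 0,\infty\llbracket_{\mathcal{T}_\infty}$ has label $1$), or belongs to some subtree $\mathcal{T}^i$ attached at $t_i>\tau_s$. Indeed, if $t_i\leq\tau_s$, then the geodesic from $u$ to infinity crosses the spine at $\tau_s$, which has label $s$, violating the defining condition of $\partial\mathcal{T}_\infty^s$. In the latter case $u$ is exactly the exit point of a unique excursion of $\omega^i$ below level $s$, and any $v\in\mathcal{T}_\infty$ with $u\in\llbracket v,\infty\llbracket$ and $\Lambda_v\leq 1$ must sit in the subtree past $u$ (any $v$ lying on the spine, in another $\mathcal{T}^j$, or on the $t_i$-side of $u$ in $\mathcal{T}^i$ gives a geodesic to infinity avoiding $u$). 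Such a $v$ therefore belongs to the sub-snake associated with that excursion, so $u\in K_1^s$ if and only if this excursion satisfies $\omega_*\leq 1$. Consequently, after subtracting the spine contribution,
\[
N_1^s = \#\bigl\{\text{excursions below $s$ of snakes $\omega^i$, $t_i>\tau_s$, with $\omega_*\leq 1$}\bigr\}.
\]

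Next I would apply the version of the special Markov property recalled just after its statement in Section~\ref{subsection_Brownian_snake}: conditionally on $Z_s=Z_s^{s,\infty}$, these excursions below $s$ form the atoms of a Poisson point measure with intensity $Z_s\,\mathbb{N}_s(d\omega\cap\{\omega_*>0\})$. Hence, conditionally on $Z_s$, the integer $N_1^s$ is Poisson with parameter $Z_s\,\mathbb{N}_s(0<\omega_*\leq 1)$, and using \eqref{etoile} twice,
\[
\mathbb{N}_s(0<\omega_*\leq 1)=\frac{3}{2(s-1)^2}-\frac{3}{2s^2}=\frac{3(2s-1)}{2s^2(s-1)^2}.
\]
Integrating the conditional Laplace transform $\exp\bigl(-(1-e^{-\lambda})Z_s\,\mathbb{N}_s(0<\omega_*\leq 1)\bigr)$ against the Laplace transform of $Z_s$ given by \eqref{eq_Laplace_Z} yields
\[
\Theta_0(e^{-\lambda N_1^s})=\Bigl(1+\tfrac{2s^2}{3}\cdot(1-e^{-\lambda})\,\tfrac{3(2s-1)}{2s^2(s-1)^2}\Bigr)^{-3/2}=\Bigl(1+(1-e^{-\lambda})\,\tfrac{2s-1}{(s-1)^2}\Bigr)^{-3/2},
\]
which is the claimed identity.

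The delicate point, which I expect to require the most care, is the restriction $\{\omega_*>0\}$ in the Poisson intensity: it is exactly what produces the numerator $2s-1$ rather than $s^2$ (the unrestricted intensity $Z_s\mathbb{N}_s(\omega_*\leq 1)=Z_s\cdot 3/(2(s-1)^2)$ would give $s^2/(s-1)^2$). This restriction is inherited from the construction of $\mathcal{M}_\infty$, where the Poisson measures $\mathfrak{L}$ and $\mathfrak{R}$ have atoms satisfying $\omega^i_*>0$, so that the sub-excursions below $s$ are automatically confined to $\{\omega_*>0\}$ and the special Markov property has to be applied in its restricted form.
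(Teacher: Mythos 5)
Your proposal is correct and follows essentially the same route as the paper: identify $N_1^s$ as the number of excursions below level $s$ (of the snakes attached after $\tau_s$) whose minimum label is $\leq 1$, note that by the special Markov property this count is conditionally Poisson with parameter $Z_s\,\mathbb{N}_s(0<W_*\leq 1)=\frac{3}{2}\frac{2s-1}{s^2(s-1)^2}$ via \eqref{etoile}, and then integrate against the Laplace transform \eqref{eq_Laplace_Z} of $Z_s$. Your extra care about the restriction $\{\omega_*>0\}$ in the Poisson intensity matches the restricted form of the special Markov property already recorded in the preliminaries.
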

\begin{proof}
Let $s>1$, and write $\mathfrak{L}^{s,\infty}:=\sum_{i\in I_{s}}\delta_{s_{i}, \omega^{i}}\:\:;\:\:\mathfrak{R}^{s,\infty}:=\sum_{i\in J_{s}}\delta_{s_{i}, \omega^{i}}.$
Let $\tilde{I}_{s}$ (resp. $\tilde{J}_{s}$) be  the set of all indices $i\in I_{s}$ (resp. $j\in J_{s}$)  such that $\omega_{*}^{i}< s$. For every $i\in \tilde{I}_{s}\cup \tilde{J}_{s}$,  write $(\omega^{i,k})_{k\in \mathbb{N}}$ for the excursions of $\omega^{i}$ below $s$.
By the special Markov property, conditionally on $Z_{s}$, 
$$\sum \limits_{i\in \tilde{I}_{s}\cup \tilde{J}_{s}}\sum \limits_{k\in \mathbb{N}} \delta_{\omega^{i,k}}$$
is a Poisson point measure with intensity $Z_{s} \mathbb{N}_{s}(\cdot \cap \{W_{*}>0\})$. Moreover by definition:
\[N_{1}^{s}=\#\{(i,k)\in (\tilde{I}_{s}\cup \tilde{J}_{s})\times\mathbb{N}:\: \omega_{*}^{i,k}\leq 1 \}.\]
So conditionally on $Z_{s}$,  $N_{1}^{s}$ is distributed as a Poisson variable with intensity
$Z_{s} \mathbb{N}_{s}(0<W_{*}\leq 1)$. We can then apply \eqref{etoile} to obtain:
\begin{equation}\label{0<W<1}
\mathbb{N}_{s}(0<W_{*}\leq 1)=\frac{3}{2}(\frac{1}{(s-1)^{2}}-\frac{1}{s^{2}}) = \frac{3}{2} \frac{2s-1}{s^{2}(s-1)^{2}}.
\end{equation}
Using \eqref{eq_Laplace_Z} we get that for $\lambda\geq 0$:
\begin{align*}
\Theta_{0}\big(\exp (-\lambda N_{1}^{s})\big)&= \mathbb{E}[\exp(-(1-\exp(-\lambda))Z_{s}\mathbb{N}_{s}(0< W_{*}\leq 1) )\big) \\
&= \Big(1+(1-\exp(-\lambda))\frac{2s-1}{(s-1)^{2}}\Big)^{-\frac{3}{2}}.
\end{align*}
\end{proof}

\noindent Let us list some immediate properties of $N_{1}^{s}$.

\begin{lem}\label{propN}~\\
$\rm(i)$ For every $s>1$ we have $\Theta_{0}(N_{1}^{s}=0)= (\frac{s-1}{s})^{3}$.
\\
$\rm(ii)$ The law of $(s-1)^{2}N_{1}^{s}$ under $\Theta_{0}$  converges weakly to a Gamma distribution with parameter $\frac{3}{2}$ and mean $3/2$ when $s\downarrow 1$. Furthermore, $N^s_1$ tends to $0$  as $s\to\infty$, $\Theta_{0}$-a.s.
\\
$\rm(iii)$ For every  $s>1$ and  $q<\log(\frac{s^{2}}{2s-1})$ there exists a constant $C_{q}$ such that for all $r>0$:
\[\Theta_{0}(N_{1}^{s}>r)<C_{q} \exp(-qr).\]
Furthermore:
\[\limsup \limits_{u \to \infty} \frac{\log \big(\Theta_{0}(N_{1}^{s}>u)\big)}{u} = -\log(\frac{s^{2}}{2s-1}).\]
$\rm(iv)$ For every $s>1$ we have $\Theta_{0}\big(N_{1}^{s}\big)=\frac{3}{2} \frac{2s-1}{(s-1)^{2}}$.
\end{lem}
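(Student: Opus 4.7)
The plan is to deduce (i), (iv), and the weak convergence in (ii) directly from the Laplace transform of Proposition~\ref{LaplaceN}, namely
\[
\Theta_{0}(e^{-\lambda N_{1}^{s}}) = \bigl(1+(1-e^{-\lambda})\alpha_s\bigr)^{-3/2},\qquad \alpha_s := \frac{2s-1}{(s-1)^{2}}.
\]
For (i), letting $\lambda\to\infty$ and using $1+\alpha_s=s^{2}/(s-1)^{2}$ gives $\Theta_{0}(N_{1}^{s}=0)=((s-1)/s)^{3}$. For (iv), differentiating at $\lambda=0$ yields $\Theta_{0}(N_{1}^{s})=\tfrac{3}{2}\alpha_s=\tfrac{3(2s-1)}{2(s-1)^{2}}$. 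For the weak convergence in (ii), I substitute $\lambda(s-1)^{2}$ for $\lambda$: since $\alpha_s(1-e^{-\lambda(s-1)^{2}})=(2s-1)\lambda+O((s-1)^{2})\to\lambda$ as $s\downarrow 1$, the Laplace transform of $(s-1)^{2}N_{1}^{s}$ converges to $(1+\lambda)^{-3/2}$, the Laplace transform of a Gamma distribution with parameter $3/2$ and mean $3/2$.

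For the almost sure part of (ii), I establish that $s\mapsto N_{1}^{s}$ is $\Theta_{0}$-a.s.\ non-increasing on $(1,\infty)$. Given $s<s'$ and any $u'\in K_{1}^{s'}$, I pick a witness $v\in\mathcal{T}_\infty$ with $\Lambda_v\leq 1$ and $u'\in\llbracket v,\infty\llbracket$, and define $\phi(u')$ to be the first crossing of label $s$ encountered along the geodesic $\llbracket u',v\rrbracket$. Continuity of the label function ensures this crossing exists, and the construction guarantees that labels on $\llbracket\phi(u'),u'\rrbracket\setminus\{\phi(u')\}$ exceed $s$, while labels beyond $u'$ exceed $s'>s$; hence $\phi(u')\in\partial\mathcal{T}_\infty^{s}$ and $\phi(u')\in K_{1}^{s}$. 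The map $\phi$ is injective, because from $u=\phi(u')$ the point $u'$ is uniquely determined as the last crossing of level $s'$ along $\llbracket u,\infty\llbracket$. Since $\phi$ maps $\tau_{s'}$ to $\tau_{s}$, this gives $N_{1}^{s'}\leq N_{1}^{s}$. Combined with $\Theta_{0}(N_{1}^{s}\geq 1)=1-((s-1)/s)^{3}\to 0$ from (i), monotonicity upgrades convergence in probability to almost sure convergence.

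For (iii), Markov's inequality applied to the exponential moment $C_{q}:=\Theta_{0}(e^{qN_{1}^{s}})=(1+(1-e^{q})\alpha_s)^{-3/2}$, finite precisely when $q<\log(s^{2}/(2s-1))$, yields $\Theta_{0}(N_{1}^{s}>r)\leq C_{q}e^{-qr}$ and hence $\limsup u^{-1}\log\Theta_{0}(N_{1}^{s}>u)\leq -\log(s^{2}/(2s-1))$. For the matching lower bound I invert the probability generating function: factoring $1+(1-t)\alpha_s=(s^{2}/(s-1)^{2})(1-t(2s-1)/s^{2})$ and expanding by the generalized binomial theorem gives the explicit law
\[
\Theta_{0}(N_{1}^{s}=k)=\Bigl(\frac{s-1}{s}\Bigr)^{3}\binom{k+1/2}{k}\Bigl(\frac{2s-1}{s^{2}}\Bigr)^{k},
\]
and since $\binom{k+1/2}{k}$ grows only polynomially (of order $k^{1/2}$) by Stirling, $\Theta_{0}(N_{1}^{s}>r)$ is of order $r^{1/2}((2s-1)/s^{2})^{r}$ as $r\to\infty$, giving the matching lower bound. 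The most delicate step is the tree-theoretic monotonicity used for (ii), which requires careful handling of the interaction of geodesics to infinity with level sets of the label function; the remaining assertions reduce to direct manipulations of the Laplace transform from Proposition~\ref{LaplaceN}.
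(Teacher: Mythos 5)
Your proof is correct, and the skeleton is the same as the paper's: (i), the weak convergence in (ii), the Markov-inequality upper bound in (iii), and (iv) are all read off from Proposition \ref{LaplaceN} exactly as in the paper (for (iv) the paper computes $\Theta_0(N_1^s)=\mathbb{N}_s(0<W_*\leq 1)\,\Theta_0(Z_s)$ from the mixed Poisson representation rather than differentiating the Laplace transform, an immaterial difference). Where you genuinely diverge is in the two remaining sub-steps. For the a.s.\ statement in (ii), the paper disposes of it in one line: since $\Lambda_{\mathcal{E}_t}\to\infty$ as $|t|\to\infty$, only finitely many subtrees reach label $\leq 1$ and they are all grafted before $\tau_s$ once $s$ is large, so in fact $N_1^s=0$ eventually. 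Your route — a pathwise injection $K_1^{s'}\hookrightarrow K_1^{s}$ for $s<s'$ giving monotonicity of $s\mapsto N_1^s$, combined with $\Theta_0(N_1^s\geq 1)\to 0$ — is valid (the map $\phi$ depends on the choice of witness $v$, but any fixed choice works since recovering $u'$ as the last point of label $\leq s'$ on $\llbracket \phi(u'),\infty\llbracket_{\mathcal{T}_\infty}$ does not use the witness, and the plain inequality $\#K_1^{s'}\leq \#K_1^{s}$ already yields $N_1^{s'}\leq N_1^{s}$ without needing $\phi(\tau_{s'})=\tau_s$), but it is considerably heavier than needed and gives a weaker conclusion than the paper's "eventually zero". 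For the matching lower bound in (iii), the paper argues by contradiction from the divergence of the exponential moment at the critical $q=\log(s^2/(2s-1))$, whereas you extract the explicit negative binomial law from the generating function and read off the exact tail asymptotics; this buys a genuine limit rather than a limsup and is perfectly legitimate, with the small caveat that Proposition \ref{LaplaceN} is stated for $\lambda\geq 0$, so the identity for $\Theta_0(e^{qN_1^s})$ with $q>0$ should be justified either from the explicit law you derive (matching power-series coefficients of the probability generating function on $[0,1)$) or from the Poisson--Gamma representation the paper uses — a point worth making explicit but not a gap.
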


\begin{proof}~\\
$\rm(i)$ By Proposition \ref{LaplaceN}, we have
\begin{align*}
\Theta_{0}(N_{1}^{s}=0)&=\lim \limits_{\lambda \to \infty} \Theta_{0}\big(\exp(-\lambda N_{1}^{s})\big)=\big(\frac{s-1}{s}\big)^{3}
\end{align*}
(we can also use the fact that $\Theta_{0}(N_{1}^{s}=0)=\Theta_{0}(Z_{1}^{s,\infty}=0)$ and Proposition \ref{corZabc0}).
\\
\\
$\rm(ii)$ Again using Proposition \ref{LaplaceN}, we obtain:
\[\Theta_{0}\big(\exp(-\lambda(s-1)^{2} N_{1}^{s})\big) = \Big(1+(1-\exp(-\lambda(s-1)^{2}))\frac{2s-1}{(s-1)^{2}}\Big)^{-\frac{3}{2}}.\]
When $s$ goes to 1 the Laplace transform converges to $\lambda \mapsto(1+\lambda)^{-\frac{3}{2}}$ which is the Laplace transform of a Gamma distribution with parameter $3/2$ and mean $3/2$. The fact that $N^s_1$ tends to $0$  as $s\to\infty$, $\Theta_{0}$ a.s., immediately follows
from the property $\Lambda_{\mathcal{E}_t}\longrightarrow \infty$ as $|t|\to\infty$. 
\\
\\
$\rm(iii)$ For every $\lambda>0$ we have:
\[
\Theta_{0}\Big(\exp(\lambda N_{1}^{s})\Big)=\Theta_{0}\Big(\exp\big(Z_{s}(\exp(\lambda)-1) \mathbb{N}_{s}(0< W_{*}\leq 1)\big) \Big)\] 
because, conditionally on $Z_{s}$ , the variable  $N_{1}^{s}$  is distributed as a Poisson random variable with intensity $Z_{s} \mathbb{N}_{s}(0<W_{*}\leq 1)$. But $Z_{s}$ is a Gamma random variable with parameter $3/2$ and mean $s^{2}$ so the previous expectation is finite if and only if $$(\exp(-\lambda)-1)\mathbb{N}_{s}(0<W_{*}<1)<3/(2 s^{2})$$ or equivalently $\lambda<\log(\frac{s^{2}}{2s-1})$. The first part of $\rm(iii)$ then follows from 
the Markov inequality. On the other hand if we had:
\[\limsup \limits_{u \to \infty} \frac{\log \big(\Theta_{0}(N_{1}^{s}>u)\big)}{u} \leq \alpha <-\log(\frac{s^{2}}{2s-1})\]
this would contradict $\mathbb{E}[\exp(\log(\frac{s^{2}}{2s-1})N_{1}^{s})]=\infty$. This gives the second assertion of $\rm(iii)$.
\\
\\
$\rm(iv)$ We use again the fact that, under $\Theta_{0}$, conditionally on $Z_{s}$, $N_{1}^{s}$ is distributed as a Poisson random variable with intensity $Z_{s} \mathbb{N}_{s}(0<W_{*}\leq 1)$. We have:
\begin{align*}
\Theta_{0}\big(N_{1}^{s}\big)&=\mathbb{N}_{s}(0<W_{*}\leq 1) \Theta_{0}\big(Z_{s}\big)=\frac{3}{2} \frac{2s-1}{s^{2}(s-1)^{2}} \Theta_{0}\big(Z_{s}\big)= \frac{3}{2} \frac{2s-1}{(s-1)^{2}}
\end{align*} 
where in the second equality we use  \eqref{0<W<1} and in the last equality we use the fact that $Z_{s}$ has mean $s^{2}$. 
\end{proof}
\noindent As a direct consequence we derive Theorem \ref{tail} $\rm(i)$ from Lemma \ref{propN}.
\begin{proof}[Proof of Theorem \ref{tail} \rm(i)]
By \eqref{L_tilde_L_1}, for every $s>1$ we have  $L_{1}\leq 2(s-1)(N_{1}^{s}+1)$ $\Theta_{0}$-a.s.
Let $s>1$.  Lemma \ref{propN} gives that :
\begin{align*}
\limsup \limits_{u \to \infty} \frac{\log \big(\Theta_{0}(L_{1}>u)\big)}{u} &\leq \limsup \limits_{u \to \infty} \frac{\log \big(\Theta_{0}(N_{1}^{s}>\frac{u}{2(s-1)}-1)\big)}{u}\\
&\leq -\frac{1}{2(s-1)}\log(\frac{s^{2}}{2s-1} ).
\end{align*}
Since this holds  for every $s>1$, we obtain:
\[\limsup \limits_{u \to \infty} \frac{\log \big(\Theta_{0}(L_{1}>u)\big)}{u}\leq-\sup\limits_{s>1}\frac{1}{2(s-1)}\log(\frac{s^{2}}{2s-1} ).\]
\end{proof}
The rest of this section is devoted to the proof of the lower bound appearing in Theorem \ref{tail} $\rm(ii)$. The proof relies again in \eqref{L_tilde_L_1} but in a more technical way.  We state the following slightly stronger result:
\begin{prop}\label{borneinf}
There exists a positive constant, $c_{1}$, such that for every $\epsilon\in[0,1]$ and $r>0$:
\[\Theta_{0}\big(L_{r,3r}<\epsilon r)\geq c_{1} \epsilon^{2}.\]
\end{prop}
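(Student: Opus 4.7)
The plan is as follows. First, the scale invariance of the Brownian plane, together with the definition of $B^\bullet_r$, immediately gives $L_{r,3r}\overset{(d)}{=}rL_{1,3}$, so it suffices to prove that $\Theta_{0}(L_{1,3}<\epsilon)\geq c_1\epsilon^2$ for every $\epsilon\in(0,1]$.

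Next, I would exploit Proposition \ref{L_domine_par_N}: for every $s\in(1,3]$ the cycle constructed there takes values in $B_{1,s}^\circ\subset B_{1,3}^\circ$, which gives
\[
L_{1,3}\leq L_{1,s}\leq 2(N_{1}^{s}+1)(s-1).
\]
The natural idea is to pick $s=1+\delta$ and look at the event $\{N_{1}^{1+\delta}=0\}$, on which the upper bound becomes $2\delta$. Taking $\delta=\epsilon/2$ does give $L_{1,3}<\epsilon$, but Lemma \ref{propN}(i) only provides probability $((\delta)/(1+\delta))^3\sim \epsilon^3/8$, which falls short of the goal by a factor $\epsilon^{-1}$. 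Extracting this extra factor is the heart of the proof.

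To gain the missing $\epsilon^{-1}$, I would revisit the bound from Proposition \ref{L_domine_par_N} and track the random minimum $R$ in its proof rather than the crude estimate $R\geq 1$. On the event $\{N_{1}^{1+\delta}=0\}$, the cycle actually has length $2((1+\delta)-R)$, where $R$ is the minimum of labels attained on the two "outer" exploration intervals above $\tau_{1+\delta}$. By the special Markov property of the Brownian snake (applied through the Poisson structure of $\mathfrak{L},\mathfrak{R}$ above level $\tau_{1+\delta}$ and a conditioning on $Z_{1+\delta}$ through formulas \eqref{eq_Laplace_Z}--\eqref{eq_Z_sachant_Z}), one can compute the joint law of $N_{1}^{1+\delta}$ and of the minimum label reached by the snakes glued above $\tau_{1+\delta}$. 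Choosing $\delta$ of constant order (say $\delta_0\in(0,2)$ fixed) and asking that $(1+\delta_0)-R<\epsilon/2$ reduces the problem to the probability that \emph{no} snake glued above $\tau_{1+\delta_0}$ dips below the level $1+\delta_0-\epsilon/2$ — a Poisson event whose intensity is controlled by $\mathbb{N}_{y}(W_{*}<1+\delta_0-\epsilon/2)=3/(2(y-1-\delta_0+\epsilon/2)^2)$ integrated against the Bessel spine. Combining this with the bound \eqref{etoile} and a careful estimate of $\Theta_0(Z_{1+\delta_0}\leq c\epsilon^{4/3})$ using $\Theta_0(Z_{1+\delta_0}\leq z)\sim z^{3/2}$ should yield a lower bound of order $\epsilon^2$.

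The main obstacle, which I expect to be the quantitative core of the proof, is precisely to exhibit an explicit event of probability $\gtrsim \epsilon^2$ on which the cycle given by the proof of Proposition \ref{L_domine_par_N} is shorter than $\epsilon$: the basic route through $\{N_{1}^{1+c\epsilon}=0\}$ is off by a factor $\epsilon$, so the missing factor must come either from a sharper control on the height $R$ (via the Poisson intensity above the spine, controlled through \eqref{etoile}) or from a conditioning on $Z_{1+\delta_0}$ that exploits the density $\sim z^{1/2}$ of $Z_r$ near zero. The rest of the argument — measurability, passage to $B_{1,3}^{\circ}$ from $B_{1,1+\delta}^{\circ}$, and going back to the general $r$ by scaling — is routine.
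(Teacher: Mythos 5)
Your reduction by scaling and your use of Proposition \ref{L_domine_par_N} match the paper, and you correctly diagnose that the naive event $\{N_{1}^{1+\epsilon/2}=0\}$ only gives order $\epsilon^{3}$. However, the two mechanisms you propose to recover the missing factor $\epsilon^{-1}$ both fail, so the quantitative core of the proof is missing. First, at a fixed level $s=1+\delta_0$ the "refined" event $\{R>s-\epsilon/2\}$ (no snake glued above $\tau_{s}$ dips below $s-\epsilon/2$) is, up to null sets, exactly the event $\{N_{s-\epsilon/2}^{s}=0\}$: by the special Markov property this is the event that a Poisson variable with parameter $Z_{s}\,\mathbb{N}_{s}(0<W_{*}<s-\epsilon/2)=Z_{s}\bigl(\tfrac{6}{\epsilon^{2}}-\tfrac{3}{2s^{2}}\bigr)$ vanishes, and \eqref{eq_Laplace_Z} gives its probability exactly as $\bigl(4s^{2}/\epsilon^{2}\bigr)^{-3/2}=\epsilon^{3}/(8s^{3})$, i.e.\ still order $\epsilon^{3}$; tracking $R$ rather than $N$ gains nothing. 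Second, the conditioning-on-$Z$ variant does not help either: on $\{Z_{1+\delta_0}\leq c\,\epsilon^{4/3}\}$, which indeed has probability of order $\epsilon^{2}$, the Poisson parameter is still of order $\epsilon^{4/3}\cdot\epsilon^{-2}=\epsilon^{-2/3}\to\infty$, so the conditional probability that no excursion dips below $s-\epsilon/2$ tends to $0$; the Laplace-transform identity above shows there is no trade-off between $Z_{s}$ and the dipping event to be exploited at a single level.

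The idea you are missing is to use many levels rather than one. The paper fixes annuli of width $\epsilon/2$ at about $\epsilon^{-1}$ different heights (after scaling, the annuli $[(m+i)\epsilon/2,(m+i+1)\epsilon/2]$ for $0\leq i\leq m-1$ with $m=\lceil 2/\epsilon\rceil$), each giving a cycle of length at most $\epsilon$ on the event that the corresponding $N$ vanishes, and proves in Lemma \ref{lemborneinf} that the probability that \emph{at least one} of these $m$ events occurs is at least $c_{1}/m^{2}\gtrsim\epsilon^{2}$. The point is that each single annulus event has probability only of order $\epsilon^{3}$ (consistent with your computation and Lemma \ref{propN}), but there are $\sim\epsilon^{-1}$ of them, and the spine independence property together with a positive-correlation inequality for the events $\{Z_{m+i}^{m+i+1,\cdot}>0\}$ allows one to decompose the union into disjoint pieces whose probabilities essentially add up. Without some version of this multi-level argument (or another genuinely new input), your plan cannot reach the $\epsilon^{2}$ lower bound.
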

 
\noindent The factor $3$ is arbitrary and we will see in the proof that it can be replaced by any constant greater than $1$. It will be useful in what follows to note that for every $0<r<t$,  we have $\{N_{r}^{t}=0\}=\{Z_{r}^{t,\infty}=0\}\:$ $\Theta_{0}$-a.s. 
\noindent We are going to deduce Proposition \ref{borneinf}  from \eqref{L_tilde_L_1} and the following result:
\begin{lem}\label{lemborneinf}
There exists a positive constant $c_{1}$ such that for every $r>0$ and $m\in \mathbb{N}^{*}$:
\[\Theta_{0}\Big(\:\bigcup \limits_{i=0}^{m-1} \{ N_{(m+i)r}^{(m+i+1)r}=0\}\Big) \geq \frac{c_{1}}{m^{2}}\]
\end{lem}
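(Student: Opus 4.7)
The plan is to apply the Paley--Zygmund (second moment) inequality to the counting variable $X:=\sum_{i=0}^{m-1}\mathbbm{1}_{A_i}$, where $A_i:=\{N_{(m+i)r}^{(m+i+1)r}=0\}$. By the scale invariance of $\mathcal{M}_{\infty}$ I may take $r=1$, so $A_i=\{N_{m+i}^{m+i+1}=0\}=\{Z_{m+i}^{m+i+1,\infty}=0\}$ by the identity noted just before the statement. From Lemma \ref{propN}(i) together with the scaling of the Brownian plane, $\Theta_{0}(A_i)=(m+i+1)^{-3}$, hence $\mathbb{E}[X]\geq m/(2m+1)^3\geq c_0/m^2$ for some universal $c_0>0$.

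The core estimate is on $\Theta_{0}(A_i\cap A_j)$ for $0\leq i<j\leq m-1$. Write $s_k:=m+k$, $t_k:=m+k+1$. On $A_j$, every snake in $\mathfrak{L}^{t_j,\infty}+\mathfrak{R}^{t_j,\infty}$ has minimum label strictly greater than $s_j\geq s_i$, so its exit measure at $s_i$ vanishes; hence $Z_{s_i}^{t_j,\infty}=0$ on $A_j$, and
\[A_i\cap A_j=A_j\cap\{Z_{s_i}^{t_i,t_j}=0\}.\]
The crucial structural point is that these two events sit in independent $\sigma$-fields. Indeed, any point $u\in\partial\mathcal{T}_{\infty}^{t_j}$ lying in a snake attached before $\tau_{t_j}$ would have its geodesic to $\infty$ pass through the spine point $\tau_{t_j}$ (of label $t_j$), contradicting the defining strict inequality of $\partial\mathcal{T}_{\infty}^{t_j}$. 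Consequently $A_j$ is measurable with respect to $((X_{\tau_{t_j}+\ell})_{\ell\geq 0},\mathfrak{L}^{t_j,\infty},\mathfrak{R}^{t_j,\infty})$, whereas $\{Z_{s_i}^{t_i,t_j}=0\}$ is measurable with respect to $((X_{(\tau_{t_j}-\ell)\vee 0})_{\ell\geq 0},\mathfrak{L}^{0,t_j},\mathfrak{R}^{0,t_j})$. The spine independence property of $\mathcal{T}_{\infty}$ factorises the probability, and Proposition \ref{corZabc0} gives
\[\Theta_{0}(A_i\cap A_j)=\frac{1}{(m+j+1)^3}\cdot\Big(\frac{m+j+1}{(m+i+1)(j-i+1)}\Big)^3=\frac{1}{(m+i+1)^3(j-i+1)^3}.\]

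Summing yields $\mathbb{E}[X^2]\leq\bigl(1+2\sum_{k\geq 2}k^{-3}\bigr)\sum_{i=0}^{m-1}(m+i+1)^{-3}\leq C/m^2$ since the inner series converges, and the Paley--Zygmund inequality then delivers
\[\Theta_{0}\Big(\bigcup_{i=0}^{m-1}A_i\Big)=\Theta_{0}(X\geq 1)\geq\frac{\mathbb{E}[X]^2}{\mathbb{E}[X^2]}\geq\frac{c_0^2}{C}\cdot\frac{1}{m^2},\]
which is the required bound with $c_1:=c_0^2/C$. The main obstacle is the factorisation of $\Theta_{0}(A_i\cap A_j)$: once one recognises that $A_j$ depends only on the spine and snakes \emph{after} $\tau_{t_j}$, the spine independence property does the work and the rest is moment bookkeeping.
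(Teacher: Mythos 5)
Your proof is correct, and it takes a genuinely different route from the paper. The paper writes the union as a disjoint decomposition according to the first index $k$ with $N_{m+k+1}^{m+k+2}=0$ (see \eqref{eq:N_m}), reduces each piece via spine independence as in \eqref{N_m+k+1_m+k+2}, and then needs the positive-correlation induction \eqref{Z_m_i} to lower-bound $\Theta_0\big(\bigcap_i\{Z^{m+i+1,m+k+2}_{m+i}>0\}\big)$, finally summing $\sim\sum_k(m+k+2)^{-3}$. You instead bound the union from below by a second moment (Paley--Zygmund) argument for $X=\sum_i\mathbbm{1}_{A_i}$. Your key step --- that on $A_j=\{Z_{s_j}^{t_j,\infty}=0\}$ one has $Z_{s_i}^{t_j,\infty}=0$, hence $A_i\cap A_j=A_j\cap\{Z_{s_i}^{t_i,t_j}=0\}$ with the two events measurable with respect to the post-$\tau_{t_j}$ and pre-$\tau_{t_j}$ parts of the spine decomposition respectively --- is exactly the observation the paper uses in \eqref{N_m+k+1_m+k+2}, but because you only need \emph{pairwise} intersections, spine independence and Proposition \ref{corZabc0} give the exact identity $\Theta_0(A_i\cap A_j)=(m+i+1)^{-3}(j-i+1)^{-3}$, and the correlation-inequality step \eqref{Z_m_i} is avoided entirely; the first moment comes from Lemma \ref{propN}(i) (or Proposition \ref{corZabc0} with $t\to\infty$) and scaling. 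What each approach buys: yours is shorter and the moment bookkeeping is routine (the factor $1+2\sum_{k\geq2}k^{-3}$ is summable, giving $\mathbb{E}[X^2]\leq C/m^2$ against $\mathbb{E}[X]\geq c_0/m^2$), while the paper's decomposition is exact on disjoint events and establishes along the way the FKG-type inequality \eqref{Z_m_i}, which has independent interest. One cosmetic remark: the phrase ``minimum label strictly greater than $s_j$'' is more than you need and would require a null-set argument; $\omega_*\geq s_j>s_i$ suffices, and in any case the implication $Z_{s_j}^{t_j,\infty}=0\Rightarrow Z_{s_i}^{t_j,\infty}=0$ for $s_i\leq s_j$ is precisely the fact the paper itself invokes, so this is not a gap.
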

\begin{proof}
By the scaling invariance of $\mathcal{M}_{\infty}$ we can take  $r=1$. For $m\geq 2$, we have:
\begin{align}\label{eq:N_m}
\Theta_{0}\Big(\:\bigcup \limits_{i=0}^{m-1} \{N_{m+i}^{m+i+1}=0\}\Big)&=\Theta_{0}(N_{m}^{m+1}=0)\\
&+\sum \limits_{k=0}^{m-2} \Theta_{0}\Big(\{N_{m+k+1}^{m+k+2}=0\} \cap \bigcap \limits_{i=0}^{k} \{N_{m+i}^{m+i+1}>0\} \Big).\nonumber
\end{align}

Moreover, for every $k \in \{0,\ldots m-2\}$:
\begin{align}\label{N_m+k+1_m+k+2}
\Theta_{0}\Big(\{N_{m+k+1}^{m+k+2}=0\} \cap \bigcap \limits_{i=0}^{k} \{&N_{m+i}^{m+i+1}>0\}\Big)\\
&=\Theta_{0}\Big(\{Z_{m+k+1}^{m+k+2,\infty}=0\} \cap \bigcap \limits_{i=0}^{k} \{Z_{m+i}^{m+i+1,m+k+2}>0\}\Big) \nonumber \\
&=\Theta_{0} \big(Z_{m+k+1}^{m+k+2,\infty}=0\big)\: \Theta_{0}\Big(\bigcap \limits_{i=0}^{k} \{Z_{m+i}^{m+i+1,m+k+2}>0\}\Big)\nonumber
\end{align}
where the first equality comes from the fact that $N_{m+k+1}^{m+k+2}=0$, which is equivalent to $Z_{m+k+1}^{m+k+2,\infty}=0$, implies $Z_{t}^{m+k+2,\infty}=0$ for every $t\leq m+k+1$, so that, on the event $\{N_{m+k+1}^{m+k+2}=0\}$, we have $Z^{m+i+1,\infty}_{m+i}=Z^{m+i+1,m+k+2}_{m+i}$ for every $0\leq i\leq k$. The second equality in \eqref{N_m+k+1_m+k+2}  is a consequence of the spine independence property of $\mathcal{T}_{\infty}$ (see Section \ref{subsection_Brownian_Plane}). The idea now is to prove that for every integer $k\geq 0$:
\begin{equation}\label{Z_m_i}
\Theta_{0}\big(\bigcap \limits_{i=0}^{k} \{Z_{m+i}^{m+i+1,m+k+2}>0\}\big)\geq \prod \limits_{i=0}^{k} \Theta_{0}\big(Z_{m+i}^{m+i+1,m+k+2}>0\big).
\end{equation}
\noindent Let us explain how to obtain  this inequality. 
Let $k>0$, then:
\begin{align*}
\Theta_{0}\Big(\bigcap \limits_{i=0}^{k} \{Z_{m+i}^{m+i+1,m+k+2}>0\}\Big)&=  \Theta_{0}\Big(\bigcap \limits_{i=0}^{k-1} \{Z_{m+i}^{m+i+1,m+k+2}>0\}\Big) \\
& - \Theta_{0}\Big(\{Z_{m+k}^{m+k+1,m+k+2}=0\}\cap\bigcap \limits_{i=0}^{k-1} \{Z_{m+i}^{m+i+1,m+k+2}>0\}\Big) \\
&= \Theta_{0}\Big(\bigcap \limits_{i=0}^{k-1} \{Z_{m+i}^{m+i+1,m+k+2}>0\}\Big) \\
&- \Theta_{0}\Big(\{Z_{m+k}^{m+k+1,m+k+2}=0\}\cap\bigcap \limits_{i=0}^{k-1} \{Z_{m+i}^{m+i+1,m+k+1}>0\}\Big) 
\end{align*}
where the second equality is a consequence of the fact that $Z_{m+k}^{m+k+1,m+k+2}=0$ implies that for every $i<k$ $Z_{m+i}^{m+i+1,m+k+2}=Z_{m+i}^{m+i+1,m+k+1}$. We now can apply the spine independence property to obtain that $\Theta_{0}\Big(\bigcap \limits_{i=0}^{k} \{Z_{m+i}^{m+i+1,m+k+2}>0\}\Big)$ is equal to
\begin{align*}
\Theta_{0}\Big(\bigcap \limits_{i=0}^{k-1} \{Z_{m+i}^{m+i+1,m+k+2}>0\}\Big)-\Theta_{0}\big(Z_{m+k}^{m+k+1,m+k+2}=0\big)\Theta_{0}\Big(\bigcap \limits_{i=0}^{k-1} \{Z_{m+i}^{m+i+1,m+k+1}>0\}\Big).
\end{align*}
Now using the property $\{Z_{m+i}^{m+i+1,m+k+1}>0\}\subset \{Z_{m+i}^{m+i+1,m+k+2}>0\}$ for $i=0,\ldots,k-1$, we derive 
 \begin{align*}
 \Theta_{0}\Big(\bigcap \limits_{i=0}^{k} \{Z_{m+i}^{m+i+1,m+k+2}>0\}\Big)
 \geq \Theta_{0}\Big(\bigcap \limits_{i=0}^{k-1} \{Z_{m+i}^{m+i+1,m+k+2}>0\}\Big) \Theta_{0}\big(Z_{m+k}^{m+k+1,m+k+2}>0\big).
\end{align*}
We can then iterate this argument to obtain \eqref{Z_m_i}. By combining \eqref{N_m+k+1_m+k+2} and \eqref{Z_m_i} we deduce that:
\[\Theta_{0}\Big(\{N_{m+k+1}^{m+k+2}=0\} \cap \bigcap \limits_{i=0}^{k} \{N_{m+i}^{m+i+1}>0\}\Big)\geq \Theta_{0} \big(Z_{m+k+1}^{m+k+2,\infty}=0\big) \prod \limits_{i=0}^{k} \Theta_{0}\big(Z_{m+i}^{m+i+1,m+k+2}>0\big).\]
On the other hand, Proposition \ref{corZabc0} states that for $0<r<t<s$,
\[\Theta_{0}(Z_{r}^{t,s}=0)=\big(\frac{s}{t}\big)^{3}\big(\frac{t-r}{s-r}\big)^{3}\]
and taking the limit when $s$ goes to $\infty$, we obtain $\Theta_{0}(Z_{r}^{t,\infty}=0)=\big(\frac{t-r}{t}\big)^{3}$.
It follows that:
\begin{align*}
\Theta_{0}\big(\{N_{m+k+1}^{m+k+2}=0\} \cap \bigcap \limits_{i=0}^{k} \{&N_{m+i}^{m+i+1}>0\} \big)\geq
\frac{1}{(m+k+2)^{3}} \prod \limits_{i=0}^{k} \Big(1- \big(\frac{m+k+2}{m+i+1}\big)^{3} \frac{1}{(k+2-i)^3}\Big).
\end{align*}
Then,  for $m\geq 3$ and $k \in \{0,\ldots,m-2\}$ :
\begin{align*}
\prod \limits_{i=0}^{k} \Big(1- (\frac{m+k+2}{m+i+1})^{3} \frac{1}{(k+2-i)^3}\Big)&\geq \Big(1-\frac{1}{8}\big(\frac{m+k+2}{m+k+1}\big)^{3}\Big)\prod \limits_{i=0}^{k-1} \Big(1- (\frac{2m}{m})^{3} \frac{1}{(k+2-i)^3}\Big) \\ 
&\geq \Big(1-\frac{1}{8}\Big(\frac{5}{4}\Big)^{3}\Big)\prod \limits_{i=3}^{\infty} \big(1- \frac{8}{i^3}\big)
\end{align*}
which is a positive constant not depending on $m$. Let 
$\tilde{c}_{1}$ denote this constant. By applying the previous inequality to \eqref{eq:N_m}, we obtain that for every $m\geq 3$:
\begin{align*}
\Theta_{0}\Big(\bigcup \limits_{i=0}^{m-1} \{N_{m+i}^{m+i+1}=0\}\Big) &\geq \sum \limits_{k=0}^{m-2} \Theta_{0}\Big(\{N_{m+k+1}^{m+k+2}=0\} \cap \bigcap \limits_{i=0}^{k} \{N_{m+i}^{m+i+1}>0\} \Big)\\
&\geq\sum \limits _{k=0}^{m-2} \frac{\tilde{c}_{1}}{(m+k+2)^3}
\end{align*}
which gives us the lower bound in the lemma. 
\end{proof}
\noindent Proposition \ref{borneinf} follows now easily.
\begin{proof}[Proof of Proposition \ref{borneinf}]
By scaling we only need to prove the proposition for $r=1$. 
Let $\epsilon\in[0,1)$ and set  $s=\frac{\epsilon}{2}$ and $m=\lceil \frac{2}{\epsilon} \rceil$. Then the bound \eqref{L_tilde_L_1} and the fact that $L_{u^{\prime},v^{\prime}}\leq L_{u,v}$ if $[u^{\prime},v^{\prime}]\subset [u,v]$ give:
\[L_{1,3}\leq 2 \big(1+\min \limits_{i\in\{0,\ldots,m-1\}} N_{(m+i)s}^{(m+i+1)s} \big)s\]
$\Theta_{0}\text{-a.s.}$. Consequently
$\Theta_{0}(L_{1,3}\leq \epsilon)\geq \Theta_{0}\Big(\bigcup \limits_{i=0}^{m-1} \{ N_{(m+i)s}^{(m+i+1)s}=0\}\Big).$
 The desired result follows directly from  Lemma \ref{borneinf}. 
\end{proof}
\subsection{Upper bound for the tail of $L_{1}$ near $0$}
We are going to deduce the upper bound for Theorem \ref{tail} $\rm(ii)$ from the following result
\begin{prop}\label{exp}
For every $\delta>0$, there exists a constant $\alpha_{\delta}$ such that for every $r\geq 1$:
\[\Theta_{0}(L_{r,r+\delta}<1\:|\: Z_{r+2\delta})\leq  \exp(1-\alpha_{\delta} Z_{r+2\delta}) .\]
\end{prop}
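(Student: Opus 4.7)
\vspace{0.2cm}
\noindent\textbf{Plan.} The plan is to condition on $Z_{r+2\delta}$ and apply the special Markov property at level $r+2\delta$, which provides, conditionally on $Z_{r+2\delta}$, a Poisson point process (PPP) representation of the snake excursions below $r+2\delta$ with intensity $Z_{r+2\delta}\,\mathbb{N}_{r+2\delta}(d\omega\cap\{\omega_*>0\})$. Since the hull $B^\bullet_{r+2\delta}$, and hence the random variable $L_{r,r+\delta}$, is measurable with respect to this PPP, the problem reduces to a bound on a Poisson functional given $Z_{r+2\delta}$.

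By Lemma \ref{sepa_A}, it suffices to consider separating cycles $\gamma$ in the subclass $\mathcal{A}$. Such a $\gamma$ of length less than $1$ bounds a Jordan region $A$ with $B^\bullet_r\subset A\subset B^\bullet_{r+\delta}$ and decomposes into a right-side half and a left-side half meeting the spine $\Pi([0,\infty))$ at two points with labels in $(r,r+\delta)$. The central geometric step is to show that the existence of such a $\gamma$ forces a clustering constraint on the \emph{deep} excursions, i.e.~those $\omega^i$ in the PPP with $\omega^i_*<r$, which are the ones that reach level $r$ and therefore contribute to $B^\bullet_r$. I expect to prove that their attachment points on $\partial B^\bullet_{r+2\delta}$ all lie within a single cyclic arc of length at most some constant $C_\delta$ depending only on $\delta$. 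The heuristic is that the two halves of $\gamma$, each of length less than $1$, project via the cactus / tree structure to arcs on $\partial B^\bullet_{r+2\delta}$ of bounded length, and the attachment points of the deep excursions must lie in this projection since the corresponding subtrees feed into $B^\bullet_r\subset A$.

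Once the clustering is established, I would conclude by a Poisson void argument. By \eqref{etoile}, the intensity of the PPP of deep attachment points on the cyclic boundary of total length $Z_{r+2\delta}$ equals $\tfrac{3}{8\delta^2}$ per unit length. Covering $\partial B^\bullet_{r+2\delta}$ by $\lceil Z_{r+2\delta}/C_\delta\rceil$ arcs of length $C_\delta$ and applying the Poisson void formula to each, the probability that all deep attachment points lie inside some one of these arcs is at most $\tfrac{Z_{r+2\delta}}{C_\delta}\exp\!\big(-\tfrac{3(Z_{r+2\delta}-C_\delta)}{8\delta^2}\big)$, which is bounded by $\exp(1-\alpha_\delta Z_{r+2\delta})$ for an appropriate $\alpha_\delta>0$ depending only on $\delta$.

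The main obstacle is the clustering step: turning ``cycle length less than $1$'' into a rigorous statement of the form ``all deep excursions attach within an arc of length $C_\delta$''. This requires carefully exploiting the topological constraints given by property $(F)$ and Lemma \ref{Topological_lemma} on the separation of $\Pi(\mathcal{L})$ and $\Pi(\mathcal{R})$ by $\Pi([0,\infty))\cup\Pi(P)$, combined with the comparison $\Delta(u,v)\geq|\Lambda_u-\Lambda_v|$ from \eqref{major} and the relation between lengths in $\mathcal{M}_\infty$ and arc-lengths on $\partial B^\bullet_{r+2\delta}$ mediated by the tree distance $d_{\mathcal{T}_\infty}$.
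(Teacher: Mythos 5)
Your setup (conditioning on $Z_{r+2\delta}$ and using the special Markov property to view the excursions below level $r+2\delta$ as a Poisson measure with intensity $Z_{r+2\delta}\,\mathbb{N}_{r+2\delta}(\cdot\cap\{\omega_*>0\})$) is the same starting point as the paper, but the core of your argument rests on the clustering claim — that a separating cycle of length $<1$ in $B_{r,r+\delta}^{\circ}$ forces all attachment points of the deep excursions to lie in a single arc of $\partial B_{r+2\delta}^{\bullet}$ of exit-measure length at most $C_\delta$ — and this claim is neither proved nor, as far as I can see, provable with the tools you invoke. Two distinct problems arise. First, the cycle lives at labels in $(r,r+\delta)$ while the attachment points are at label $r+2\delta$: the fact that all intersection points of the cycle with the territories $A_{u}$ of the deep excursions are within $\Delta$-distance $1/2$ of each other says nothing about the attachment points themselves, since each attachment point is separated from the corresponding crossing by a piece of the excursion whose lateral extent is unbounded. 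Second, and more fundamentally, the arcs in your Poisson void computation must be measured in exit measure (the intensity $\tfrac{3}{8\delta^2}$ per unit length only makes sense for the generalized boundary length $Z_{r+2\delta}$), and there is no comparison of the form you need between $\Delta$-lengths and exit measure: the bound \eqref{major} and property $(F)$ control labels and identifications, but an arc of $\partial B_{r+2\delta}^{\bullet}$ of huge exit measure can have small metric "width'' at the levels where the cycle lives, so metric proximity of the crossings does not cluster the attachments in boundary length. Indeed one can picture configurations with a few deep excursions attached at widely separated boundary positions, each with a very narrow "neck'' in the annulus, through which a short cycle passes; ruling these out would essentially require the estimate you are trying to prove.

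The paper's mechanism is different and avoids positions on the boundary circle altogether: for every deep excursion $u$ (restricted to those whose minimum label lies in $(r-\tfrac12,r)$, so as to get a fixed law), the set $A_u=\Pi(\mathcal{T}_\infty^{(u)})$ is a Jordan domain whose intrinsic metric $\Delta_{A_u}=\widetilde\Delta_u$ depends only on the labeled subtree (Lemma \ref{lem_A_u}), and any cycle in $\mathcal{A}$ confined to $B_{r,r+\delta}^{\circ}$ must cross $A_u\cap B_{r,r+\delta}^{\circ}$ from one boundary arc to the other, at intrinsic cost $D_u>0$. Hence $L_{r,r+\delta}\geq\sum_k D_k$, where, conditionally on $(Z_{r+2\delta},\tilde N_r^{r+2\delta})$, the $D_k$ are i.i.d.\ with a law $\mu_\delta$ independent of $r$ (by the special Markov property and translation of labels), and $\tilde N_r^{r+2\delta}$ is Poisson with parameter $Z_{r+2\delta}\,\mathbb{N}_{2\delta}(\tfrac12<W_*\leq1)$. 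The exponential bound then comes from $\Theta_0(\sum_k D_k<1\mid Z_{r+2\delta})\leq e\,\Theta_0(\chi_\delta^{\tilde N_r^{r+2\delta}}\mid Z_{r+2\delta})$ with $\chi_\delta=\int\mu_\delta(dx)e^{-x}<1$, i.e.\ from the Poisson \emph{number} of deep excursions, not from their spatial distribution on the boundary. To repair your proof you would need to replace the clustering step by an argument of this type (a strictly positive, i.i.d.\ cost per deep excursion), since the void-probability route does not connect to the metric length of the cycle.
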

We need to introduce some notation in order to prove Proposition \ref{exp}~. Let $u\in\mathcal{T}_{\infty}\setminus[0,\infty)$ such that $u$ is not a leaf. Set $f_{u}:=\inf\{t\in \mathbb{R}_{+}:\: \mathcal{E}_{t}=u\}$ and  $\ell_{u}:=\sup\{t\in \mathbb{R}_{+}:\: \mathcal{E}_{t}=u\}$. We consider the subtree $\mathcal{T}_{\infty}^{(u)}:=\{\mathcal{E}_{t}:\:t\in[f_{u},\ell_{u}]\}$ which is also equal to the set of all points $v\in \mathcal{T}_{\infty}$ such that $u\preceq v$. Remark that for every $v_{1},v_{2}\in\mathcal{T}_{\infty}^{(u)}$  there are two possibilities, either $[v_{1},v_{2}]_{\mathcal{T}_{\infty}}\subset \mathcal{T}_{\infty}^{(u)}$ and in this case $0\notin [v_{1},v_{2}]_{\mathcal{T}_{\infty}}$ or $0\in [v_{1},v_{2}]_{\mathcal{T}_{\infty}}$. Consequently $\Delta^{\circ}(v_{1},v_{2})$  only depends on the subtree $\big(\mathcal{T}_{\infty}^{(u)}, (\Lambda_{v})_{v\in \mathcal{T}_{\infty}^{(u)}}\big)$. For every $v,w\in \mathcal{T}_{\infty}^{(u)}$, set:
\begin{equation}\label{definition_tilde_Delta}
    \widetilde{\Delta}_{u}(v,w):=\mathop{\inf \limits_{v=v_{1},\ldots,v_{n}=w}}_{v_{1},\ldots,v_{n}\in \mathcal{T}_{\infty}^{(u)}}\sum\limits_{i=1}^{n-1} \Delta^{\circ}(v_{i},v_{i+1})
\end{equation}
 where the infimum is over all choices of the integer $n\geq 1$ and all the finite sequences $u_{0},\ldots,u_{n}$ of elements of $\mathcal{T}_{\infty}^{(u)}$ verifying $v_{1}=v$ and $v_{n}=w$. Remark that $\widetilde{\Delta}_{u}$ defines a continuous pseudo-distance on $\mathcal{T}^{(u)}_{\infty}$ (since $v\mapsto \Lambda_{v}$ is continuous)  and that $\Delta(v,w)\leq \widetilde{\Delta}_{u}(v,w)$ for every $v,w\in\mathcal{T}^{(u)}_{\infty}$. By the previous remark and since by $(F)$, for every $v,w\in\mathcal{T}^{(u)}_{\infty}$, we have $\Delta(v,w)=0$ if and only if $\Delta^{\circ}(v,w)=0$, we see that $\widetilde{\Delta}_{u}$ defines a distance on $\Pi(\mathcal{T}^{(u)}_{\infty})$ (and we keep the notation  $\widetilde{\Delta}_{u}$ for this distance).  To simplify notation, we introduce the set $A_{u}:=\Pi(\mathcal{T}_{\infty}^{(u)})$ and  the paths $\gamma^{(1)}_{u}$ and $\gamma^{(2)}_{u}$ defined as follows. For every $0\leq t\leq\Lambda_{u}- \min \limits_{v\in \mathcal{T}_{\infty}^{(u)}}\Lambda_v$, take:
\[\gamma^{(1)}_{u}(t):=\Pi\big(\inf\{r\in [f_{u},\ell_{u}]:\: \Lambda_{r}=\Lambda_{u}-t\}\big)\]
and 
\[\gamma^{(2)}_{u}(t):=\Pi\big(\sup\{r\in [f_{u},\ell_{u}]:\: \Lambda_{r}=\min \limits_{v\in \mathcal{T}_{\infty}^{(u)}}\Lambda_v+t\}\big).\]
By construction and \eqref{major}, $\gamma^{(1)}_{u}$ and $\gamma^{(2)}_{u}$ are two geodesic paths. We also observe that 
$\min_{v\in \mathcal{T}_{\infty}^{(u)}}\Lambda_v<\Lambda_u$ as a consequence of \cite[Lemma 3.2]{GallPaulin} (it is important
to notice that this holds simultaneously for all $u\in\mathcal{T}_{\infty}\setminus[0,\infty)$ such that $u$ is not a leaf).
Moreover, we have $\gamma^{(1)}_{u}\big(\Lambda_{u}- \min_{v\in \mathcal{T}_{\infty}^{(u)}}\Lambda_v\big)=\gamma^{(2)}_{u}(0)$ and $\gamma^{(1)}_{u}(0)=\gamma^{(2)}_{u}\big(\Lambda_{u}- \min_{v\in \mathcal{T}_{\infty}^{(u)}}\Lambda_v\big)$. From property $(F)$, we get that the concatenation of $\gamma^{(1)}_{u}$ and $\gamma^{(2)}_{u}$ is an injective cycle with length $2\Lambda_{u}-2 \min_{v\in \mathcal{T}_{\infty}^{(u)}}\Lambda_v$. We denote this path by $\gamma_{u}$. It will be useful to remark that $\gamma_{u}^{1}$ and $\gamma_{u}^{2}$ are also geodesic paths for $\widetilde{\Delta}_{u}$.  Recall the notation $\Delta_{A_{u}}$ for the intrinsic distance induced by $\Delta$ on $A_{u}$. 
\begin{lem}\label{lem_A_u}
$\Theta_{0}$-a.s. for every $u\in\mathcal{T}_{\infty}\setminus[0,\infty)$ such that $u$ is not a leaf, the set $A_{u}$ is homeomorphic to the closed unit disk of $\mathbb{C}$ and its boundary is the range of $\gamma_{u}$. Moreover we have:
\begin{equation}\label{Delta_A_u}
\Delta_{A_{u}}=\widetilde{\Delta}_{u}.    
\end{equation}
\end{lem}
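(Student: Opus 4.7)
My plan is to establish the topological statement first, then to prove the metric identity in two inequalities.

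For the topology, I will use that $\mathcal{M}_{\infty}$ is $\Theta_{0}$-a.s.\ homeomorphic to $\mathbb{C}$, combined with Jordan's theorem. The cycle $\gamma_{u}$ is, as noted right before the lemma, an injective continuous cycle in $\mathcal{M}_{\infty}$; by the discussion above and property $(F)$ the only potential gluings among points of $\gamma_u^{(1)}$ and $\gamma_u^{(2)}$ would involve distinct leaves with identical labels, and are absent thanks to $(F)$. Hence the range $\Gamma_{u}$ of $\gamma_{u}$ is a Jordan curve, and $\mathcal{M}_{\infty}\setminus \Gamma_{u}$ has exactly two connected components, one bounded and one unbounded. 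I will identify $A_{u}$ with the closure of the bounded one. For this, observe that $\mathcal{T}_{\infty}$ decomposes as $\mathcal{T}_{\infty}^{(u)} \cup (\mathcal{T}_{\infty}\setminus \mathcal{T}_{\infty}^{(u)})$ with the two subtrees overlapping only on the topological boundary of $\mathcal{T}_{\infty}^{(u)}$ in $\mathcal{T}_{\infty}$, whose $\Pi$-image is precisely $\Gamma_{u}$. Property $(F)$ further guarantees that no identification by $\Pi$ can merge an interior point of $\mathcal{T}_{\infty}^{(u)}$ with a point of its complement. Consequently, $A_{u}$ and $\Pi\bigl(\mathcal{T}_{\infty}\setminus \mathcal{T}_{\infty}^{(u)}\bigr)$ cover $\mathcal{M}_{\infty}$ and intersect exactly along $\Gamma_{u}$. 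Since both images are path-connected, each equals the closure of one of the two complementary components of $\Gamma_{u}$, and as $A_{u}$ is bounded (being a continuous image of the compact set $\mathcal{T}_{\infty}^{(u)}$), it must be the closure of the bounded one. The homeomorphism with the closed unit disk then follows from the Schoenflies theorem.

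For the inequality $\Delta_{A_{u}} \leq \widetilde{\Delta}_{u}$: let $v,w\in \mathcal{T}_{\infty}^{(u)}$ and let $v=v_{1},\ldots,v_{n}=w$ be an admissible chain in $\mathcal{T}_{\infty}^{(u)}$. For each consecutive pair, the interval of $\mathcal{T}_{\infty}$ joining $v_{i}$ to $v_{i+1}$ that remains inside $\mathcal{T}_{\infty}^{(u)}$ has $\inf \Lambda \geq \min_{\mathcal{T}_{\infty}^{(u)}}\Lambda >0$ (since $0\notin \mathcal{T}_{\infty}^{(u)}$ because $u\neq 0$ and $0$ is the only zero-label point), whereas the opposite interval passes through the root where $\Lambda=0$. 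Hence the maximum in the definition of $\Delta^{\circ}(v_{i},v_{i+1})$ is achieved by the inside interval, and the geodesic path $\gamma_{v_{i},v_{i+1}}$ defined right before Proposition~\ref{L_domine_par_N} has range in $A_{u}$ and length exactly $\Delta^{\circ}(v_{i},v_{i+1})$. Concatenating these geodesic paths yields a continuous path in $A_{u}$ from $\Pi(v)$ to $\Pi(w)$ of length $\sum_{i}\Delta^{\circ}(v_{i},v_{i+1})$. Taking the infimum over chains gives $\Delta_{A_{u}}(\Pi(v),\Pi(w))\leq \widetilde{\Delta}_{u}(v,w)$.

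The reverse inequality $\widetilde{\Delta}_{u}\leq \Delta_{A_{u}}$ is the main obstacle. Given a path $\gamma\colon [0,1]\to A_{u}$ from $\Pi(v)$ to $\Pi(w)$ of $\Delta$-length $L$ and $\epsilon>0$, I will produce a chain in $\mathcal{T}_{\infty}^{(u)}$ from $v$ to $w$ with total $\Delta^{\circ}$-cost at most $L+\epsilon$. Pick a partition $0=s_{0}<\cdots<s_{N}=1$ fine enough that $\sum_{i}\Delta(\gamma(s_{i}),\gamma(s_{i+1}))\leq L$ and each step has $\Delta$-length less than some $\delta>0$, and for each $i$ select a preimage $v_{i}\in \mathcal{T}_{\infty}^{(u)}$ of $\gamma(s_{i})$. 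The crucial step is to bound $\widetilde{\Delta}_{u}(v_{i},v_{i+1})$ by $\Delta(\gamma(s_{i}),\gamma(s_{i+1}))+\eta$ for an arbitrarily small $\eta$; equivalently, to show that any near-optimal chain in $\mathcal{T}_{\infty}$ realising $\Delta(\gamma(s_{i}),\gamma(s_{i+1}))$ can be turned into a chain entirely in $\mathcal{T}_{\infty}^{(u)}$ without appreciable cost increase. The mechanism is the following: any excursion of such a chain outside $\mathcal{T}_{\infty}^{(u)}$ enters and exits through $\partial \mathcal{T}_{\infty}^{(u)}$ at some points $a,b$; using property $(F)$, the portion of the chain between $a$ and $b$ has $\Delta^{\circ}$-sum bounded below by $\Delta(a,b)$, while $\Delta(a,b)$ itself is approximated arbitrarily well by a chain in $\mathcal{T}_{\infty}^{(u)}$ because $a,b$ lie on the injective cycle $\gamma_{u}$, whose subarcs stay in $A_{u}$. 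This ``projection onto $\mathcal{T}_{\infty}^{(u)}$'' argument is entirely parallel to the one used in the proof of \cite[Lemma~30]{Infinite_Spine} for the hull, with the subtree $\mathcal{T}_{\infty}^{(u)}$ playing the role of the complement of the hull. Summing over $i$ then gives $\widetilde{\Delta}_{u}(v,w)\leq L+\epsilon$, and letting $\epsilon\downarrow 0$ completes the proof.
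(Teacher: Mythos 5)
Your topological argument and your proof of the easy inequality $\Delta_{A_u}\leq \widetilde{\Delta}_u$ are fine and essentially coincide with the paper's (the paper also invokes Jordan's theorem and the paths $\gamma_{v,w}$, whose ranges lie in $\Pi([v,w]_{\mathcal{T}_\infty})\subset A_u$ because the interval containing the root has infimum label $0$). The problem is in the hard direction $\widetilde{\Delta}_u\leq\Delta_{A_u}$, where your ``projection onto $\mathcal{T}_\infty^{(u)}$'' mechanism has a genuine gap. First, a chain $u_{i,1},\ldots,u_{i,m}$ realising (almost) $\Delta(\gamma(s_i),\gamma(s_{i+1}))$ is a discrete object: consecutive points can jump from inside $\mathcal{T}_\infty^{(u)}$ to outside without there being any well-defined ``entry and exit points $a,b$ on $\partial\mathcal{T}_\infty^{(u)}$'' (and the boundary of the subtree in $\mathcal{T}_\infty$ is just $\{u\}$, not the preimage of the range of $\gamma_u$). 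More seriously, even if you pass to the associated continuous paths and locate crossing points $a,b$ on the cycle $\gamma_u$, your key claim --- that $\Delta(a,b)$ can be approximated arbitrarily well by chains inside $\mathcal{T}_\infty^{(u)}$ --- is unjustified and false in general: it asserts $\widetilde{\Delta}_u(a,b)\leq\Delta(a,b)$ for arbitrary boundary points, i.e.\ that the intrinsic and ambient metrics agree on $\partial A_u$, which is essentially the nontrivial content of the lemma (ambient geodesics between boundary points may take shortcuts through the complement of $A_u$, exactly as for the hull). The justification you hint at (subarcs of $\gamma_u$ stay in $A_u$) only bounds $\widetilde{\Delta}_u(a,b)$ by the \emph{arc length} along the cycle, not by $\Delta(a,b)$; this is only adequate when $a$ and $b$ lie on the \emph{same} arc $\gamma_u^{(1)}$ or $\gamma_u^{(2)}$, where both distances equal the label difference, but fails when an outside excursion connects the two different arcs.

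For comparison, the paper avoids any projection of chains. For a path $\gamma$ in $A_u$ avoiding $\partial A_u$, it uses compactness to get $\delta:=\inf_t\Delta(\gamma(t),\partial A_u)>0$, subdivides so that consecutive points are at distance $<\delta/3$, and observes that every point of a near-optimal ambient chain between two consecutive points is within $\delta/2$ of a point of $\gamma$, hence cannot project into $\Pi(\mathcal{T}_\infty\setminus\mathcal{T}_\infty^{(u)})$; thus the chain is automatically a $\widetilde{\Delta}_u$-admissible chain and no replacement is needed. When $\gamma$ does meet $\partial A_u$, the paper decomposes $\gamma$ at the first and last visits to the range of $\gamma_u^{(1)}$ and then of $\gamma_u^{(2)}$, and uses that each of these two arcs is a geodesic simultaneously for $\Delta$ and for $\widetilde{\Delta}_u$, so that on each arc the two distances coincide; the pieces in between are handled by the first case and the triangle inequality. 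If you want to complete your proof, you need either this two-case structure or a genuine substitute for the boundary-crossing step; as written, the crucial inequality is assumed rather than proved.
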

The main interest of \eqref{Delta_A_u} is the fact that the function $\widetilde{\Delta}_{u}$ only depends on $\big(\mathcal{T}_{\infty}^{(u)}, (\Lambda_{v})_{v\in \mathcal{T}_{\infty}^{(u)}}\big)$. Recall the definition of $\Delta^{\circ}$  above \eqref{Def-Delta}, and remark that $\Delta^{\circ}$ on $\mathcal{T}_{\infty}^{(u)}$ does not change if we shift all the labels by any $a> -\min_{\mathcal{T}_{\infty}^{(u)}}\Lambda$. This gives  that  $\widetilde{\Delta}_{u}$ can also be defined from the labeled tree $\big(\mathcal{T}_{\infty}^{(u)}, (\Lambda_{v})_{v\in \mathcal{T}_{\infty}^{(u)}}+a\big)$ for any  $a> -\min_{\mathcal{T}_{\infty}^{(u)}}\Lambda$.
\begin{proof}
Let $u\in\mathcal{T}_{\infty}\setminus[0,\infty)$ such that $u$ is not a leaf. Since $\gamma_{u}$ is an injective cycle, Jordan's theorem implies that the complement of the range of $\gamma_{u}$ has two connected components, namely a bounded connected component $U_{1}$ and an unbounded connected component $U_{2}$. Moreover, the closure of $U_{1}$ is homeomorphic to the closed unit disk. 
The first assertion of the lemma then follows from the fact that $\text{Cl}(U_1)=A_u$, which is easy and left to the reader. Let us turn to the second part of the lemma. We start by showing that $\Delta_{A_{u}}(v,w)\leq \widetilde{\Delta}_{u}(v,w)$ for every $v,w\in \mathcal{T}_{\infty}^{(u)}$. Let $v,w\in \mathcal{T}_{\infty}^{(u)}$. Up to interchanging $v$ and $w$, we can suppose that 
\[\Delta^{\circ}(v,w)=\Lambda_{v}+\Lambda_{w}-2\min \limits_{[v,w]_{\mathcal{T}_{\infty}}}\Lambda,\]
and $v=\mathcal{E}_s$, $w=\mathcal{E}_t$ with $s\leq t$ and $[u,w]_{\mathcal{T}_{\infty}}=\{\mathcal{E}_r:s\leq r\leq t\}$.
We can then consider the path $\gamma_{v,w}$ introduce at the beginning of Section \ref{bound_L_r_s}. The length of $\gamma_{v,w}$ is $\Delta^{\circ}(v,w)$ and its range is a subset of $\Pi([v,w]_{\mathcal{T}_{\infty}})$. Since $[v,w]_{\mathcal{T}_{\infty}}\subset \mathcal{T}_{\infty}^{(u)}$, we deduce that $\gamma_{v,w}$ takes values in $A_{u}$. From the definition \eqref{definition_tilde_Delta} we obtain that $\Delta_{A_{u}}(v,w)\leq \widetilde{\Delta}_{u}(v,w)$ for every $v,w\in \mathcal{T}_{\infty}^{(u)}$.
\\
\\
Let us prove the reverse inequality. Let $\gamma:[0,1]\to A_{u}$ be a path. It is enough to show that $\Delta(\gamma)\geq \widetilde{\Delta}_{u}(\gamma(0),\gamma(1))$. We deal with two separate cases.\\
\\
$\bullet$ Case 1: We assume that, for every $t\in(0,1)$, $\gamma(t)\notin \partial A_{u}$. Let us start by showing that $\gamma$ is also continuous for $\widetilde{\Delta}_{u}$.
 In order to prove this, remark that the identity function $(A_{u},\widetilde{\Delta}_{u})\mapsto  (A_{u},\Delta)$ is a bijection and that it is also continuous, since $\Delta\leq \widetilde{\Delta}_{u}$. Moreover, as $\mathcal{T}_{\infty}^{(u)}$ is compact, the continuity of $\Pi$ implies that $A_{u}$ is  compact for the quotient topology. Since $\Delta$ and $\widetilde{\Delta}_{u}$ are continuous on  $\mathcal{T}_{\infty}^{(u)}\times \mathcal{T}_{\infty}^{(u)}$ we derive that $(A_{u},\widetilde{\Delta}_{u})$ and $(A_{u},\Delta)$ are both compact. So  the identity function $(A_{u},\widetilde{\Delta}_{u})\mapsto  (A_{u},\Delta)$ is a continuous bijection between compact spaces which implies that it is also an homeomorphism. We deduce that $\gamma$ is continuous for $\widetilde{\Delta}_{u}$. In particular, we have
$$\Delta(\gamma)=\lim \limits_{\epsilon \to 0} \Delta(\gamma|_{[\epsilon,1-\epsilon]})\:\:\text{and}\:\:\widetilde{\Delta}_{u}\big(\gamma(0),\gamma(1)\big)= \lim \limits_{\epsilon \to 0}\widetilde{\Delta}_{u}\big(\gamma(\epsilon),\gamma(1-\epsilon)\big).$$
By the previous display, we may and will restrict our attention to the case when $\gamma(t)\notin A_{u}$ for every $t\in [0,1]$. By compactness, the quantity $\delta:=\inf \limits_{(t,v)\in [0,1]\times \partial A_{u}} \Delta(\gamma(t),v)$ is positive. Let $n>1$ be an integer such that for every $0\leq i\leq n-1$:
\[\Delta\big(\gamma(\frac{i}{n}),\gamma(\frac{i+1}{n})\big)<\frac{\delta}{3}.\]
We are going to show that $\Delta\big(\gamma(\frac{i}{n}),\gamma(\frac{i+1}{n})\big)\geq \widetilde{\Delta}_{u}\big(\gamma(\frac{i}{n}),\gamma(\frac{i+1}{n})\big)$ for every $0\leq i\leq n-1$. By the triangle inequality this will imply that $\Delta(\gamma)\geq \widetilde{\Delta}_{u}(\gamma(0),\gamma(1))$. Fix $0\leq i\leq n-1$ and consider $u_{i},v_{i}\in \mathcal{T}_{\infty}$ such that $\big(\Pi(u_{i}),\Pi(v_{i})\big)=\big(\gamma(\frac{i}{n}),\gamma(\frac{i+1}{n})\big)$. Remark that  we must have $u_{i},v_{i}\in \mathcal{T}_{\infty}^{(u)}$ and recall that:
\[\Delta(u_{i},v_{i})=\inf \limits_{u_{i}=u_{i,1},u_{i,2}\ldots,u_{i,m}=v_i} \sum \limits_{k=1}^{m-1}\Delta^{\circ}(u_{i,k},u_{i,k+1})\]
where the infimum is over all choices of the integer  $m\geq 1$ and all finite sequence $u_{i,1},\ldots,u_{i,m}\in \mathcal{T}_{\infty}$ with $(u_{i,1},u_{i,m})=(u_{i},v_{i})$. Since $\Delta(u_{i},v_{i})<\delta/3$ we can restrict the previous infimum to finite sequence $u_{i,1},\ldots,u_{i,m}\in \mathcal{T}_{\infty}$ with $(u_{i,1},u_{i,m})=(u_{i},v_{i})$ such that:
\[\sum \limits_{k=1}^{m-1}\Delta^{\circ}(u_{i,k},u_{i,k+1})<\frac{\delta}{2}.\]
Consider  such a sequence $u_{i,1},\ldots,u_{i,m}\in \mathcal{T}_{\infty}$ and remark that $\Delta(u_{i,1},u_{i,k})<\frac{\delta}{2}$ for every $1\leq k\leq m$ by triangle inequality. This implies by the definition of $\delta$ that $u_{i,k}\in \mathcal{T}_{\infty}^{(u)}$ for every $1\leq k\leq m$. We conclude
from the definition of $\widetilde\Delta_u$ that:
\[\sum \limits_{k=1}^{m-1}\Delta^{\circ}(u_{i,k},u_{i,k+1})\geq \widetilde{\Delta}_{u}(u_{i},v_{i}).\]
Consequently, $\Delta\big(\gamma(\frac{i}{n}),\gamma(\frac{i+1}{n})\big)\geq \widetilde{\Delta}_{u}\big(\gamma(\frac{i}{n}),\gamma(\frac{i+1}{n})\big)$ for every $0\leq i\leq n-1$ and thus $\Delta(\gamma)\geq \widetilde{\Delta}_{u}\big(\gamma(0),\gamma(1)\big)$. 
\\
\\
$\bullet$ Case 2: We now assume that $\gamma$ hits $\partial A_u$. Let $s:=\inf \{r\in[0,1]:\:\gamma(r)\in \partial A_{u}\}$. 
Without loss of generality, we may assume that $\gamma(s)$ belongs to the range of $\gamma_{u}^{(1)}$. Let $s^{\prime}$ be the largest element of $[0,1]$ such that $\gamma(s^{\prime})$ is in the range of $\gamma_{u}^{(1)}$.  If $s^{\prime}=1$ or if $\gamma(t)\notin \partial A_{u}$ for every $s^{\prime}<t\leq 1$, we have:
\[\Delta(\gamma)=\Delta(\gamma|_{[0,s]})+\Delta(\gamma|_{[s,s^{\prime}]})+\Delta(\gamma|_{[s^{\prime},1]})\geq \widetilde{\Delta}_{u}(\gamma(0),\gamma(s))+\Delta(\gamma|_{[s,s^{\prime}]})+\widetilde{\Delta}_{u}(\gamma(s^{\prime}),\gamma(1))\]
since case $1$ can be applied to $\gamma|_{[0,s]}$ and $\gamma|_{[s^{\prime},1]}$. Moreover, since $\gamma_{u}^{(1)}$ is also a geodesic for $\widetilde{\Delta}_{u}$, we have  $\Delta(\gamma|_{[s,s^{\prime}]})\geq \Delta(\gamma(s),\gamma(s'))=\widetilde{\Delta}_{u}(\gamma(s),\gamma(s^{\prime}))$ and by the triangle inequality we obtain $\Delta(\gamma)\geq \widetilde{\Delta}_{u}(\gamma(0),\gamma(1))$. 
\\
\\
It remains to consider the case where $s^{\prime}<1$ and $\{t\in(s^{\prime},1]:~\gamma(t)\in\partial A_{u}\}$ is not empty. Let $t$ be the smallest element of $[s^{\prime},1]$ such that $\gamma(t)\in \partial A_{u}$. Then  $\gamma(t)$ belongs to the range of $\gamma_{u}^{(2)}$. Let $t^{\prime}$ 
be the largest element of $[t,1]$ such that $\gamma(t^{\prime})$ belongs to the range of $\gamma_{u}^{(2)}$. Then we get that:
\[\Delta(\gamma)=\Delta(\gamma|_{[0,s]})+\Delta(\gamma|_{[s,s^{\prime}]})+\Delta(\gamma|_{[s^{\prime},t]})+\Delta(\gamma|_{[t,t^{\prime}]})+\Delta(\gamma|_{[t^{\prime},1]}).\]
Since $\gamma_{u}^{(1)}$ and $\gamma_{u}^{(2)}$ are two geodesics paths for $\widetilde{\Delta}_{u}$ we have:
$$\Delta(\gamma|_{[s,s^{\prime}]})\geq \Delta\big(\gamma(s),\gamma(s^{\prime})\big)=\widetilde{\Delta}_{u}\big(\gamma(s),\gamma(s^{\prime})\big)$$
and
$$\Delta(\gamma|_{[t,t^{\prime}]})\geq  \Delta\big(\gamma(t),\gamma(t^{\prime})\big)=\widetilde{\Delta}_{u}\big(\gamma(t),\gamma(t^{\prime})\big) .$$
On the other hand,
$\gamma|_{[0,s]}$, $\gamma|_{[s^{\prime},t]}$ and $\gamma|_{[t^{\prime},1]}$ belong to case $1$. Consequently, we obtain $\Delta(\gamma)\geq \widetilde{\Delta}_{u}(\gamma(0),\gamma(1))$.
\end{proof}
Let us deduce Proposition \ref{exp} from Lemma \ref{lem_A_u}. 
\\
\\
\noindent{\it Proof of Proposition \ref{exp}.} 
Fix $\delta>0$ and $r\geq 1$. We want to give a lower bound for $L_{r,r+\delta}$. By Lemma \ref{sepa_A} it is enough to give a lower bound for $\Delta(\gamma)$ for every $\gamma\in \mathcal{A}$ taking values in $B_{r,r+\delta}^{\circ}$. Recall the notation of Section \ref{bound_L_r_s} and for every $u\in K_{r}^{r+2\delta}$ not belonging to the spine $[0,\infty)$ set:
$$M_{u}:=\min\limits_{v\in \mathcal{T}_{\infty}^{(u)}} \Lambda_{v}.$$
Let $u_{1},\ldots,u_{\tilde{N}_{r}^{r+2\delta}}$ be the elements of $K_{r}^{r+2\delta}$ that do not belong to the spine $[0,\infty)$ and such that $M_{u}>r-1/2$. By Lemma \ref{lem_A_u}, for every $1\leq k\leq  \tilde{N}_{r}^{r+2\delta}$ the set $A_{u_{k}}$ is homeomorphic to the closed unit disk. Since $\partial B_{r}^{\bullet}$ and $\partial B_{r+\delta}^{\bullet}$ are injective cycles (see  Section \ref{subsection_Brownian_Plane}), it is straightforward  to verify (using Jordan's theorem) that $A_{u_{k}}\cap B_{r,r+\delta}^{\bullet}$ is homeomorphic to the unit disk and its boundary is:
\[\big(A_{u_{k}}\cap \partial B_{r,r+\delta}^{\bullet}\big)\cup \gamma_{u_{k}}^{(1)}([\delta, 2\delta])\cup\gamma_{u_{k}}^{(2)}([r-M_{u}, r+\delta-M_{u}]).\]
This implies that any separating cycle $\gamma\in \mathcal{A}$ taking values in $B_{r,r+\delta}^{\circ}$ has to connect $\gamma_{u_{k}}^{(1)}([\delta, 2\delta])$ and $\gamma_{u_{k}}^{(2)}([r-M_{u}, r+\delta-M_{u}])$ while staying in $A_{u_{k}}$ see figure \ref{fig:borne} for an illustration. In other words, for every $1\leq k\leq \tilde{N}_{r}^{r+2\delta}$, there exist $t_{k}<t_{k}^{\prime}$ such that $\gamma|_{ [t_{k},t_{k}^{\prime}]}$ takes values in $A_{u_{k}}\cap B_{r,r+\delta}^{\circ}$, $\gamma(t_{k})\in  \gamma_{u_{k}}^{(1)}([\delta, 2\delta])$  and $\gamma(t_{k}^{\prime})\in \gamma_{u_{k}}^{(2)}([r-M_{u}, r+\delta-M_{u}])$. In particular:
\begin{align*}
\Delta(\gamma)&\geq \sum \limits_{k=1}^{\tilde{N}_{r}^{r+2\delta}}\Delta(\gamma|_{[t_{k},t_{k}^{\prime}]})\geq \sum \limits_{k=1}^{\tilde{N}_{r}^{r+2\delta}}\Delta_{A_{u_{k}}}(\gamma(t_{k}),\gamma(t_{k}^{\prime})).
\end{align*}
Set $$D_{k}:=\inf\big\{\Delta_{A_{u_{k}}}(x,y):\:(x,y)\in \gamma_{u_{k}}^{(1)}([\delta, 2\delta])\times\gamma_{u_{k}}^{(2)}([r-M_{u}, r+\delta-M_{u}]) \big\},$$
and note that $D_k>0$ by property $(F)$ (no point of $\gamma_{u_{k}}^{(1)}([\delta, 2\delta])$ can be identified with a point
of $\gamma_{u_{k}}^{(2)}([r-M_{u}, r+\delta-M_{u}])$). 
With this notation we have  $\Delta(\gamma)\geq \sum_{k=1}^{\tilde{N}_{r}^{r+2\delta}} D_{k}$ for every $\gamma\in \mathcal{A}$ taking values in $B_{r,r+\delta}^{\circ}$. Consequently, we obtain that:
\[L_{r,r+\delta}\geq \sum \limits_{k=1}^{\tilde{N}_{r}^{r+2\delta}} D_{k}.\]
\begin{figure}[!h]
 \begin{center}
     \includegraphics[height=4cm,width=7cm]{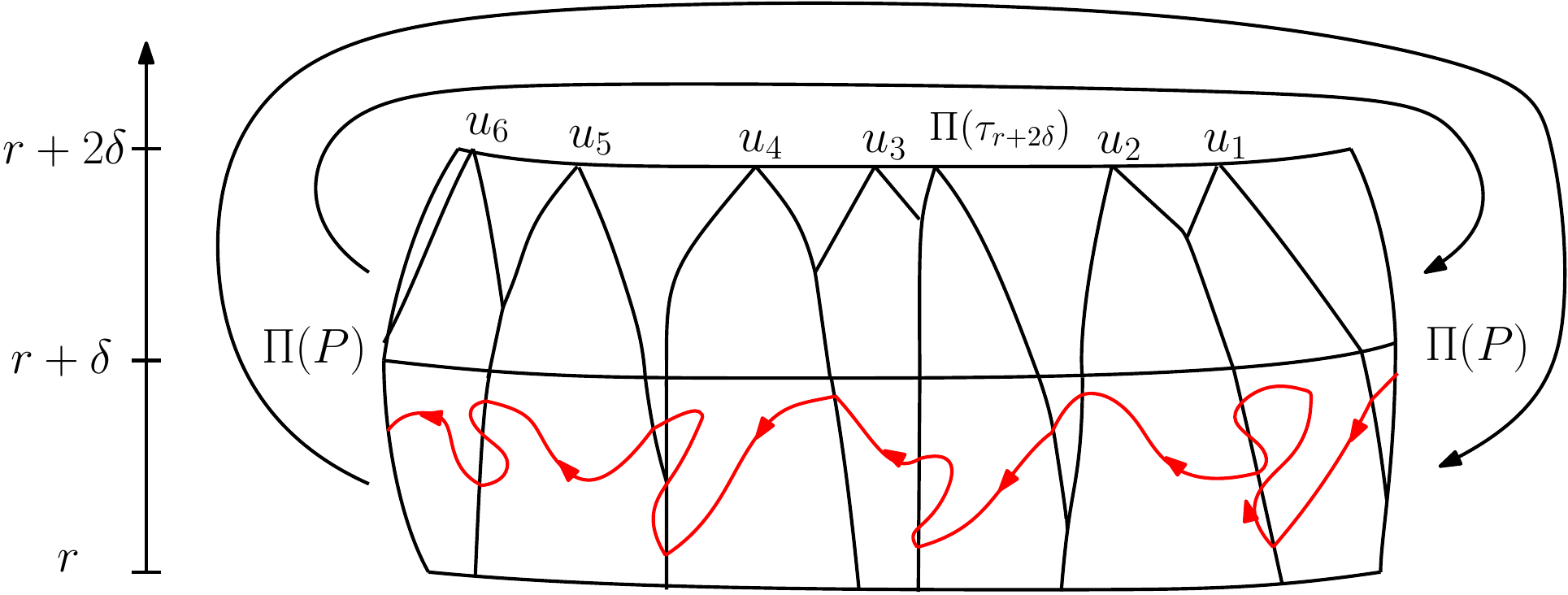}\:\:\:\:\:\:\:\:
  \includegraphics[height=2cm,width=5cm]{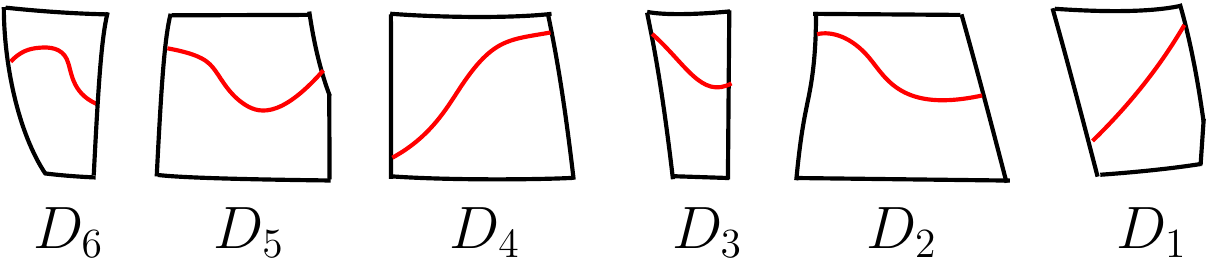}
 \caption{\label{fig:borne}Illustration of the inequality $L_{r,r+\delta}\geq \sum \limits_{k=1}^{\tilde{N}_{r}^{r+2\delta}} D_{k}$. The red path is an element of $\mathcal{A}$ taking values in $B_{r,r+\delta}^{\circ}$ and here $\tilde{N}_{r}^{r+2\delta}=6$.}
 \end{center}
 \end{figure}
 
To conclude we use the following claim:
\\
\\
$(C)$: Conditionally on $(Z_{r+2\delta},\tilde{N}_{r}^{r+2\delta})$, the variables $(D_{k})_{1\leq k\leq \tilde{N}_{r}^{r+2\delta}}$ are independent and identically distributed according to a distribution $\mu_\delta$ that does not depend on $r$ and is supported on $(0,\infty)$.
\\
\\
Before proving $(C)$, let us explain why  Proposition \ref{exp} follows. \\
We set $\chi_\delta:=\int \mu_\delta(dx)\,e^{-x}\in (0,1)$, and  then we have
\begin{align*}
\Theta_{0}(L_{r,r+\delta}<1\:|\: Z_{r+2\delta})&\leq \Theta_{0}\big(\sum\limits_{k=1}^{\tilde{N}_{r}^{r+2\delta}}D_{k} <1\:|\: Z_{r+2\delta}\big) \\
&\leq e\: \Theta_{0}\big(\exp(-\sum_{k=1}^{\tilde{N}_{r}^{r+2\delta}}D_{k})\:|\: Z_{r+2\delta} \big)\\
&= e\: \Theta\big(\chi_\delta^{\tilde{N}_{r}^{r+2\delta}}\:|\: Z_{r+2\delta} \big)
\end{align*}
by our claim $(C)$. By the special Markov property, conditionally on $Z_{r+2\delta}$, the variable  $\tilde{N}_{r}^{r+2\delta}$ is distributed as a Poisson variable with parameter $Z_{r+2\delta} \mathbb{N}_{r+2\delta}(r -1/2<W_{*}\leq r)$. It follows that
\[\Theta_{0}(L_{r,r+\delta}<1\:|\: Z_{r+2\delta})\leq e \exp\big(-Z_{r+2\delta} \mathbb{N}_{r+2\delta}(r -1/2<W_{*}\leq r)(1-\chi_\delta)\big)\]
and we obtain the desired result with 
\[\alpha_{\delta}:=\mathbb{N}_{r+2\delta}(r -1/2<W_{*}\leq r)(1-\chi)=\mathbb{N}_{2\delta}( 1/2<W_{*}\leq 1)(1-\chi_\delta).\]
Let us explain why $(C)$ holds in order to complete the proof.
Let $(\omega^{i})_{i\in I_{r+2\delta}\cup J_{r+2\delta}}$ be the atoms of $\mathfrak{L}^{r+2\delta,\infty}$ and $\mathfrak{R}^{r+2\delta,\infty}$. Let $ \tilde{I}_{r+2\delta}\cup \tilde{J}_{r+2\delta}$ be the set of indices $i\in I_{r+2\delta}\cup J_{r+2\delta}$ such that $\omega^{i}_{*}<r+2\delta$. For every $i\in\tilde{I}_{r+2\delta}\cup \tilde{J}_{r+2\delta}$ we write $(\omega^{i,n})_{n\in \mathbb{N}}$ for the excursions of $\omega^{i}$ outside $r+2\delta$. By construction:
\[\tilde{N}_{r}^{r+2\delta}:=\#\big\{(i,n)\in (\tilde{I}_{r+2\delta}\cup\tilde{J}_{r+2\delta})\times \mathbb{N}:\: r-1/2<\omega^{i,n}_{*}<r\big\}.\]
For every $1\leq k\leq \tilde{N}_{r}^{r+2\delta}$, there exists a unique $(i,n)$  with $r-1/2<\omega^{i,n}_{*}<r$ such that the labeled trees $\mathcal{T}_{\omega^{i,n}}$ and $\mathcal{T}_{\infty}^{(u_{k})}$ can be identified and we write $\omega^{k}$ instead of
$\omega^{i,n}$ to simplify notation. By the special Markov property, conditionally on $Z_{r+2\delta}$, the point measure:
\[\sum \limits_{k=1}^{\tilde{N}_{r}^{r+2\delta}}\delta_{\omega^{k}}\]
is a Poisson measure with intensity:
$$Z_{r+2\delta}\mathbb{N}_{r+2\delta}\big(\cdot \cap \{r-\frac{1}{2}<W_{*}<r\}\big).$$
So conditionally on $(Z_{r+2\delta},\tilde{N}_{r}^{r+2\delta})$, the sequence $(\omega^{k}-r+1)_{1\leq k\leq \tilde{N}_{r}^{r+2\delta}}$ is an i.i.d. sequence with common distribution $\mathbb{N}_{\delta}(\cdot\cap \{\frac{1}{2}<W_{*}<1\})$. In particular, this distribution does not depend on $r$. Moreover $\tilde{\Delta}_{u_{k}}$ depends only on the labeled tree $\mathcal{T}_{\infty}^{(u_{k})}=\mathcal{T}_{\omega^k}$ and the definition
\eqref{definition_tilde_Delta} shows that $\tilde{\Delta}_{u_{k}}$ is not affected if labels are shifted by $(-r+1)$. So $\tilde{\Delta}_{u_{k}}$ is also a function of $\omega^{k}-r+1$. Our claim $(C)$ follows since by Lemma \ref{lem_A_u}, we have $\Delta_{A_{u_{k}}}=\tilde{\Delta}_{u_{k}}$ for every $1\leq k\leq \tilde{N}_{r}^{r+2\delta}$. \hfill$\square$
\\
\\
We conclude this section with the proof of part $\rm(ii)$ of Theorem \ref{tail}.
\\
\\
\noindent{\it Proof of Theorem \ref{tail} $\rm(ii)$.} We want to show that
there exists  $c_{2}$, such that for every $\epsilon\geq 0$:
\[\Theta_{0}(L_{1}<\epsilon)\leq c_{2} \epsilon^{2} .\]
To do so fix $\epsilon\in(0,1/2)$ and remark that:
\begin{equation}\label{L_1_ep}
\{L_{1}<\epsilon\}\subset \bigcup \limits_{m=\lfloor \frac{1}{\epsilon} \rfloor-1}^{\infty}\{L_{m\epsilon,(m+3)\epsilon}<\epsilon\}\
\end{equation}
Let us explain why \eqref{L_1_ep} holds.  On the event $\{L_{1}<\epsilon\}$, let $\gamma$ be a separating cycle in $\check{B}_{1}^{\circ}$ such that $\Delta(\gamma)<\epsilon$. Since the sets $B^\bullet_{(m+1)\epsilon}\backslash B^\bullet_{m\epsilon}$, for $m\geq \lfloor \frac{1}{\epsilon}\rfloor$, cover 
$\check{B}_{1}^{\circ}$, we can find $m_0\geq \lfloor \frac{1}{\epsilon}\rfloor$ such that $\gamma(0)\in B^\bullet_{(m_0+1)\epsilon}\backslash B^\bullet_{m_0\epsilon}$. Then notice that $\Delta(\gamma(0),B_{(m_{0}-1)\epsilon}^{\bullet})\geq\epsilon$ and
$\Delta(\gamma(0),\check{B}_{(m_{0}+2)\epsilon}^{\bullet})\geq\epsilon$. Since the length of $\gamma$ is smaller than $\epsilon$, it follows that
the path $\gamma$ stays inside $B_{(m_{0}-1)\epsilon,(m_{0}+2)\epsilon}^{\circ}$, and consequently $\Delta(\gamma)\leq L_{(m_{0}-1)\epsilon, (m_{0}+2)\epsilon}$.
\\
\\
\eqref{L_1_ep} implies that:
\begin{align*}
\Theta_{0}(L_{1}<\epsilon)&\leq \sum \limits_{m=\lfloor \frac{1}{\epsilon} \rfloor-1}^{\infty} \Theta_{0}\big(L_{m\epsilon, (m+3)\epsilon}<\epsilon\big) =\sum \limits_{m=\lfloor \frac{1}{\epsilon} \rfloor-1}^{\infty} \Theta_{0}\big(L_{m,m+3}<1\big)
\end{align*}
where to obtain the right equality we use the scale invariance of $\mathcal{M}_{\infty}$. We now can apply Proposition \ref{exp} to obtain that there exists $\alpha>0$ such that :
\begin{align*}
\Theta_{0}(L_{1}<\epsilon)    
\leq e\sum \limits_{m=\lfloor \frac{1}{\epsilon} \rfloor-1}^{\infty} \Theta_{0}\big(\exp(-\alpha Z_{m+4})\big)=  e\sum \limits_{m=\lfloor \frac{1}{\epsilon} \rfloor-1}^{\infty} (1+\frac{2}{3}\alpha (m+4)^{2})^{-\frac{3}{2}}
\end{align*}
where we used \eqref{eq_Laplace_Z} in the last equality. The desired result follows.
\hfill$\square$
\subsection{Application to the infinite volume Brownian disk}\label{subsect_application}
The goal of this section is to extend Theorem \ref{spatial_fix_level} to random levels  and then to derive some properties of injective cycles separating the boundary from infinity in the infinite volume Brownian disk. Let us recall the notation  of Subsection \ref{IBD}, and in particular, the definition
of the coding triple $(\widetilde{X}^{(r)},\widetilde{\mathfrak{L}}_{r},\widetilde{\mathfrak{R}}_{r})$ for every $r\geq 0$. On the 
canonical space $\mathcal{C}(\mathbb{R}_{+},\mathbb{R})\times M(\mathcal{S})\times M(\mathcal{S})$, for every $r\geq 0$, let $\mathcal{F}_{r}$ be the $\sigma$-field generated by $B_{r}^{\bullet}$ (view as a random variable with values in $\mathbb{K}$ as explained
in Section \ref{subsub-Plane}) and the class of all $\Theta_{0}$-negligible sets. The approximation property \eqref{geoZ} implies that $Z_{r}$ is $\mathcal{F}_{r+}$-measurable, for every $r\geq 0$. We write $\rho_{r}:=\Pi(\tau_{r})$ for every $r\geq 0$, where $(\tau_{r})_{r\geq 0}$ is defined in \eqref{tau}.

\begin{theo}\label{spatial_random_level}
Let $T$ be a stopping time of the filtration $(\mathcal{F}_{r+})_{r\geq 0}$ such that  we have $0<T<\infty$, $\Theta_{0}$-a.s. Then conditionally on $Z_{T}=z$, the coding triple $(\widetilde{X}^{(T)},\widetilde{\mathfrak{L}}_{T},\widetilde{\mathfrak{R}}_{T})$ is distributed according to $\Theta_{z}$ and  is independent of $B_{T}^{\bullet}$.
Furthermore,
the intrinsic distance $\check{\Delta}^{(T)}$ on $\check{B}^\circ_T$ has a unique continuous extension to $\check{B}_{T}^{\bullet}$. The space $\check{B}_{T}^{\bullet}$ equipped with this continuous extension of $\check{\Delta}^{(T)}$, with the restriction of the volume measure and with the distinguished point $\rho_T$ coincides
(as an element of $\mathbb{K}_{\infty}$) with the metric space associated with $(\widetilde{X}^{(T)},\widetilde{\mathfrak{L}}_{T},\widetilde{\mathfrak{R}}_{T})$. In particular, conditionally on  $Z_{T}=z$, the space  $\check{B}_{T}^{\bullet}$ is an infinite Brownian disk with perimeter $z$ and is independent of $B_{T}^{\bullet}$. 
\end{theo}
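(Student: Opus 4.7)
The natural strategy is a discrete-time approximation of the stopping time $T$ combined with the fixed-level spatial Markov property of Theorem \ref{spatial_fix_level}. Concretely, I would define the dyadic approximations
\[T_n := 2^{-n}\lceil 2^n T\rceil,\]
so that $T_n$ takes values in $D_n := 2^{-n}\mathbb{N}^*$ with $T_n\downarrow T$, and each $T_n$ is a stopping time of the filtration $(\mathcal{F}_{r+})_{r\geq 0}$. The target is to prove, for any bounded continuous $F$ on $\mathbb{K}$ and any bounded continuous $G$ on the space of coding triples,
\[\Theta_0\bigl[F(B^{\bullet}_T)\,G(\widetilde{X}^{(T)},\widetilde{\mathfrak{L}}_T,\widetilde{\mathfrak{R}}_T)\bigr]=\Theta_0\bigl[F(B^{\bullet}_T)\,\Theta_{Z_T}(G)\bigr],\]
which is exactly the conditional independence and identification of the conditional law claimed in the theorem.

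The second step is to establish the identity above with $T$ replaced by each $T_n$. For this I would decompose along the countable set of possible values of $T_n$,
\[\Theta_0\bigl[F(B^{\bullet}_{T_n})\,G(\text{coding triple at }T_n)\bigr]=\sum_{t\in D_n}\Theta_0\bigl[\mathbf{1}_{\{T_n=t\}}F(B^{\bullet}_t)\,G(\text{coding triple at }t)\bigr],\]
and apply Theorem \ref{spatial_fix_level} at the deterministic level $t$ on each term. The subtle point is that $\{T_n=t\}\in\mathcal{F}_{t+}$ rather than $\mathcal{F}_t$. To overcome this, I would show that $\mathcal{F}_{t+}$ coincides, up to $\Theta_0$-negligible sets, with $\sigma(B^{\bullet}_t,Z_t)$: using the approximation \eqref{geoZ} for $Z_t$ and the fact that any information gained from $B^{\bullet}_s$ with $s\downarrow t$ beyond $(B^{\bullet}_t,Z_t)$ would contradict the conditional independence of $B^{\bullet}_t$ and $\check{B}^{\bullet}_t$ given $Z_t$ at fixed times. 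Once this is granted, $\mathbf{1}_{\{T_n=t\}}F(B^{\bullet}_t)$ is a measurable function of $(B^{\bullet}_t,Z_t)$ and the fixed-level Markov property yields the desired identity for $T_n$. This right-continuity verification is the main technical obstacle.

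The third step is to pass to the limit $n\to\infty$. The family $B^{\bullet}_s$ is monotone non-increasing from the right in $s$ with $\bigcap_{s>T}B^{\bullet}_s = B^{\bullet}_T$ (this is the standard definition of the hull), so $B^{\bullet}_{T_n}\to B^{\bullet}_T$ in $\mathbb{K}$; similarly $Z_{T_n}\to Z_T$ by right-continuity of the càdlàg modification of $Z$; and one checks, directly from the definitions of $\widetilde{X}^{(r)}, \widetilde{\mathfrak{L}}_r, \widetilde{\mathfrak{R}}_r$, that the coding triples converge in the product topology introduced in Section \ref{subsect_coding_triple} as $T_n\downarrow T$ (this uses the time-continuity of the Bessel process $X$ and the fact that the truncation and translation operations are continuous on snake trajectories with $\omega_*>0$). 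Passing to the limit in the identity established in step two then gives the required equality of expectations, and a monotone class argument extends it to all bounded measurable $F,G$; this yields both the conditional distribution statement and the conditional independence from $B^{\bullet}_T$.

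Finally, for the metric identification, I would invoke Lemma \ref{lem:H_2}: conditionally on $Z_T=z$, the coding triple $(\widetilde{X}^{(T)},\widetilde{\mathfrak{L}}_T,\widetilde{\mathfrak{R}}_T)$ has distribution $\Theta_z$ and therefore almost surely satisfies $(H_2)$ (by the fixed-perimeter statement recalled just after \eqref{def_theta_z}), so part $\rm(i)$ of Lemma \ref{lem:H_2} produces a unique continuous extension of $\check{\Delta}^{(T)}$ to $\check{B}^{\bullet}_T$, and part $\rm(ii)$ identifies the resulting pointed weighted compact metric space with the space associated to the coding triple, namely an infinite volume Brownian disk of perimeter $z$. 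Combining with the conditional independence established in step three gives the last assertion.
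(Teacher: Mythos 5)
Your overall strategy (dyadic approximation of $T$, the fixed-level spatial Markov property of Theorem \ref{spatial_fix_level}, passage to the limit, then Lemma \ref{lem:H_2} for the metric identification) is the same as the paper's, but your second step contains a genuine gap. You reduce the problem to showing that $\mathcal{F}_{t+}$ coincides, up to $\Theta_0$-negligible sets, with $\sigma(B^{\bullet}_t,Z_t)$, and you justify this by saying that any extra germ information ``would contradict the conditional independence of $B^{\bullet}_t$ and $\check{B}^{\bullet}_t$ given $Z_t$ at fixed times.'' This is not a proof, and the inference pattern is invalid in general: a random variable can be measurable with respect to $\sigma(U,V,Z)$ and conditionally independent of $V$ given $Z$ without being $\sigma(U,Z)$-measurable (think of $W=U\oplus V$ with $U,V$ independent fair bits). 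What you are really asserting is a Blumenthal-type triviality of the germ $\sigma$-field of the exterior disk at its boundary, which is a nontrivial statement of essentially the same depth as the theorem you are proving; the paper never needs it. The correct and much simpler device, which the paper uses, is to decompose on the events $\{\frac{k}{n}\leq T<\frac{k+1}{n}\}$ and to apply the fixed-level property at the level $\frac{k+1}{n}$, strictly above $T$ on that event: for $A\in\mathcal{F}_{T+}$ one has $A\cap\{T<t\}\in\mathcal{F}_t$ by the elementary theory of stopping times of $(\mathcal{F}_{r+})$, so $F_1\mathbbm{1}_{\frac{k}{n}\leq T<\frac{k+1}{n}}$ is already a measurable function of $B^{\bullet}_{\frac{k+1}{n}}$ and Theorem \ref{spatial_fix_level} applies directly, with no identification of $\mathcal{F}_{t+}$ required. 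Your choice $T_n=2^{-n}\lceil 2^nT\rceil$ forces the problematic event $\{T\leq t\}$; replacing it by the level $2^{-n}(\lfloor 2^nT\rfloor+1)$ (equivalently, strict inequality on the right) removes the difficulty.

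A secondary issue: in your limiting step you evaluate the hull at $T_n$, i.e.\ you need $B^{\bullet}_{T_n}\to B^{\bullet}_T$ in $\mathbb{K}$, which requires proving right-continuity of $t\mapsto B^{\bullet}_t$ as pointed weighted compact metric spaces (convergence of the intrinsic metrics $\Delta^{(T_n)}$ and of the volumes, not just the set-theoretic identity $\bigcap_{s>T}B^{\bullet}_s=B^{\bullet}_T$); this is not immediate and you do not prove it. The paper sidesteps it by fixing once and for all a bounded $\mathcal{F}_{T+}$-measurable functional $F_1$ (which covers $F(B^{\bullet}_T)$) and only letting the coding triple vary with $n$, so that the only convergences needed are $\widetilde{\mathfrak{L}}_{T_n}\to\widetilde{\mathfrak{L}}_T$, $\widetilde{\mathfrak{R}}_{T_n}\to\widetilde{\mathfrak{R}}_T$ in $M(\mathcal{S})$ (via \cite[Lemma 11]{ALG} and the c\`adl\`ag property of $r\mapsto\tau_r$), $Z_{T_n}\to Z_T$ by right-continuity of $Z$, and the continuity of $z\mapsto\Theta_z(F_2)$ coming from the scaling property of $(\Theta_z)_{z>0}$ — the last point being one you should also state explicitly. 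Your final step (invoking Lemma \ref{lem:H_2} once the conditional law is identified as $\Theta_z$, so that $(H_2)$ holds a.s.) matches the paper and is fine.
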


\begin{proof}
Let $T$ be as in the statement of the theorem. 
Recall the notation $M(\mathcal{S})$ and the distance $d_{M(\mathcal{S})}$ introduced in Section \ref{subsect_coding_triple}. 
Let $F_1$ and $F_2$ be two bounded nonnegative measurable functions on the canonical space $\mathcal{C}(\mathbb{R}_{+},\mathbb{R})\times M(\mathcal{S})\times M(\mathcal{S})$. Assume that $F_1$ is $\mathcal{F}_{T+}$-measurable and that $F_2$
is continuous. We will show that
\begin{equation}\label{eq:indep}
\Theta_{0}\big(F_{1}\times F_{2}\big(X^{(T)},\widetilde{\mathfrak{L}}_{T},\widetilde{\mathfrak{R}}_{T}\big)\big)=\Theta_{0}\big(F_{1}\,\Theta_{Z_{T}}\big(F_{2}\big)\big).
\end{equation}
Remark that \eqref{eq:indep} implies that, conditionally on $Z_{T}=z$, the coding triple $(\widetilde{X}^{(T)},\widetilde{\mathfrak{L}}_{T},\widetilde{\mathfrak{R}}_{T})$ is distributed according to $\Theta_{z}$ and  is independent of $B_{T}^{\bullet}$ (the hull 
$B_{T}^{\bullet}$ is $\mathcal{F}_{T+}$ measurable, since the process $t\mapsto B^\bullet_t$ is adapted to $(\mathcal{F}_{t+})_{t\geq 0}$
and $T$ is a stopping time). In particular,  $(\widetilde{X}^{(T)},\widetilde{\mathfrak{L}}_{T},\widetilde{\mathfrak{R}}_{T})$ will a.s. verify $(H_{2})$. Then the different assertions of the theorem follow from Lemma \ref{lem:H_2}. It remains to establish \eqref{eq:indep}. For every integer $n\geq1$ we have:
\begin{equation}\label{eq:indep2}\Theta_{0}\big(F_{1}\times F_{2}\big(X^{(\frac{\lceil n T \rceil}{n})},\widetilde{\mathfrak{L}}_{\frac{\lceil n T \rceil}{n}},\widetilde{\mathfrak{R}}_{\frac{\lceil n T \rceil}{n}}\big)\big)=\sum \limits_{k=0}^{\infty}\Theta_{0}\big(F_{1}\,\mathbbm{1}_{\frac{k}{n}\leq T<\frac{k+1}{n}}F_{2}\big(X^{(\frac{k+1}{n})},\widetilde{\mathfrak{L}}_{\frac{k+1}{n}},\widetilde{\mathfrak{R}}_{\frac{k+1}{n}}\big)\big)\end{equation}
For every atom $(\ell,\omega)$ of $\mathfrak{R}$ or $\mathfrak{L}$ such that $\ell>\tau_T$, an application of \cite[Lemma 11]{ALG}
shows that $\text{tr}_{\frac{\lceil n T \rceil}{n}}(\omega)\to \text{tr}_{T}(\omega)$  as $n\to \infty$. Using also the fact that
$r\to \tau_{r}$ is c\`adl\`ag, 
we easily obtain that $\widetilde{\mathfrak{L}}_{\frac{\lceil n T \rceil}{n}}\to \widetilde{\mathfrak{L}}_{T}$ and $\widetilde{\mathfrak{R}}_{\frac{\lceil n T \rceil}{n}}\to \widetilde{\mathfrak{R}}_{T}$ when $n\to \infty$, with respect to the topology on $M(\mathcal{S})$. 
Since $F_{2}$ is bounded and continuous, we can take the limit when $n$ goes to $\infty$ to obtain:
\begin{equation}\label{eq:indep3}\lim \limits_{n\to \infty}\Theta_{0}\big(F_{1}\times F_{2}\big(X^{(\frac{\lceil n T \rceil}{n})},\widetilde{\mathfrak{L}}_{\frac{\lceil n T \rceil}{n}},\widetilde{\mathfrak{R}}_{\frac{\lceil n T \rceil}{n}}\big)\big)=\Theta_{0}\big(F_{1}\times F_{2}\big(X^{(T)},\widetilde{\mathfrak{L}}_{T},\widetilde{\mathfrak{R}}_{T}\big)\big).\end{equation}
On the other hand, for every $k\geq 0$, $F_{1}\,\mathbbm{1}_{\frac{k}{n}\leq T<\frac{k+1}{n}}$ is $\mathcal{F}_{\frac{k+1}{n}}$-measurable and
is thus equal, $\Theta_0$-a.s.~, to a measurable function of $B^\bullet_{\frac{k+1}{n}}$. Hence we can apply the spatial Markov property of Theorem \ref{spatial_fix_level} to obtain:
\begin{align*}
\sum_{k=0}^\infty\Theta_{0}\big(F_{1}\,\mathbbm{1}_{\frac{k}{n}\leq T<\frac{k+1}{n}}\,F_{2}\big(X^{(\frac{k+1}{n})},\widetilde{\mathfrak{L}}_{\frac{k+1}{n}},\widetilde{\mathfrak{R}}_{\frac{k+1}{n}}\big)\big)
&=\sum_{k=0}^\infty\Theta_{0}\Big(F_{1}\,\mathbbm{1}_{\frac{k}{n}\leq T<\frac{k+1}{n}}\Theta_{Z_{\frac{k+1}{n}}}\big(F_{2}\big)\Big)\\
&=\Theta_{0}\big(F_{1}\,\Theta_{Z_{\frac{\lceil n T \rceil}{n}}}\big(F_{2}\big)\big).
\end{align*}

Since the process $Z$ is c\`adl\`ag, we 
have $Z_{\frac{\lceil n T \rceil}{n}}\to Z_T$ as $n\to\infty$. Moreover, the fact that $F_{2}$ is bounded and  continuous and the scaling property of $(\Theta_{l})_{l>0}$ imply that the mapping $l\mapsto \Theta_{l}(F_{2})$ is also bounded and continuous. It follows that
\begin{align}\label{eq:indep4}\lim_{n\to\infty} \sum_{k=0}^\infty\Theta_{0}\big(F_{1}\,\mathbbm{1}_{\frac{k}{n}\leq T<\frac{k+1}{n}}\,F_{2}\big(X^{(\frac{k+1}{n})},\widetilde{\mathfrak{L}}_{\frac{k+1}{n}},\widetilde{\mathfrak{R}}_{\frac{k+1}{n}}\big)\big)&=
\lim \limits_{n\to \infty}\Theta_{0}\big(F_{1}\,\Theta_{Z_{\frac{\lceil n T \rceil}{n}}}\big(F_{2}\big)\big)\nonumber\\
&=\Theta_{0}\big(F_{1}\,\Theta_{Z_{T}}\big(F_{2}\big)\big).\end{align}
The identity \eqref{eq:indep} then follows by passing to the limit $n\to\infty$
in \eqref{eq:indep2}, using \eqref{eq:indep3} and \eqref{eq:indep4}.
\end{proof}

Let us state some direct consequences of Theorem \ref{spatial_random_level}~. For every $z>0$, set
\begin{equation}\label{T_z}
   T_{z}:=\inf\{r\geq 0:\: Z_{r}\geq z\} 
\end{equation}
which is a stopping time of the filtration $(\mathcal{F}_{r+})_{r\geq 0}$. Note that $0<T_{z}<\infty$, $\Theta_{0}$-a.s. Moreover since  $Z$ does not have positive jumps we have $Z_{T_{z}}=z$. Applying Theorem \ref{spatial_random_level} with $T=T_{z}$, we obtain the following result.

\begin{cor}\label{SpaMarkov_T_z}
Let $z>0$. Under $\Theta_{0}$,  $(\check{B}_{T_{z}}^{\bullet}, \rho_{T_{z}},\check{\Delta}^{T_{z}}, |\cdot|_{\check{\Delta}^{T_{z}}})$  is an infinite volume Brownian disk with perimeter $z$ and is  independent of $(B_{T_{z}}^{\bullet}, 0,\Delta^{T_{z}}, |\cdot|_{\Delta^{T_{z}}})$.
\end{cor}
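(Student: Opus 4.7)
The plan is to derive the corollary as a direct specialization of Theorem \ref{spatial_random_level} applied to the random level $T = T_z$. Three ingredients need to be checked: that $T_z$ is a stopping time of the filtration $(\mathcal{F}_{r+})_{r\geq 0}$, that $0 < T_z < \infty$ holds $\Theta_0$-almost surely, and that $Z_{T_z}$ is equal to the deterministic value $z$. Once these are in place, the statement of the corollary is the conclusion of Theorem \ref{spatial_random_level} in which the conditioning on $\{Z_T = z\}$ becomes trivial.

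For the stopping time property, I would argue as follows. The approximation property \eqref{geoZ} of Lemma \ref{lem:appen}, which holds simultaneously for every $r>0$, expresses $Z_r$ as the limit of functionals of the hulls $(B^\bullet_{r+\epsilon})_{\epsilon>0}$, hence $Z_r$ is $\mathcal{F}_{r+}$-measurable (this is exactly the point noted in the paragraph defining $\mathcal{F}_r$). Since the modification of $Z$ we work with is c\`adl\`ag and since $T_z$ is the hitting time of the closed set $[z,\infty)$ by this c\`adl\`ag $(\mathcal{F}_{r+})$-adapted process, standard arguments yield that $T_z$ is an $(\mathcal{F}_{r+})$-stopping time. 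The fact that $0<T_z<\infty$ $\Theta_0$-a.s.~is already recorded after \eqref{T_z}: it follows from $Z_0=0$ together with $Z_r\to\infty$ as $r\to\infty$ (a consequence of $\Theta_0(Z_r)=r^2$, or of the more refined bounds provided by \eqref{eq_Laplace_Z}).

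The key simplification comes from the fact that $Z$ has only negative jumps (as stated in Section \ref{subsub-Plane} after \eqref{eq_Laplace_Z}). Consequently the set $\{r:Z_r\geq z\}$ is hit continuously from below, so $Z_{T_z-}=z$, and because any jump at $T_z$ would be negative and would push $Z_{T_z}$ below $z$, contradicting the definition of $T_z$, there can be no jump at $T_z$ and we deduce $Z_{T_z}=z$ $\Theta_0$-a.s.

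With these preliminaries, applying Theorem \ref{spatial_random_level} with $T=T_z$ yields that $(\widetilde X^{(T_z)},\widetilde{\mathfrak L}_{T_z},\widetilde{\mathfrak R}_{T_z})$, conditionally on $Z_{T_z}$, is distributed according to $\Theta_{Z_{T_z}}$ and is independent of $B_{T_z}^\bullet$. Since $Z_{T_z}=z$ is deterministic, this conditional statement becomes the unconditional statement that $(\widetilde X^{(T_z)},\widetilde{\mathfrak L}_{T_z},\widetilde{\mathfrak R}_{T_z})$ is distributed according to $\Theta_z$ and is independent of $B_{T_z}^\bullet$. The second part of Theorem \ref{spatial_random_level} identifies the metric space coded by this triple with $(\check B_{T_z}^\bullet,\rho_{T_z},\check\Delta^{(T_z)},|\cdot|_{\check\Delta^{(T_z)}})$ viewed as an element of $\mathbb{K}_\infty$, and by definition of $\Theta_z$ (see Section \ref{IBD}) this space is an infinite volume Brownian disk with perimeter $z$. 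Since $B_{T_z}^\bullet$ is $\mathcal{F}_{T_z+}$-measurable, the claimed independence follows. There is essentially no obstacle here beyond carefully checking the measurability of $T_z$; the rest is a direct reading of Theorem \ref{spatial_random_level}.
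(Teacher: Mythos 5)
Your proposal is correct and follows essentially the same route as the paper: the paper likewise notes that $T_z$ is an $(\mathcal{F}_{r+})$-stopping time with $0<T_z<\infty$ $\Theta_0$-a.s., that $Z_{T_z}=z$ because $Z$ has no positive jumps, and then obtains the corollary by applying Theorem \ref{spatial_random_level} with $T=T_z$. Your additional verifications (measurability of $Z_r$ via \eqref{geoZ}, the hitting-time argument, and the trivialization of the conditioning since $Z_{T_z}$ is deterministic) are just more detailed renderings of the same steps.
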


The next goal is to extend the definition of process $Z$ under $\Theta_{z}$.
\noindent It will be useful to consider the Skorokhod space $\mathbb{D}(\mathbb{R}_{+},\mathbb{R})$ of càdlàg functions from $\mathbb{R}_{+}$ into $\mathbb{R}$. We write $(\xi_{t})_{t\geq 0}$ for the canonical process on $\mathbb{D}(\mathbb{R}_{+},\mathbb{R})$ and $(\mathcal{D}_{t})_{t\geq 0}$ for the canonical filtration. We introduce a probability measure  $\mathbb{P}$ on $\big(\mathbb{D}(\mathbb{R}_{+},\mathbb{R}),\mathcal{D}\big)$ such that under  $\mathbb{P}$, the process $\xi$ is distributed as a L\'evy process without positive jumps with Laplace exponent 
$\psi(\lambda):=\sqrt{\frac{8}{3}}\frac{\Gamma(\lambda+\frac{3}{2})}{\Gamma(\lambda)}$, i.e.:
\[\mathbb{E}[\exp(\lambda\xi_{1})]=\exp(\psi(\lambda))\,,\quad \forall \lambda>-\frac{3}{2},\]
where $\mathbb{E}$ stands for the expectation with respect to $\mathbb{P}$.  We refer to \cite[Lemma 2.1]{BBCK} for the existence of this L\'evy process. Since $\psi^{\prime}(0+)>0$, standard properties of Levy processes imply that $\xi$ drifts to $\infty$ (see for example \cite[Chapter VII]{BertoinBook}). 
We also introduce the time change:
\[\kappa(r):=\inf\big\{s\geq 0:~ \int_{0}^{s}\exp(\frac{1}{2}\xi_{t})\:dt\geq r\big\}\:.\]
Theorem 24 in \cite{Growth} states that the process $Z$ under $\Theta_0$ is a self-similar Markov process started at $0$ with index $\frac{1}{2}$ and  Laplace exponent $\psi$. In particular, the process $(Z_{T_{z}+t})_{t\geq 0}$ is distributed under $\Theta_{0}$ as:
\[\Big(z\exp\big(\xi_{\kappa(z^{-\frac{1}{2}}r)}\big)\Big)_{r\geq 0}\]
under $\mathbb{P}$.
As a consequence of \eqref{geoZ} and Corollary \ref{SpaMarkov_T_z}  we obtain:

\begin{cor}\label{Z_z}
Fix $z>0$.  Then, for every $r\geq 0$,
\[Z_{r}:=\lim \limits_{\epsilon\to 0}\epsilon^{-2}|\check{B}_{r}^{\bullet} \cap B_{r+\epsilon}|\]
exists $\Theta_z$-a.s. Moreover the process $(Z_{r})_{r\geq 0}$ has a c\`adl\`ag modification
under $\Theta_z$, which is distributed  as a $(\frac{1}{2},\psi)$-self-similar Markov process started at $z$, i.e.
\begin{equation}\label{Lamperti}
(Z_{r})_{r\geq 0}\overset{(d)}{=}\Big(z\exp\big(\xi_{\kappa(z^{-\frac{1}{2}}r)}\big)\Big)_{r\geq 0}\:
\end{equation}
where $\xi$ is distributed according to $\mathbb{P}$.
\end{cor}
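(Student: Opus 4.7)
My plan is to deduce Corollary~\ref{Z_z} from the spatial Markov property at the stopping time $T_{z}$ (Corollary~\ref{SpaMarkov_T_z}), together with Theorem~24 of \cite{Growth} and the geometric characterization of $Z_{r}$ provided by Lemma~\ref{lem:appen}. The underlying idea is that $(Z_{r})_{r\geq 0}$ under $\Theta_{z}$ should be identified with $(Z_{T_{z}+r})_{r\geq 0}$ under $\Theta_{0}$, by viewing $\check{B}_{T_{z}}^{\bullet}$ as an infinite volume Brownian disk of perimeter $z$.

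I would begin by exhibiting the candidate $(Z_{r})_{r\geq 0}$ on the disk as an intrinsic geometric quantity. Working under $\Theta_{0}$, Lemma~\ref{lem:appen} yields the existence of $Z_{T_{z}+r}=\lim_{\epsilon\downarrow 0}\epsilon^{-2}|\check{B}_{T_{z}+r}^{\circ}\cap B_{T_{z}+r+\epsilon}|$ simultaneously for every $r>0$. For such $r$ both sets lie in $\check{B}_{T_{z}}^{\bullet}$, and using the identification of Lemma~\ref{lem:H_2} between $\check{B}_{T_{z}}^{\bullet}$ and the metric space coded by $(\widetilde{X}^{(T_{z})},\widetilde{\mathfrak{L}}_{T_{z}},\widetilde{\mathfrak{R}}_{T_{z}})$ (whose labels are $\Lambda_{v}-T_{z}$), together with the fact that both balls and hulls are defined as sublevel sets of the label function, one checks that $\check{B}_{T_{z}+r}^{\circ}$ and $B_{T_{z}+r+\epsilon}\cap\check{B}_{T_{z}}^{\bullet}$ correspond respectively to the complement of the hull of radius $r$ and to the closed $(r+\epsilon)$-neighborhood of the boundary in the disk. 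Since the volume measure on the disk is the restriction of the plane's volume measure, $Z_{T_{z}+r}$ coincides with the analogous limit computed inside the disk. Applying Corollary~\ref{SpaMarkov_T_z}, which identifies the law of $\check{B}_{T_{z}}^{\bullet}$ with $\Theta_{z}$, transfers the existence of this limit to $\Theta_{z}$ for every $r>0$; the case $r=0$ follows from the same argument applied at level $r=T_{z}$ in the plane (where $T_{z}>0$), combined with the fact that $\check{B}_{T_{z}}^{\circ}$ and $\check{B}_{T_{z}}^{\bullet}$ differ only by the injective cycle $\partial\check{B}_{T_{z}}^{\bullet}$, which has zero volume.

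For the Lamperti representation \eqref{Lamperti}, note that by the identification just described, the process $(Z_{r})_{r\geq 0}$ under $\Theta_{z}$ has the same law as $(Z_{T_{z}+r})_{r\geq 0}$ under $\Theta_{0}$. By Theorem~24 of \cite{Growth}, $Z$ under $\Theta_{0}$ is a $(1/2,\psi)$-self-similar Markov process, and since its càdlàg modification has only negative jumps we have $Z_{T_{z}}=z$, $\Theta_{0}$-a.s. The strong Markov property of $Z$ at $T_{z}$ (a stopping time for the natural filtration of $Z$) therefore shows that $(Z_{T_{z}+r})_{r\geq 0}$ is distributed as a $(1/2,\psi)$-self-similar Markov process started at $z$, whose standard Lamperti representation is exactly the right-hand side of \eqref{Lamperti}. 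This yields both the existence of the càdlàg modification under $\Theta_{z}$ and the distributional identity.

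The main obstacle is the careful set identification in the first step: one must verify that, under the metric identification of Lemma~\ref{lem:H_2}, balls $B_{T_{z}+s}\cap\check{B}_{T_{z}}^{\bullet}$ in the plane match the closed $s$-neighborhoods of the boundary in the disk. This reduces to the observation that both are described by the sublevel set $\{v\in\mathcal{T}_{\infty}^{T_{z}}:\Lambda_{v}\leq T_{z}+s\}$: in the plane by the definition $B_{s'}=\{x:\Lambda_{x}\leq s'\}$ together with $\Lambda_{x}=\Delta(0,x)$, and in the disk by the analog of \eqref{distance_boundary} applied to the disk's coding triple with shifted labels $\Lambda-T_{z}$. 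Once this is granted the remainder is routine bookkeeping.
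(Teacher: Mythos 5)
Your proposal is correct and follows essentially the same route as the paper: the paper also deduces the corollary from the simultaneous geometric approximation of Lemma \ref{lem:appen}, the identification of $\check{B}_{T_{z}}^{\bullet}$ with the infinite volume Brownian disk given by Corollary \ref{SpaMarkov_T_z}, and the fact (from Theorem 24 of \cite{Growth} and the absence of positive jumps) that $(Z_{T_{z}+t})_{t\geq 0}$ under $\Theta_{0}$ has the Lamperti representation started at $z$. Your added bookkeeping on matching sublevel sets of the label function and the zero-volume boundary is exactly the detail the paper leaves implicit.
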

Let $\mathcal{M}_{\infty}^{(z)}$ be the infinite volume Brownian disk with perimeter $z$ defined under the probability measure $\Theta_{z}$
as explained in Section \ref{IBD}. We say that $\gamma:[s,t]\to \mathcal{M}_{\infty}^{(z)}$ is a separating cycle if it is an injective continuous cycle that does not hit the boundary of  $\mathcal{M}_{\infty}^{(z)}$ and if any path connecting this boundary to $\infty$ has to cross the range of $\gamma$. 
We are going to use the "strong" spatial Markov property (Theorem \ref{spatial_random_level}) to study the separating cycles of the infinite volume Brownian disk. As in the previous sections, we consider $B_{r,s}^{\circ}=\text{Int}(B^\bullet_s\backslash B^\bullet_r)$ and
\[L_{r,s}=\inf \{\Delta(g): ~ g:[0,1]\to B_{r,s}^{\circ}\:\:\text{separating cycle}\},\]
 for every $0\leq r<s$, and we will now study $L_{r,s}$ under $\Theta_z$. 
To simplify notation we write $L_{r}:=L_{r,\infty}$ for every $r\geq 0$. Note that $L_{0}$ is the infimum of lengths of paths separating the boundary of $\mathcal{M}_{\infty}^{(z)}$ from infinity. 
We have the following analog of Theorem \ref{tail} for the infinite volume Brownian disk: 
\begin{prop}\label{cortail}
Fix  $z$ a positive real number.
\\
\\
$\rm(i)$ We have
\[\limsup \limits_{u \to \infty} \frac{\log \big(\Theta_{z}(L_{0}>u)\big)}{u} \leq -\sup_{s>1}\frac{1}{2(s-1)}\log(\frac{s^{2}}{2s-1}) .\]
Consequently, $\Theta_{z}(L_{0}>u)$ decreases at least exponentially fast when $u$ goes to $\infty$. 
\\
\\
$\rm(ii)$ There exist two constants $0<\tilde{c}_{1}\leq \tilde{c}_{2}$ such that:
\[\forall \epsilon\geq 0, \: \tilde{c}_{1} (1\wedge\epsilon^{2} )\leq \Theta_{z}(L_{0}<\epsilon) \leq \tilde{c}_{2} \epsilon^{2} .\]
\end{prop}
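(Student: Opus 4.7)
The proof combines the strong spatial Markov property (Corollary \ref{SpaMarkov_T_z}) with the techniques already developed in Section 3. By the scaling property of the family $(\Theta_z)_{z>0}$ (the pushforward of $\Theta_z$ by $\text{hom}_\lambda$ is $\Theta_{\lambda^2 z}$, and $L_0$ scales linearly in $\lambda$), it suffices to treat $z=1$. Under $\Theta_0$, set $T_1 := \inf\{r \geq 0 : Z_r \geq 1\}$, so that $Z_{T_1}=1$ almost surely since $Z$ has no positive jumps. By Corollary \ref{SpaMarkov_T_z}, the space $(\check B^\bullet_{T_1}, \check\Delta^{(T_1)})$ under $\Theta_0$ is an infinite Brownian disk with perimeter $1$. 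Since $\check\Delta^{(T_1)}$ agrees with $\Delta$ on lengths of paths contained in the open set $\check B^\circ_{T_1}$, and separating cycles of the disk correspond under this identification to injective cycles of $\mathcal{M}_\infty$ that lie in $\check B^\circ_{T_1}$ and separate $B^\bullet_{T_1}$ from infinity, we obtain the key distributional identity
\[
L_0 \text{ under } \Theta_1 \;\overset{(d)}{=}\; L_{T_1} \text{ under } \Theta_0.
\]

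For Part $\mathrm{(i)}$, I would apply Proposition \ref{L_domine_par_N} at the random pair $(T_1, sT_1)$ for $s>1$, obtaining $L_{T_1} \leq 2(N_{T_1}^{sT_1}+1)(s-1)T_1$. By the special Markov property of the Brownian snake at level $sT_1$, conditionally on $(T_1, Z_{sT_1})$, the variable $N_{T_1}^{sT_1}$ is Poisson with parameter $Z_{sT_1}\cdot\frac{3(2s-1)}{2s^2(s-1)^2 T_1^2}$. By the strong Markov property of $Z$ at $T_1$ combined with Corollary \ref{Z_z}, the process $(Z_{T_1+t})_{t\geq 0}$ is independent of $T_1$ and distributed as the exit measure process of an infinite Brownian disk with perimeter $1$. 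Applying Chernoff bounds with an exponential moment of the form $\mathbb{E}[\exp(\beta L_{T_1}/((s-1)T_1))]$ (finite for $\beta$ below a critical value coming from the Laplace transforms above) and optimizing over $s>1$ produces the claimed exponential tail rate.

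For Part $\mathrm{(ii)}$, both bounds follow by adapting the arguments of Sections 3.2 and 3.3 through the identification. The upper bound uses the covering $\{L_0<\epsilon\}\subset \bigcup_{m\geq \lfloor 1/\epsilon\rfloor-1}\{L_{m\epsilon,(m+3)\epsilon}<\epsilon\}$ from the proof of Theorem \ref{tail}$\mathrm{(ii)}$; via the identification each term equals $\Theta_0(L_{T_1+m\epsilon,T_1+(m+3)\epsilon}<\epsilon)$, which I would bound using Proposition \ref{exp} at the shifted level $T_1+m\epsilon$ (with scaling to handle the case $T_1+m\epsilon<1$) together with the Laplace transform of $Z_{T_1+(m+4)\epsilon}$ from Corollary \ref{Z_z}. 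For the lower bound, I would adapt Proposition \ref{borneinf}: the cycle-counting argument of Lemma \ref{lemborneinf} applied at random levels in $[T_1, 3T_1]$, together with the strong Markov property at $T_1$, yields the desired lower bound of order $\epsilon^2$.

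The main obstacle is that under $\Theta_1$ the exit measure process $(Z_t)_{t\geq 0}$ is a self-similar Markov process with Laplace exponent $\psi$ (Corollary \ref{Z_z}) rather than a Gamma-distributed random variable, so explicit Laplace transform computations are more involved than in the Brownian plane setting. However, only qualitative tail estimates are needed (exponential decay at infinity and $\epsilon^2$-type behavior at zero), and these can be extracted from the structural properties of the $\psi$-self-similar Markov process, in particular the fact that $\psi(\lambda)$ is finite for all $\lambda>-3/2$ and the decoupling provided by the strong Markov property at $T_1$ (which makes $(Z_{T_1+t})_{t\geq 0}$ independent of $T_1$).
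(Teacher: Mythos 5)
Your starting identification is correct: by Corollary \ref{SpaMarkov_T_z}, $L_0$ under $\Theta_1$ has the law of $L_{T_1}$ under $\Theta_0$ (this is exactly how the paper argues in the proof of Proposition \ref{cycleIBD}). But the way you then propose to estimate this quantity has a genuine gap in part $\rm(i)$. You bound $L_{T_1}\leq 2(N_{T_1}^{sT_1}+1)(s-1)T_1$ and run a Chernoff argument on $L_{T_1}/((s-1)T_1)$. This only controls the ratio $L_{T_1}/T_1$, not $L_{T_1}$ itself: the factor $T_1$ is unbounded with a merely polynomial tail ($\Theta_0(T_1>u)\leq \Theta_0(Z_u\leq 1)=\Theta_0(Z_1\leq u^{-2})\asymp u^{-3}$), so the random upper bound $2(N_{T_1}^{sT_1}+1)(s-1)T_1$ itself decays only polynomially in $u$, and no optimization over $s$ can recover the claimed exponential rate from it. Any splitting of the form $\Theta_0(L_{T_1}>u)\leq \Theta_0(T_1>A)+\Theta_0(L_{T_1}/T_1>u/((s-1)A))$ is stuck at polynomial decay. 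In addition, applying Proposition \ref{L_domine_par_N} and the special Markov property at the random pair of levels $(T_1,sT_1)$ is not justified as stated (they are proved for fixed levels), though this is a secondary issue.

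The paper's proof is much lighter and avoids all of this: by Theorem \ref{spatial_fix_level} at the \emph{fixed} level $r=1$ together with the scaling of $(\Theta_z)$, the variable $Z_1^{-1/2}L_1$ under $\Theta_0$ is distributed as $L_0$ under $\Theta_1$ and is independent of $Z_1$. Then $\rm(i)$ and the upper bound in $\rm(ii)$ follow at once from Theorem \ref{tail} by restricting to $\{Z_1>1\}$ (resp. $\{Z_1<1\}$), e.g. $\Theta_1(L_0>u)\,\Theta_0(Z_1>1)=\Theta_0(L_1>Z_1^{1/2}u,\,Z_1>1)\leq \Theta_0(L_1>u)$, so the rate of Theorem \ref{tail}$\rm(i)$ is inherited verbatim; the lower bound in $\rm(ii)$ comes from slicing $\Theta_0(L_1\leq\epsilon)$ over the values of $Z_1^{1/2}\in[\frac1{m+1},\frac1m]$, using the already proved upper bound $\Theta_1(L_0<\epsilon)\leq\tilde c_2\epsilon^2$, the explicit density of $Z_1$, and choosing $m_0$ so that the error term is smaller than $c_1\epsilon^2$. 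No new estimates on the process $Z$ under $\Theta_1$ are needed. By contrast, your plan for part $\rm(ii)$ requires random-level analogues of Proposition \ref{exp} and Lemma \ref{lemborneinf} and quantitative control of $\Theta_1(\exp(-\alpha Z_t))$ for the $\psi$-self-similar Markov process of Corollary \ref{Z_z}; your remark that ``only qualitative tail estimates are needed'' is not accurate, since obtaining the precise power $\epsilon^2$ in both bounds requires quantitative decay (the $\Theta_0$ computation used the summable bound $\Theta_0(\exp(-\alpha Z_{m}))\asymp m^{-3}$), and none of these substitutes are carried out in your sketch.
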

\begin{proof}
By scaling, it is enough to consider $z=1$. 
The spatial Markov property (Theorem \ref{spatial_fix_level}) and again a scaling argument show that the distribution of $Z_{1}^{-\frac{1}{2}} L_{1}$ under $\Theta_{0}$ coincides with the distribution of $L_{0}$ under $\Theta_{1}$ and moreover $Z_{1}^{-\frac{1}{2}} L_{1}$  is independent of $Z_{1}$
under $\Theta_0$. We obtain:
\[\Theta_{1}(L_{0}>u)\Theta_{0}(Z_{1}>1)=\Theta_{0}(L_{1}>Z_{1}^{\frac{1}{2}}u,\: Z_{1}>1 )\leq \Theta_{0}(L_{1}>u).\]
 Part $\rm(i)$ of the proposition then follows from Theorem \ref{tail}~.
\\
\\
Let us prove $\rm(ii)$. The upper bound is a direct consequence of the beginning of the proof, noting that for every $\epsilon\geq 0$:
\[\Theta_{1}(L_{0}<\epsilon)\Theta_{0}(Z_{1}<1)=\Theta_{0}(Z_{1}^{-\frac{1}{2}}L_{1}<\epsilon,\:Z_{1}<1)\leq \Theta_{0}(L_{1}<\epsilon)\]
so that, by Theorem \ref{tail}~, if $\tilde{c}_{2}:=c_{2}/\Theta_{0}(Z_{1}<1)\in(0,\infty)$ we have:
\begin{equation}\label{L_0_ep}
\Theta_{1}(L_{0}<\epsilon) \leq \tilde{c}_{2} \epsilon^{2}
\end{equation}
for every $\epsilon>0$. We argue in a similar way to obtain the lower bound. Let $\epsilon\leq 1$ and let $m_{0}\geq 1$ be an  integer. We have similarly:
\begin{align*}
\Theta_{0}(&L_{1}\leq \epsilon) =\Theta_{0}(Z_{1}^{\frac{1}{2}}Z_{1}^{-\frac{1}{2}}L_{1}\leq \epsilon)\\
&\leq \Theta_{0}(Z_{1}^{-\frac{1}{2}}L_{1}\leq m_{0} \epsilon,\:Z_{1}^{\frac{1}{2}}>\frac{1}{m_{0}})+ \sum_{m=m_{0}}^{\infty} \Theta_{0}\big(Z_{1}^{\frac{1}{2}}\in[\frac{1}{m+1}, \frac{1}{m}]\:,\: Z_{1}^{-\frac{1}{2}}L_{1}\leq (m+1)\epsilon\big).
\end{align*}
Now we can use the first observations of the proof and \eqref{L_0_ep} to get:
\begin{align*}
\Theta_{0}(L_{1}\leq \epsilon) &\leq  \Theta_{0}(Z_{1}^{-\frac{1}{2}}L_{1}\leq m_{0}\epsilon)+\tilde{c}_{2}\epsilon^{2}   \sum_{m=m_{0}}^{\infty} (m+1)^{2}\:\Theta_{0}(Z_{1}\in[(m+1)^{-2}, m^{-2}])\\
&= \Theta_{1}(L_{0}\leq m_{0}\epsilon)+\tilde{c}_{2}\epsilon^{2}   \sum_{m=m_{0}}^{\infty} (m+1)^{2}\:\Theta_{0}(Z_{1}\in[(m+1)^{-2}, m^{-2}])
\end{align*}
Under $\Theta_{0}$, the density of $Z_{1}$ is $\frac{3^{\frac{3}{2}}}{\sqrt{2\pi}}\sqrt{x}\exp(-\frac{3}{2}x)\:dx$, so for every $0<a<b$:
\[\Theta_{0}(Z_{1}\in[a, b])\leq \sqrt{\frac{6}{\pi}}(b^{\frac{3}{2}}-a^{\frac{3}{2}}) . \] 
Hence we can find a constant $c_{3}>0$, which does not depend on the choice of $m_0$, such that
\[\Theta_{0}(L_{1}\leq \epsilon)
\leq  \Theta_{1}(L_{0}\leq m_{0}\epsilon)+ c_{3} \epsilon^{2}   \sum_{m=m_{0}}^{\infty} \frac{1}{m^{2}}.\]
Then using Theorem \ref{tail}~, we get for every $\epsilon\in[0,1]$:
\[(c_{1}- c_3\sum_{m=m_{0}}^{\infty} \frac{1}{m^{2}})\,\epsilon^{2} \leq \Theta_{1}(L_{0}\leq m_{0}\epsilon) .\]
We obtain the lower bound in $\rm(ii)$ by choosing $m_{0}$ such that $\sum \limits_{m=m_{0}}^{\infty} m^{-2}<\frac{c_{1}}{c_{3}}.$
\end{proof}
\noindent Recall that  $0<\tilde{c}_{1}<\tilde{c}_{2}$ are the constants  appearing in Proposition \ref{cortail}. The end of this section is devoted to the proof of the following result which will be crucial for the proof of Theorem \ref{iso} $\rm(i)$. Before stating the result, observe that the definition 
\eqref{T_z} of $T_r$ for $r>0$ also makes sense under  $\Theta_z$ by Corollary \ref{Z_z}.

\begin{prop}\label{cycleIBD}
There exists $\tilde{c}_{3}>0$ such that, for every $r>2\tilde{c}_{2}/\tilde{c}_{1}$ and $\epsilon>0$,
\[\Theta_{1}\big(L_{0,T_{2r}}\leq\epsilon\big)\geq \tilde{c}_{3} (1\wedge\epsilon^{2}).\]
\end{prop}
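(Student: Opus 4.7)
The plan is to combine the lower bound on $\Theta_{1}(L_{0} \leq \epsilon)$ from Proposition~\ref{cortail}(ii) with the strong spatial Markov property at the stopping time $T_{2r}$, and to use the hypothesis $r > 2\tilde{c}_{2}/\tilde{c}_{1}$ to absorb an error term. I start from
\[
\Theta_{1}(L_{0, T_{2r}} \leq \epsilon) \;\geq\; \Theta_{1}(L_{0} \leq \epsilon) \;-\; \Theta_{1}\!\big(L_{0} \leq \epsilon,\ L_{0, T_{2r}} > \epsilon\big),
\]
where the first term is bounded below by $\tilde{c}_{1}(1 \wedge \epsilon^{2})$ thanks to Proposition~\ref{cortail}(ii). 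The task is thus to upper bound the ``bad event'' $B := \{L_{0} \leq \epsilon,\ L_{0, T_{2r}} > \epsilon\}$ by something strictly smaller than $\tilde{c}_{1}(1\wedge\epsilon^{2})$.

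To control $\Theta_{1}(B)$, I note that on $B$ there exists a separating cycle $\gamma$ with $\Delta(\gamma)\leq \epsilon$ whose range is not contained in $B^{\bullet}_{T_{2r}}$, so $\gamma$ visits a point with label $> T_{2r}$. Since $|\Lambda_{x} - \Lambda_{y}| \leq \Delta(x,y) \leq \epsilon$, every point of $\gamma$ has label $> T_{2r} - \epsilon$, and $\gamma$ lies in a thin shell around $\partial B^{\bullet}_{T_{2r}}$. Modulo the straddling issue discussed below, this forces the existence of a separating cycle of length $\leq \epsilon$ in the intrinsic metric $\check{\Delta}^{(T_{2r})}$ of the outer infinite Brownian disk $\check{B}^{\bullet}_{T_{2r}}$; using here that for a path contained in a subset $U$, its length in $\Delta$ and its length in the intrinsic metric induced on $U$ coincide. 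By the analogue of Theorem~\ref{spatial_random_level} under $\Theta_{1}$ (derived from Corollary~\ref{SpaMarkov_T_z} by embedding $\Theta_{1}$ inside $\Theta_{0}$ via the IBD of perimeter $1$ at level $T_{1}$), the conditional law of $\check{B}^{\bullet}_{T_{2r}}$ given $\mathcal{F}_{T_{2r}+}$ is $\Theta_{2r}$. Combining with the upper bound from Proposition~\ref{cortail}(ii) and the scaling identity $\Theta_{2r}(L_{0} \leq \epsilon) = \Theta_{1}(L_{0} \leq \epsilon/\sqrt{2r})$, this yields
\[
\Theta_{1}(B) \;\leq\; \Theta_{2r}(L_{0} \leq \epsilon) \;\leq\; \frac{\tilde{c}_{2}\,\epsilon^{2}}{2r}.
\]

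To conclude, the hypothesis $r > 2\tilde{c}_{2}/\tilde{c}_{1}$ gives $\tilde{c}_{2}/(2r) < \tilde{c}_{1}/4$, so for $\epsilon \leq 1$ one gets $\Theta_{1}(L_{0, T_{2r}} \leq \epsilon) \geq \tilde{c}_{1}\epsilon^{2} - (\tilde{c}_{1}/4)\epsilon^{2} = (3\tilde{c}_{1}/4)\epsilon^{2}$, while for $\epsilon > 1$ the monotonicity in $\epsilon$ and the $\epsilon = 1$ case give $\Theta_{1}(L_{0, T_{2r}} \leq \epsilon) \geq 3\tilde{c}_{1}/4$. Thus the proposition holds with $\tilde{c}_{3} = 3\tilde{c}_{1}/4$.

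The main obstacle will be the middle step, namely the case where the short cycle $\gamma$ straddles $\partial B^{\bullet}_{T_{2r}}$ (has points both inside and outside the hull), so that it does not directly produce a cycle lying entirely in $\check{B}^{\bullet}_{T_{2r}}$ and the inequality $\Theta_{1}(B) \leq \Theta_{2r}(L_{0} \leq \epsilon)$ is not immediate. I expect to handle this either by applying the spatial Markov property at a slightly earlier stopping time $T_{2r - c\epsilon}$, with $c>0$ chosen so that, using an auxiliary tail estimate on the perimeter process under $\Theta_{1}$, the event $\{T_{2r - c\epsilon} \leq T_{2r} - \epsilon\}$ has conditional probability close to one (in which case the whole cycle lies in $\check{B}^{\bullet}_{T_{2r - c\epsilon}}$), or by a topological surgery closing up the outer arcs of $\gamma$ using segments of the injective cycle $\partial B^{\bullet}_{T_{2r}}$ whose total length is controlled by $O(\epsilon)$ via the crossings of $\gamma$ with $\partial B^{\bullet}_{T_{2r}}$. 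Either variant will at worst replace $\epsilon$ by a constant multiple of $\epsilon$ and so only affect the value of $\tilde{c}_{3}$.
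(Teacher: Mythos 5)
Your skeleton (lower-bound $\Theta_{1}(L_{0,T_{2r}}\leq\epsilon)$ by $\Theta_{1}(L_{0}\leq\epsilon)-\Theta_{1}(B)$, use Proposition \ref{cortail}(ii) for the main term, and control the bad event via the strong spatial Markov property at a perimeter stopping time plus scaling) is the paper's, but the step you yourself single out as the main obstacle is exactly where the argument breaks, and neither of your proposed fixes works. Concerning the first fix: for a fixed $c$, the event $\{T_{2r-c\epsilon}\leq T_{2r}-\epsilon\}$ does \emph{not} have probability close to one; its probability tends to $0$ as $\epsilon\to 0$. Indeed, by the Markov and scaling properties of the perimeter process (Corollary \ref{Z_z}), $\Theta_{1}(T_{2r}-T_{2r-c\epsilon}\geq\epsilon)=\Theta_{1}\big(\sup_{[0,u]}Z<1+a\big)$ with $u\asymp\epsilon$ and $a\asymp\epsilon$, and in the Lamperti representation \eqref{Lamperti} the L\'evy process $\xi$ has $\psi(\lambda)\asymp\lambda^{3/2}$ at infinity, hence small-time fluctuations of order $t^{2/3}\gg t$; so the supremum over a window of length $\asymp\epsilon$ exceeds a level of order $\epsilon$ with probability tending to $1$. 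Consequently, in the decomposition $\Theta_{1}(B)\leq\Theta_{1}(L_{T_{2r-c\epsilon}}\leq\epsilon)+\Theta_{1}(T_{2r}-T_{2r-c\epsilon}<\epsilon)$ the second term is of order $1$, not $o(\epsilon^{2})$, and it swamps the main term $\tilde c_{1}\epsilon^{2}$. Repairing this with a shrinking gap would require a gap of order at least $\epsilon^{2/3}$ together with a quantitative strengthening of Lemma \ref{T2} in which the threshold $A$ is allowed to tend to $z$, which is not available in the paper. The second fix also fails: there is no control whatsoever on the $\Delta$-length of arcs of $\partial B^{\bullet}_{T_{2r}}$ between two crossing points at distance $O(\epsilon)$ — the only notion of boundary size available is the generalized perimeter $Z_{T_{2r}}$, and the hull boundary is not a curve of locally finite length — so the surgery cannot be closed up at cost $O(\epsilon)$. (A minor further point: your justification ``every point of $\gamma$ has label $>T_{2r}-\epsilon$, hence $\gamma$ lies in a thin shell'' is not quite right, since large label does not preclude lying inside a smaller hull; the correct statement, which does hold, is that $\gamma$ avoids $B^{\bullet}_{T_{2r}-\epsilon}$ because the distance between $B^{\bullet}_{s}$ and $\check B^{\bullet}_{s'}$ is at least $s'-s$.)

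The paper's resolution is to use a \emph{macroscopic} gap of perimeter levels, which is precisely the repair your argument needs. On $\{L_{0}\leq\epsilon\}$, a near-optimal separating cycle either stays inside $B^{\bullet}_{T_{2r}}$ (the event you want), or stays outside $B^{\bullet}_{T_{r}}$ — in which case $L_{T_{r}}\leq\epsilon$, an event of probability $\Theta_{r}(L_{0}\leq\epsilon)=\Theta_{1}(L_{0}\leq\epsilon/\sqrt r)\leq\tilde c_{2}\epsilon^{2}/r$ by Theorem \ref{spatial_random_level}, Corollary \ref{SpaMarkov_T_z} and scaling — or it connects $B^{\bullet}_{T_{r}}$ to $\check B^{\bullet}_{T_{2r}}$, which forces $T_{2r}-T_{r}\leq\epsilon$, an event of probability $\Theta_{1}(\sup_{[0,\epsilon/\sqrt r]}Z\geq 2)=o(\epsilon^{2})$ by Lemma \ref{T2} applied with the \emph{fixed} threshold $A=2$. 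Because the level gap is of order one, the straddling case costs a length of order $T_{2r}-T_{r}$ and Lemma \ref{T2} applies as stated; the hypothesis $r>2\tilde c_{2}/\tilde c_{1}$ then closes the estimate, and large $\epsilon$ follows by monotonicity as you indicate. Your other ingredients (the identification of ambient and intrinsic lengths for paths contained in the open outer region, the use of scaling, the treatment of $\epsilon>1$) are fine; the genuine gap is solely the quantitative control of the straddling event, for which your proposal does not supply a valid argument.
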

The proof of Proposition \ref{cycleIBD} is based on the next lemma:

\begin{lem}\label{T2}
Let $z> 0$. Then, for every $A>z$ and $\beta<3$ we have:
\[\Theta_{z}\big(\sup \limits_{[0,\epsilon]}Z\geq A\big)=o(\epsilon^{\beta})\]
as $\epsilon \to 0$.
\end{lem}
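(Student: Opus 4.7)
The plan is to reduce to $z=1$ via the self-similarity of $Z$ (Corollary \ref{Z_z}), use the Lamperti representation to rephrase the event in terms of the underlying Lévy process $\xi$, and then control the resulting quantity by a Chernoff-type argument whose rate is ultimately driven by the location $\lambda=-3/2$ of the singularity of $\psi$. Concretely, \eqref{Lamperti} together with a change of variables identifies $\Theta_z(\sup_{r\in[0,\epsilon]}Z_r\geq A)$ with $\mathbb{P}(I_a\leq z^{-1/2}\epsilon)$, where $a:=\log(A/z)>0$, $\tau_a:=\inf\{t:\xi_t\geq a\}$ and $I_a:=\int_0^{\tau_a}e^{\xi_t/2}\,dt$; the equivalence is exact because $\xi$, being spectrally negative, reaches $a$ continuously. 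It therefore suffices to show $\mathbb{P}(I_a\leq\epsilon)=o(\epsilon^\beta)$ for every $\beta<3$ and every $a>0$.

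For this, I would fix an auxiliary threshold $t_0>0$ (ultimately a small positive power of $\epsilon$) and split
\[\mathbb{P}(I_a\leq\epsilon)\leq \mathbb{P}(\tau_a\leq t_0)+\mathbb{P}(I_a\leq\epsilon,\ \tau_a>t_0).\]
For the first term, the classical exponential martingale $e^{\Phi(q)\xi_t-qt}$ and optional stopping yield $\mathbb{E}[e^{-q\tau_a}]=e^{-a\Phi(q)}$. Since Stirling applied to the formula for $\psi$ gives $\psi(\lambda)\sim\sqrt{8/3}\,\lambda^{3/2}$ as $\lambda\to\infty$, and hence $\Phi(q)\sim(3/8)^{1/3}q^{2/3}$, an explicit Chernoff optimization in $q$ produces $\mathbb{P}(\tau_a\leq t_0)\leq\exp(-c\,a^{3}/t_0^{2})$ for $t_0$ small, which is super-polynomial. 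For the second term, Jensen's inequality applied to $x\mapsto e^{x/2}$ gives, on $\{\tau_a>t_0\}$, $I_a\geq t_0\exp\bigl(\tfrac{1}{2t_0}\int_0^{t_0}\xi_s\,ds\bigr)$, so the event forces $\tfrac{1}{t_0}\int_0^{t_0}\xi_s\,ds\leq 2\log(\epsilon/t_0)$. Writing $\int_0^{t_0}\xi_s\,ds=\int_0^{t_0}(t_0-s)\,d\xi_s$ and applying the exponential formula for Lévy stochastic integrals, $\mathbb{E}\bigl[\exp\bigl(-\tfrac{\mu}{t_0}\int_0^{t_0}\xi_s\,ds\bigr)\bigr]=\exp\bigl(t_0\int_0^1\psi(-\mu u)\,du\bigr)$, which is finite for every $\mu<3/2$. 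The Markov inequality, applied with $y=2\log(t_0/\epsilon)$, then gives $\mathbb{P}(I_a\leq\epsilon,\ \tau_a>t_0)\leq C_\mu\,(\epsilon/t_0)^{2\mu}$, uniformly in small $t_0$.

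Given $\beta<3$, I would pick $\mu<3/2$ with $2\mu>\beta$ and set $t_0:=\epsilon^{1-\beta/(2\mu)}$: the first error term decays super-polynomially in $\epsilon$ while the second is at most $C_\mu\epsilon^\beta$, yielding $\mathbb{P}(I_a\leq\epsilon)=O(\epsilon^\beta)$. As $\beta<3$ was arbitrary, $\mathbb{P}(I_a\leq\epsilon)=o(\epsilon^{\beta'})$ for every $\beta'<3$, which is the desired conclusion. The delicate point, and the main obstacle, is precisely matching the exponent $3$: it is sharp and is forced by the simple pole of $\Gamma(\lambda+3/2)/\Gamma(\lambda)$ at $\lambda=-3/2$, so one must push $\mu$ arbitrarily close to $3/2$; the technical check is that the prefactor $\exp\bigl(t_0\int_0^1\psi(-\mu u)\,du\bigr)$, which blows up as $\mu\uparrow 3/2$, remains bounded uniformly as $t_0\to 0$ because the singularity of $\psi$ at $-3/2$ is integrable over $u\in[0,1]$ whenever $\mu<3/2$.
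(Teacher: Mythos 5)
Your proposal is correct and follows essentially the same route as the paper: after the Lamperti reduction of the event to $\{\int_0^{\tau_a}e^{\xi_t/2}\,dt\leq\epsilon\}$ (the paper's $T$ is your $\tau_a$), you split according to whether $\tau_a$ is smaller than a small power of $\epsilon$, control the first piece via the first-passage Laplace transform together with $\psi(\lambda)\asymp\lambda^{3/2}$, and control the second by a Markov/Chernoff bound resting on the finiteness of the exponential moments $\mathbb{E}[e^{\lambda\xi_1}]$ for $\lambda>-\frac{3}{2}$, which is exactly where the exponent $3$ comes from in the paper as well. The only (harmless) difference is the implementation of the second piece — you apply Jensen to $x\mapsto e^{x/2}$ and the exponential formula for the deterministic integrand in $\int_0^{t_0}(t_0-s)\,d\xi_s$, whereas the paper bounds a negative moment of $\int_0^{\epsilon^\alpha}e^{\xi_r/2}\,dr$ via Jensen for $x\mapsto x^{-\beta}$ and $\mathbb{E}[e^{-\beta\xi_r/2}]=e^{r\psi(-\beta/2)}$ — and both versions hinge on precisely the same constraint $\mu<\frac{3}{2}$ (resp. $\beta<3$), your closing remark about "integrability of the singularity" being unnecessary since for fixed $\mu<\frac{3}{2}$ the integrand $\psi(-\mu u)$ never approaches the pole.
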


\begin{proof}
By a scaling argument, it is enough to prove the lemma with $z=1$. Fix $A>1$.
Introduce the stopping time $T:=\inf \{t\geq 0: ~\xi_{t}\geq \log(A)\}$ which is finite $\mathbb{P}$-a.s.
By Corollary \ref{Z_z} for every $\epsilon>0$:
\[\Theta_{1}\big(\sup \limits_{t\in[0,\epsilon]} Z_{t}\geq A\big)=\mathbb{P}\big(T\leq \kappa(\epsilon)\big)=\mathbb{P}\big(\int_{0}^{T}\exp(\frac{1}{2}\xi_{r})\:dr\leq \epsilon\big).\]
Let $\alpha\in(0,1)$, we split $\mathbb{P}(\int_{0}^{T}\exp(\frac{1}{2}\xi_{r})\:dr\leq\epsilon)$ as follows:
\begin{equation}\label{split}
\mathbb{P}\big(\int_{0}^{T}\exp(\frac{1}{2}\xi_{r})\:dr\leq\epsilon\big)\leq\mathbb{P}(T\leq\epsilon^{\alpha})+\mathbb{P}\big(\int_{0}^{\epsilon^{\alpha}}\exp(\frac{1}{2}\xi_{r})\:dr\leq\epsilon\big)
\end{equation}
and we study each term separately. We need to estimate $\mathbb{P}(T<\delta)$ for $\delta>0$.
As $\xi$ is a Lévy process without positive jumps which drifts to $\infty$,  we have by standard properties of L\'evy processes
\[\mathbb{E}[\exp( -\psi(\lambda)T)]=\exp(-\lambda \log(A))\]
for every $\lambda>0$. See for example \cite[Chapter VII]{BertoinBook} for a proof. 
Remark that there exists $c>0$ such that, for every $\lambda>1$, we have $\psi(\lambda)<c\lambda^{\frac{3}{2}}$ and that an application of Markov's inequality gives:
\[\mathbb{P}(T<\delta)=\mathbb{P}(-\psi(\lambda)T>-\psi(\lambda)\delta)\leq \exp(\psi(\lambda)\delta-\lambda\log(A)).\]
So taking $\lambda=\delta^{-\frac{2}{3}}$ in the previous bound we obtain:
\[\mathbb{P}(T<\delta)=\underset{\delta\downarrow 0}{O}\big(\exp(-\delta^{-\frac{2}{3}}\log(A))\big)\:.\]
Consequently, for every $q>0$, $\mathbb{P}(T<\delta)=o(\delta^{q})$ as $\delta\downarrow 0$. Let us study the other term appearing in \eqref{split}. Fix $\beta\in (0,3)$. Again by using Markov's inequality we have:
\begin{align*}
\mathbb{P}\big(\int_{0}^{\epsilon^{\alpha}}\exp(\frac{1}{2}\xi_{r})\:dr\leq\epsilon\big)
&\leq \epsilon^{\beta} \mathbb{E}\Big[\big(\int_{0}^{\epsilon^{\alpha}}\exp(\frac{1}{2}\xi_{r})\:dr\big)^{-\beta}\Big].
\end{align*}
But then an application of Jensen inequality gives
\begin{align*}
\mathbb{P}\big(\int_{0}^{\epsilon^{\alpha}}\exp(\frac{1}{2}\xi_{r})\:dr\leq\epsilon\big)
&\leq \epsilon^{(1-\alpha)\beta-\alpha} \mathbb{E}\big[\int_{0}^{\epsilon^{\alpha}}\exp(-\frac{\beta}{2}\xi_{r})\:dr\big]\\
&=\epsilon^{(1-\alpha)\beta-\alpha} \frac{\exp(\psi(-\frac{\beta}{2})\epsilon^{\alpha})-1}{\psi(-\frac{\beta}{2})}.
\end{align*}
We obtain that $\mathbb{P}\big(\int_{0}^{\epsilon^{\alpha}}\exp(\frac{1}{2}\xi_{r})\:dr\leq\epsilon\big)=O(\epsilon^{(1-\alpha)\beta}) $
as $\epsilon \downarrow 0$. Since this is true for every $\beta\in(0,3)$ and $\alpha\in(0,1)$, the lemma follows. 
\end{proof}
\noindent Let us deduce Proposition \ref{cycleIBD} from Lemma \ref{T2}.
\\
\\
\noindent{\it Proof of Proposition \ref{cycleIBD}.} 
Fix $r>2 \tilde{c}_{2}/\tilde{c}_{1}\geq 2$.
Let $\gamma$ be a path separating the boundary of $\mathcal{M}_{\infty}^{(1)}$ from infinity. If $\gamma$ does not stay inside $B_{T_{2r}}^{\bullet}$ then it has to stay outside $B_{T_{r}}^{\bullet}$ or to connect $B_{T_{r}}^{\bullet}$ and $\check{B}_{T_{2r}}^{\bullet}$. Since the distance between $B_{T_{r}}^{\bullet}$ and  $\check{B}_{T_{2r}}^{\bullet}$ is $T_{2r}-T_{r}$ we have:
\[L_{0}\geq L_{0,T_{2r}} \wedge L_{T_{r}} \wedge (T_{2r}-T_{r})\:\:\Theta_{1}\text{-a.s.}\]
Consequently, for every $\epsilon>0$:
\begin{align*}
\Theta_{1}(L_{0}\leq \epsilon)&\leq \Theta_{1}\big(L_{0,T_{2r}}\leq \epsilon\big)+\Theta_{1}\big(L_{T_{r}}\leq \epsilon\big)+\Theta_{1}\big(T_{2r}-T_{r}\leq \epsilon\big).
\end{align*}
By Theorem \ref{spatial_random_level} and Corollary \ref{SpaMarkov_T_z}~, the distribution of $L_{T_{r}}$ under $\Theta_{1}$ is the distribution of $L_{0}$ under $\Theta_{r}$. Using a  scaling argument, we obtain:
\[\Theta_{1}\big(L_{T_{r}}\leq \epsilon\big)= \Theta_{r}\big(L_{0}\leq \epsilon\big)=\Theta_{1}\big( L_{0}\leq \frac{\epsilon}{\sqrt{r}}\big)\leq \tilde{c}_{2} \frac{\epsilon^{2}}{r} \]
and
\[\Theta_{1}\big(T_{2r}-T_{r}\leq \epsilon\big)=\Theta_r(T_{2r}\leq \epsilon)=\Theta_{1}\big(T_{2}\leq \frac{\epsilon}{\sqrt{r}}\big)=\Theta_{1}\big(\sup \limits_{t\in[0,\frac{\epsilon}{\sqrt{r}}]}Z_{t}\geq 2\big)=\underset{\epsilon\downarrow 0}{o}(\epsilon^{2})\]
where the last equality comes from Lemma \ref{T2} (taking $A=2$). 
We finally derive that:
\begin{align*}
\Theta_{1}\big(L_{0,T_{2r}}\leq \epsilon\big)&\geq \Theta_{1}\big(L_{0}\leq \epsilon\big)-\tilde{c}_{2}\frac{\epsilon^{2}}{r}+o(\epsilon^{2})
\geq \tilde{c}_{1}(1\wedge \epsilon^{2})-\tilde{c}_{2}\frac{\epsilon^{2}}{r}+o(\epsilon^{2})
\end{align*}
where in the second line we use Proposition \ref{cortail}~. Since  $r>2\tilde{c}_{2}/\tilde{c}_{1}$  we obtain the desired result.
\hfill $\square$
\section{Isoperimetric inequalities}\label{seciso}
\subsection{Preliminary results on the volume of the hulls}
This section is devoted to preliminary results about the volume of hulls. This will simplify some arguments in the derivation of Theorem \ref{iso}. We are going to use the following result \cite[Theorem 1.4]{Hull}
\begin{align} \label{eq_B_sachant_Z}
\Theta_{0}\big(\exp(-\lambda | B^{\bullet}_{r}|)\:|\: Z_{r}=l\big) 
= &r^{3}(2\lambda)^{\frac{3}{4}} \frac{\cosh((2\lambda)^{\frac{1}{4}}r)}{\sinh^{3}((2\lambda)^{\frac{1}{4}}r)}\nonumber\\
&\cdot \exp\Big(-l\big(\sqrt{\frac{\lambda}{2}}(3\coth^{2}((2\lambda)^{\frac{1}{4}}r)-2)-\frac{3}{2r^{2}}\big)\Big)
\end{align}
for every $\lambda> 0$. In particular, using \eqref{eq_Laplace_Z}, we obtain that for every $\lambda\geq 0$:
\begin{equation}\label{Laplace_Hull}
  \Theta_{0}\big(\exp(-\lambda | B^{\bullet}_{r}|) \big)=3^{\frac{3}{2}}\cosh((2\lambda)^{\frac{1}{4}}r)\:(\cosh^{2}((2\lambda)^{\frac{1}{4}}r)+2)^{-\frac{3}{2}}
\end{equation}
(this formula also appears in \cite{Hull}).

\begin{cor}\label{lemvolIBD}
There exists a constant $C>0$ such that for every $z>0$ and $r>0$:
\[\Theta_{z}\big(|B_{r}^{\bullet}|\big)\leq C (r+\sqrt{z})^{4}.\]
\end{cor}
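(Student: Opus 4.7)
The plan is to reduce to the case $z=1$ by scaling, and then use Theorem \ref{spatial_fix_level} to realize the infinite volume Brownian disk with perimeter $Z_{s}$ as the ``exterior'' $\check B_{s}^{\bullet}$ of the Brownian plane, so that its hull can be compared with that of $\mathcal{M}_{\infty}$.

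First, by the scaling invariance of $\mathcal{M}_{\infty}$ one has $|B_{r}^{\bullet}|\stackrel{(d)}{=}r^{4}|B_{1}^{\bullet}|$, so $\Theta_{0}(|B_{r}^{\bullet}|)=C_{0}r^{4}$ for some $C_{0}<\infty$; finiteness of $C_{0}$ follows from \eqref{Laplace_Hull}, and in fact a short differentiation at $\lambda=0$ of \eqref{eq_B_sachant_Z} yields the explicit identity $\Theta_{0}(|B_{r}^{\bullet}|\mid Z_{r}=l)=\tfrac{2r^{4}}{15}+\tfrac{lr^{2}}{5}$, giving $C_{0}=1/3$.

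Next, for every $s>0$ I would identify, via Lemma \ref{lem:H_2}, the hull of radius $r$ in the metric space coded by $(\widetilde{X}^{(s)},\widetilde{\mathfrak{L}}_{s},\widetilde{\mathfrak{R}}_{s})$ with the set
\[
\widetilde{B}_{r}^{\bullet}:=\Pi\bigl(\{u\in\mathcal{T}_{\infty}^{s}:\inf_{\llbracket u,\infty\llbracket_{\mathcal{T}_{\infty}}}\Lambda\leq s+r\}\bigr),
\]
obtained by applying \eqref{equation_hull_open} to the shifted labels $\Lambda-s$ (and using that $\llbracket u,\infty\llbracket_{\mathcal{T}_{\infty}^{s}}=\llbracket u,\infty\llbracket_{\mathcal{T}_{\infty}}$ whenever $u\in\mathcal{T}_{\infty}^{s}$). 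Then $\widetilde{B}_{r}^{\bullet}\subset B_{s+r}^{\bullet}$ and, since the IBD volume measure coincides with the restriction of $|\cdot|$ (Lemma \ref{lem:H_2}), $|\widetilde{B}_{r}^{\bullet}|\leq|B_{s+r}^{\bullet}|$. Theorem \ref{spatial_fix_level} asserts that, conditionally on $Z_{s}=y$, the law of $|\widetilde{B}_{r}^{\bullet}|$ is that of $|B_{r}^{\bullet}|$ under $\Theta_{y}$, and integrating out $Z_{s}$ gives
\[
\int_{0}^{\infty}\Theta_{y}(|B_{r}^{\bullet}|)\,\Theta_{0}(Z_{s}\in dy)=\Theta_{0}(|\widetilde{B}_{r}^{\bullet}|)\leq C_{0}(s+r)^{4}.
\]

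Specializing $s=1$, restricting the integral to $y\in[1,2]$, and combining the scaling property $\Theta_{y}(|B_{r}^{\bullet}|)=y^{2}\,\Theta_{1}(|B_{r/\sqrt{y}}^{\bullet}|)$ with the monotonicity of $t\mapsto|B_{t}^{\bullet}|$ gives $\Theta_{y}(|B_{r}^{\bullet}|)\geq\Theta_{1}(|B_{r/\sqrt{2}}^{\bullet}|)$ for every $y\in[1,2]$. Since $\Theta_{0}(Z_{1}\in[1,2])$ is a fixed positive constant, this produces $\Theta_{1}(|B_{t}^{\bullet}|)\leq C(1+t)^{4}$ for some $C>0$, and one last scaling application yields $\Theta_{z}(|B_{r}^{\bullet}|)=z^{2}\,\Theta_{1}(|B_{r/\sqrt{z}}^{\bullet}|)\leq Cz^{2}(1+r/\sqrt{z})^{4}=C(\sqrt{z}+r)^{4}$. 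The main delicate point is the geometric inclusion $\widetilde{B}_{r}^{\bullet}\subset B_{s+r}^{\bullet}$ together with the identification of volume measures from Lemma \ref{lem:H_2}; once this is established, the rest is a direct integration and scaling argument.
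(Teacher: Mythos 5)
Your argument is correct, but it follows a genuinely different route from the paper. The paper works directly at the stopping time $T_{z}=\inf\{t:Z_{t}\geq z\}$: since $Z$ has no positive jumps, $Z_{T_{z}}=z$ exactly, so Corollary \ref{SpaMarkov_T_z} (i.e.\ the strong spatial Markov property of Theorem \ref{spatial_random_level}) identifies $\check{B}_{T_{z}}^{\bullet}$ as an IBD of perimeter $z$ independent of $B^{\bullet}_{T_{z}}$, and the bound follows from $\Theta_{0}\big((|B^{\bullet}_{T_{z}+r}|-|B^{\bullet}_{T_{z}}|)\mathbbm{1}_{T_{z}<\sqrt{z}}\big)=\Theta_{z}(|B^{\bullet}_{r}|)\,\Theta_{0}(T_{z}<\sqrt{z})$ together with $B^{\bullet}_{T_{z}+r}\subset B^{\bullet}_{\sqrt{z}+r}$ on $\{T_{z}<\sqrt{z}\}$ and $\Theta_{0}(T_{z}\leq\sqrt{z})\geq\Theta_{0}(Z_{1}>1)>0$; this gives the constant $C=\Theta_{0}(|B^{\bullet}_{1}|)/\Theta_{0}(Z_{1}>1)$ in one stroke, for each $z$, at the price of invoking the random-level Markov property. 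You instead use only the fixed-level statement (Theorem \ref{spatial_fix_level}): you embed the IBD hull of radius $r$ inside $B^{\bullet}_{s+r}$ via \eqref{equation_hull_open} and Lemma \ref{lem:H_2}, integrate over the Gamma law of $Z_{1}$, keep only the window $Z_{1}\in[1,2]$, and then transfer to a bound on $\Theta_{1}$ (and hence on every $\Theta_{z}$) using the IBD scaling relation $\Theta_{y}(|B^{\bullet}_{r}|)=y^{2}\Theta_{1}(|B^{\bullet}_{r/\sqrt{y}}|)$ and monotonicity of hulls; this buys independence from Section \ref{subsect_application} (the whole corollary could then be proved right after the preliminaries) at the cost of the extra averaging/transfer step and a less explicit constant. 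Both arguments rest on the same geometric identification that you rightly single out as the delicate point — that, under the identification of Lemma \ref{lem:H_2}, the volume of the hull of radius $r$ of the IBD is the $|\cdot|$-volume of a subset of a hull of $\mathcal{M}_{\infty}$ (the paper uses it implicitly when writing this volume as $|B^{\bullet}_{T_{z}+r}|-|B^{\bullet}_{T_{z}}|$) — and your treatment of it is at the same level of detail as the paper's. Your explicit computation $\Theta_{0}(|B^{\bullet}_{r}|\mid Z_{r}=l)=\tfrac{2}{15}r^{4}+\tfrac{1}{5}lr^{2}$, hence $\Theta_{0}(|B^{\bullet}_{r}|)=\tfrac{1}{3}r^{4}$, is consistent with \eqref{eq_B_sachant_Z} and \eqref{eq_Laplace_Z}; the paper only needs finiteness, which it reads off \eqref{Laplace_Hull}.
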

\begin{proof}
Fix $z>0$ and $r>0$. First remark that, under $\Theta_{0}(\cdot\,|\,T_{z}\leq \sqrt{z})$,  the hull $B_{T_{z}+r}^{\bullet}$ is contained in $B_{\sqrt{z}+r}^{\bullet}$. So an application of Corollary \ref{SpaMarkov_T_z} gives:
\begin{align*}
\Theta_{0}\big(|B_{r+\sqrt{z}}^{\bullet}|\big)&\geq \Theta_{0}\big(|B_{r+\sqrt{z}}^{\bullet}| \mathbbm{1}_{T_{z}<\sqrt{z}}\big) \\
&\geq \Theta_{0}\big((|B_{T_{z}+r}^{\bullet}|-|B_{T_{z}}^{\bullet}|) \mathbbm{1}_{T_{z}<\sqrt{z}}\big)\\
&=\Theta_{z}\big(|B_{r}^{\bullet}|\big) \:\Theta_{0}(T_{z}<\sqrt{z}).
\end{align*}
On the other hand, we have $\{Z_{\sqrt{z}}>z\}\subset\{T_{z}\leq \sqrt{z}\}$. By scaling we also have $\Theta_{0}(Z_{\sqrt{z}}>z)=\Theta_{0}(Z_{1}>1)>0$ and $\Theta_{0}\big(|B_{r+\sqrt{z}}^{\bullet}|\big)=(r+\sqrt{z})^{4}\Theta_{0}\big(|B_{1}^{\bullet}|\big)$. Finally, it is easy to deduce from \eqref{Laplace_Hull} that $\Theta_{0}\big(|B_{1}^{\bullet}|\big)$ is finite. This gives  the statement of the corollary with $C=\Theta_{0}(|B_{1}^{\bullet}|)/\Theta_{0}(Z_{1}>1)$.
\end{proof}
We now give two lemmas that will be useful to control the fluctuations of the volume of  hulls in the Brownian plane.
\begin{lem}\label{moment}
For every $\beta\in \mathbb{R}$, we have $\Theta_{0}(|B^{\bullet}_{1}|^{\beta})<\infty$ if and only if $\beta<\frac{3}{2}$.
\end{lem}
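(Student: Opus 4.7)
The plan is to extract everything from the explicit Laplace transform \eqref{Laplace_Hull}, which is a very clean formula. Writing $\mathcal{L}(\lambda):=\Theta_0(\exp(-\lambda|B^\bullet_1|))=3^{3/2}\cosh(u)(\cosh^2(u)+2)^{-3/2}$ with $u=(2\lambda)^{1/4}$, I would first carry out a careful Taylor expansion in $u$ near $0$. Expanding $\cosh(u)=1+u^2/2+u^4/24+O(u^6)$ and $\cosh^2(u)+2=3(1+u^2/3+u^4/9+O(u^6))$, and then multiplying through, one obtains the key expansion
\[
\mathcal{L}(\lambda)=1-\frac{\lambda}{3}+c\,\lambda^{3/2}+O(\lambda^2),\qquad \lambda\to 0^+,
\]
with $c=\frac{2\sqrt{2}}{27}>0$. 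In particular $\Theta_0(|B^\bullet_1|)=1/3<\infty$, and the nonanalytic $\lambda^{3/2}$ term is the critical feature. A parallel asymptotic analysis as $\lambda\to\infty$ gives $\mathcal{L}(\lambda)\sim 4\cdot 3^{3/2}\exp(-2(2\lambda)^{1/4})$; this will be used to handle negative moments.

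For the finiteness direction ($\beta<3/2$), I would split into three regimes. For $\beta\in[0,1]$, simply use $x^\beta\leq 1+x$ and the first-moment bound. For $\beta\in(1,3/2)$, apply the identity
\[
x^\beta=\frac{\beta(\beta-1)}{\Gamma(2-\beta)}\int_0^\infty (e^{-\lambda x}-1+\lambda x)\,\lambda^{-\beta-1}\,d\lambda,\qquad x\geq 0,
\]
and Fubini to get $\Theta_0(|B^\bullet_1|^\beta)=\frac{\beta(\beta-1)}{\Gamma(2-\beta)}\int_0^\infty(\mathcal{L}(\lambda)-1+\lambda/3)\lambda^{-\beta-1}d\lambda$; the Step~1 expansion makes the integrand $O(\lambda^{1/2-\beta})$ near $0$ (integrable iff $\beta<3/2$) and $O(\lambda^{-\beta})$ at infinity (integrable since $\beta>1$). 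For $\beta\leq 0$, use Markov's inequality $\Theta_0(|B^\bullet_1|\leq s)\leq e^{\lambda s}\mathcal{L}(\lambda)$, optimize over $\lambda$ using the asymptotic $\mathcal{L}(\lambda)\asymp\exp(-2(2\lambda)^{1/4})$, and obtain a stretched-exponential bound $\Theta_0(|B^\bullet_1|\leq s)\leq C\exp(-c s^{-1/3})$ which makes every negative moment converge.

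For the divergence direction, by monotonicity it suffices to show $\Theta_0(|B^\bullet_1|^{3/2})=\infty$, and here I would argue by contradiction. Set $g(\lambda):=\mathcal{L}(\lambda)-1+\lambda/3=\Theta_0(e^{-\lambda|B^\bullet_1|}-1+\lambda|B^\bullet_1|)\geq 0$. The elementary inequality $e^{-y}-1+y\leq C y^{3/2}$ for $y\geq 0$ (check separately on $\{y\leq 1\}$ using $e^{-y}-1+y\leq y^2/2$ and on $\{y\geq 1\}$ using $e^{-y}-1+y\leq y$) together with the assumption $\Theta_0(|B^\bullet_1|^{3/2})<\infty$ provides a uniform integrable majorant $C|B^\bullet_1|^{3/2}$ for $(e^{-\lambda|B^\bullet_1|}-1+\lambda|B^\bullet_1|)/\lambda^{3/2}$. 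Since this integrand tends pointwise to $0$ as $\lambda\to 0$ (for fixed $X$, the numerator is $O(\lambda^2)$), dominated convergence would force $g(\lambda)/\lambda^{3/2}\to 0$, contradicting Step~1 which says $g(\lambda)/\lambda^{3/2}\to c>0$.

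The main obstacle is really just Step~1: it is crucial that the coefficient of $\lambda^{3/2}$ in the expansion of $\mathcal{L}$ is genuinely nonzero (and positive). This requires carrying the expansion of $\cosh(u)(\cosh^2(u)+2)^{-3/2}$ to order $u^6$ with correct accounting of cross-terms, since the $u^2$ coefficient vanishes and one must not stop too early; everything else downstream is a fairly standard Tauberian/dominated-convergence argument.
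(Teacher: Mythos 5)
Your proof is correct, and the one computation everything hinges on checks out: expanding $\cosh(u)\,(\cosh^{2}(u)+2)^{-3/2}$ to order $u^{6}$ indeed gives $1-\tfrac{u^{4}}{6}+\tfrac{u^{6}}{27}+O(u^{8})$, i.e. $\mathcal{L}(\lambda)=1-\tfrac{\lambda}{3}+\tfrac{2\sqrt{2}}{27}\lambda^{3/2}+O(\lambda^{2})$, and this is consistent with the paper (your coefficient $c=\tfrac{2\sqrt 2}{27}$ corresponds exactly to the paper's small-$\lambda$ estimate $F''(\lambda)=\tfrac{1}{9\sqrt 2}\lambda^{-1/2}+O(1)$, since $\tfrac34 c=\tfrac{1}{9\sqrt2}$). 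Your remaining steps — Tonelli with the kernel $e^{-\lambda x}-1+\lambda x\geq 0$ and the identity $x^{\beta}=\tfrac{\beta(\beta-1)}{\Gamma(2-\beta)}\int_0^\infty(e^{-\lambda x}-1+\lambda x)\lambda^{-\beta-1}d\lambda$ for $1<\beta<3/2$, the trivial bound for $\beta\in[0,1]$, the Chernoff lower-tail bound $\Theta_0(|B^\bullet_1|\leq s)\leq Ce^{-cs^{-1/3}}$ for negative moments, and the dominated-convergence contradiction at $\beta=3/2$ — are all sound.

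The route differs from the paper's in its mechanics, though both start from \eqref{Laplace_Hull}. The paper never expands $\mathcal{L}$ itself: it differentiates twice, writes $F''(\lambda)=\Theta_0(|B^\bullet_1|^2e^{-\lambda|B^\bullet_1|})$, and uses the single identity $\int_0^\infty\lambda^{\alpha-1}F''(\lambda)\,d\lambda=\Gamma(\alpha)\,\Theta_0(|B^\bullet_1|^{2-\alpha})$; the second derivative automatically regularizes the Mellin integral at both ends, so the asymptotics $F''(\lambda)\sim\tfrac{1}{9\sqrt2}\lambda^{-1/2}$ at $0$ and $O(e^{-2(2\lambda)^{1/4}})$ at $\infty$ settle finiteness for all $\beta<3/2$ and divergence at $\beta=3/2$ in one stroke, with no case split and no contradiction argument. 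Your approach buys elementariness and an explicit expansion of $\mathcal{L}$ (which also re-derives the mean $1/3$), at the cost of three separate regimes for finiteness and a DCT argument for divergence; the paper's second-derivative trick buys uniformity and brevity, at the cost of hiding the nonanalytic $\lambda^{3/2}$ term inside an asymptotic for $F''$. Either way, the delicate point is the same: the nonvanishing of the coefficient in front of the critical power, which you computed correctly.
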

\begin{proof}
To simplify notation write $F(\lambda):=\Theta_{0}(\exp(-\lambda|B^{\bullet}_{1}|))$ for every $\lambda\geq 0$.
Remark that for every $\lambda>0$, we have $F^{\prime \prime}(\lambda)=\Theta_{0}(|B^{\bullet}_{1}|^{2}\exp(-\lambda |B^{\bullet}_{1}|))$.
If  $\alpha >0$, we have:
\begin{align*}
\int_{\mathbb{R}_{+}} \lambda^{\alpha-1} F^{\prime \prime}(\lambda)\: d\lambda&=\Theta_{0}\Big(|B^{\bullet}_{1}|^{2}\int_{\mathbb{R}_{+}} \lambda^{\alpha-1} \exp(-\lambda|B^{\bullet}_{1}|)\: d\lambda\Big)=\Gamma(\alpha)\,\Theta_{0}\big(|B^{\bullet}_{1}|^{2-\alpha}\big) .
\end{align*}
From the explicit expression of $F$ given in \eqref{Laplace_Hull} we get:
\[F^{\prime \prime}(\lambda)=\frac{1}{9\sqrt{2}} \lambda^{-\frac{1}{2}}+O(1)\]
as $\lambda\downarrow0$, and
\[F^{\prime \prime}(\lambda)=O\big(\exp(-2(2\lambda)^{\frac{1}{4}})\big)\]
as $\lambda\uparrow \infty$.
So for $\alpha=\frac{1}{2}$, $\Theta_{0}(|B^{\bullet}_{1}|^{\frac{3}{2}})=\int_{\mathbb{R}_{+}} \lambda^{-\frac{1}{2}} F^{\prime \prime}(\lambda)\: d\lambda=\infty$ and for every $\alpha>\frac{1}{2}$, $\Theta_{0}(|B^{\bullet}_{1}|^{2-\alpha})<\infty$.
\end{proof}
We conclude this section with the following consequence of Lemma \ref{moment}:
\begin{lem}\label{CantelliH}
For every $\beta_{1}>0$ and $\beta_{2}>2/3$ we have $\Theta_0$-a.s.
\[\inf_{r>0} \frac{|B^{\bullet}_{r}|}{r^4(1+|\log(r)|)^{-\beta_{1}}}>0\]
and
\[\sup_{r>0} \frac{|B^{\bullet}_{r}|}{r^4(1+|\log(r)|)^{\beta_{2}}}<\infty.\]
\end{lem}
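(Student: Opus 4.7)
The plan is a standard Borel--Cantelli argument along the dyadic sequence $r_n=2^n$, $n\in\mathbb{Z}$, combining the scale invariance $|B^{\bullet}_r|\overset{(d)}{=}r^4|B^{\bullet}_1|$ of the Brownian plane with the moment bounds of Lemma \ref{moment}. The monotonicity of $r\mapsto|B^{\bullet}_r|$ will let us pass from the dyadic estimate to the continuous one.

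For the upper bound, fix $\beta_2>2/3$ and choose $\alpha\in(1/\beta_2,3/2)$, which is possible precisely because $\beta_2>2/3$. By Lemma \ref{moment}, $M_\alpha:=\Theta_0(|B^{\bullet}_1|^{\alpha})<\infty$. Using scale invariance and Markov's inequality, for every $n\in\mathbb{Z}$,
\[
\Theta_0\!\left(|B^{\bullet}_{2^n}|>2^{4n}(1+|n|)^{\beta_2}\right)=\Theta_0\!\left(|B^{\bullet}_1|>(1+|n|)^{\beta_2}\right)\leq M_\alpha(1+|n|)^{-\alpha\beta_2}.
\]
Since $\alpha\beta_2>1$, this is summable over $n\in\mathbb{Z}$, so Borel--Cantelli yields that $\Theta_0$-a.s., $|B^{\bullet}_{2^n}|\leq 2^{4n}(1+|n|)^{\beta_2}$ for all but finitely many $n$. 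For $r\in[2^n,2^{n+1}]$ with $|n|$ large, monotonicity gives $|B^{\bullet}_r|\leq|B^{\bullet}_{2^{n+1}}|$, and comparing $2^{4(n+1)}\leq 16\,r^4$ and $(1+|n+1|)\leq C(1+|\log r|)$ produces $|B^{\bullet}_r|\leq C'r^4(1+|\log r|)^{\beta_2}$.

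For the lower bound, fix $\beta_1>0$ and choose any $\gamma>1/\beta_1$. By Lemma \ref{moment}, $M_{-\gamma}:=\Theta_0(|B^{\bullet}_1|^{-\gamma})<\infty$ (negative moments of all orders are finite since every $\beta<3/2$ is allowed). Applying Markov to $|B^{\bullet}_1|^{-\gamma}$ and scaling,
\[
\Theta_0\!\left(|B^{\bullet}_{2^n}|<2^{4n}(1+|n|)^{-\beta_1}\right)=\Theta_0\!\left(|B^{\bullet}_1|^{-\gamma}>(1+|n|)^{\beta_1\gamma}\right)\leq M_{-\gamma}(1+|n|)^{-\beta_1\gamma}.
\]
Since $\beta_1\gamma>1$, Borel--Cantelli again yields that $\Theta_0$-a.s., $|B^{\bullet}_{2^n}|\geq 2^{4n}(1+|n|)^{-\beta_1}$ for all but finitely many $n$. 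For $r\in[2^n,2^{n+1}]$, monotonicity gives $|B^{\bullet}_r|\geq|B^{\bullet}_{2^n}|$, and comparing $2^{4n}\geq r^4/16$ and $(1+|n|)^{-\beta_1}\geq c(1+|\log r|)^{-\beta_1}$ produces $|B^{\bullet}_r|\geq c'r^4(1+|\log r|)^{-\beta_1}$.

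The dyadic estimates only fail on a (random but a.s.) bounded range of $r$, and this is the only mildly delicate point: on any compact interval $[a,b]\subset(0,\infty)$ one has $0<|B^{\bullet}_a|\leq|B^{\bullet}_r|\leq|B^{\bullet}_b|<\infty$ (the positivity using $|B^{\bullet}_r|\geq V_\omega(\cdot)$ on the non-degenerate tree), while $r^4(1+|\log r|)^{\pm\beta}$ is bounded above and away from zero there, so both claims hold on this interval with some a.s.~finite, positive random constant. I anticipate no real obstacle; the only subtle point is recognizing that the constraint $\beta_2>2/3$ is forced by the gap between the exponent $1/\beta_2$ needed for summability and the maximal moment $3/2$ provided by Lemma \ref{moment}, while no such constraint appears for $\beta_1$.
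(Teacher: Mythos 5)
Your proposal is correct and follows essentially the same route as the paper: scale invariance plus the moment bounds of Lemma \ref{moment}, a Borel--Cantelli argument along the dyadic scales $2^n$, $n\in\mathbb{Z}$, and monotonicity of $r\mapsto|B^{\bullet}_{r}|$ to interpolate. The only cosmetic difference is that the paper bounds $\sum_m\Theta_0(|B^{\bullet}_1|^{1/\beta_2}>m)$ and $\sum_m\Theta_0(|B^{\bullet}_1|^{-1/\beta_1}>m)$ directly by the corresponding finite expectations (which is exactly where $\beta_2>2/3$ enters), whereas you use Markov's inequality with an auxiliary exponent $\alpha\in(1/\beta_2,3/2)$ (resp. $\gamma>1/\beta_1$); both are the same Borel--Cantelli argument.
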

\begin{proof}
Fix $\beta_{1}$ and $\beta_{2}$ as in the statement.
By Lemma \ref{moment}, the quantities $\Theta_{0}(|B^{\bullet}_{1}|^{-1/\beta_{1}})$ and $\Theta_{0}(|B^{\bullet}_{1}|^{1/\beta_{2}})$ are finite. This implies by the scaling invariance of $\mathcal{M}_{\infty}$:
\begin{align*}
\sum \limits_{m\in \mathbb{Z}}\Theta_{0}(|B^{\bullet}_{2^{m}}|^{-1}>|m|^{\beta_{1}} 2^{-4m})&\leq 2 \sum \limits_{m=0}^{\infty} \Theta_{0}(|B^{\bullet}_{1}|^{-\frac{1}{\beta_{1}}}>m)<\infty
\end{align*}
and
\begin{align*}
\sum \limits_{m\in \mathbb{Z}}\Theta_{0}(|B^{\bullet}_{2^{m}}|>|m|^{\beta_{2}} 2^{4m})&\leq 2\sum \limits_{m=0}^{\infty}\Theta_{0}(|B^{\bullet}_{1}|^{\frac{1}{\beta_{2}}}>m)<\infty\:.
\end{align*}
The result then follows by the Borel-Cantelli lemma.
\end{proof}

\subsection{Proof of Part $\rm(i)$ of Theorem \ref{iso}}
\noindent The goal of this section is to prove the following slightly more precise form of Theorem \ref{iso} \rm(ii).
\begin{prop}\label{infand0}
Let $f:\mathbb{R}_{+}\to \mathbb{R}_{+}^{*}$ be a positive nondecreasing function such that
$\sum \limits_{m\in \mathbb{N}}f(m)^{-2}=\infty\:.$
 Then,
\begin{equation}\label{infand0:0}
\mathop{\inf_{A\in \mathcal{K}}}_{A\subset B_{1}^{\bullet}} \frac{\Delta(\partial A)}{|A|^{\frac{1}{4}}}f(|\log(|A|)|)=0,\:\:\Theta_{0}\text{-a.s.}
\end{equation}
and
\begin{equation}\label{infand0:inf}
\mathop{\inf_{A\in \mathcal{K}}}_{B_{1}^{\bullet}\subset A} \frac{\Delta(\partial A)}{|A|^{\frac{1}{4}}}f(|\log(|A|)|)=0,\:\:\Theta_{0}\text{-a.s.}
\end{equation}
\end{prop}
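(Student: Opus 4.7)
The plan is to construct, almost surely, a random sequence $(A_n)\subset\mathcal K$ of Jordan domains — with $A_n\subset B_1^\bullet$ and $|A_n|\to 0$ for part (i), or $B_1^\bullet\subset A_n$ and $|A_n|\to\infty$ for part (ii) — along which $\Delta(\partial A_n)f(|\log|A_n||)/|A_n|^{1/4}\to 0$. Each $A_n$ will be bounded by a short separating cycle produced by Proposition \ref{cycleIBD} inside an infinite Brownian disk that is naturally embedded in $\mathcal M_\infty$ via the strong spatial Markov property of Theorem \ref{spatial_random_level}; the existence of infinitely many suitable scales will follow from L\'evy's conditional Borel--Cantelli lemma, driven by the hypothesis $\sum_m f(m)^{-2}=\infty$.

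Fix $\rho>2\tilde c_2/\tilde c_1$ and pick $\alpha>2\rho$ (so $\alpha>1$) for part (ii), or $\alpha<1/(2\rho)$ for part (i). Set $\sigma_n:=T_{\alpha^n}$. Then $\sigma_n\uparrow\infty$ (respectively $\sigma_n\downarrow 0$), and since $Z$ has no positive jumps (Corollary \ref{Z_z}) we have $Z_{\sigma_n}=\alpha^n$ almost surely. By Theorem \ref{spatial_random_level}, conditionally on $\mathcal F_{\sigma_n+}$ the region $\check B^\bullet_{\sigma_n}$ is an infinite Brownian disk of perimeter $\alpha^n$, independent of $\mathcal F_{\sigma_n+}$. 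Let $E_n$ be the event that Proposition \ref{cycleIBD}, applied in this disk after rescaling to perimeter $1$, produces a separating cycle of length at most $\epsilon_n\alpha^{n/2}$. Because $Z$ has no positive jumps and $\alpha>2\rho$, the disk-hitting time $T_{2\rho\alpha^n}$ inside $\check B^\bullet_{\sigma_n}$ is deterministically less than or equal to the hitting time of perimeter $\alpha^{n+1}$, which equals $\sigma_{n+1}-\sigma_n$. The disk-hull supplied by Proposition \ref{cycleIBD} — hence the cycle itself — is therefore automatically confined to the Brownian-plane annulus between $B^\bullet_{\sigma_n}$ and $B^\bullet_{\sigma_{n+1}}$, so the Jordan domain $A_n$ bounded by the cycle satisfies $B^\bullet_{\sigma_n}\subset A_n\subset B^\bullet_{\sigma_{n+1}}$ and is measurable with respect to $\mathcal F_{\sigma_{n+1}+}$; by scaling, $\Theta_0(E_n\mid\mathcal F_{\sigma_n+})\geq \tilde c_3(\epsilon_n^2\wedge 1)$.

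I then intersect with a volume-control event $V_n:=\{c_*\alpha^{2n}\leq |B^\bullet_{\sigma_n}|\leq C_*\alpha^{2n}\}$, which is $\mathcal F_{\sigma_n+}$-measurable and has $\Theta_0$-probability bounded below uniformly in $n$ by the scale invariance of $\mathcal M_\infty$ (the conditional distribution of $|B^\bullet_{\sigma_n}|/\alpha^{2n}$ given $Z_{\sigma_n}=\alpha^n$ does not depend on $n$, cf.~\eqref{eq_B_sachant_Z}); on $V_n$ one has $|A_n|\asymp\alpha^{2n}$ (up to a bounded factor, using also Lemma \ref{CantelliH} to upper-bound $|B^\bullet_{\sigma_{n+1}}|$) and the isoperimetric ratio is bounded by $C\epsilon_nf(cn)$ for constants $c,C>0$, \emph{with no polylogarithmic correction}. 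Using the hypothesis $\sum_mf(m)^{-2}=\infty$ together with the classical lemma that a divergent positive series can be divided by a slowly growing sequence while remaining divergent, select $\epsilon_n\to 0$ with $\sum_n\epsilon_n^2=\infty$ and $\epsilon_nf(cn)\to 0$. Since $V_n\in \mathcal F_{\sigma_n+}$ and $E_n$ is conditionally independent of $\mathcal F_{\sigma_n+}$ given $Z_{\sigma_n}$, we have $\Theta_0(E_n\cap V_n\mid\mathcal F_{\sigma_n+})\geq c_0\mathbbm 1_{V_n}\epsilon_n^2$; L\'evy's conditional Borel--Cantelli applied to the increasing filtration $(\mathcal F_{\sigma_{n+1}+})_n$ (and its analogue generated by the increasing family $(\check B^\bullet_{\sigma_n})_n$ for part (i)) yields $E_n\cap V_n$ infinitely often almost surely, provided $\sum_n\epsilon_n^2\mathbbm 1_{V_n}=\infty$ almost surely. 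The latter — the main technical obstacle — is to be proved via a second-moment argument exploiting the weak correlations between the $V_n$ arising from the spatial Markov structure, ultimately using that $\Theta_0(V_n)$ is bounded below uniformly in $n$.
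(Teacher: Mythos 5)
Your overall strategy (short cycles in the successive Brownian disks $\check B^\bullet_{T_{\alpha^n}}$ via Proposition \ref{cycleIBD} and Theorem \ref{spatial_random_level}, then a Borel--Cantelli argument) is the right one, but the proof has a genuine gap at exactly the point you flag: you need $\sum_n\epsilon_n^2\,\mathbbm{1}_{V_n}=\infty$, $\Theta_0$-a.s., and this is not established. A uniform lower bound $\Theta_0(V_n)\geq p>0$ does not yield an almost sure statement about how often the $V_n$ occur; a second-moment (Paley--Zygmund type) argument gives at best a positive-probability conclusion, and there is no obvious zero--one law here, since the events $V_n=\{c_*\alpha^{2n}\leq|B^\bullet_{T_{\alpha^n}}|\leq C_*\alpha^{2n}\}$ concern the nested, strongly positively correlated hull volumes. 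Nor can you weaken $V_n$ to an almost sure, polynomially corrected volume bound (say $|B^\bullet_{T_{\alpha^n}}|\geq \alpha^{2n}n^{-C}$ eventually): the criterion $\sum f(m)^{-2}=\infty$ is sharp, and in the borderline case $f(m)\asymp\sqrt m$ the requirement $\epsilon_n\, n^{C/4}f(cn)\to0$ forces $\sum\epsilon_n^2<\infty$, so your choice of $\epsilon_n$ collapses. In short, the ``main technical obstacle'' you defer is of comparable difficulty to the statement itself, and the route you sketch for it does not close it. (A secondary point: your control of $f(|\log|A_n||)$ invokes Lemma \ref{CantelliH} to upper-bound $|B^\bullet_{\sigma_{n+1}}|$, but that lemma only helps once you also have an almost sure upper bound on $\sigma_{n+1}=T_{\alpha^{n+1}}$, which you never prove; the paper gets it from $\Theta_0(T_1\geq u)\lesssim u^{-3}$ and Borel--Cantelli.)

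The paper's proof avoids the problematic joint control of volume and cycle events by decoupling them through an even/odd annulus decomposition. It works with the annulus volumes $|B^\bullet_{T_{r^{2n}},T_{r^{2n+1}}}|$ (which lower-bound $|A|$ just as well) and the cycle lengths $L_{T_{r^{2n+1}},T_{r^{2n+2}}}$ in the \emph{next} annulus: by the strong spatial Markov property at the stopping times $T_{r^k}$ and scaling, the rescaled pairs $\bigl(r^{-4n}|B^\bullet_{T_{r^{2n}},T_{r^{2n+1}}}|,\,r^{-n-\frac12}L_{T_{r^{2n+1}},T_{r^{2n+2}}}\bigr)$ form an i.i.d.\ sequence whose two components are independent, distributed as $|B^\bullet_{T_r}|$ and $L_{0,T_r}$ under $\Theta_1$. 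Consequently the events $\{|B^\bullet_{T_{r^{2n}},T_{r^{2n+1}}}|>\delta r^{4n}\}\cap\{L_{T_{r^{2n+1}},T_{r^{2n+2}}}<\epsilon r^{n+\frac12}/f(hn)\}$ are genuinely independent, Proposition \ref{cycleIBD} makes their probabilities non-summable, and the ordinary second Borel--Cantelli lemma applies with a \emph{fixed} $\epsilon$ and a \emph{fixed} volume threshold $\delta$ (one lets $\epsilon\downarrow0$ only at the very end, through the liminf). The logarithmic factor is handled separately by almost sure eventual bounds ($T_{r^n}\leq nr^{n/2}$ eventually, plus Lemma \ref{CantelliH}), which cannot interfere with the Borel--Cantelli step. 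If you want to salvage your scheme, replace $V_n$ by this annulus-volume event and condition at $T_{\alpha^{2n+1}}$; otherwise you would have to prove an ergodic-type statement for the scaling-stationary sequence $|B^\bullet_{T_{\alpha^n}}|/\alpha^{2n}$, which is precisely what your sketch is missing.
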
 

\begin{proof}
Fix $r>2 \tilde{c}_{2}/\tilde{c}_{1}\geq 2$, where $\tilde{c}_{1}$ and $\tilde{c}_{2}$ are as in Proposition \ref{cortail}, and let $f:\mathbb{R}_{+}\to \mathbb{R}_{+}^{*}$ be a positive nondecreasing function such that
$\sum \limits_{m\in \mathbb{N}}f(m)^{-2}=\infty\:.$
We give a detailed proof of the \eqref{infand0:inf} since \eqref{infand0:0}  can be obtained, \textit{mutatis mutandis}, by the same method. Recall the notation $T_{r}:=\inf\{t\geq 0:\: Z_{t}\geq r\}$. Since $Z$ does not have positive jumps, we have $Z_{T_{r}}=r$.
 For every $n\geq 1$, a separating cycle taking values in $B_{T_{r^{n}},T_{r^{n+1}}}^{\bullet}$ bounds a Jordan domain $A\in\mathcal{K}$ such that $B_{T_{r^{n}}}^{\bullet}\subset A$ and for $n$ large enough we also have $B_{1}^{\bullet}\subset A$. Hence,
\[ \mathop{\inf_{A\in \mathcal{K}}}_{ B_{1}^{\bullet}\subset A} \frac{\Delta(\partial A)}{|A|^{\frac{1}{4}}}f(|\log(|A|)|)\leq\liminf \limits_{n\to \infty}\frac{L_{T_{r^{2n+1}},T_{r^{2n+2}}}}{|B_{T_{r^{2n+1}}}^{\bullet}|^{\frac{1}{4}}}f(\log(|B_{T_{r^{2n+2}}}^{\bullet}|)),\:\:\Theta_{0}\text{-a.s.} \]
So to obtain \eqref{infand0:inf} it is enough to show that:
\begin{equation}\label{cut_T_r}
\liminf \limits_{n\to \infty}\frac{L_{T_{r^{2n+1}},T_{r^{2n+2}}}}{|B_{T_{r^{2n+1}}}^{\bullet}|^{\frac{1}{4}}}f(\log(|B_{T_{r^{2n+2}}}^{\bullet}|)) =0,\:\:\Theta_{0}\text{-a.s.}
\end{equation}
Let us study the growth of the sequence $(T_{r^{n}})_{n\in \mathbb{N}}$. First note that:
\[\Theta_{0}(T_{1}\geq u)\leq\Theta_{0}(Z_{u}\leq  1)= \Theta_{0}(Z_{1}\leq  u^{-2})=\frac{3^{\frac{3}{2}}}{\sqrt{2\pi}}\int_{0}^{u^{-2}} \sqrt{x}\exp(-\frac{3}{2}x)\:dx\leq \sqrt{\frac{6}{\pi}}u^{-3}\]
where in the first equality we apply the scaling invariance of the Brownian plane and in the second one we use the density of $Z_{1}$ (see \eqref{eq_Laplace_Z}). In particular, we have $\Theta_{0}(T_{1})<\infty$. So by scaling invariance  we obtain:
\[\sum \limits_{n\in \mathbb{N}^{*}}\Theta_{0}(T_{r^{n}}>n r^{\frac{n}{2}})=\sum \limits_{n\in \mathbb{N}^{*}}\Theta_{0}(T_{1}>n)<\infty.\]
The Borel-Cantelli lemma then implies that $\limsup\limits_{n\to \infty} (nr^{n/2})^{-1}T_{r^{n}}\leq 1$, $\Theta_{0}$-a.s. Since $\lim \limits_{s\to \infty}T_{s}=\infty$,  $\Theta_{0}$-a.s.~, Lemma \ref{CantelliH} gives:
\[\limsup\limits_{n\to \infty}\frac{\log(|B_{T_{r^{2n+2}}}^{\bullet}|)}{4\log(T_{r^{2n+2}})}\leq 1,\:\:\Theta_{0}\text{-a.s.}\]
and then we deduce:
\[\limsup\limits_{n\to \infty}\frac{\log(|B_{T_{r^{2n+2}}}^{\bullet}|)}{2n \log(r)}\leq 1,\:\:\Theta_{0}\text{-a.s.}\]
Fix $h>2\log(r)$. As $f$ is nondecreasing we have:
\begin{equation}\label{dernier**}
\liminf \limits_{n\to \infty}\frac{L_{T_{r^{2n+1}},T_{r^{2n+2}}}}{|B_{T_{r^{2n+1}}}^{\bullet}|^{\frac{1}{4}}}f(\log(|B_{T_{r^{2n+2}}}^{\bullet}|))\leq \liminf \limits_{n\to \infty}\frac{L_{T_{r^{2n+1}},T_{r^{2n+2}}}}{|B_{T_{r^{2n}},T_{r^{2n+1}}}^{\bullet}|^{\frac{1}{4}}}f(h n).
\end{equation} 
We will use the Borel-Cantelli lemma to conclude. By Theorem \ref{spatial_random_level}, under $\Theta_{0}$ :
\[\Big(\:\frac{1}{r^{4n}}|B^{\bullet}_{T_{r^{2n}},T_{r^{2n+1}}}|,\frac{1}{r^{n+\frac{1}{2}}} L_{T_{r^{2n+1}},T_{r^{2n+2}}}\:\Big)_{n\in \mathbb{N}}\:.\]
is an i.i.d. sequence of random variables and for every $n\geq 0$ the variables:
\[\frac{1}{r^{4n}}|B^{\bullet}_{T_{r^{2n}},T_{r^{2n+1}}}|,\:\: \text{and}\:\: \frac{1}{r^{n+\frac{1}{2}}} L_{T_{r^{2n+1}},T_{r^{2n+2}}}\] are independent. Moreover $\frac{1}{r^{4n}}|B^{\bullet}_{T_{r^{2n}},T_{r^{2n+1}}}|$ (resp. $\frac{1}{r^{n+\frac{1}{2}}} L_{T_{r^{2n+1}},T_{r^{2n+2}}}$) is distributed 
under $\Theta_{0}$ as $B_{T_{r}}
^{\bullet}$ (resp. $L_{0,T_{r}}$) under $\Theta_{1}$. Fix $\delta>0$, such that $\Theta_{1}(B_{T_{r}}
^{\bullet}>\delta)>0$ and let $\epsilon>0$. By the previous remark we have:
\begin{align*}
\sum \limits_{n=0}^{\infty}\Theta_{0}\Big(|B^{\bullet}_{T_{r^{2n}},T_{r^{2n+1}}}|>\delta r^{4n},\:  &L_{T_{r^{2n+1}},T_{r^{2n+2}}}<\frac{\epsilon}{f(h n)}r^{n+\frac{1}{2}}\Big)\\
&=\sum \limits_{n=0}^{\infty}\Theta_{1}(|B^{\bullet}_{T_{r}}|>\delta)\: \Theta_{1}\big(L_{0,T_{r}}<\frac{\epsilon}{f(h n)}\big).
\end{align*}
By Proposition \ref{cycleIBD} the right-hand side of the last display is greater than $$\tilde{c}_{3}\:\Theta_{1}(|B^{\bullet}_{T_{r}}|>\delta)\sum \limits_{n=0}^{\infty} \left(\frac{\epsilon^{2}}{f(h n)^{2}}\wedge 1\right)$$ which is infinite since $\sum \limits_{m\in \mathbb{N}}f(m)^{-2}=\infty$.
The  Borel-Cantelli lemma then implies that:
\[ \liminf \limits_{n\to \infty}\frac{L_{T_{r^{2n+1}},T_{r^{2n+2}}}}{|B_{T_{r^{2n}},T_{r^{2n+1}}}^{\bullet}|^{\frac{1}{4}}}f(h n) \leq \epsilon \delta^{-\frac{1}{4}} \sqrt{r},\:\:\Theta_{0}\text{-a.s.}\]
This holds for every $\epsilon>0$, which together with \eqref{dernier**} gives \eqref{cut_T_r}.
\end{proof}

\subsection{Proof of Part $\rm(ii)$ of Theorem \ref{iso}}
We need to show that for any positive nondecreasing function $f$, the condition $\sum \limits_{m\in \mathbb{N}} f(m)^{-2}<\infty$ implies $\inf \limits_{A\in \mathcal{K}} \frac{\Delta(\partial A)}{|A|^{\frac{1}{4}}} f(|\log(|A|)|)>0$, $\Theta_{0}$-a.s.
\\
\\
We begin with a technical lemma.
\begin{lem}\label{lemtech}
Let $\beta\in [0,1)$. There exists a constant $C_{\beta}>0$, which only depends on $\beta$, such that for every $r>0$ and $\epsilon>0$:
\[\Theta_{0}\big(\mathbbm{1}_{L_{1}<\epsilon}|B_{r}^{\bullet}|^{\beta}\big)\leq C_{\beta}\: r^{4\beta}\epsilon^{2}.\]
\end{lem}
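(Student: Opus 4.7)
The plan is to reduce the estimate to the endpoint case $\beta=1$ by Hölder's inequality, and then to establish the endpoint bound by combining the spatial Markov property with the explicit conditional Laplace transforms already at our disposal. Writing
\[\mathbbm{1}_{L_1<\epsilon}|B_r^\bullet|^\beta = (\mathbbm{1}_{L_1<\epsilon})^{1-\beta}\cdot(\mathbbm{1}_{L_1<\epsilon}|B_r^\bullet|)^\beta\]
and applying Hölder with conjugate exponents $1/(1-\beta)$ and $1/\beta$ gives
\[\Theta_0(\mathbbm{1}_{L_1<\epsilon}|B_r^\bullet|^\beta)\le \Theta_0(L_1<\epsilon)^{1-\beta}\cdot\Theta_0(\mathbbm{1}_{L_1<\epsilon}|B_r^\bullet|)^\beta,\]
and by Theorem~\ref{tail} $\rm(ii)$ the first factor is bounded by $(c_2\epsilon^2)^{1-\beta}$. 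It therefore suffices to prove the endpoint estimate $\Theta_0(\mathbbm{1}_{L_1<\epsilon}|B_r^\bullet|)\le C\,r^4\epsilon^2$ with a constant independent of $r$ and $\epsilon$; this is what I would aim for.

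For $r\le 1$, the spatial Markov property (Theorem~\ref{spatial_fix_level}) gives conditional independence: given $Z_1$, the hull $B_1^\bullet$ and its complement $\check B_1^\bullet$ are independent, $L_1$ is a function of $\check B_1^\bullet$ distributed as $L_0$ under $\Theta_{Z_1}$, and $|B_r^\bullet|$ is a function of $B_1^\bullet$ since $B_r^\bullet\subset B_1^\bullet$. Hence
\[\Theta_0(\mathbbm{1}_{L_1<\epsilon}|B_r^\bullet|)=\Theta_0\bigl[\Theta_{Z_1}(L_0<\epsilon)\,\Theta_0(|B_r^\bullet|\mid Z_1)\bigr].\]
Proposition~\ref{cortail} $\rm(ii)$ combined with scaling gives $\Theta_z(L_0<\epsilon)\le 1\wedge(\tilde c_2\epsilon^2/z)$, while differentiating \eqref{eq_B_sachant_Z} at $\lambda=0$ yields the clean formula $\Theta_0(|B_r^\bullet|\mid Z_r)=\tfrac{2r^4}{15}+\tfrac{r^2}{5}Z_r$; combining this with the analogous computation of $\Theta_0(Z_r\mid Z_1)$ from \eqref{eq_Z_sachant_Z} produces, for $r\le 1$, a linear bound of the form $\Theta_0(|B_r^\bullet|\mid Z_1)\le C r^4(1+Z_1)$. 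Substituting and carefully evaluating $\Theta_0\bigl[(1\wedge\tilde c_2\epsilon^2/Z_1)(1+Z_1)\bigr]$ against the explicit Gamma density of $Z_1$ (splitting at the crossover $Z_1=\tilde c_2\epsilon^2$) produces the desired $\le C\,\epsilon^2$ bound.

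For $r>1$, I would write $|B_r^\bullet|=|B_1^\bullet|+|B_r^\bullet\setminus B_1^\bullet|$; the first piece is covered by the case $r=1$ and yields a bound of order $\epsilon^2\le r^4\epsilon^2$. For the annular piece, I condition again on $Z_1$: Theorem~\ref{spatial_fix_level} realises $\check B_1^\bullet$ as an infinite Brownian disk of perimeter $Z_1$, and by Corollary~\ref{lemvolIBD} the conditional volume satisfies $\Theta_0(|B_r^\bullet\setminus B_1^\bullet|\mid Z_1)\le C(r+\sqrt{Z_1})^4\le C(r^4+Z_1^2)$. Running the same endpoint argument inside the disk (with the bound $\Theta_{Z_1}(L_0<\epsilon)\le 1\wedge(\tilde c_2\epsilon^2/Z_1)$ playing the same role as before) absorbs the contribution of the $Z_1^2$ term via the rapid Gamma tails and produces the required $C r^4\epsilon^2$.

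The main obstacle is the case $r>1$, because $L_1$ and $|B_r^\bullet\setminus B_1^\bullet|$ are both functions of the same infinite Brownian disk $\check B_1^\bullet$ and are not conditionally independent given $Z_1$; a naïve Cauchy--Schwarz loses one factor of $\epsilon$ and only yields an $\epsilon$ bound. The resolution is to use the \emph{endpoint} reduction already performed in Step~1: bounding $\Theta_0(\mathbbm{1}_{L_1<\epsilon}|B_r^\bullet\setminus B_1^\bullet|)$ rather than its $\beta$-th power preserves the $\epsilon^2$, because the conditional first moment $\Theta_0(|B_r^\bullet\setminus B_1^\bullet|\mid Z_1)$ of Corollary~\ref{lemvolIBD} matches exactly the linear growth in $Z_1$ that can be integrated against $1\wedge \tilde c_2\epsilon^2/Z_1$ with the help of the explicit density of $Z_1$. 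Should this direct route fail, the fallback is to combine the annular decomposition $\{L_1<\epsilon\}\subset\bigcup_{m\ge\lfloor 1/\epsilon\rfloor}\{L_{m\epsilon,(m+3)\epsilon}<\epsilon\}$ from the proof of Theorem~\ref{tail} $\rm(ii)$ with the exponential bound of Proposition~\ref{exp}, weighted in each annulus by the appropriate moment of $|B_r^\bullet|$.
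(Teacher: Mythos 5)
Your opening Hölder step is legitimate, but notice what it does: it reduces the lemma to the \emph{endpoint} estimate $\Theta_{0}(\mathbbm{1}_{L_{1}<\epsilon}|B_{r}^{\bullet}|)\leq C r^{4}\epsilon^{2}$, i.e. to the case $\beta=1$, which is strictly stronger than the statement you were asked to prove (the paper deliberately restricts to $\beta<1$ and only remarks that $\beta<3/2$ is possible ``but more tedious''). Your treatment of this endpoint for $r\leq 1$ is fine in outline: there $|B_{r}^{\bullet}|$ is a functional of $B_{1}^{\bullet}$, $L_{1}$ is a functional of $\check B_{1}^{\bullet}$, Theorem~\ref{spatial_fix_level} gives conditional independence given $Z_{1}$, and the explicit formulas \eqref{eq_B_sachant_Z}, \eqref{eq_Z_sachant_Z} plus the Gamma density of $Z_{1}$ give the $\epsilon^{2}$.

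The genuine gap is the case $r>1$, and it is precisely the obstacle you yourself identify but do not overcome. For the annular piece you would need
$\Theta_{0}\bigl(\mathbbm{1}_{L_{1}<\epsilon}\,|B_{r}^{\bullet}\setminus B_{1}^{\bullet}|\bigr)\leq \Theta_{0}\bigl[(1\wedge \tilde c_{2}\epsilon^{2}/Z_{1})\,\Theta_{0}(|B_{r}^{\bullet}\setminus B_{1}^{\bullet}|\mid Z_{1})\bigr]$,
i.e. a nonpositive conditional correlation between $\{L_{1}<\epsilon\}$ and $|B_{r}^{\bullet}\setminus B_{1}^{\bullet}|$ given $Z_{1}$. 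Both quantities are functionals of the same infinite Brownian disk $\check B_{1}^{\bullet}$, so no conditional independence is available, and your ``resolution'' — that working with the first moment rather than the $\beta$-th power preserves $\epsilon^{2}$ — says nothing about this correlation; it only restates the bookkeeping of exponents. No FKG-type or negative-association input is proved (or available in the paper) to justify the factorization, so the $r>1$ endpoint remains unproven — and this is exactly the regime needed later (the application in the proof of Theorem~\ref{iso}~$\rm(ii)$ uses $|B_{4}^{\bullet}|^{\beta}$ together with $L_{1}$). The paper's proof avoids the issue differently: after scaling ($m=1/\epsilon$), it localizes the short cycle into an annulus $B^{\bullet}_{m+k,m+k+2}$, applies Proposition~\ref{exp} to get $\Theta_{0}(L_{m+k,m+k+2}<1\mid Z_{m+k+4})\leq e\exp(-\alpha_{1}Z_{m+k+4})$, and then decouples the indicator from $|B_{mr}^{\bullet}|^{\beta}$ by a \emph{conditional} Hölder inequality with exponents $p=1/(1-\beta)$, $q=1/\beta$, so that only the conditional first moment of $|B_{mr}^{\bullet}|$ given $Z_{m+k+4}$ is needed (computed via \eqref{eq_B_sachant_Z}, \eqref{eq_Z_sachant_Z} and Corollary~\ref{lemvolIBD}); summing $(m+k+4)^{-3}$ over $k$ yields $\epsilon^{2}$. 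That conditional Hölder step is exactly where $\beta<1$ is used, so your fallback sketch (which gestures at this annulus decomposition) cannot be run at the endpoint $\beta=1$ to which you reduced; the natural repair is to drop the endpoint reduction and apply the annulus decomposition and conditional Hölder directly to $|B_{r}^{\bullet}|^{\beta}$ — which is the paper's argument.
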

\noindent The reason for taking $\beta<1$ is just technical and one can extend the result to $\beta<\frac{3}{2}$ but the proof will be more tedious. At an intuitive level, Lemma \ref{lemtech} states that if we know that there exists a small cycle separating the hull of radius $1$ then the expected volume of the hull of radius $r$  stays at most of order $r^{4}$ (with a uniform control). 
\begin{proof}
Fix $\beta\in(0,1)$ and let $r>0$ and $\epsilon>0$. 
\\
\\
To simplify notation, set $m:=\frac{1}{\epsilon}$, $q:=\frac{1}{\beta}$ and $p:=\frac{q}{q-1}=\frac{1}{1-\beta}$. By the scaling property of $\mathcal{M}_{\infty}$:
\[\Theta_{0}\big(\mathbbm{1}_{L_{1}<\epsilon}|B_{r}^{\bullet}|^{\beta}\big)=\frac{1}{m^{4\beta}}\Theta_{0}\big(\mathbbm{1}_{L_{m}<1}|B_{mr}^{\bullet}|^{\beta}\big) .\]
If $L_{m}<1$, there is a separating cycle of length smaller than $1$ that  is contained in  $\check{B}_{m}^{\circ}$, and necessarily this separating cycle is contained in $B_{m+k,m+k+2}^{\bullet}$ for some integer $k\geq 0$. Hence, 
$$\Theta_{0}\big(\mathbbm{1}_{L_{m}<1}|B_{mr}^{\bullet}|^{\beta}\big)\leq\sum \limits_{k=0}^{\infty}  \Theta_{0}\big(\mathbbm{1}_{L_{m+k,m+k+2}<1}|B_{mr}^{\bullet}|^{\beta}\big).
$$
Applying the conditional version of the H\"older inequality with respect to $Z_{m+k+4}$ we obtain:
\begin{align*}
\Theta_{0}\big(\mathbbm{1}_{L_{m+k,m+k+2}<1}|B_{mr}^{\bullet}|^{\beta}\big)\leq  \Theta_{0}\Big(\:\Theta_{0}\big(L_{m+k,m+k+2}<1\:\big\vert\: Z_{m+k+4}\big)^{\frac{1}{p}}\: \Theta_{0}\big(|B_{mr}^{\bullet}| \:\big\vert\: Z_{m+k+4}\big)^{\frac{1}{q}}\:\Big).
\end{align*}
By Proposition \ref{exp}, there exists $\alpha_{1}>0$ such that $$\Theta_{0}\big(L_{m+k,m+k+2}<1\:\big\vert\: Z_{m+k+4}\big)\leq e\cdot\exp(-\alpha_{1}Z_{m+k+4})$$ for every $k\geq 0$ and thus we get :
\begin{equation}
\label{lemtech5}
\Theta_{0}\big(\mathbbm{1}_{L_{m}<1}|B_{mr}^{\bullet}|^{\beta}\big)\leq e\:\sum \limits_{k=0}^{\infty}  \:\Theta_{0}\Big(\exp(-\alpha Z_{m+k+4})\:\Theta_{0}\big(|B_{mr}^{\bullet}|\:\big\vert\:Z_{m+k+4}\big)^{\frac{1}{q}}\:\Big)
\end{equation}
where $\alpha:=\alpha_{1}/p$.
Then again by the H\"older inequality:
\begin{align}
\label{lemtech6}
\Theta_{0}\Big(\exp(-\alpha Z_{m+k+4})~&\Theta_{0}\big(|B_{mr}^{\bullet}|\:\big\vert\:Z_{m+k+4}\big)^{\frac{1}{q}}\:\Big)\nonumber\\
&=\Theta_{0}\Big(\exp(- \frac{\alpha}{p} Z_{m+k+4}) \exp(- \frac{\alpha}{q} Z_{m+k+4})\Theta_{0}\big(|B_{mr}^{\bullet}|\:\big\vert\:Z_{m+k+4}\big)^{\frac{1}{q}}\:\Big)\nonumber\\
&\leq  \Theta_{0}\big(\exp(-\alpha Z_{m+k+4})\big)^{\frac{1}{p}}\:\Theta_{0}\big(\exp(-\alpha Z_{m+k+4})|B_{mr}^{\bullet}|\big)^{\frac{1}{q}}.
\end{align}
By \eqref{eq_Laplace_Z}, $Z_{m+k+4}$ follows the Gamma distribution with
parameter $\frac{3}{2}$ and mean $(m+k+4)^2$. So we can find $d_{1}(\alpha)>0$ independent of $m$ and $k$ such that:
\begin{equation}
\label{bound-LaplaceZ}
\Theta_{0}(\exp(-\alpha Z_{m+k+4}))\leq \frac{d_{1}(\alpha)}{(m+k+4)^{3}}\,,\quad 
\Theta_{0}(Z_{m+k+4}\exp(-\alpha Z_{m+k+4}))\leq \frac{d_{1}(\alpha)}{(m+k+4)^{3}} .
\end{equation}
Moreover, noting that $|\partial B_{m+k+4}^{\bullet}|=0$, $\Theta_{0}$-a.s.~, we obtain:
\begin{align*}
\Theta_{0}\big( \exp(-\alpha Z_{m+k+4})|B_{mr}^{\bullet}|\big)&=\Theta_{0}\big(\exp(-\alpha Z_{m+k+4})|B_{(m+k+4)\:\wedge\:mr}^{\bullet}|\big)\\
&+ \Theta_{0}\big(\exp(-\alpha Z_{m+k+4})|B_{m+k+4,mr}^{\bullet}|\big)
\end{align*}
where by convention $|B_{s_{1},s_{2}}^{\bullet}|=0$ if $s_{2}\leq s_{1}$. 

Let $0<s_{1}\leq s_{2}$. We observe that $|B^\bullet_{s_1}|$ is independent of $Z_{s_2}$ conditionally on $Z_{s_1}$.
This follows from the special Markov property and the spine independence property, using the fact
that $|B^\bullet_{s_1}|$ is determined by the excursions of $\omega$ below $s_1$ for all atoms 
of $\mathfrak{L}$ and $\mathfrak{R}$ such that $t\geq \tau_{s_1}$, and by the atoms $(t,\omega)$
such that $t\leq \tau_{s_1}$. Thanks to this conditional independence property, we have
\begin{equation}\label{B_s_1_Z_s_2}
\Theta_{0}\Big(|B_{s_{1}}^{\bullet}|\:\big\vert\:Z_{s_{2}}\Big)=\Theta_{0}\Big(\Theta_{0}\big(|B_{s_{1}}^{\bullet}|\:\big\vert\:Z_{s_{1}}\big)\:\big\vert\:Z_{s_{2}}\Big)~.
\end{equation}
By differentiating the right-hand side of $\eqref{eq_B_sachant_Z}$ at $\lambda=0$ we get
\[\Theta_{0}\left(|B_{s_{1}}^{\bullet}|\:\big\vert\:Z_{s_{1}}\right)=\frac{2}{15}s_{1}^{4}+\frac{1}{5}s_{1}^{2} Z_{s_{1}} \]
similarly we have from \eqref{eq_Z_sachant_Z}:
\[\Theta_{0}(Z_{s_{1}}\:\big\vert\:Z_{s_{2}})=\frac{s_{1}^{3}}{s_{2}^{3}}Z_{s_{2}}+\frac{s_{2}-s_{1}}{s_{2}}s_{1}^{2} .\]
So by \eqref{B_s_1_Z_s_2}, for every $0<s_{1}\leq s_{2}$
\[\Theta_{0}\Big(|B_{s_{1}}^{\bullet}|\:\big\vert\:Z_{s_{2}}\Big)=\frac{1}{3} s_1^4 -\frac{s_1^5}{5s_2}+ \frac{s_1^5}{5s_2^3} Z_{s_2}.
\]
Taking $s_{1}=(m+k+4)\wedge (mr)$ and $s_{2}=m+k+4$, we deduce from the last two formulas and
 \eqref{bound-LaplaceZ}  that there exists $d_{2}(\alpha)>0$ independent of $m$ and $k$ such that:
\[\Theta_{0}\big(\exp(-\alpha Z_{m+k+4})|B_{(m+k+4)\:\wedge\:mr}^{\bullet}|\big)\leq  \frac{d_{2}(\alpha)}{(m+k+4)^{3}}r^{4}m^{4}.\]
Suppose that $m+k+4<m r$. Then by the spatial Markov property  and Corollary \ref{lemvolIBD},
\begin{align*}
\Theta_{0}(\: \exp(-\alpha Z_{m+k+4})|B_{m+k+4,mr}^{\bullet}|\:)&=\Theta_{0}\Big( \exp(-\alpha Z_{m+k+4})\:\Theta_{Z_{m+k+4}}\big(|B^{\bullet}_{mr-m-k-4}|\big)\:\Big)\\
&\leq C'\: \Theta_{0}(\exp(-\alpha Z_{m+k+4})(m^{4}r^{4}+Z_{m+k+4}^{2})).
\end{align*}
where $C'=16C$, if $C$ is the constant appearing in Corollary \ref{lemvolIBD}. 
Using again the distribution of $Z_{m+k+4}$ , we get that there exists a constant $d_{3}(\alpha)>0$ independent of $m$ and $k$ such that, if $m+k+4<m r$,
\[\Theta_{0}( \exp(-\alpha Z_{m+k+4})\:|B_{m+k+4,mr}^{\bullet}|\:)\leq  \frac{d_{3}(\alpha)}{(m+k+4)^{3}}\:r^{4} m^{4}. \]
Summarizing, we get  in both cases $m+k+4< mr$ and $m+k+4\geq mr$,
\[\Theta_{0}\big( \exp(-\alpha Z_{m+k+4})|B_{mr}^{\bullet}|\big)\leq \frac{d_{2}(\alpha)+d_{3}(\alpha)}{(m+k+4)^{3}}\:r^{4} m^{4}.\]
Coming back to \eqref{lemtech5} and \eqref{lemtech6}, using \eqref{bound-LaplaceZ} once again, and recalling that $q=1/\beta$ and $m=1/\epsilon$,\newpage
\noindent we can find a constant $d(\alpha)>0$ such that:

\begin{align*}
\Theta_{0}\big(\mathbbm{1}_{L_{1}<\epsilon}|B_{r}^{\bullet}|^{\beta}\big)&=\frac{1}{m^{4\beta}}
\Theta_{0}(\mathbbm{1}_{L_{m}<1}|B_{mr}^{\bullet}|^{\beta})\\
&\leq \frac{c}{m^{4\beta}} \sum \limits_{k=0}^{\infty} \Theta_{0}\big(\exp(-\alpha Z_{m+k+4})\big)^{\frac{1}{p}}\:\Theta_{0}\big(\exp(-\alpha Z_{m+k+4})|B_{mr}^{\bullet}|\big)^{\frac{1}{q}}\\
&\leq  \sum\limits_{k=0}^{\infty} \frac{d(\alpha)}{(m+k+4)^{3}} r^{4 \beta}
\end{align*}
and the lemma follows since $m=\epsilon^{-1}$.
\end{proof}
\vspace{0.4 cm}
\noindent We now use Lemma \ref{lemtech} to prove that for any nondecreasing positive function $f$:
\begin{equation}
\label{second-part-thm}
\sum \limits_{m\in \mathbb{N}} \frac{1}{f(m)^2}<\infty\implies \inf \limits_{A\in \mathcal{K}} \frac{\Delta(\partial A)}{|A|^{\frac{1}{4}}} f(|\log(|A|)|)>0\:,\:\Theta_{0}\text{-a.s.} 
\end{equation}
\begin{proof}
Fix a nondecreasing function $f:\mathbb{R}_{+}\to \mathbb{R}_{+}^{*}$ such that $\sum \limits_{m\in \mathbb{N}}f(m)^{-2}<\infty.$ We begin by showing that for every $A\in \mathcal{K}$ we have:
\begin{equation}\label{inf_iso_A}
\frac{\Delta(\partial A)}{|A|^{\frac{1}{4}}} f(|\log(|A|)|)\geq \Big(\inf\limits_{m\in \mathbb{Z}}\frac{2^{m-1}}{|B_{2^{m+1}}^{\bullet}|^{\frac{1}{4}}}f\big(|\log(|B_{2^{m}}^{\bullet}|)|\big)\Big)\wedge \Big(\inf\limits_{m\in \mathbb{Z}} \frac{L_{2^{m-1}}}{|B_{2^{m+1}}^{\bullet}|^{\frac{1}{4}}}f\big(|\log(|B_{2^{m}}^{\bullet}|)|\big)\Big).
\end{equation}
Let $A\in \mathcal{K}$ and let $m$ be the unique element of $\mathbb{Z}$ such that:
\[|B_{2^{m}}^{\bullet}|< |A|\leq |B_{2^{m+1}}^{\bullet}|.\]
We divide the proof of \eqref{inf_iso_A} in two cases: 
\\
\\
$\bullet$ Case 1: Assume that $\partial A$ intersects  $\partial B^{\bullet}_{2^{m-1}}$. As $|B^{\bullet}_{2^{m}}|<|A|$, there exists $x\in A\setminus B^{\bullet}_{2^{m}}$. Consider a path $p_{\infty}$  connecting $x$ to $\infty$ that does not hit $B^{\bullet}_{2^{m}}$ and let $y$ be the last point of $p_{\infty}$ that belongs to $A$. By construction we have $y\in \partial A$ and $y\notin B^{\bullet}_{2^{m}}$. Finally let $z\in \partial A \cap \partial B^{\bullet}_{2^{m-1}}$. Since $\partial A$ connects $y$ and $z$ we have
\[\Delta(\partial A)\geq \Delta(y,z)\geq 2^{m-1} .\]
This implies that:
\[\frac{\Delta(\partial A)}{|A|^{\frac{1}{4}}} f(|\log(|A|)|)\geq \frac{2^{m-1}}{|B^{\bullet}_{2^{m+1}}|^{\frac{1}{4}}} f(|\log(|B^{\bullet}_{2^{m}}|)|).\]
~
\\
\\
$\bullet$ Case 2: Assume $\partial A$ does not intersect  $\partial B^{\bullet}_{2^{m-1}}$. Since $|A|>|B_{2^{m-1}}^{\bullet}|$, the set $A$ is not contained in $B_{2^{m-1}}^{\bullet}$. This implies that  $\partial A$ separates $B^{\bullet}_{2^{m-1}}$ from infinity, and consequently $\Delta(\partial A)\geq L_{2^{m-1}}$. It follows that:
\[\frac{\Delta(\partial A)}{|A|^{\frac{1}{4}}} f(|\log(|A|)|)\geq\frac{L_{2^{m-1}}}{|B^{\bullet}_{2^{m+1}}|^{\frac{1}{4}}} f(|\log(|B^{\bullet}_{2^{m}}|)|)\]
and this completes the proof of \eqref{inf_iso_A}. 
\begin{figure}[!h]
 \begin{center}
 \includegraphics[height=5.5cm,width=6.5cm]{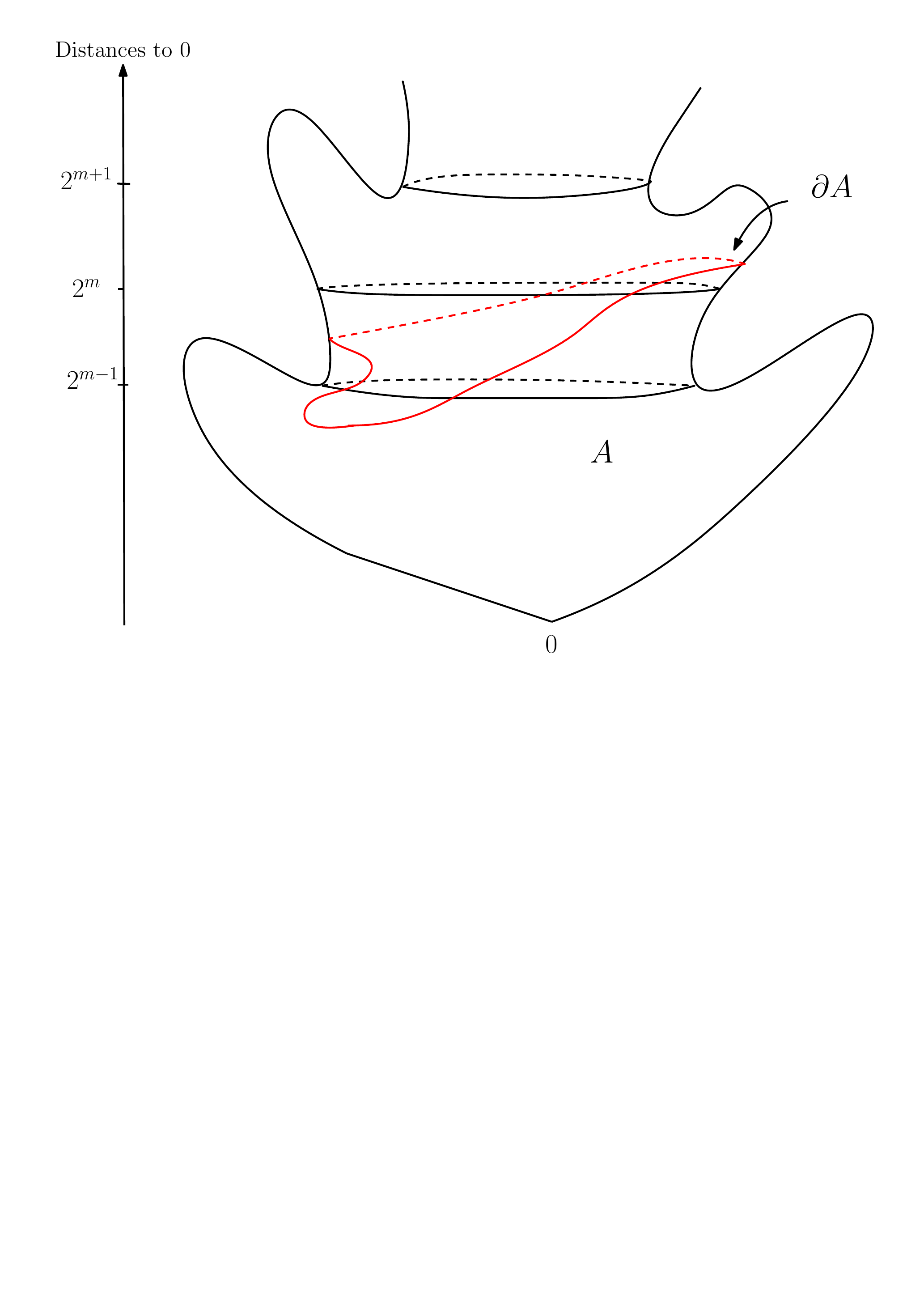}\:\:\:\:\:\:\:\:\:\:\:\:
  \includegraphics[height=5.5cm,width=6.5cm]{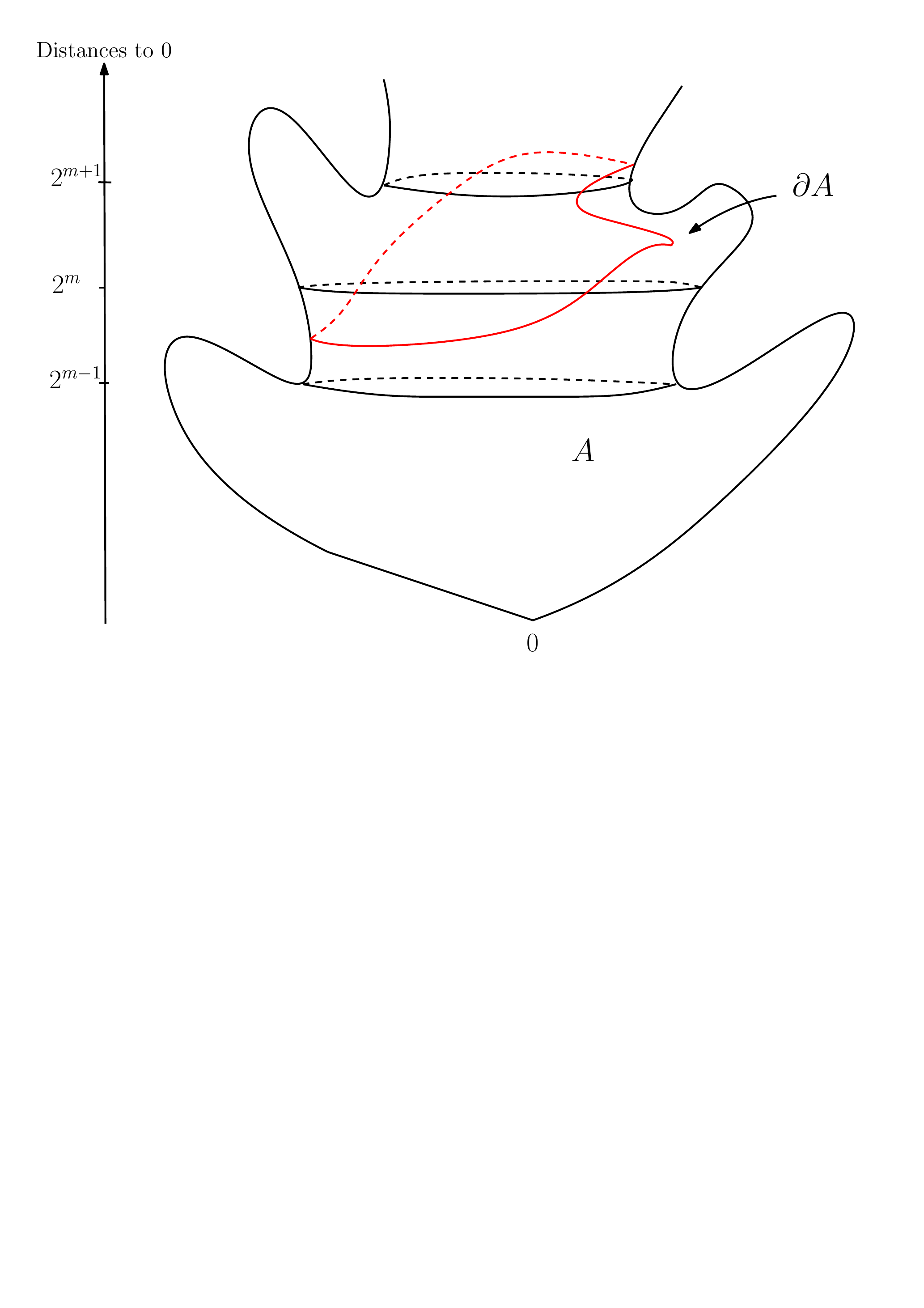}
 \caption{Illustration of \eqref{inf_iso_A}. In red we represent the boundary of $A$. On the left we are in case 1 and we have $\Delta(\partial A)\geq 2^{m-1}$. On the right we are in case 2 and we have $\Delta(\partial A)\geq L_{2^{m-1}}$.}
 \end{center}
 \end{figure}
\newpage
Thanks to \eqref{inf_iso_A}, the proof of \eqref{second-part-thm} will be complete if we can verify that:
\begin{equation}\label{final:1}
\inf\limits_{m\in \mathbb{Z}}\frac{2^{m-1}}{|B_{2^{m+1}}^{\bullet}|^{\frac{1}{4}}}f\big(|\log(|B_{2^{m}}^{\bullet}|)|\big)>0,\:\:\Theta_{0}\text{-a.s.}
\end{equation}
and 
\begin{equation}\label{final:2}
\inf\limits_{m\in \mathbb{Z}}\frac{L_{2^{m-1}}}{|B_{2^{m+1}}^{\bullet}|^{\frac{1}{4}}}f\big(|\log(|B_{2^{m}}^{\bullet}|)|\big)>0,\:\:\Theta_{0}\text{-a.s.}
\end{equation}
Let us start by proving \eqref{final:1}. By Lemma \ref{CantelliH}, $\Theta_{0}\text{-a.s.}$, there is a positive integer $M$ such that for every $m\in \mathbb{Z}$ with $|m|\geq M$:
\begin{equation}\label{control_m}
\frac{1}{|m|}2^{4m}\leq |B_{2^{m}}^{\bullet}|\leq |m|2^{4m}
\end{equation}
In particular, we have:
\[\mathop{\inf_{m\in \mathbb{Z}}}_{|m|>M}\frac{2^{m-1}}{|B_{2^{m+1}}^{\bullet}|^{\frac{1}{4}}}f(|\log(|B_{2^{m}}^{\bullet}|)|)\geq \frac{1}{4}\mathop{\inf_{m\in \mathbb{Z}}}_{|m|>M}\frac{f\big(|4\log(2)m-\log(|m|)|\big)}{(|m|+1)^{\frac{1}{4}}}\:\:\Theta_{0}\text{-a.s.}\]
On the other hand,  by the Cauchy-Schwarz inequality:
\[
\frac{1}{f(n)}\leq\frac{1}{n} \sum \limits_{k=1}^{n} \frac{1}{f(k)} \leq \frac{1}{n^{\frac{1}{2}}} \big(\sum \limits_{k=1}^{n} \frac{1}{f(k)^{2}}\big)^{\frac{1}{2}} 
\leq \frac{1}{n^{\frac{1}{2}}} \big(\sum \limits_{k=1}^{\infty} \frac{1}{f(k)^{2}}\big)^{\frac{1}{2}}.\]
Consequently $\inf \limits_{n\in \mathbb{N}}n^{-\frac{1}{2}}f(n)>0$ and we obtain \eqref{final:1}. Actually here it will be enough to have $\inf \limits_{n\in \mathbb{N}}n^{-\frac{1}{4}}f(n)>0$. 
\\
\\
Let us prove \eqref{final:2}.
By \eqref{control_m}, if we can verify that
\[\sum \limits_{m\in \mathbb{Z}} \Theta_{0}\big(\frac{L_{2^{m-1}}}{|B^{\bullet}_{2^{m+1}}|^{\frac{1}{4}}}<\frac{1}{f(|m|)}\big)<\infty\]
we will conclude by an application of the Borel-Cantelli lemma. 
Fix $\beta\in (\frac{3}{4},1)$. For every  $m\in \mathbb{Z}$, by scaling we have:
\begin{align*}
 \Theta_{0}\big(\frac{L_{2^{m-1}}}{|B^{\bullet}_{2^{m+1}}|^{\frac{1}{4}}}< \frac{1}{f(|m|)}\big)&=   \Theta_{0}\big(\frac{L_{1}}{|B^{\bullet}_{4}|^{\frac{1}{4}}}<\frac{1}{f(|m|)}\big).
\end{align*}
Consequently, we get : 
\begin{align*}
 \Theta_{0}\big(\frac{L_{2^{m-1}}}{|B^{\bullet}_{2^{m+1}}|^{\frac{1}{4}}}< \frac{1}{f(|m|)}\big)&\leq   \Theta_{0}\big(L_{1}<\frac{1}{f(|m|)}\:,\:|B_{4}^{\bullet}|\leq 1\big)\\
 &+ \sum \limits_{n=1}^{\infty}  \Theta_{0}\big(|B^{\bullet}_{4}|^{\frac{1}{4}}\in[n,n+1]\:,\: L_{1}<\frac{n+1}{f(|m|)}\big).
 \end{align*}
 Now remark that the right term of the above display is bounded above by 
\begin{align*} 
 \Theta_{0}\big(L_{1}<\frac{1}{f(|m|)}\big)+ \sum \limits_{n=1}^{\infty}  \Theta_{0}\big(|B^{\bullet}_{4}|^{\beta}\geq n^{4\beta},\: L_{1}<\frac{n+1}{f(|m|)}\big).
 \end{align*}
 We deduce by an application of Markov inequality that
$$ \Theta_{0}\big(\frac{L_{2^{m-1}}}{|B^{\bullet}_{2^{m+1}}|^{\frac{1}{4}}}< \frac{1}{f(|m|)}\big)\leq   \Theta_{0}(L_{1}<\frac{1}{f(|m|)})+ \sum \limits_{n=1}^{\infty}\frac{1}{n^{4\beta}} \Theta_{0}(|B_{4}^{\bullet}|^{\beta} \mathbbm{1}_{L_{1}<\frac{n+1}{f(|m|)}}) $$
 Lemma \ref{lemtech} and Theorem  \ref{tail} imply that there exist two constants $c_{2}\in(0,\infty)$ and $C\in(0,\infty)$ such that:
\[
\sum \limits_{m\in \mathbb{Z}}  \Theta_{0}(\frac{L_{2^{m-1}}}{|B^{\bullet}_{2^{m+1}}|^{\frac{1}{4}}}< \frac{1}{f(|m|)})\leq 2 c_{2} \sum \limits_{m\in \mathbb{N}}^{\infty} \frac{1}{f(m)^{2}}+ C \sum \limits_{n=1}^{\infty}\frac{(n+1)^{2}}{n^{4\beta}} \sum \limits_{m\in \mathbb{N}} \frac{1}{f(m)^{2}}<\infty .
\]
This completes the proof.   
\end{proof}

We observe that the same proof will work mutatis mutandis if we replace $|A|$ by $\Delta(\partial A)$ inside the logarithm in theorem \ref{iso}. 
\\
\\
Recall that $\mathcal{M}_{\infty}^{(z)}$ stands for an infinite volume Brownian disk with perimeter $z$.
With a slight abuse of terminology, we call Jordan domain of $\mathcal{M}_{\infty}^{(z)}$ the closure of the bounded component of the 
complement of an injective cycle of $\mathcal{M}_{\infty}^{(z)}$.
As a direct consequence of Corollary \ref{SpaMarkov_T_z}, Proposition \ref{infand0} $\rm(i)$ and Theorem \ref{iso} we obtain:
\begin{cor}
\label{isocor}~\\
Fix $z>0$. Let $\mathcal{M}_{\infty}^{(z)}$ be the infinite volume Brownian disk with perimeter $z$ defined under 
the probability measure $\Theta_{z}$. Consider the collection 
$\mathcal{K}^{(z)}$ of all Jordan domains of $\mathcal{M}_{\infty}^{(z)}$ whose interior contains the boundary of $\mathcal{M}_{\infty}^{(z)}$. 
For any nondecreasing function $f:\mathbb{R}_{+}\to \mathbb{R}_{+}^{*}$
\\
\\
$\rm(i)$ We have 
\[\inf \limits_{A\in \mathcal{K}^{(z)}} \frac{\Delta\big(\partial A\big)}{|A|^{\frac{1}{4}}} f(|\log(|A|)|)=0 \:,\:\Theta_{z}\text{-a.s.}\:,\:\text{if}\:\: \sum \limits_{m\in \mathbb{N}}\frac{1}{f(m)^{2}}=\infty.\]
\\
\\
$\rm(ii)$ We have
\[\inf \limits_{A\in \mathcal{K}^{(z)}} \frac{\Delta\big(\partial A\big)}{|A|^{\frac{1}{4}}} f(|\log(|A|)|)>0 \:,\:\Theta_{z}\text{-a.s.}\:,\:\text{if} \:\:\sum \limits_{m\in \mathbb{N}}\frac{1}{f(m)^{2}}<\infty.\]

\end{cor}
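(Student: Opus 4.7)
The plan is to leverage the identification provided by Corollary~\ref{SpaMarkov_T_z}: under $\Theta_0$, the set $\check{B}_{T_z}^\bullet$ equipped with the continuous extension of its intrinsic distance is distributed as $\mathcal{M}_\infty^{(z)}$ under $\Theta_z$. Along this identification, Jordan domains in $\mathcal{K}^{(z)}$ correspond bijectively to Jordan domains $A'\in\mathcal{K}$ of $\mathcal{M}_\infty$ that satisfy $B_{T_z}^\bullet\subset A'$, via $A'=A\cup B_{T_z}^\bullet$ and $A=\text{Cl}(A'\setminus B_{T_z}^\bullet)$. Under this correspondence $\Delta(\partial A)=\Delta(\partial A')$ and $|A'|=|A|+V$, where $V:=|B_{T_z}^\bullet|$ is $\Theta_0$-a.s.\ finite and positive. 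It is therefore enough to argue under $\Theta_0$, transferring the two statements from $\mathcal{M}_\infty$ to the subclass of Jordan domains of the plane containing $B_{T_z}^\bullet$.

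For part~\rm(i), I would rerun the Borel--Cantelli construction used to prove~\eqref{infand0:inf}, keeping only indices $n$ large enough that $T_{r^{2n+1}}\geq T_z$, a condition that is eventually satisfied $\Theta_0$-a.s.\ since $T_{r^n}\to\infty$ (by Corollary~\ref{Z_z} and the fact that $Z$ drifts to infinity). The construction yields a sequence $A_n'\in\mathcal{K}$ with $B_{T_{r^{2n+1}}}^\bullet\subset A_n'\subset B_{T_{r^{2n+2}}}^\bullet$ along which $\Delta(\partial A_n')|A_n'|^{-1/4}f(|\log|A_n'||)\to 0$. Since $B_{T_z}^\bullet\subset A_n'$ for large~$n$, the sets $A_n:=\text{Cl}(A_n'\setminus B_{T_z}^\bullet)$ belong to $\mathcal{K}^{(z)}$, and $|A_n'|\to\infty$ forces $|A_n|/|A_n'|\to 1$ and $|\log|A_n||\leq|\log|A_n'||$ eventually; monotonicity of $f$ then yields that the same ratio along $A_n$ tends to zero.

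For part~\rm(ii), the main obstacle is that the shift $|A'|-|A|=V$ does not correspond to a uniformly bounded shift of $|\log|A||$ to $|\log|A'||$ as $|A|$ varies, so with no regularity assumption on $f$ one cannot bound $f(|\log|A'||)$ by a fixed multiple of $f(|\log|A||)$. To bypass this I would apply Theorem~\ref{iso}~\rm(ii) not to $f$ itself but to the countable family $g_n(x):=f((x-n)_+)$ for $n\in\mathbb{N}$; each $g_n$ is nondecreasing and satisfies $\sum_m g_n(m)^{-2}<\infty$. On the intersection (still of full $\Theta_0$-measure) of the corresponding a.s.\ events, there exist positive constants $c_n(\omega)$ with $\Delta(\partial A')\geq c_n\,|A'|^{1/4}/g_n(|\log|A'||)$ for every $A'\in\mathcal{K}$ and every $n$. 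Setting $N:=\lceil\log(1+V)\rceil$ (a finite random integer), a short case analysis separating $|A|\leq 1$ from $|A|\geq 1$ shows that $|\log|A||+N\geq |\log|A'||$ in all cases, whence $g_N(|\log|A'||)=f((|\log|A'||-N)_+)\leq f(|\log|A||)$. Combining with $|A'|^{1/4}\geq|A|^{1/4}$ and $\Delta(\partial A)=\Delta(\partial A')$ yields $\Delta(\partial A)/|A|^{1/4}\cdot f(|\log|A||)\geq c_N>0$ uniformly in $A\in\mathcal{K}^{(z)}$, which is the desired lower bound.
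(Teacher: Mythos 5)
Your proposal is correct and follows essentially the same route as the paper, which derives the corollary directly from Corollary \ref{SpaMarkov_T_z} together with Proposition \ref{infand0} and Theorem \ref{iso} by identifying $\mathcal{M}_{\infty}^{(z)}$ with $\check{B}_{T_{z}}^{\bullet}$ under $\Theta_{0}$ and Jordan domains of the disk with domains $A'\in\mathcal{K}$ containing $B_{T_{z}}^{\bullet}$. Your treatment of the volume shift $|A'|=|A|+|B_{T_z}^\bullet|$ in part $\rm(ii)$, via the countable family $g_{n}(x)=f((x-n)_{+})$ and the random index $N$, is a valid way of filling in a detail the paper leaves implicit in calling the result a "direct consequence."
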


\subsection*{Appendix: Proof of Lemma  \ref{lem:appen}}

This appendix is devoted to the proof of Lemma \ref{lem:appen}, which relies on  \cite[Proposition 8]{Growth}. We use the notation of Subsection \ref{subsub-Plane}.

\smallskip
{\sc Proof of Lemma \ref{lem:appen}}.

 First fix $0<r_1<r_2<\infty$. The lemma will follow if we prove that, $\Theta_0$-a.s.~, for every $r_1\leq r\leq r_2$, we have:
\begin{equation*}
Z_{r}=\lim \limits_{\epsilon \downarrow 0} \frac{1}{\epsilon^{2}} |\check{B}^{\circ}_{r}\cap B_{r+\epsilon}|.
\end{equation*}
In order to prove this, we introduce the event $A(s):=\{Z_{r_2}^{s,\infty}=0\}$, for every $s>r_2$. In particular, under the event $A(s)$, we have $Z_{r}=Z_{r}^{r,s}$ for every   $r\leq r_2$.

 Moreover, by Proposition  \ref{corZabc0} (taking the limit when $t\to \infty$) we also have:
$$\Theta(A(s))=\big(\frac{s-r}{s}\big)^{3},$$ 
which converges to $1$ when $s\to \infty$. Consequently, to obtain the desired result it is sufficient to show that, for every $s>r_2$, under $A(s)$  we have:
\begin{equation*}
Z_{r}^{r,s}=\lim \limits_{\epsilon \downarrow 0} \frac{1}{\epsilon^{2}} |\check{B}^{\circ}_{r}\cap B_{r+\epsilon}|,
\end{equation*}
for every $r\in [r_1,r_2]$.
Let us now introduce, for every $r\in \mathbb{R}$ and $\omega\in \mathcal{S}$ with $\omega_0>r$, the quantity:
\[\mathcal{Z}_{r}^{\epsilon}(\omega):=\frac{1}{\epsilon^{2}}\int_{0}^{\sigma}ds\mathbbm{1}_{\text{hit}_{r}(\omega_{s})=\infty,~\widehat{\omega}_{s}<r+\epsilon}.\] 
In particular, note that $\mathcal{Z}_{r}(\omega)=\liminf_{\epsilon \to 0}\mathcal{Z}_{r}^{\epsilon}(\omega)$. We set:
\[ Z_{r}^{r,s}(\epsilon):=\int \mathcal{Z}_{r}^{\epsilon}(\omega)\mathfrak{R}^{r,s}(d\ell d\omega)+\int \mathcal{Z}_{r}^{\epsilon}(\omega)\mathfrak{L}^{r,s}(d\ell d\omega).\]
Under $A(s)$, all the labels appearing after the point $\tau_s$ of the spine are greater than $r_2$. This implies that, under $A(s)$,  the quantity $\epsilon^{2} Z_{r}^{r,s}(\epsilon)$ is exactly $|\check{B}^{\circ}_{r}\cap B_{r+\epsilon}|$ (since $|\cdot|$ is the pushforward of Lebesgue measure under $\Pi\circ \mathcal{E}$). To conclude, we are going to use \cite[Proposition 8]{Growth}, which states that,  for every $s>0$ and $\beta>0$, we have
\begin{equation}\label{int_Z^eps}
\sup_{r\in(-\infty,s-\beta]} |\mathcal{Z}_{r}^{\epsilon}-\mathcal{Z}_{r}| ~\underset{\epsilon \to 0} {\longrightarrow}~0\;,\qquad \mathbb{N}_s\hbox{-a.e.}
\end{equation}
To translate \eqref{int_Z^eps} in terms of $Z_{r}^{r,s}$ and  $Z_{r}^{r,s}(\epsilon)$, we recall that $(X_{(\tau_s-\ell)\vee 0})_{\ell\geq 0}$ is a Bessel process of dimension $-5$ started from $s$, and that, conditionally on $(X_{(\tau_s-\ell)\vee 0})_{\ell\geq 0}$, the measures $\mathfrak{R}^{0,s}$ and $\mathfrak{L}^{0,s}$ are two independent  Poisson point measures on $\mathbb{R}_{+}\times \mathcal{S}$ with intensity:
\[2\mathbbm{1}_{[0,\tau_s]}(\ell)~d\ell~
\mathbb{N}_{X_{\ell}}(d\omega\cap\{\omega_{*}>0\}).\]
 In particular, the distribution $(X_{(\tau_s-\ell)\vee 0})_{0\leq \ell\leq \tau_s-\tau_{r_1}}$ is absolute continuous with respect to the distribution of a Brownian motion started from $s$ and stopped when it hits $r_1$. We can now apply  \cite[Proposition 2]{Infinite_Spine} to deduce that the distribution of $(Z_{r}^{r,s},Z_{r}^{r,s}(\epsilon))_{r\in [r_1,r_2]}$ is absolute continuous with respect to the distribution $(\mathcal{Z}_{r-r_1},\mathcal{Z}^{\epsilon}_{r-r_1})_{r\in [r_1,r_2]}$ under $\mathbb{N}_{s-r_1}$. Consequently  \eqref{int_Z^eps} gives that:
\begin{equation*}
\sup_{r\in[r_1,r_2]} |Z_{r}^{r,s}(\epsilon)-Z_{r}^{r,s}| ~\underset{\epsilon \to 0} {\longrightarrow}~0\;,\qquad \Theta_0\hbox{-a.s.}
\end{equation*}
which completes the proof. \hfill$\square$

\section*{Acknowledgements}
I warmly thank Jean-François Le Gall for all his suggestions and for carefully reading earlier versions of this manuscript.  The present work was supported by the ERC Advanced Grant 740943 GEOBROWN.

\end{document}